\tikzset{->-/.style={decoration={  markings,  mark=at position #1 with
    {\arrow{>}}},postaction={decorate}}}
\tikzset{-<-/.style={decoration={  markings,  mark=at position #1 with
    {\arrow{<}}},postaction={decorate}}}
\theoremstyle{plain}
\newtheorem{theorem}{Theorem}[section]
\newtheorem{lemma}[theorem]{Lemma}
\newtheorem{corollary}[theorem]{Corollary}
\newtheorem{proposition}[theorem]{Proposition}
\newtheorem{conjecture}[theorem]{Conjecture}
\theoremstyle{definition}
\newtheorem{definition}[theorem]{Definition}
\newtheorem{example}[theorem]{Example}
\newtheorem{remark}[theorem]{Remark}
\numberwithin{equation}{section}
\newtheorem{assumption}[theorem]{Assumption}
\newtheorem{construction}[theorem]{Construction}
\newtheorem{problem}[theorem]{Problem}
\newtheorem*{con}{Convention}
\newtheorem*{thma}{Theorem~1}
\def\hh{\mathcal}
\def\ha{\hh{A}}
\def\kong{\mathbb}
\def\<{\langle}
\def\>{\rangle}
\def\ZZ{\mathbb{Z}}
\def\QQ{\mathbb{Q}}
\def\R{\mathbb{R}}
\def\RR{\R}
\def\CC{\mathbb{C}}
\def\TM{\mathcal{T}_M}
\def\O{\mathcal{O}}
\def\rd{\partial}
\def\hs{\mathfrak{h}}
\def\reg{\mathrm{reg}}
\def\alp{\alpha}
\def\lam{\lambda}
\def\CKZ{\mathrm{CKZ}}
\def\Po{\Pol(\xi)}
\def\ii{\mathbf{i}}
\def\XX{\mathbb{X}}
\def\Stab{\operatorname{Stab}}
\def\Stap{\operatorname{Stab}^\circ}
\def\OStab{\QStab^{\oplus}}
\def\CStab{\QStab^{*}}
\def\QQuad{\operatorname{QQuad}}
\def\QStab{\operatorname{QStab}}
\def\QStap{\QStab^\circ}
\def\Stap{\operatorname{Stab}^\circ}
\renewcommand{\mod}{\operatorname{mod}}
\newcommand{\Ho}[1]{\operatorname{\bf H}_{#1}}
\newcommand{\HHo}[1]{\widehat{\operatorname{\bf H}}_{#1}}
\newcommand{\Cone}{\operatorname{Cone}}
\newcommand{\Ae}[1]{\mathcal{#1}^e}
\renewcommand{\Re}{\operatorname{Re}}
\newcommand{\id}{\operatorname{id}}
\newcommand{\HH}{\operatorname{HH}}
\newcommand{\D}{\operatorname{\hh{D}}}
\newcommand{\C}{\operatorname{\hh{C}}}
\newcommand{\per}{\operatorname{per}}
\newcommand\Sph{\operatorname{Sph}}
\newcommand{\Tri}{\Delta}
\newcommand{\ST}{\operatorname{ST}}        
\newcommand{\BT}{\operatorname{BT}}        
\newcommand{\MCG}{\operatorname{MCG}}
\newcommand{\Int}{\operatorname{Int}}
\def\Homeo{\operatorname{Homeo}}
\newcommand\coho[1]{\operatorname{H}^{#1}}
\newcommand\ho[1]{\operatorname{H}_{#1}}
\newcommand\coch[1]{\operatorname{Z}^{#1}}
\def\Diff{\operatorname{Diff}}
\def\RHom{\operatorname{RHom}}
\newcommand{\EG}{\operatorname{EG}}
\def\add{\operatorname{add}}
\newcommand{\Quad}{\operatorname{Quad}}
\newcommand{\conf}{\operatorname{conf}}
\newcommand{\TFuk}{\operatorname{TFuk}}
\def\Obj{\operatorname{Obj}}
\def\Aut{\operatorname{Aut}}
\def\Ind{\operatorname{Ind}}
\def\Sim{\operatorname{Sim}}
\def\Hom{\operatorname{Hom}}
\def\hom{{\hh{H}}om}
\def\Ext{\operatorname{Ext}}
\def\Irr{\operatorname{Irr}}
\def\diff{\operatorname{d}}
\def\Br{\operatorname{Br}}
\def\GL{\operatorname{GL}}
\def\rank{\operatorname{rank}}
\def\deg{\operatorname{deg}}
\newcommand{\h}{\hh{H}}            
\newcommand{\ns}{\widehat{\sigma}}
\newcommand{\nz}{\widehat{Z}}
\newcommand{\np}{\widehat{\hh{P}}}
\renewcommand{\k}{\mathbf{k}}
\def\surf{\mathbf{S}}                       
\def\TT{\mathbf{A}}
\def\M{\mathbf{M}}
\def\surfo{{\mathbf{S}}_\Tri}
\def\LS{\log_{\cut}\surfo^\lambda}
\def\CA{\operatorname{CA}}
\def\Grot{\operatorname{\mathrm{K}}}
\def\sadd{\sigma_{\oplus}}
\def\sext{\sigma_{*}}
\def\padd{\hh{P}_{\oplus}}
\def\pext{\hh{P}_{*}}
\def\bi{\mathbf{i}}
\def\sli{\mathcal{P}}
\newcommand{\norm}[1]{\lVert #1 \rVert}
\newcommand{\ind}{\operatorname{ind}}
\def\GAX{\Gamma^{\XX}_{\TT}}
\def\CXT{\hh{C}(\GAX)}
\def\GAN{\Gamma^{N}_{\TT}}
\def\Homeo{\operatorname{Homeo}}
\def\PP{\mathbb{P}}
\def\Diff{\operatorname{Diff}}
\def\M{\mathbf{M}}
\def\P{\mathbb{P}}
\def\RHom{\operatorname{RHom}}
\def\surfo{{\mathbf{S}}_\Tri}
\def\CA{\operatorname{CA}}
\def\OA{\operatorname{OA}}
\def\wCA{\widetilde{\CA}}
\def\wOA{\widetilde{\OA}}
\def\Y{\mathbf{Y}}
\def\add{\operatorname{add}}
\def\DX{\D_{\XX}(\TT)}
\def\DN{\D_{N}(\TT)}
\def\DI{\D_{\infty}(\TT)}
\def\Dinf{\D_{fd}(\ha_\TT)}
\newcommand{\HS}{\operatorname{HS}}
\def\Grot{\operatorname{\mathrm{K}}}
\def\gldim{\operatorname{gldim}}
\def\QQuac{\operatorname{QQuad}^{\mathrm{c}}}
\def\FQuad{\operatorname{FQuad}}
\def\Cut{\operatorname{Cut}}
\def\cut{\operatorname{\mathbf{c}}}
\def\uc{\cut}
\def\ut{\mathbf{t}}
\def\sli{\mathcal{P}}
\def\bC{\mathbb{C}}
\def\Im{\operatorname{Im}}
\def\Zer{\operatorname{Zero}}
\def\Pol{\operatorname{Pol}}
\def\Crit{\operatorname{Crit}}
\def\sadd{\sigma_{\oplus}}
\def\sext{\sigma_{*}}
\def\padd{\hh{P}_{\oplus}}
\def\pext{\hh{P}_{*}}
\newcommand{\SBr}{\operatorname{SBr}}
\newcommand{\DT}{\D_\infty(\TT)}
\def\DXQ{\D_\XX(Q)}
\def\bb{\psi}
\def\ss{{\mathrm{ss}}}
\def\separated{cyan}
\def\grad{\lambda}
\def\DEG{\widetilde{\deg}\,}
\newcommand{\sslash}{\mathbin{/\mkern-6mu/}}
\def\rs{\mathrm{S}}
\def\rsf{\rs^f}
\def\rsp{\rs^{\circ}}
\def\uk{\mathbf{k}}
\def\ul{\mathbf{l}}
\def\Sp{{\operatorname{sp}}}
\def\sg{{\operatorname{sg}}}
\def\Ram{{\operatorname{\mathfrak{R}}}}
\def\surp{\rs^\circ}
\def\rsp{\mathrm{S}^\phi}
\def\Core{\operatorname{Core}}
\def\core{\underline{\Core}}
\def\cz{\Core^0}
\def\cx{\Core^0_{\cut}}
\def\logrs{\log\rs_{\Tri}}
\def\logrsx{\log\rs_{\Tri}^{\;\;\xi}}
\newcommand{\wind}[1]{\operatorname{wind}_{#1}}
\newcommand{\AC}{\operatorname{Ang}}
\newcommand{\LP}{\operatorname{LP}}
\newcommand{\ord}{\operatorname{ord}}
\def\num{\mathfrak{N}}
\def\gms{\surf^\grad}
\def\gzero{\mathbf{C}}
\def\Poly{\hh{M}}
\def\Pone{\overline{\bC}}
\def\dd{\mathbf{D}}
\def\ddo{\mathbf{D}_\Tri}
\def\wg{\widetilde{\gamma}}
\def\we{\widetilde{\eta}}
\def\wX{\widetilde{X}}
\def\Xinf{\wX_{\infty}}
\def\Xxx{\wX_{\XX}}
\def\xs{X_{3}}
\def\ACC{\operatorname{AC}}
\def\wACC{\widetilde{\ACC}}
\def\imc{\mathbf{im}_{\cut}}
\title[$q$-Stability conditions via $q$-quadratic differentials]
{$q$-Stability conditions via $q$-quadratic differentials for
Calabi-Yau-$\XX$ categories}
\author{Akishi Ikeda}
\address{AI: Department of Mathematics,
    Josai University,
    Saitama, Japan}
\email{akishi@josai.ac.jp}
\author{Yu Qiu}
\address{Qy:
	Yau Mathematical Sciences Center and Department of Mathematical Sciences,
	Tsinghua University,
    100084 Beijing,
    China.
    \&
    Beijing Institute of Mathematical Sciences and Applications, Yanqi Lake, Beijing, China}
\email{yu.qiu@bath.edu}
\begin{document}

\begin{abstract}
Categorically, we introduce the Calabi-Yau-$\mathbb{X}$ categories $\mathcal{D}_{\mathbb{X}}$
of a graded marked surface $\mathbf{S}^\lambda$, as a $q$-deformation of
Haiden-Katzarkov-Kontsevich's topological Fukaya category $\mathcal{D}_\infty$ of $\mathbf{S}^\lambda$.
We show that $\mathcal{D}_\infty$ can be identified with
the cluster-$\mathbb{X}$ category associated to $\mathcal{D}_{\mathbb{X}}$.

Geometrically, we construct and identify the space of $q$-quadratic differentials
on the logarithm surface $\operatorname{log}_{\mathbf{c}}\mathbf{S}_\Delta^\lambda$
with the space of induced $q$-stability conditions on $\mathcal{D}_{\mathbb{X}}$,
for a complex parameter $s$ satisfying $\operatorname{Re}(s)\gg1$.
When $s=N$ is an integer,
the result gives an $N$-analogue of Bridgeland-Smith's result for realizing stability conditions
on the Calabi-Yau-$N$ orbit category $\mathcal{D}_{\mathbb{X}}\mathbin{/\mkern-6mu/}[\mathbb{X}-N]$
via CY-$N$ type quadratic differentials.

When the genus of $\mathbf{S}$ is zero, the spaces of $q$-quadratic differentials can be also identified with framed Hurwitz spaces.
As a byproduct, we confirms the conjectural almost Frobenius structure on the spaces of $q$-stability conditions for type $A$.

    \vskip .3cm
    {\parindent =0pt
    \it Key words:}
    $q$-stability conditions, Calabi-Yau-$\XX$ categories,  cluster categories,
    $q$-quadratic differentials, twisted periods

\end{abstract}
\maketitle
\tableofcontents\addtocontents{toc}{\setcounter{tocdepth}{1}}

\setlength\parindent{0pt}
\setlength{\parskip}{5pt}

\section*{Introduction}
In the prequel \cite{IQ1}, we introduced $q$-stability conditions on
a class of triangulated categories $\D_\XX$ with a distinguished auto-equivalence $\XX$.

In this paper, we study the case when $\D_\XX$ are Calabi-Yau-$\XX$ categories
associated to graded decorated marked surfaces.
We introduce $q$-quadratic differentials as counterpart of the $q$-stability conditions in prequel
and prove they can be identified in the surface case.


\subsection{Quadratic differentials as stability conditions}
Motivated by Douglas' $\Pi$-stability of D-branes in string theory,
Bridgeland \cite{B1} introduced stability conditions on triangulated categories.
Since then, the theory of stability conditions
has played an important role in the study of mirror symmetry, Donaldson-Thomas invariants, cluster theory, etc.
In particular,  Kontsevich and Seidel observed that
spaces of stability conditions and spaces of abelian/quadratic differentials
share similar properties a couple of years ago (cf. \cite{BS}).
Recently, there are two seminal works in this direction:
\begin{itemize}
\item In \cite{BS}, Bridgeland-Smith (BS) showed that
\[
    \Stap\D_3(\surf^3)/\Aut\cong\Quad_3(\surf^3)
\]
which is upgraded in \cite{KQ2} as
\begin{gather}\label{eq:BS}
    \Stap\D_3(\surfo^3)\cong\FQuad_3^\circ(\surfo^3).
\end{gather}
Here $\D_3(\surf^3)\cong\D_3(\surfo^3)$ is the Calabi-Yau-3 category associated to a (decorated) marked surface $\surf^3$ (or $\surfo^3$),
$\Stap\D$ the space of stability conditions on a triangulated category $\D$,
$\Quad_3(\surf^3)$ the moduli space of (GMN/CY-3 type) quadratic differentials on $\surf^3$
and $\FQuad_3(\surfo^3)$ the $\surfo^3$-framed version.

\item In \cite{HKK}, Haiden-Katzarkov-Kontsevich (HKK) showed that
\begin{gather}\label{eq:above}
    \Stap\D_\infty(\gms)\cong\FQuad_\infty(\gms),
\end{gather}
where $\D_\infty(\gms)$ is the topological Fukaya category of a graded marked surface $\gms$,
$\Stap\D_\infty(\gms)$ the space of stability conditions on $\D_\infty(\gms)$
and $\Quad_\infty(\gms)$ the moduli space of (exponential type) quadratic differentials on $\gms$.
\end{itemize}
These works share many similarities,
which fit into the framework \cite{DHKK} that relates dynamical systems and triangulated categories.
For one thing,
these categories are of topological Fukaya type.
More precisely, the (spherical/indecomposable) objects in these categories
correspond to (closed/graded) arcs on the surfaces (i.e., \cite[Thm.~6.6]{QQ}, \cite[Prop.~4.2]{HKK}).
Note that $\D_3(\surfo^3)$ can be embedded into the derived Fukaya category
of some symplectic manifold constructed from $\surfo^3$ (\cite{S}, cf. the survey \cite{Q5}).
For another, the constructions of a stability condition $\sigma=(Z,\hh{P})$
from a quadratic differential $\xi$ are both roughly as follows:
\begin{itemize}
  \item a saddle connection $\eta$ with phase $\varphi$
    corresponds to (semi)stable objects $X_\eta$ in the slicing $\hh{P}(\phi)$;
  \item the integral of the square root of $\xi$ along (the double cover of) $\eta$
  gives the central charge of the corresponding stable object $X_\eta$:
\begin{gather}\label{eq:formula}
    Z(X_\eta)=\int_{\widetilde{\eta}} \sqrt{\xi}.
\end{gather}
\end{itemize}

One of the main motivations of this paper is to give concrete links between these results \cite{BS,HKK}.
Categorically, we construct the (deformed) Calabi-Yau-$\XX$ completion
$\D_\XX(\gms):=\DX$ of $\D_\infty(\gms)$
via a full formal arc system $\TT$ of the decorated version of $\gms$,
such that $\D_3(\surfo^3)$ is the (triangulated hull of the) orbit quotient:
\begin{gather}\label{eq:3}
    \D_3(\surfo^3)=\D_\XX(\gms)\sslash[\XX-3].
\end{gather}
Geometrically, we will $q$-deform HKK's result to produce a Calabi-Yau-$s$ version of BS' result,
that realizes $q$-stability conditions via $q$-quadratic differentials,
for a complex parameter $s$.

A by-product we obtained along the way is that
HKK's topological Fukaya category can be realized as the cluster category in the following sense:
\begin{gather}\label{eq:c0}
    \D_\infty(\gms)\cong\per(\gms)\cong\C_\XX(\gms):\;=\per_\XX(\gms)/\D_\XX(\gms),
\end{gather}
where $\per(\gms)$ is the Koszul dual of $\D_\infty(\gms)$ and
$\C_\XX(\gms)$ is the Calabi-Yau-$\XX$ version of the cluster category,
in the sense of Amiot-Guo-Keller \cite{A,K8,IY}.
This explains the cluster-like structure of $\Stap\D_\infty(\gms)$ in \cite[\S~6]{HKK},
cf. \cite{KQ1,Q2}.

\subsection{The $q$-deformations}
Given a triangulated category $\D_\infty$ with Grothendieck group $K(\D_\infty) \cong \ZZ^{\oplus n}$,
Bridgeland shows that the set of all stability conditions (with support property)
on $\D_\infty$ forms a complex manifold with dimension $n$.
There is a local homeomorphism/coordinate
\begin{gather}\label{eq:Z inf}\begin{array}{rcl}
    \mathcal{Z}_\infty \colon \Stab\D_\infty &\longrightarrow& \Hom_{\ZZ}(K(\D_\infty),\bC),\\
        (Z,\sli) &\mapsto& Z
\end{array}\end{gather}
provided by the central charge $Z$ of a stability condition $\sigma=(Z,\hh{P})$.
To $q$-deform the notion of stability conditions and this result,
we consider the Calabi-Yau-$\XX$ version of $\D_\infty$,
which is a triangulated category $\D_\XX$ admitting a distinguish autoequivalence $\XX$,
that makes the Grothendieck group as
$$K(\D_\XX) \cong R^{\oplus n}, \qquad R\colon=\ZZ[q^{\pm 1}].$$
Recall that there are natural $\bC$-action and $\Aut$-action on the set of stability conditions.

A $q$-stability condition on $\D_\XX$ consists of a stability condition $\sigma$
and a complex number $s$ satisfying the Gepner equation
\begin{gather}\label{eq:Gepner}
    \XX ( \sigma)=s \cdot \sigma,
\end{gather}
(and a technical condition, the $q$-support property).
In the prequel \cite{IQ1} we show that when fixing $s$,
the $q$-stability conditions $(\sigma,s)$ also form a complex manifold with dimension $n$
with local homeomorphism/coordinate
\begin{gather}\label{eq:Z s}\begin{array}{rcl}
    \mathcal{Z}_s \colon \QStab_s\D_{\XX} &\longrightarrow& \Hom_{R}(K(\D_{\XX}),\bC_s),\\
        (Z,\sli,s) &\mapsto& Z.
\end{array}\end{gather}
Here $\bC_s$ is the complex plane equipped with the $R$-structure
given by $q \cdot z:=e^{ \ii \pi s}z$.

On the quadratic differential side, we also need to perform $q$-deformation.
The moduli space $\FQuad_\infty(\gms)$ that corresponds to $\Stab\D_\infty(\gms)$ in \eqref{eq:above}
consists of quadratic differentials with exponential type singularities, locally of the form
\begin{gather}\label{eq:exp sing}
    e^{z^{-k}}z^{-l} g(z) \diff z^2,
\end{gather}
for some $(k,l)\in(\ZZ_{>0}\times\ZZ)$ and some locally holomorphic non-vanishing function $g(z)$.
The local homeomorphism/coordinate is given by the period map
\begin{gather}\label{eq:Pi_inf}\begin{array}{rcl}
    \Pi_\infty\colon\FQuad_\infty(\gms)&\longrightarrow&\Hom(  \Ho{1}(\gms,\partial\gms;\ZZ_\Sp),\bC),\\
        \phi&\mapsto&\displaystyle\int\sqrt{\phi},
\end{array}\end{gather}
which corresponds to \eqref{eq:Z inf}.

We would like to $q$-deform the singularity \eqref{eq:exp sing} as an $s$-pole of the form
\begin{gather}\label{eq:sing}
    e^{-k(s-2)-l} \diff z^2
\end{gather}
together with $k$ $s$-zeroes of the form $e^{(s-2)} \diff z^2 $.
Analytically, we introduce a multi-valued quadratic differential $\Theta$ on $\gms$.
Equivalently, we construct $\log$-surface $\LS$ and
view $\Theta$ as a single-valued quadratic differentials on $\LS$.
The deck transformation, also denoted by $q$, on $\LS$ corresponds to the distinguish autoequivalence $\XX$
and the equation \eqref{eq:Gepner} becomes
\begin{gather}
   q^* (\Theta)=e^{\ii \pi s}\cdot \Theta
\end{gather}
in this setting.
Then the local homeomorphism/coordinate is given by the period map
\begin{gather}\label{eq:Pi_s}\begin{array}{rcl}
    \Pi_s\colon\QQuad_s(\LS)&\longrightarrow&\Hom_R(  \HHo{}(\LS;\ZZ),\bC_s),\\
        \xi&\mapsto&\displaystyle\int\sqrt{\xi},
\end{array}\end{gather}
that corresponds to \eqref{eq:Z s},
where $\QQuad_s(\LS)$ is the moduli space of $\LS$-framed $q$-quadratic differentials.

When $\Re(s)\gg1$, we use HKK's isomorphism \eqref{eq:above}
to prove the main result (Theorem~\ref{thm:q=x}) of the paper,
which is a $q$-deformed generalization of BS' isomorphism \eqref{eq:BS}.
\begin{thma}
Suppose $\Re(s)\gg1$.
There is an isomorphism
$$\QQuad_s(\LS)\xrightarrow{\;\cong\;}\QStap_s\DX$$ between complex manifolds,
where $\QStap_s\DX$ consists of (possibly many) connected components in
the space $\QStab_s\DX$ of $q$-stability conditions.
\end{thma}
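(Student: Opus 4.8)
The plan is to produce the biholomorphism as a composite of three correspondences, and then to upgrade the resulting bijection to an isomorphism of complex manifolds by comparing the two natural coordinate charts. First I would fix, from the combinatorics of the full formal arc system $\TT$ (equivalently the quiver $Q_\TT$), an isomorphism of $R$-modules
\[
    K(\D_\XX(\surf)) \;\cong\; \HHo{}(\log\surfo,\ZZ),
\]
under which the auto-equivalence $\XX$ corresponds to the deck transformation $q$ of $\log\surfo$, so that the Gepner-type equation $\XX(\sigma)=s\cdot\sigma$ is matched with $q^*(\Theta)=e^{\ii\pi s}\,\Theta$. Under this identification both the central-charge coordinate $\mathcal{Z}_s$ and the period map $\Pi_s$ take values in the same space $\Hom_R(-,\bC_s)$, which is exactly what will make the final coordinate comparison possible.

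Categorically, I would invoke the result established earlier in the paper that $\D_\infty(\surf)$ is the $\XX$-baric heart of $\D_\XX(\surf)$, together with the induction procedure of the prequel \cite{IQ}. A $q$-stability condition $(Z,\sli,s)$ on $\D_\XX(\surf)$ with $\XX(\sigma)=s\cdot\sigma$ is determined by the restriction of its slicing to the heart $\D_\infty(\surf)$, where it becomes an ordinary stability condition; conversely, a stability condition on $\D_\infty(\surf)$ together with the parameter $s$ induces such a $q$-stability condition. The hypothesis $\Re(s)\gg1$ enters precisely here: it should guarantee that the induced data satisfies the $\XX$-support property, so that the induction is a bijection onto a union of connected components, identifying $\QStap_s\D_\XX(\surf)$ with a union of components of $\Stab\D_\infty(\surf)$ compatibly with central charges.

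Geometrically, I would match the two moduli spaces by unwinding singularities: the log-surface $\log\surfo$ resolves the exponential-type singularities \eqref{eq:exp sing} of $\Quad_\infty(\surf)$ into single-valued quadratic differentials carrying the deck action $q$, and the multi-valued differentials $\Theta$ with $q^*(\Theta)=e^{\ii\pi s}\,\Theta$ then constitute $\XQuad_s(\log\surfo)$, in a way compatible with the period maps when $\Re(s)\gg1$. Combined with HKK's \eqref{eq:above}, namely $\Stap\D_\infty(\surf)/\Aut\cong\Quad_\infty(\surf)$, this supplies the geometric half of the composite and closes the loop back to the categorical side.

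Finally I would assemble the composite and verify that it is a biholomorphism. Transporting HKK's central-charge formula $Z(X_\eta)=\int_{\widetilde{\eta}}\sqrt{\xi}$ through the categorical and geometric reductions should show that $\mathcal{Z}_s$ and $\Pi_s$ agree under the fixed $R$-module identification; since both are local homeomorphisms onto the same target $\Hom_R(-,\bC_s)$, any bijection intertwining them is automatically a local biholomorphism, hence an isomorphism of complex manifolds. I expect the main obstacle to be the control of the $\XX$-support property for $\Re(s)\gg1$: one must show not only that every stability condition on the heart lifts, but that the lift remains within the intended components, and one must reconcile HKK's $\Aut$-quotient with the framing recorded on $\log\surfo$ — this reconciliation is precisely what the passage to connected components (the superscript $\circ$) is designed to handle.
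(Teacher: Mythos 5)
Your proposal reproduces the correct skeleton of the paper's argument: the $R$-module identification $K(\DX)\cong\HHo{}(\log\surfo)$ matching $\XX$ with the deck transformation $q$, the induction of $q$-stability conditions from the $\XX$-baric heart $\DT$ via the prequel, HKK's theorem on $\Stap\DT\cong\Quad_\infty(\surf)$, and the final observation that a bijection intertwining two local homeomorphisms $\mathcal{Z}_s$ and $\Pi_s$ onto the same target $\Hom_R(-,\bC_s)$ is automatically biholomorphic. That last step is exactly diagram \eqref{eq:QQ=QS2}.

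However, there is a genuine gap at the heart of the geometric half, and a misattribution of the role of $\Re(s)\gg1$. The passage from a $q$-quadratic differential on $\log\surfo$ to an element of $\Quad_\infty(\surf)$ is not a formal ``unwinding of singularities'': it requires choosing a \emph{cut} $\cut$ compatible with the core $\core(\xi)$, pulling the $s$-simple zeros back to marked points along the cut, and regluing half-planes to obtain an exponential-type differential on $\surf$ (Construction~\ref{con:QQ}); the winding-number computations of Propositions~\ref{pp:1} and \ref{pp:2} are what certify that an $s$-pole of type $(k,l)$ matches an exponential singularity with the same $(k,l)$. The hypothesis $\Re(s)\gg1$ is used precisely to prove that compatible cuts \emph{exist} (Assumption~\ref{ass:cuts}), via a winding-number estimate around each boundary annulus combined with a bipartite-graph matching argument --- not, as you suggest, to secure the $\XX$-support property, which is already handled by Theorem~\ref{thm:IQ}. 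Relatedly, your claim that a $q$-stability condition ``is determined by the restriction of its slicing to the heart'' is unjustified and is not how the paper argues: the quantitative condition governing the induction is $\gldim\ns+1\le\Re(s)$, and its geometric counterpart (angles between consecutive saddle connections across sheets being at least $\pi$, so that no geodesic crosses the cut) is the content of Proposition~\ref{pp:QS=QQ2}. Without the cut mechanism and the gldim/angle dictionary you cannot prove surjectivity of $\Xi_s$, nor the equality $\OStab_s\DX=\CStab_s\DX$ (proved by perturbing the cut), nor the closedness of the image in $\QStab_s\DX$ (a Bridgeland--Smith-style argument using the support property), all of which are needed to conclude that the image consists of connected components.
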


\subsection{Frobenius structures on genus zero Hurwitz spaces}
A Hurwitz space $\HS$ is the moduli space of meromorphic functions on a Riemann surface $\rs$.
In \cite[\S~5]{Dub1}, Dubrovin constructed Frobenius structures of
Saito type (also called flat structure in \cite{Sa1}) on Hurwitz spaces.
The primitive form \cite{Sa1,SaTa} (primary differential \cite{Dub1}) on a Hurwitz space
plays an essential role in his construction.

In Section~\ref{sec:Hurwitz},
we identify the space of $q$-quadratic differentials with the regular locus of the Hurwitz space
when the surface $\rs$ is $\kong{P}^1$.
As in Section~\ref{sec:q_def}, for $q$-quadratic differentials,
we consider $s$-zeroes of the form $z^{(s-2)}\diff z^2$ and $s$-poles of the form $z^{-k(s-2)-l}\diff z^2$,
which have the numerical data/polar type $(k,l)\in \ZZ_{\ge 1}\times\ZZ$.
On the one hand,
the positive integer $k$ determines the complex structure of Hurwitz spaces.
On the other hand,
the choice of the integer $l$ specifies the choice of the primitive form.
Thus our introduction of the new parameter $s$ and
the division of the pole order into $k(s-2)$ plus $l$
are essential for studying the moduli spaces of multi-valued quadratic differentials.
The key numerical calculation is to match the winding numbers (in Section~\ref{sec:wind}),
which shows that a combination of $k$ $s$-zeroes and one $s$-pole is indeed the $q$-deformation
of HKK's exponential type singularity \eqref{eq:exp sing}, with the same parameters $(k,l)$.

On the regular locus $\HS_{\reg}$ of a Hurwitz space $\HS$,
we can consider the almost Frobenius structure \cite{Dub2}
as the almost dual of the original Frobenius structure.
Under the identification between Hurwits spaces and moduli spaces,
the twisted periods of the almost Frobenius structure on $\HS_{\reg}$
can be identified with the periods of $q$-quadratic differentials.
Combining with our main result that identifies the moduli space of $q$-quadratic differentials
with the space of $q$-stability conditions,
we obtain the correspondence between the Hurwitz space
and the space of $q$-stability conditions in this case.

In particular, twisted periods of the almost Frobenius structure
should be identified with the central charges of $q$-stability conditions.
In fact, we present a conjectural description of the almost Frobenius structure
on the space $\QStap$ of $q$-stability conditions on a Calabi-Yau-$\XX$ category
in the case of type ADE through isomorphism between an $\QStap$ and
the universal covering of the regular locus of the Cartan subalgebra of the corresponding type.
One application of our $q$-deformation of
quadratic differentials/stability conditions is to prove this conjecture in the type $A_n$ case.
The observation here is that the Cartan subalgebra of the type $A_n$ can be identified with
the unfolding of a type $A_n$ singularity, 
which is a special case of a Hurwitz space.

In all, our new constructions provide an approach to understand
almost Frobenius structure on the spaces of $q$-stability conditions.

\subsection{Content}
All the modules and categories will be over $\k$, an algebraically closed field.
Denote $\ZZ/m\ZZ$ by $\ZZ_m$ for a positive integer $m$.

The paper is organized as follows.

In Part~\ref{part:Cat}, we establish categorical setups:
  \begin{itemize}
  \item In Section~\ref{sec:QS},
  we recall the notations and results from the prequel \cite{IQ1} on $q$-deformation of stability conditions.
  \item In Section~\ref{sec:TFuk},
  we describe topological Fukaya categories from graded marked surfaces, partially following \cite{HKK,LP,IQZ}.
  \item In Section~\ref{sec:D-CY},
  we recall Keller's deformed Calabi-Yau completion from \cite{K8} in the Calabi-Yau-$\XX$ setting.
  \item In Section~\ref{sec:CYS}, we introduce the Calabi-Yau-$\XX$ categories associated to
  graded marked surfaces, as $q$-deformations of the topological Fukaya categories.
  We recall technical results for such categories from \cite{IQZ}.
  \end{itemize}

In Part~\ref{part:Geo}, we study the geometric side of the story:
  \begin{itemize}
  \item In Section~\ref{sec:QDR},
  we recall the theory of quadratic differentials of GMN type and exponential type
  on Riemann surfaces from \cite{BS} and \cite{HKK}, respectively.
  \item In Section~\ref{sec:q_def},
  we introduce $q$-quadratic differentials as $q$-deformations of quadratic differentials of exponential type.
  The key result (Theorem~\ref{thm:winding}) here is that
  the numerical data (mainly, the winding numbers) for these two type of differentials do match.
  \item In Section~\ref{sec:main}, we prove the main results of the paper
  (Theorem~\ref{thm:main} and Theorem~\ref{thm:q=x}),
  that the induced $q$-stability conditions on Calabi-Yau-$\XX$ categories from surfaces
  can be identified with $q$-quadratic differentials.
  \end{itemize}

In Part~\ref{part:app}, we give applications:
  \begin{itemize}
  \item In Section~\ref{sec:BS-N},
  we show (Theorem~\ref{thm:BS N}) that when specializing $s=N$ to be a sufficiently large integer,
  the stability conditions on Calabi-Yau-$N$ orbit categories
  can be realized as CY-$N$ type quadratic differentials.
  \item In Section~\ref{sec:Hurwitz},
  we show (Theorem~\ref{thm:HW_Quad_iso}) that when the genus of the surface is zero,
  then framed $q$-quadratic differential can be identified with framed Hurwitz covers.
  In particular, this provides a solution of gluing $q$-stability conditions along the $s$-direction
  (cf. Corollary~\ref{cor:n+2}).
  Furthermore, when restricted to the disk case (type $A_n$),
  the moduli space of framed Hurwitz covers can be identified with the universal cover of
  the regular part of the space of polynomials of degree $n+1$ (cf. Proposition~\ref{prop:uc=HS}).
  \item In Section~\ref{sec:SF},
  we describe a conjectural almost Frobenius structure on spaces of $q$-stability conditions
  for Dynkin quivers and confirm it
  (Theorem~\ref{thm:pline}) for the type A case using the result above on Hurwitz covers.
  \end{itemize}

\begin{table}
\caption{List of notations}
\setlength{\extrarowheight}{2pt}
\begin{tabular}{ccc}
\hline
$R $&& $\ZZ[q,q^{-1}]$ \\ \hline
$\D_?(-)$&& a Calabi-Yau-$?$ category for $?\in \ZZ\cup\{\XX,\infty\}$\\ \hline
$\sigma=(Z,\hh{P}) $&& a stability conditions with central charge $Z$ and slicing $\hh{P}$ \\ \hline

\hline
$\surf=(\surf,\M,\Y)  $&& a marked surface, i.e., surface with sets of open/closed marked points \\ \hline
$\gms $&& a graded marked surface with grading $\grad$\\ \hline
$\surfo  $&& a decorated marked surface (DMS), with decoration $\Tri$ \\ \hline
$\cut  $&& a cut, i.e., a pairing between the set $\Y$ and the set $\Tri$ \\ \hline
$\surfo=(\surfo,\cut,\Lambda)$&& a graded DMS with grading $\Lambda$ (compatible with $\gms$)  \\ \hline
$\LS  $&& the log surface associated to $\surfo$, w.r.t. $\uc$ \\ \hline
$\surfo^{(m)}$&& $m^{\mathrm{th}}$ sheet of $\LS$   \\ \hline
$\gzero$&& a marked surface $\surf$ of genus 0\\ \hline
$\dd$&& a marked disk \\ \hline
$\num(?)$&& the numerical data of $?$ \\ \hline
$(\uk,\ul)$&& the polar type for exponential type/$q$- quadratic differentials \\ \hline
$\TT=\{\wg_i\} $ &&a full formal open arc system for $\gms$\\ \hline
$\TT^*=\{\we_i\} $ &&the dual full formal closed arc system\\ \hline\\[-12pt]
$\wCA(?)$ && the set of graded closed arcs on $?$ \\ \hline \\[-12pt]
$\widehat{\Ho{}}(?)=\Ho{1}(?)^-$&& the hat homology of $?$\\ \hline

\hline
$\DI$&& the topological Fukaya category of $\gms$ \\ \hline
$\DX$&& the Calabi-Yau-$\XX$ category of $\surfo$ \\ \hline

\hline

$(\rs,\phi,h)$&& a quadratic differential $\phi$ on
    a Riemann surface $\rs$ with framing $h$ \\ \hline
$\Core(\phi)$&& the core of a quadratic differential\\ \hline

$\Theta=(\rs,\xi,h;s)$&& a $\LS$-framed $q$-quadratic differential\\ \hline

$\FQuad_\infty(\gms)$ &&  the moduli space of framed quadratic differential on $\gms$ \\ \hline\\[-11pt]

$\QQuad_s^?(\LS)$ &&  the moduli space of $\LS$-framed $q$-quadratic differential \\ \hline\\[-11pt]

$\QStab_s^?\DX$&& the space of certain type $q$-stability conditions on $\DX$ \\ \hline

$\HS(g,\k)$&& the moduli space of Hurwitz cover of genus $g$ and polar type $\uk$ \\ \hline

$\HS(\log \gzero_\Tri)$&& the moduli space of $\log \gzero_\Tri$-framed Hurwitz covers\\ \hline

$\Poly_{n}$&& the space of polynomials of degree $n+1$\\ \hline
$?_{\reg}$&& the regular part of space $?$\\[.5ex] \hline
$\surf^N,\surfo^N$&& CY-$N$ marked surface and DMS   \\ \hline
$\Pone=\mathbb{P}^1$&& Riemann sphere\\ \hline
\hline
\end{tabular}
\end{table}
\subsection*{Acknowledgments}
We would like to thank Tom Bridgeland, Yu-Wei Fan, Fabian Haiden, Bernhard Keller,
Alastair King, Kyoji Saito, Atsushi Takahashi and Yu Zhou for inspirational discussion.
This paper is supported by
National Key R\&D Program of China (No. 2020YFA0713000),
Beijing Natural Science Foundation (Z180003),
World Premier International Research Center Initiative (WPI initiative), MEXT, Japan and
JSPS KAKENHI Grant Number JP16K17588.

\part{Categorical Part}\label{part:Cat}
\section{$q$-Deformation of stability conditions}\label{sec:QS}
In this section, we recall the $q$-deformation of stability conditions from the prequel \cite{IQ1}.
\subsection{Bridgeland stability conditions}\label{sec:BSC}
First we recall the definition of Bridgeland stability conditions
on triangulated categories from \cite{B1}.
We assume that for a triangulated category $\D$, its Grothendieck group
$K(\D)$ is free of finite rank in this subsection, i.e., $K(\D) \cong \ZZ^{\oplus n}$ for some $n$.

\begin{definition}
\label{def:stab}
Let $\D$ be a triangulated category.
A {\it stability condition} $\sigma = (Z, \sli)$ on $\D$ consists of
a group homomorphism $Z \colon K(\D) \to \CC$ called the {\it central charge} and
a family of full additive subcategories $\sli (\varphi) \subset \D$ for $\varphi \in \R$
called the {\it slicing}
satisfying the following conditions:
\begin{itemize}
\item[(a)]
if  $0 \neq E \in \sli(\varphi)$,
then $Z(E) = m(E) \exp(\bi \pi \varphi)$ for some $m(E) \in \R_{>0}$,
\item[(b)]
for all $\varphi \in \R$, $\sli(\varphi + 1) = \sli(\varphi)[1]$,
\item[(c)]if $\varphi_1 > \varphi_2$ and $A_i \in \sli(\varphi_i)\,(i =1,2)$,
then $\Hom_{\D}(A_1,A_2) = 0$,
\item[(d)]for $0 \neq E \in \D$, there is a finite sequence of real numbers
\begin{equation}\label{eq:>}
\varphi_1 > \varphi_2 > \cdots > \varphi_m
\end{equation}
and a collection of exact triangles (\emph{Harder-Narasimhan filtration})
\begin{equation}\label{eq:HN}
0 =
\xymatrix @C=5mm{
 E_0 \ar[rr]   &&  E_1 \ar[dl] \ar[rr] && E_2 \ar[dl]
 \ar[r] & \dots  \ar[r] & E_{m-1} \ar[rr] && E_m \ar[dl] \\
& A_1 \ar@{-->}[ul] && A_2 \ar@{-->}[ul] &&&& A_m \ar@{-->}[ul]
}
= E
\end{equation}
with $A_i \in \sli(\varphi_i)$ for all $i$.
\end{itemize}

For each non-zero object $0 \neq E \in \D$, we define two real numbers by
$\varphi_{\sigma}^+(E):=\varphi_1$ and $\varphi_{\sigma}^-(E):=\varphi_m$ where $\varphi_1$ and
$\varphi_m$ are determined by the axiom (d).
Nonzero objects in $\sli(\varphi)$ are called {\it semistable of phase $\varphi$} and simple objects
in $\sli(\varphi)$ are called {\it stable of phase $\varphi$}.
For a non-zero object $E \in \D$ with extension factors $A_1,\dots,A_m$
given by axiom (d), define the mass of $E$ by
\[
m_{\sigma}(E):=\sum_{i=1}^m |Z(A_i)|.
\]

For a stability condition $\sigma = (Z,\sli)$,
we introduce the set of semistable classes
$\ss(\sigma) \subset K(\D)$ by
\begin{equation}\label{eq:ss}
\ss(\sigma) :=\{\,\alpha \in
K(\D)\,\vert\,\text{there is a semistable object }
E \in \D \text{ such that } [E] = \alpha\,\}.
\end{equation}
Let $\norm{\,\cdot\,}$ be some norm on $K(\D) \otimes \R$.
A stability condition $\sigma=(Z,\sli)$
satisfies the {\it support property} if there is some constant $C >0$ such that
\begin{equation}\label{eq:supp}
    C \cdot |{Z(\alpha)}| > \norm{\alpha}
\end{equation}
for all $\alpha \in \ss(\sigma)$.
We will assume that the stability conditions we considered always satisfy support property.
\end{definition}

The key result by Bridgeland is the following.
\begin{theorem}[\cite{B1}, Theorem 1.2]
\label{thm:B}
The projection map of taking central charges
\[
    \mathcal{Z}\colon \Stab\D \longrightarrow \Hom(K(\D),\CC),\qquad
    \quad(Z,\hh{P}) \mapsto Z
\]
is a local homeomorphism of topological spaces. In particular,
$\mathcal{Z}$ induces a complex structure on $\Stab\D$.
\end{theorem}

There are two group actions on $\Stab\D$ commuting with each other.
The first one is the natural $\CC$ action
\[
    s \cdot (Z,\hh{P})=(Z \cdot e^{-\mathbf{i} \pi s},\hh{P}_{\Re(s)}),
\]
where $\hh{P}_x(\phi)=\hh{P}(\phi+x)$.
There is also a natural action on $\Stab\D$ induced by $\Aut\D$, namely:
$$\Phi  (Z,\hh{P})=\big(Z \circ \Phi^{-1}, \Phi (\hh{P}) \big).$$
\subsection{$q$-stability conditions}
Let $\D_{\XX}$ be a triangulated category with a distinguished auto-equivalence
$$\XX \colon \D_{\XX} \to \D_{\XX}.$$
We will write $E[l \XX]$ instead of $\XX^l(E)$ for
$l \in \ZZ $ and $E \in \D_{\XX}$.
Set $R=\ZZ[q^{\pm 1}]$ and define the action of $R$ on $K(\D_{\XX})$ by
\[
    q^n \cdot [E] := [E[n \XX]].
\]
Then $K(\D_{\XX})$ has an $R$-module structure.

\begin{definition}
A \emph{$q$-stability condition} consists of a (Bridgeland) stability condition $\sigma=(Z,\hh{P})$ on $\D_\XX$
and a complex number $s$ satisfying
\begin{gather}\label{eq:X=s}
    \XX(\sigma)=s \cdot \sigma.
\end{gather}
\end{definition}

In the rest of this paper, we assume the following:
\begin{assumption}\label{ass:R}
The Grothendieck group $\Grot (\D_{\XX})$ is free of finite rank
over $R$, i.e., $\Grot(\D_{\XX} )\cong R^{\oplus n}$ for some $n $.
\end{assumption}

Similar to usual stability conditions, we also always require
$q$-stability conditions satisfy the \emph{$q$-support property}
(see \cite[Def.~3.8]{IQ1}).

Denote by $\QStab_s\D_\XX$ the set of all $q$-stability conditions
satisfying $q$-support property and with fixed $s$.
Let $$\QStab\D_\XX=\displaystyle{\bigcup_{s\in\bC}}\QStab_s\D_\XX.$$

Let $\bC_s$ denote the complex numbers with the $R$-module structure
given by $$q \cdot z:=e^{ \ii \pi s}z$$ for $z \in \bC_s$,
that corresponds to the specialization \eqref{eq:qs}.
The following is our $q$-deformed version of Bridgeland's original result (Theorem~\ref{thm:B}).

\begin{theorem}\cite[Thm.~3.10]{IQ1}
\label{thm:localiso2}
Let $\D_\XX$ be a triangulated category with $\Grot(\D_{\XX} )\cong R^{\oplus n}$ and fix $s\in\bC$.
The projection map of taking central charges
\begin{equation*}
\mathcal{Z}_s \colon \QStab_s\D_{\XX}
\longrightarrow \Hom_{R}(K(\D_{\XX}),\bC_s),\qquad
(Z,\sli) \mapsto Z
\end{equation*}
is a local homeomorphism of topological spaces. In particular,
$\mathcal{Z}_s$ induces a complex structure on $\QStab_s\D_{\XX}$.
\end{theorem}

\subsection{$\XX$-baric heart and $q$-stability conditions}\label{sec:QSC}
\begin{definition}[$\XX$-baric heart]\label{def:X-baric heart}
An \emph{$\XX$-baric heart} $\D_{\infty} \subset \D_{\XX}$
is a full triangulated subcategory of $\D_{\XX}$ satisfying the following conditions:
\begin{itemize}
\item[(1)]if $k_1 > k_2$ and $A_i \in \D_{\infty}[k_i\XX]\,(i =1,2)$,
then $\Hom_{\D_{\XX}}(A_1,A_2) = 0$,
\item[(2)]for $0 \neq E \in \D_{\XX}$,
there is a finite sequence of integers
\begin{equation*}
k_1 > k_2 > \cdots > k_m
\end{equation*}
and a collection of exact triangles
\begin{equation*}
0 =
\xymatrix @C=5mm{
 E_0 \ar[rr]   &&  E_1 \ar[dl] \ar[rr] && E_2 \ar[dl]
 \ar[r] & \dots  \ar[r] & E_{m-1} \ar[rr] && E_m \ar[dl] \\
& A_1 \ar@{-->}[ul] && A_2 \ar@{-->}[ul] &&&& A_m \ar@{-->}[ul]
}
= E
\end{equation*}
with $A_i \in \D_{\infty}[k_i\XX]$ for all $i$.
\end{itemize}
\end{definition}

Note that, by definition, the classes of objects in $\D_{\infty}$
span $K(\D_{\XX})$ over $R$ and there is a canonical isomorphism
\begin{gather}\label{eq:KKK}
    K(\D_{\infty}) \otimes_{\ZZ} R \cong K(\D_{\XX}).
\end{gather}
Recall that the triangulated category $\D_\XX$ is Calabi-Yau-$\XX$
if $\XX$ is the Serre functor (cf. \eqref{eq:serre}).
For an $\XX$-baric heart $\D_{\infty}$ in a Calabi-Yau-$\XX$ category $\D_\XX$,
Condition $(1)$ in Definition~\ref{def:X-baric heart} is equivalent to
\begin{gather}\label{eq:01}
    \Hom_{\D_{\XX}}(A_1,A_2) = 0
\end{gather}
for $A_i\in\D_\infty[k_i\XX]$ and $k_1-k_2\notin\{0,1\}$.

For a fixed complex number $s \in \bC$, consider the specialization
\begin{gather}\label{eq:qs}\begin{array}{ccl}
    q_s\colon\bC[q,q^{-1}]&\to&\bC,\\
        q&\mapsto&e^{\mathbf{i} \pi s}.
\end{array}\end{gather}

\begin{construction}\label{con:q}
Consider a triple $(\D_\infty,\ns,s)$ consisting of
an $\XX$-baric heart $\D_\infty$, a (Bridgeland) stability condition $\ns=(\nz,\np)$ on $\D_\infty$
and a complex number $s$.
We construct
\begin{enumerate}
\item the additive pre-stability condition $\sadd=(Z,\padd)$ and
\item the extension pre-stability condition $\sext=(Z,\pext)$,
\end{enumerate}
where
\begin{itemize}
\item we first extend $\nz$ to
\[
    Z_q\colon=\nz \otimes 1\colon K(\D_\XX)\to\bC[q,q^{-1}]
\]
via \eqref{eq:KKK} and
$$Z=q_s\circ (\widehat{Z}\otimes 1)\colon K(\D_\XX)\to\bC$$
gives a central charge function on $\D_\XX$;
\item the pre-slicing $\padd$ is defined as
\begin{equation}\label{eq:padd}
    \padd(\phi)=\add^s\np[\ZZ\XX]\colon=\add \bigoplus_{k\in\ZZ}  \np(\phi-k\Re(s))[k\XX];
\end{equation}
\item the pre-slicing $\pext$ is defined as
\begin{equation}\label{eq:pext}
    \pext(\phi)=\<\np[\ZZ\XX]\>^s\colon=\<  \np(\phi-k\Re(s))[k\XX] \>.
\end{equation}
\end{itemize}
Note that $\sigma$ does not necessarily satisfy condition (d) in Definition~\ref{def:stab}
and hence may not be a stability condition.
\end{construction}

\begin{definition}\cite{IQ1,Q3}
Given a slicing $\hh{P}$ on a triangulated category $\D$.
Define the \emph{global dimension} of $\hh{P}$ by
\begin{gather}\label{eq:geq}
\gldim\hh{P}=\sup\{ \phi_2-\phi_1 \mid
    \Hom(\hh{P}(\phi_1),\hh{P}(\phi_2))\neq0\}.
\end{gather}
For a stability condition $\sigma=(Z,\hh{P})$ on $\D$,
its global dimension $\gldim\sigma$ is defined to be $\gldim\hh{P}$.
\end{definition}

\begin{definition}\label{eq:def:qstab}
An \emph{open $q$-stability condition} on $\D_{\XX}$ is a pair $(\sigma,s)$
consisting of a stability condition $\sigma$ on $\D_\XX$ and a complex parameter $s$ such that
\begin{itemize}
    \item $\sigma=\sadd$ is an additive pre-stability condition induced from some triple
    $(\D_\infty,\ns,s)$
    as in Construction~\ref{con:q}.
\end{itemize}
A \emph{closed $q$-stability condition} on $\D_{\XX}$ is a pair $(\sigma,s)$
consisting of a stability condition $\sigma$ on $\D_\XX$ and a complex parameter $s$
such that
\begin{itemize}
    \item $\sigma=\sext$ is an extension pre-stability condition induced from some triple
    $(\D_\infty,\ns,s)$
    as in Construction~\ref{con:q}.
\end{itemize}
\end{definition}

\begin{theorem}\cite{IQ1}\label{thm:IQ}
Suppose that $\D_\XX$ is Calabi-Yau-$\XX$ with an $\XX$-baric heart $\D_\infty$.
A triple $(\D_\infty,\ns,s)$ induces an open $q$-stability condition $(\sigma,s)$ for $\sigma=\sadd$
if and only if
\begin{equation}\label{eq:open}
    \gldim\ns+1<\Re(s).
\end{equation}
A triple $(\D_\infty,\ns,s)$ induces a closed $q$-stability condition $(\sigma,s)$ for $\sigma=\sadd$
if and only if
\begin{equation}\label{eq:closed}
    \gldim\ns+1\le\Re(s)
\end{equation}
\end{theorem}

We will still write
\[
    (\sigma,s)=(\D_\infty,\ns,s)\otimes_{\oplus} R,\qquad
    (\sigma,s)=(\D_\infty,\ns,s)\otimes_{*} R
\]
for the two induced $q$-stability conditions in the above theorem if there causes no confusion.
We will study subspaces $\OStab_s\D_\XX$ and $\CStab_s\D_\XX$ of $\QStab_s\D_\XX$
consisting of certain induced open and closed $q$-stability conditions.

\section{Topological Fukaya categories}\label{sec:TFuk}
\subsection{Graded marked surface}
We recall the relative notions and notations on graded marked surfaces,
cf. \cite{IQZ,HKK,LP}.
Note that we will use a single marked point to denote a marked boundary arc in their setting,
where if one performs the real blow-up (with respect to some quadratic differential)
at the corresponding singularity,
such a marked point corresponds to a line (isomorphic to $\RR$).

\begin{definition}\label{def:GMS}
A \emph{graded marked surface} $(\surf,\M,\Y,\grad)$ consists of the following data:
\begin{itemize}
\item $\surf$ is a smooth oriented surface with $\partial\surf\ne\emptyset$,
\item $\M$ is a set of open marked points on $\partial\surf$,
such that $\M\cap\partial_i\neq\emptyset$ for each boundary component $\partial_i$ of $\partial\surf$
(cf. the blue bullet points in Figure~\ref{fig:QR}).
\item $\Y$ is a set of closed marked points on $\partial\surf$,
such that points in $\M$ and in $\Y$ are alternating in $\partial\surf$
(cf. the red circle points in Figure~\ref{fig:QR}).
Let $\aleph:=|\Y|$.
\item $\grad$ is a grading (or line field) on $\surf$, that is, a section $\lambda:\surf\to\PP T\surf$
of the projectivized tangent bundle $\PP T\surf$.
\end{itemize}
\end{definition}

For simplicity, we write $\surf$ for the triple $(\surf,\M,\Y)$, called \emph{marked surface},
and $\gms$ for the graded version.
We will exclude the case when $\surf$ is a disk with $\aleph=2$.

A \emph{morphism} $\gms \to \surf'^{\grad'}$ of graded marked surfaces
is a pair $(f,\widetilde{f})$
consisting of an orientation preserving local diffeomorphism $f\colon \surf\to\surf'$,
that preserves the sets of marked points setwise,
and a (homotopy class of) path $\widetilde{f}$ from $f^*(\grad')$ to $\grad$
in the space of sections of $\PP T\surf$.
Every graded marked surface $\surf$ admits an automorphism $[1]$, known as the shift,
given by the pair $(\mathrm{id}_{\surf},\overrightarrow{\pi})$,
where restricted to the generator of $\pi_1(\PP T_p\surf)$,
$\overrightarrow{\pi}$ is rotating clockwise by an angle of $\pi$, for any $p\in \surf$.

A \emph{curve} in a graded marked surface $\gms$ is
an immersion $c\colon I\to \surf$ for a 1-manifold $I$.
A \emph{grading} $\widetilde{c}$ on \emph{a curve} $c$ is given by a family of (homotopy classes of) paths in $\P T_{c(t)} \surf$ from $\grad(c(t))$ to $\dot{c}(t)$,
varying continuously with $t\in(0,1)$.
The pair $(c,\widetilde{c})$ is called a \emph{graded curve}, and will be simply denoted by $\widetilde{c}$ usually.
The pushforward of a graded curve of a graded morphism $(f,\widetilde{f})$ is given by
$(I,f\circ c,(c^*\widetilde{f})\cdot\widetilde{c})$.

A point of transverse intersection of a pair $\big((I_1,c_1,\widetilde{c}_1), (I_2,c_2,\widetilde{c}_2)\big)$ of graded curves determines an integer as follows.
Suppose $t_i\in I_i$ such that
\begin{equation}
    c_1(t_1)=c_2(t_2)=p\in \surf \quad \text{and} \quad
    \dot{c}_1(t_1)\neq \dot{c}_2(t_2)\in\PP T_p \surf.
\end{equation}
We have the following homotopy classes of paths in $\PP T_p \surf$:
\begin{enumerate}
\item $\widetilde{c}_1(t_1)$ from $\grad(p)$ to $\dot{c}_1(t_1)$,
\item $\widetilde{c}_2(t_2)$ from $\grad(p)$ to $\dot{c}_2(t_2)$,
\item $\kappa$ from $\dot{c}_1(t_1)$ to $\dot{c}_2(t_2)$ given by
counterclockwise rotation in $T_p\surf$ by an angle less than $\pi$.
\end{enumerate}
Define the \emph{intersection index} of $c_1,c_2$ at $p$
\begin{equation}
\ind_p(c_1,c_2)=\widetilde{c}_1(t_1)\cdot\kappa\cdot\widetilde{c}_2(t_2)^{-1}\ \in\pi_1(\PP T_p\surf)\cong\mathbb{Z}.
\end{equation}
It is well-defined if $c_1$ and $c_2$ are in general position.

\subsection{Grading as cohomology/covering}
The grading $\lambda:\surf\to\PP T\surf$ can be interpreted as
a cohomology class $[\grad]$ in $\coho{1}(\PP T\surf)$ in this case.
The projection $\PP T\surf\to\surf$ with $\R\PP^1\backsimeq \mathrm{S}^1$-fiber gives a short exact sequence
$$0\to\pi_1(\mathrm{S}^1)\to\pi_1(\PP T\surf)\to\pi_1(\surf)\to0,$$
or (taking the abelization)
$$0\to\ho{1}(\mathrm{S}^1)\to\ho{1}(\PP T\surf)\to\ho{1}(\surf)\to0.$$
Dually, we have
\begin{gather}\label{eq:lambda}
    0\to \coho{1}(\surf) \to \coho{1}(\PP T\surf) \xrightarrow{\pi_\surf} \coho{1}(\mathrm{S}^1)=\ZZ \to0.
\end{gather}
\cite[Lemma~1.2]{LP2} shows that $\lambda$ is determined by a class $[\grad]\in\coho{1}(\PP T\surf)$,
induced from a split of $\pi_\surf$ (i.e., $\pi_\surf([\lambda])=1$ in $\coho{1}(\mathrm{S}^1)$).
Such a data is equivalent to a split of $\ho{1}(\mathrm{S}^1)\to\ho{1}(\PP T\surf)$
or a split of $\pi_1(\mathrm{S}^1)\to\pi_1(\PP T\surf)$.

A more comprehensive way to think about $\grad$ is via the \emph{Maslov ($\ZZ$-)covering}
$$\pi_{\grad}\colon\R T\gms\to\PP T\surf$$
determined by $\grad$, where $\R T\gms$ is the $\R$-bundle of $\surf$
that can be constructed by gluing $\ZZ$ copies of $\PP T\surf$ cut out by $\lambda$.
A morphism between two graded marked surfaces then consists of
a map $f\colon\gms\to\surf'^{\grad'}$ such that it preserves the marked points and
$[\grad']=f^*[\grad']$, regarding $[\grad]\in \coho{1}(\PP T\surf)$, together with
a map $\widetilde{f}$ between their Maslov covering that commutes with $f$.
The automorphism/grading shift $[1]$ is then the deck transformation of the Maslov covering.

In this description, a graded curve
$\widetilde{c}$ is one of $\ZZ$ lifts in $\R T\gms$,
of the tangent $\dot{c}$ of an usual curve $c$ on $\surf$.
The intersection index $i=\ind_p(\widetilde{c}_1,\widetilde{c}_2)$ is the shift $[i]$ such that
the lift $\widetilde{c}_2[i]\mid_p$ of $\widetilde{c}_2[i]$ at $p$ is in the length one interval
$$( \widetilde{c}_1\mid_p , \widetilde{c}_1[1]\mid_p )\subset \mathbb{R}T_p\ \cong \R .$$

\subsection{Arc systems}
We have the following notions.
\begin{itemize}
\item $\M$ (resp. $\Y$) divides $\partial\surf$ into $\aleph$
\emph{open (resp. closed) boundary arcs}, each of which contains a closed (resp. open) marked point.
\item An \emph{open (resp. closed) curve} $\gamma$ in $\surf$
is (the isotopy class of) a (non-trivial) curve connecting points in $\M$ (resp. in $\Y$),
such that $\gamma$ is in $\surf^\circ=\surf\setminus\partial\surf$ except for its endpoints.
\item An \emph{(open/closed) arc} is
an (open/closed) curve without self-intersection in $\surf^\circ$.
\item An open (resp. closed) \emph{arc system} $\TT$ is
a collection of pairwise disjoint graded open (resp. closed) arcs.
\item An open (resp. closed) \emph{full formal} arc system is
an open (resp. closed) arc system $\TT$ such that
it cuts $\surf$ into polygons (called $\TT$-polygons),
each of which has exactly one open (resp. closed) boundary arc.
See blue arcs in Figure~\ref{fig:QR}.
\end{itemize}
The graded version of an open/closed curve $a$ will usually be denoted by $\widetilde{a}$.
Denote by $\OA(\gms)$ (resp. $\CA(\gms)$) the set of open (resp. closed) arcs on $\gms$,
and by $\wOA(\gms)$ (resp. $\wCA(\gms)$) the set of graded ones.

Choose (and fix) an initial full formal open arc system $\TT=\{\wg_j\}$.
The dual graph of $\TT$, denoted by $\TT^*=\{\we_j\}$,
is also a full formal closed arc system satisfying
\[
    \Int^\bullet(\wg_i,\we_j)=\Int^0(\wg_i,\we_j)=\delta_{ij},
\]
where $\Int^d$ is the index-$d$ intersection number and
$\Int^\bullet=\sum_{d\in\ZZ} \Int^d$ is the total intersection number.
As we only care about full formal arc systems,
we will omit full formal for simplicity.
The picture on the top of Figure~\ref{fig:QR} shows an example of
\emph{a dual arc system} $(\TT,\TT^*)$,
where $\TT$ consists of the blue arcs and $\TT^*$ consists of red arcs.

\begin{figure}[h]\centering
\begin{tikzpicture}[yscale=.4,xscale=.6,arrow/.style={->,>=stealth,thick}]
\draw[thick] (0,0) circle (7);
\draw[blue, \separated ,thick, font=\scriptsize] (180-360/7*5:7)
    edge [bend right=-35] (180-360/7*4:7)
    edge [bend right=-15] (180-360/7*3:7)
    edge [bend right=-5] (180-360/7*2:7);
\draw[blue, \separated ,thick, font=\scriptsize](180-360/7*2:7)
    to[bend right=-35] (180-360/7*1:7)
    to[bend right=-35] (180-360/7*0:7)
    to[bend right=-35] (180-360/7*7:7)
    to[bend right=-35] (180-360/7*6:7);
\foreach \j in {1,...,7}
\draw(180-\j*360/7:7)node[cyan]{$\bullet$};

\draw[red,dashed, thick, font=\scriptsize] (360/7*5:7)
    edge [bend right=5]node[right]{$\we_4$} (360/7*1:7)
    edge [bend right=10]node[right]{$\we_3$} (360/7*2:7)
    edge [bend right=15]node[left]{$\we_2$} (360/7*3:7)
    edge [bend right=25]node[above]{$\we_1$} (360/7*4:7);
\draw[red,dashed, thick, font=\scriptsize] (360/7*1:7)
    to[bend left=-25] node[left]{$\we_5$}(360/7*0:7)
    to[bend left=-25] node[right]{$\we_6$}(-360/7:7)
    (360/7*5:7)node[below]{$Y$};
\foreach \j in {1,...,7}
\draw(\j*360/7:7)node[white]{$\bullet$} node[red]{$\circ$};
\end{tikzpicture}

\begin{tikzpicture}[scale=.9]
\foreach \j in {1,2,3,4}
{\draw[blue] (-54-72*\j:1.7)node(v\j){$\bullet$}(-54-72*\j:2)node{$\j$};;}
\foreach \j/\k in {1/2,2/3}{
    \draw[blue](-54-72-72*\k:1.7)node(w\k){$\bullet$};
    \draw[dashed,-stealth,blue] (v\j)to(w\k);
    }
\foreach \j in {1,2,3}{\foreach \k in {\j+1}{
    \draw[blue](-54-72-72*\k:1.7)node(w\k){$\bullet$};
    \draw[-stealth,blue] (v\j)to(w\k);
    }}
\draw[blue](-54+72:1.7)node(v4){$\bullet$}
    (0:3)node[blue](v5){$\bullet$}node[right,blue]{$5$}
    (-25:3)node[blue](v6){$\bullet$}node[left,blue]{$6$};

\draw[-stealth,blue] (v5)to(v4);
\draw[-stealth,blue] (v6)to(v5);
\end{tikzpicture}
\quad
\begin{tikzpicture}[scale=.9]
\foreach \j in {1,2,3,4}
{\draw[red] (-54-72*\j:1.7)node(v\j){$\bullet$}(-54-72*\j:2)node{$\j$};;}
\foreach \j/\k in {1/2,2/3,1/3}{
    \draw[red](-54-72-72*\k:1.7)node(w\k){$\bullet$};
    \draw[-stealth,red] (v\j)to(w\k);
    }
\foreach \j in {1,2,3}{\foreach \k in {\j+1}{
    \draw[red](-54-72-72*\k:1.7)node(w\k){$\bullet$};
    \draw[-stealth,red] (v\j)to(w\k);
    }}
\draw[red](-54+72:1.7)node(v4){$\bullet$}
    (0:3)node[red](v5){$\bullet$}node[right,red]{$5$}
    (-25:3)node[red](v6){$\bullet$}node[left,red]{$6$};

\draw[-stealth,red] (v5)to(v4);
\draw[-stealth,red] (v6)to(v5);
\end{tikzpicture}
\caption{The quiver with relation (in blue) associated to an open arc system and
the Ext-quiver corresponding to the dual closed arc system (in red)}\label{fig:QR}
\end{figure}
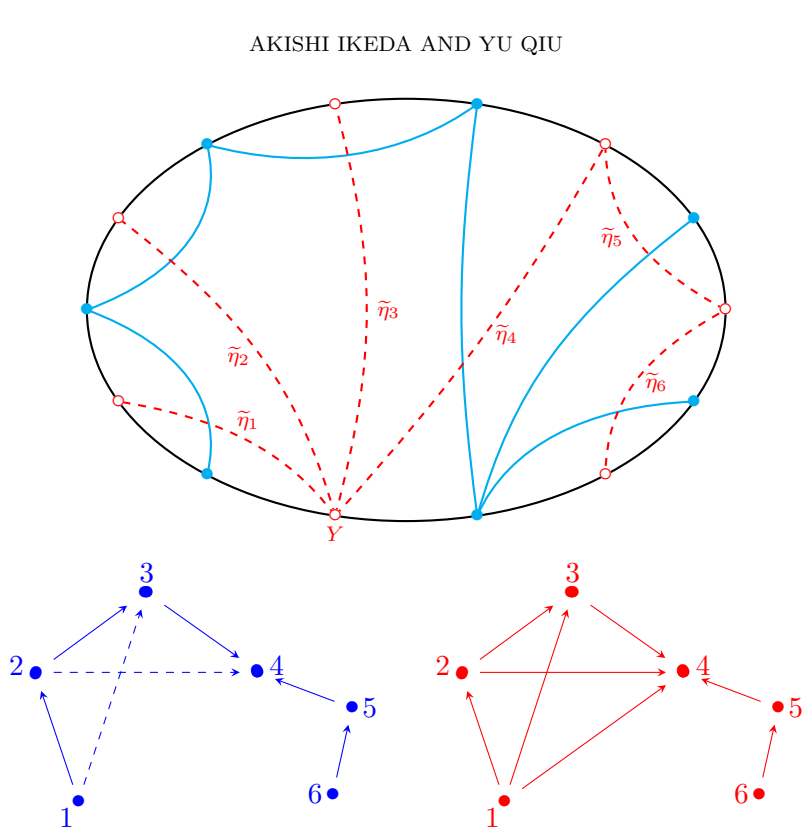
\subsection{Numerical data}

\begin{definition}\label{def:numerical}
The \emph{numerical data} $$\num=\num(\gms)=(g,b;\uk,\ul;\LP_g)$$ of $\gms$ consists of
\begin{itemize}
  \item the genus $g=g_\surf$;
  \item the number $b=|\partial\surf|$ of boundary components;
  \item the ordered multi-set $\uk=(k_1,\ldots,k_b)\in\ZZ_{>0}^b$ of \emph{orders},
  where for each boundary component $\partial_i$, $k_i$ is the number of marked points on $\partial_i$,
  satisfying
  \begin{gather}\label{eq:order k_i}
    \sum_{i=1}^b k_i=|\M|=|\Y|;
  \end{gather}
  \item the ordered multi-set $\ul=(l_1,\ldots,l_b)\in\ZZ^b$ of \emph{indices},
  where for each boundary component $\partial_i$, one can calculate $l_i$ as follows:
  let $\{Y^i_j \mid j\in\ZZ_{k_i}\}$ be the closed marked points on $\partial_i$ (in the clockwise order
  of the induced orientation of $\surf$) and
  $\we^i_j=Y^i_jY^i_{j+1}$ be the closed boundary arcs (with any grading), then
  \begin{gather}\label{eq:winding l_i}
    \begin{array}{rl}
        \wind{\lambda}(\partial_i)=&2-l_i\\
        =& \displaystyle{\sum_{j\in\ZZ_{k_i}}} \left( \ind_{Y^i_j}(\we^i_{j-1},\we^i_j)-1\right).
    \end{array}
  \end{gather}
  Here $\wind{\lambda}(\partial_i)$ is the \emph{(clockwise) winding number} of $\partial_i$ with respect to $\lambda$
  (cf. \cite[\S~1.2]{LP2} and \cite[(3.21)]{HKK}).
  More precisely, we have $\wind{\lambda}(-)=\<\lambda,-\>$, cf. \eqref{eq:lambda}, for the natural paring:
  \[
    \<-,-\>\colon\coho{1}(\PP T\surf)\times\ho{1}(\PP T\surf)\to\ZZ.
  \]
  Note that the indices satisfy (\cite[(1.5)]{LP2})
  \begin{gather}\label{eq:4-4g}
    \sum_{i=1}^b l_i=4-4g.
  \end{gather}
  We will use quadratic differentials to calculate
  the winding number $\wind{\lambda}(\partial_i)$ and $l_i$ in Section~\ref{sec:wind}.
  Denote by $\uk=(k_1,\ldots,k_b)$ and $\ul=(l_1,\ldots,l_b)$ the data of $(k_i,l_i)$.
  \item \emph{the Lekili-Polishchuk (LP) data} $\LP_g$, defined as follows:
    \begin{itemize}
      \item if $g=0$, then $\LP_0=\emptyset$;
      \item if $g=1$, then $\LP_1=\widetilde{A}$, where $\widetilde{A}:=\widetilde{A}(\lambda)$ is defined to be the non-negative integer $\gcd\{\wind{\lambda}(\gamma)\mid\text{non-separating $\gamma$}\}$;
      \item if $g>1$, then $\LP_g=(A,\kappa)$ consisting of
      the Arf invariant $A$ (see \cite[\S~1.2]{LP2}) and the indicator $\kappa$,
      where $\kappa=0$ if $\wind{\lambda}^{(2)}\equiv0$ and $\kappa=1$ otherwise. Here
        $$\wind{\lambda}^{(2)}\colon\Ho{1}(\surf;\ZZ_2)\to\ZZ_2 $$
        is the induced function (\cite[Definition~1.2.1]{LP2}).
    \end{itemize}
\end{itemize}
\end{definition}

\begin{example}\label{ex:num}
Consider two genus zero cases.
\begin{itemize}
\item When $\surf$ is a disk, which corresponds to a type $A_n$ quiver,
then the numerical data is $$(g=0,\;b=1;\;\uk=(n+1),\;\ul=(4);\, \LP_0=\emptyset).$$
\item When $\surf$ is an annulus that corresponds to the affine type $\widetilde{A_{p,q}}$ quiver
(with zero grading for all arrows), the numerical data is
\[
    (g=0,\;b=2;\;\uk=(p,q),\;\ul=(2,2);\,\LP_0=\emptyset).
\]
In general, the indices can be chosen to be $\ul=(2+w,2-w)$ for some $w\in\ZZ$
and the corresponding quiver $\widetilde{A_{p,q}}$ will be graded.
\end{itemize}
Note that in the case $g=0$, $A$ and $\kappa$ are irrelevant.
\end{example}

\subsection{The topological Fukaya categories}
The \emph{graded quiver with relation} $(Q^{(0)}_\TT, R^{(0)}_\TT)$ associated to
an open arc system $\TT$ is defined as follows:
\begin{itemize}
  \item its vertices $\{i\}$ of $Q^{(0)}_\TT$ correspond to the open arcs $\{\wg_i\}=\TT$;
  \item its arrows correspond to the (anticlockwise) angle between arcs of $\TT$ in the polygons
  of $\surf$ cutting out by $\TT$ and
  the grading of the arrows is given by the intersection index at the intersection (vertex of the angle);
  \item the quadratic relations are non-composable arrows=angles.
\end{itemize}
Let $\k$ be an algebraic closed field and $\ha^0_\TT=\k Q^{(0)}_\TT/R^{(0)}_\TT$.
Next, we produce a differential graded algebra (we will write dga for short) from $\ha^0_\TT$,
replacing the relation with differential.

\begin{construction}\cite[Cons.~2.2]{Op}
Consider the basic finite dimensional $\k$-algebra $\ha^0_\TT$.
Let $Q^{(1)}_\TT$ be the (graded) quiver obtained from $Q^{(0)}_\TT$ by adding
the arrows corresponding to the (minimal set of) relations of $R^{(0)}_\TT$,
whose degrees are the degrees of the corresponding relations minus one,
and a differential (of degree $1$), such that
$\coho{0}(\ha^0_\TT)$ is a quotient of $\coho{0}(\k Q^{(1)}_\TT)$.
Now pick a generating set $R^{(1)}_\TT$ for
$$\ker \left(   \coho{1}(\k Q^{(1)}_\TT)\to \coho{0}(\ha^0_\TT) \right)$$
to satisfy that $\k Q^{(1)}_\TT/R^{(1)}_\TT$ is quasi-isomorphic to $\ha^0_\TT$.
Iterating this process till we obtain a graded quiver $Q_\TT$ (without relation)
and a differential $\diff$
such that $\ha_\TT:=\k Q_\TT$ is quasi-isomorphic to $\ha^0_\TT$.
\end{construction}

\begin{remark}\label{rem:Ext-q}
By simple-projective duality
\[
    \Irr^d(P_i,P_j)\cong\Ext^{1-d}(S_j,S_i)^*,
\]
$Q_\TT$ can be also constructed from Ext-quiver (\cite[Def.~6.2]{KQ1}) of $\ha^0_\TT$.
More precisely,
Let $Q^{\Ext}(\ha^0_\TT)$ be the graded $\Ext$-quiver of $\ha^0_\TT$,
whose vertices are simple $\ha^0_\TT$ module and whose graded arrows are given by graded morphisms between them.
Then $Q_\TT$ is obtained from $Q^{\Ext}(\ha^0_\TT)$ by the degree changing: $d\mapsto 1-d$.

Moreover,
$Q^{\Ext}(\ha^0_\TT)$ is the intersection quiver of $\TT^*$, in the sense that
its vertices one-one correspond to graded closed arcs in $\TT^*$
and its graded arrows one-one correspond to graded (by index) intersections between them.
For instance, the blue solid quiver in Figure~\ref{fig:QR} is $Q^{(0)}_\TT$
while the relation $R^{(0)}_\TT$ is presented by blue dashed arrows;
the red quiver is the Ext-quiver of $Q_\TT$.

Furthermore, such a duality topologically corresponds to the graph duality between $\TT$ and $\TT^*$
(see \cite{Q3} for the Calabi-Yau-3 case of this correspondence).
\end{remark}

Denote by $\TFuk(\gms)$ the \emph{topological Fukaya category} of $\gms$, which
in this case can be identified as
\[
    \TFuk(\gms)\cong\per\ha_\TT\cong\D_{fd}(\ha_\TT).
\]
The above triangle equivalence follows from the fact that
we require that any boundary component contains closed and open marked points
(so that $\ha_\TT$ is homological smooth and proper).
Lekili-Polishchuk proves the following.
\begin{theorem}\cite{LP2}
Let $\surf^{\grad_i}_i$, $i=1,2$, be a graded marked surface with the numerical data $\num(\surf_i)$.
Then $\num(\surf^{\grad_1})=\num(\surf^{\grad_2})$ implies the triangle equivalence
$\TFuk(\surf_1^{\grad_1})\cong\TFuk(\surf_2^{\grad_1})$.
\end{theorem}
We primarily interest in
\begin{gather}
    \DT\colon=\D_{fd}(\ha_\TT).
\end{gather}

Denote by $\Ind\C$ the set of indecomposable objects in a category $\C$.
The classification of objects in $\DT$ is given in \cite{HKK}.
However, we will need the version in \cite[\S~6]{IQZ},
which is slightly weaker on objects but with an additional intersection formula.

\begin{theorem}\cite{HKK,IQZ}\label{thm:IQZ}
There is an injection
\begin{gather}\label{eq:CA0}
  \Xinf\colon \wCA(\gms) \longrightarrow \Ind\DT
\end{gather}
such that
\begin{gather}\label{eq:CA}
    \Int^d(\we_1,\we_2)=\dim\Hom^d(\Xinf(\we_1),\Xinf(\we_2)),
\end{gather}
for any $\Xinf(\we_i)\in\wCA(\gms)$.
\end{theorem}

Similarly, there is an injection
\[\Xinf^\vee\colon\wOA(\gms) \longrightarrow \Ind\per\ha_\TT,\]
but we will not use this map.
Moreover, Theorem~\ref{thm:IQZ} implies that there is a canonical embedding
(recall that we exclude the case when $\surf$ is a disk with two closed marked points)
\[
    \MCG_\bullet(\gms) \to \Aut^\circ\DT.
\]
Here, $\MCG_\bullet(\gms)$ is the \emph{(marked) mapping class group} of $\gms$, i.e.,
the group of isotopy classes of morphisms of $\gms$,
where all maps are required to fix $\M$ and $\Y$ setwise.
And $\Aut^\circ\DT$ is the quotient group of the \emph{auto-equivalence group} $\Aut\DT$
by those auto-equivalences of $\DT$ which acts trivially on the corresponding space of stability conditions $\Stab\DT$.
Note that in this case, the auto-equivalence that preserves $\Xinf(\wCA(\gms))$
will preserve $\Stab\DT$ since $\Xinf(\wCA(\gms))$ contains stable objects
(see Theorem~\ref{thm:HKK} for more details).


\section{Deformed Calabi-Yau-$\XX$ completion}\label{sec:D-CY}
In this section, we review Keller's deformed Calabi-Yau completion \cite{K8}.
\subsection{$\ZZ^2$-graded differential modules and categories}
We will work with \emph{$\ZZ^2$-graded $\k$-modules} ($\ZZ^2$-graded mods),
So such a $\ZZ^2$-graded mod $M$ admits a decomposition $M=\bigoplus_{(q,s)\in\ZZ^2} M_s^q$.
Denote by $\deg m=(q,s)\in\ZZ^2$ the degree of an element $m\in M^q_s$.
A $\ZZ^2$-graded morphism $f\colon M\to N$ between $\ZZ^2$-graded mods with degree $(p,t)$
is a $\k$-linear morphism such that $f(M^q_s)\subset N^{q+p}_{s+t}$.
The first component of $\ZZ^2$-grading is the \emph{cohomological grading}
and denote the cohomological degree of $m\in M^q_s$ by $|m|=q$.
Similarly $|f|$ denotes the cohomological degree of a $\ZZ^2$-graded morphism.
The tensor product $M\otimes L$ is
$(M\otimes L)^q_s:=\bigoplus_{}M^{q_1}_{s_1}\otimes L^{q_2}_{s_2}$
for $(q_1,s_1)+(q_2,s_2)=(q,s)$
and $f\otimes g\colon M\otimes L\to M'\otimes L'$ is
\begin{gather*}
(f\otimes g)(m\otimes l)\colon=(-1)^{|m|\cdot|g|}f(m)\otimes g(l),
\end{gather*}
for $f\colon M\to M', g\colon L\to L'$,
where only the cohomological degree effects the sign.

A \emph{differential $\ZZ^2$-graded $\k$-module} ($\ZZ^2$-dg-mod) is a pair
consisting of a $\ZZ^2$-graded mod $M$ and
a ($\k$-linear) \emph{differential} map $\diff_M\colon M\to M$ with degree $(1,0)$ such that $\diff_M^2=0$.
The cochain $\coch{q}(M)$ and cohomology $\coho{q}(M)$ are defined with respect to
the differential $\diff$ (or the cohomological degree),
which admits a $\ZZ$-grading (that corresponds the second grading on $M$)
$$\coch{q}(M)=\bigoplus_s \coch{q}_s(M),\quad \coho{q}(M)=\bigoplus_s \coho{q}_s(M).$$
\begin{remark}
There are two natural shifts $M\{1\}$ and $M[1]$ on $\ZZ^2$-dg-mod $M$:
\[\begin{array}{cl}
(M\{1\})^q_s=M^q_{s+1}, &\diff_{M\{1\}}=\diff_M\\
(M[1])^q_s=M^{q+1}_s, &\diff_{M[1]}=-\diff_M.
\end{array}\]
\end{remark}
A \emph{$\ZZ^2$-dg-morphism} $f\colon M\to L$ between $\ZZ^2$-dg-mods is a $\k$-linear map
such that $\deg f=(0,0)$ and $f\circ\diff_M=\diff_L\circ f$,
which induces a morphism (of graded modules) on cohomologies $\coho{\bullet}$.
The tensor product $M\otimes L$ of two $\ZZ^2$-dg-mods is the tensor $\ZZ^2$-graded mod
with $\diff_{M\otimes L}=\diff_M\otimes \id_L+\id_M\otimes\diff_L$.
The morphism space $\hom(M,L)$ is also a $\ZZ^2$-dg-mod, where the component $\hom^q_s(M,L)$
consists of all $\k$-linear maps of degree $(q,s)$ and
$$\diff_f\colon=\diff_M\circ f-(-1)^{|f|}f\circ\diff_L.$$

A \emph{$\ZZ^2$-graded differential category} ($\ZZ^2$-dg-cat) $\ha$ is a $\k$-category
whose morphism spaces are $\ZZ^2$-dg-mods and whose compositions
$\ha(Y,Z)\otimes \ha(X,Y) \to \ha(X,Z)$
are $\ZZ^2$-dg-morphisms.
A $\ZZ^2$-dg-functor $F\colon \ha\to\hh{B}$ between $\ZZ^2$-dg-cats is given by a map
between their objects and by morphisms of $\ZZ^2$-dg-mods
$F(X,Y)\colon \ha(X,Y)\to\hh{B}(FX,FY)$ for $X,Y\in\Obj\ha$.

The opposite $\ZZ^2$-dg-cat $\ha^{op}$ of $\ha$ has the same objects
with morphisms $\ha^{op}(X,Y)=\ha(Y,X)$ and the composition of
$f\in\ha^{op}(Y,X)$ and $g\in\ha^{op}(Z,Y)$ is given by $(-1)^{|f||g|}gf$.
The cochain and homology category $\coch{0}(\ha), \coho{0}(\ha)$
of $\ha$
have the same objects of $\ha$
with morphisms $(\coch{0}\ha)(X,Y)=\coch{0}(\ha(X,Y))$
and $(\coho{0}\ha)(X,Y)=\coho{0}(\ha(X,Y))$ respectively.

For two $\ZZ^2$-dg-functors $F,G\colon\ha\to\hh{B}$,
the complex of graded morphisms $\hom(F,G)$ consists of a family of morphisms
$\phi_X\colon\hh{B}(FX,FY)^n$ such that $(Gf)(\phi_X)=(\phi_Y)(Ff)$
with the induced differential from $\hh{B}(FX,FY)$.
The set of morphisms between $F$ and $G$ is given by the set $\coch{0}_0\hom(\ha,\hh{B})(F,G)$.

\subsection{Derived categories of $\ZZ^2$-dg-cats}
Denote by $\hh{C}_{dg}(A)$ the $\ZZ^2$-dg-cat of a $\ZZ$-graded $\k$-algebra $A=\bigoplus_{s\in\ZZ} A_s$.
Let $\ha$ be a small $\ZZ^2$-dg-cat.
A (right) $\ZZ^2$-dg-$\ha$-mod $M$
is a $\ZZ^2$-dg-functor
\[M\colon\ha^{op}\to\hh{C}_{dg}(\k).\]
There are also two natural shifts $[1],\{1\}$ for a $\ZZ^2$-dg-$\ha$-mod $M$,
the cohomological shift $[1]$ and the extra grading shift $\{1\}$, such that
\[
    M[1]\{1\}(X):=M(X)[1]\{1\}\in\hh{C}_{dg}(\k).
\]
For each object $X$ of $\ha$, there is a right module $X^\wedge=\ha(?,X)$
represented by $X$.

The $\ZZ^2$-dg-cat of $\ha$ is defined to be
$$\hh{C}_{dg}(\ha)\colon=\hom(\ha^{op},\hh{C}_{dg}^\ZZ(\k))$$
and we denote its morphism by $\hom_{\ha}$.
The category of $\ZZ^2$-dg-$\ha$-mods $\hh{C}(\ha)$
has the $\ZZ^2$-dg-$\ha$-mods as objects and the morphisms of $\ZZ^2$-dg-functors as morphisms
and we have
\[\hh{C}(\ha)=\coch{0}\hh{C}_{dg}(\ha).\]
The homotopy category of $\ZZ^2$-dg-$\ha$-mods is $\hh{H}(\ha)=\coho{0}\hh{C}_{dg}(\ha)$
whose morphisms are given by $\coho{0}_0\hom_{\ha}$.
Note that there are canonical isomorphisms
$\hom(X^\wedge,M)\xrightarrow{\sim}M(X)$ and
\[\coho{}(\ha)(X^\wedge, M[n])\xrightarrow{\sim}\coho{n}M(X).\]
\begin{definition}
Denote by $\hh{D}(\ha)$ the \emph{derived category} of $\hh{C}(\ha)$ (or $\h(\ha)$)
with respect to the class of quasi-isomorphisms and its homomorphisms by $\Hom_{\hh{D}(\ha)}$.
An important fact is that $\hh{D}(\ha)$ also admits two equivalences,
the triangulated shift $[1]$ and the extra grading shift $\{1\}$.
We will write $[\XX]$ for $\{1\}$
and $[m+l\XX]$ for $[m]\circ[\XX]^l$, where $m,l\in\ZZ$.
\end{definition}

Recall that a $\ZZ^2$-dg-mod $P$ is \emph{cofibrant} if any morphism $P\to M$ factors through $L$
for a surjective quasi-isomorphism $L\to M$;
a $\ZZ^2$-dg-mod $I$ is \emph{fibrant} if any morphism $L\to I$ extends to $M$
for an injective quasi-isomorphism $L\to M$.

\begin{lemma}\cite[Prop.~3.1]{Ke2}
For each $\ZZ^2$-dg-mod $M$, there is a quasi-isomorphism (known as cofibrant resolution) $\mathbf{p}M\to M$
with cofibrant $\mathbf{p}M$ and a quasi-isomorphism (known as fibrant resolution) $M\to\mathbf{i}M$ with fibrant $\mathbf{i}M$.
Moreover, the projection functor $\h(\ha)\to\hh{D}(\ha)$
admits a fully faithful left/right adjoint given by
$M\mapsto\mathbf{p}M$ and $M\mapsto\mathbf{i}M$ respectively.
Thus,
\[\h(\ha)(\mathbf{p}L,M)=\Hom_{\hh{D}(\ha)}(L,M)=\h(\ha)(L,\mathbf{i}M) .\]
\end{lemma}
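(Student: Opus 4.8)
The plan is to exhibit $\hh{C}(\ha)$ as a cofibrantly generated model category whose weak equivalences are the quasi-isomorphisms and whose fibrations are the degreewise surjections, and then to read off both resolutions from the resulting functorial factorizations. What makes the $\ZZ^2$-graded setting no harder than the classical one of \cite[\S 3]{Ke2} is that the differential has degree $(1,0)$: the second grading $\{\bullet\}$ is preserved by every cone, cylinder and colimit below, so it merely indexes a direct-sum splitting and the usual arguments apply verbatim, with the Koszul signs governed (as stipulated) only by the cohomological degree.

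Concretely, I would first fix generating maps. For each $X\in\Obj\ha$ and each $(n,m)\in\ZZ^2$ let $X^\wedge[n]\{m\}$ be the shifted representable and let $\mathbf{D}(X,n,m)=\Cone(\id_{X^\wedge[n]\{m\}})$ be the associated contractible disk module. Take as generating cofibrations the sphere inclusions $X^\wedge[n]\{m\}\hookrightarrow\mathbf{D}(X,n,m)$ and as generating trivial cofibrations the inclusions $0\hookrightarrow\mathbf{D}(X,n,m)$. Since colimits in $\hh{C}(\ha)$ are computed objectwise in $\hh{C}_{dg}(\k)$, the domains $X^\wedge[n]\{m\}$ and $0$ are small, so Quillen's small object argument supplies functorial factorizations of any morphism as a cofibration followed by a surjective quasi-isomorphism and as an injective quasi-isomorphism followed by a fibration; the remaining model axioms are routine. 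The cofibrant (resp.\ fibrant) objects are then exactly those with the lifting properties recorded just before the lemma.

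I would then obtain the two resolutions by factoring the canonical maps $0\to M$ and $M\to 0$. The first gives $0\to\mathbf{p}M\xrightarrow{\sim}M$ with $\mathbf{p}M$ cofibrant; one checks it is the expected semi-free module, an increasing union of iterated cones on sums of shifted representables, which lifts against surjective quasi-isomorphisms by construction. The second gives $M\xrightarrow{\sim}\mathbf{i}M\to 0$ with $\mathbf{i}M$ fibrant. Finally, because $\mathbf{p}L$ is cofibrant its homotopy classes of maps already compute morphisms in $\hh{D}(\ha)$, so the quasi-isomorphism $\mathbf{p}L\to L$ yields $\h(\ha)(\mathbf{p}L,M)=\hh{D}(\ha)(L,M)$, and dually $\hh{D}(\ha)(L,M)=\h(\ha)(L,\mathbf{i}M)$ using fibrancy of $\mathbf{i}M$. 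Writing $\pi\colon\h(\ha)\to\hh{D}(\ha)$ for the projection, these read $\h(\ha)(\mathbf{p}L,M)=\hh{D}(\ha)(L,\pi M)$ and $\h(\ha)(M,\mathbf{i}L)=\hh{D}(\ha)(\pi M,L)$, exhibiting $\mathbf{p}$ as a left and $\mathbf{i}$ as a right adjoint of $\pi$; both are fully faithful since the resulting comparison maps $\pi\mathbf{p}\cong\id\cong\pi\mathbf{i}$ are isomorphisms, the resolution maps being quasi-isomorphisms.

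The work here is bookkeeping rather than a genuine obstacle: the only thing to watch is that the disk/sphere generators and the transfinite small-object construction respect both gradings and the signs simultaneously. Once one observes that $\diff$ preserves the $\{\bullet\}$-grading, the whole argument reduces to the established $\ZZ$-graded case, and no new difficulty arises.
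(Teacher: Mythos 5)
The paper does not prove this lemma; it is quoted verbatim from Keller's survey \cite{Ke2}, whose own argument is exactly the model--category route you take. Your treatment of the cofibrant half is therefore essentially the standard proof: the projective model structure on $\hh{C}(\ha)$, cofibrantly generated by the sphere--disk inclusions on shifted representables, exists by the small object argument (the second grading is indeed inert since $\diff$ has degree $(1,0)$), factoring $0\to M$ gives a semi-free $\mathbf{p}M\xrightarrow{\sim}M$, and K-projectivity of $\mathbf{p}L$ yields $\h(\ha)(\mathbf{p}L,M)=\Hom_{\hh{D}(\ha)}(L,M)$ and the fully faithful left adjoint.

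The gap is in the fibrant half. In the model structure you set up, the fibrations are the degreewise surjections, so \emph{every} object is fibrant: the factorization of $M\to 0$ as a trivial cofibration followed by a fibration produces nothing beyond $M$ itself, and in particular your claim that the fibrant objects of this model structure ``are exactly those with the lifting properties recorded just before the lemma'' is false. The property actually needed for $\mathbf{i}M$ --- that every morphism $L\to \mathbf{i}M$ extends along injective quasi-isomorphisms $L\to M'$, equivalently that $\mathbf{i}M$ is homotopically injective so that $\h(\ha)(L,\mathbf{i}M)=\Hom_{\hh{D}(\ha)}(L,M)$ --- is fibrancy in the \emph{injective} model structure (cofibrations the degreewise injections), not the projective one. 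That second model structure does exist, but its existence is not obtained by ``dualizing the bookkeeping'': there is no explicit small set of sphere--disk generating cofibrations for it, and one must instead invoke local presentability of $\hh{C}(\ha)$ as a Grothendieck-type category, or build K-injective resolutions directly by a Spaltenstein-style transfinite/inverse-limit construction. Your proof needs this second input to produce $\mathbf{i}$ and the fully faithful right adjoint; as written, the ``dually'' step does not go through.
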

\subsection{The inverse dualizing complex and Calabi-Yau categories}
Suppose that $\ha$ is homologically smooth, i.e., $\ha$ is perfect as an $\Ae{A}$-mod.
Here, $\Ae{A}$ is the $\ZZ^2$-dg-cat $\ha\otimes\ha^{op}$,
which admits an involution $V^e$ such that
\[V^e(X,Y)=(Y,X)\quad\text{and}\quad V^e(f\otimes g)=(-1)^{|f|\cdot|g|}g\otimes f,\]
which is a preduality on $\Ae{A}$.

A preduality $\ZZ^2$-dg-functor $V$ on a $\ZZ^2$-dg-cat $\ha$ is a $\ZZ^2$-dg-functor $V\colon\ha\to\ha^{op}$,
such that the opposite functor $V^{op}$ is a right adjoint of $V$.
For a (right) $\ZZ^2$-dg-mod $M$, its dual left $\ZZ^2$-dg-mod $M^*\colon\ha\to\hh{C}_{dg}(\k)$
sends $X$ to $\hom_{\ha}(M,X^\wedge)$.
The $V$-dual of $M$ is a $\ZZ^2$-dg-mod
\[\begin{array}{rcl}
    M^\vee=M^*\circ V^{op}\colon \ha&\to&\hh{C}_{dg}(\k),\\
    X&\mapsto&\hom_{\ha}(M,(V^{op}X)^\wedge).
\end{array}\]
This induces a preduality functor, still denoted by $V$:
\[V\colon\hh{C}_{dg}(\ha)\to \hh{C}_{dg}(\ha^{op}).\]

Now let $\ha$ be a small $\ZZ^2$-dg-cat.
As $\k$ is a field, $\ha$ is cofibrant over $\k$.

\begin{definition}
The \emph{inverse dualizing complex} $\Theta_\ha$ is any cofibrant replacement of
the image of $\ha$ (considered as a right $\ZZ^2$-dg-$\Ae{A}$-mod)
under the total derived functor of $V^e$.
\end{definition}
Denote by $\D_{fd}(\ha)$ the finite dimensional derived category
of $\ha$, which is the full subcategory of $\D(\ha)$
formed by the $\ZZ^2$-dg-mod $M$ such that $\sum_i \dim\coho{i}(M)$ is finite.

\begin{lemma}\cite[Lemma~3.4]{K8}\label{lem:CY}
Then for any $L\in\D(\ha)$ and $M\in\D_{fd}(\ha)$,
there is a canonical isomorphism
\[
    \Hom_{\D(\ha)}(L\otimes_{\ha} \Theta_{\ha}, M)\xrightarrow{\sim}
    \mathrm{D}\Hom_{\D(\ha)}(M,L),
\]
where $\mathrm{D}=\Hom_\k(?,\k)$.
\end{lemma}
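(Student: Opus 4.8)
The statement to prove is Lemma~\ref{lem:CY} (Keller's Calabi-Yau duality), which asserts a canonical isomorphism
\[
    \Hom_{\D(\ha)}(L\otimes_{\ha} \Theta_{\ha}, M)\xrightarrow{\sim}
    D\Hom_{\D(\ha)}(M,L)
\]
for $L\in\D(\ha)$, $M\in\D_{fd}(\ha)$, where $\Theta_\ha$ is the inverse dualizing complex and $D=\Hom_\k(?,\k)$.

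The plan is to reduce the statement to the defining adjunction for the inverse dualizing complex together with a finite-dimensional nondegeneracy argument. First I would recall that, since $\ha$ is homologically smooth, $\ha$ is perfect as a $\ZZ^2$-dg-$\Ae{A}$-module, so the total derived functor of $V^e$ applied to $\ha$ is well-behaved and $\Theta_\ha$ is itself perfect over $\Ae{A}$. The key formal input is that tensoring with $\Theta_\ha$ computes the derived dual: for a perfect $\ha$-module $L$ one has a natural isomorphism $L\otimes_\ha\Theta_\ha\cong\RHom_\ha(L,\ha)^\vee$ in the appropriate sense, coming directly from the construction of $\Theta_\ha$ as the $V^e$-dual of the diagonal bimodule. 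I would make this precise using the preduality functor $V$ introduced above and the identity $M^\vee=M^*\circ V^{op}$.

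Second, I would set up the chain of canonical isomorphisms. The target $D\Hom_{\D(\ha)}(M,L)$ is the $\k$-linear dual of a Hom-space; the source is a Hom-space into $M$ from $L\otimes_\ha\Theta_\ha$. The strategy is to write both sides as (co)homology of a tensor product of complexes over $\ha$ and match them. Concretely, using cofibrant/fibrant resolutions supplied by Lemma~\ref{lem:CY}'s predecessor (the adjunction $\h(\ha)(\mathbf{p}L,M)=\Hom_{\D(\ha)}(L,M)=\h(\ha)(L,\mathbf{i}M)$), I would replace $L$ by a cofibrant resolution $\mathbf{p}L$ so that $\otimes_\ha$ becomes the underived tensor and $\Hom_{\D(\ha)}$ becomes $\h(\ha)$-morphisms. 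Then I would use that $M\in\D_{fd}(\ha)$ has finite-dimensional total cohomology, so that the natural pairing
\[
    \Hom_{\D(\ha)}(L\otimes_\ha\Theta_\ha,M)\otimes\Hom_{\D(\ha)}(M,L)\to\k
\]
induced by composition and the trace/evaluation built into $\Theta_\ha$ is a perfect pairing. Finite-dimensionality of $M$ is exactly what makes the double-dual map an isomorphism rather than merely a map, so it is essential that $M$ lies in $\D_{fd}(\ha)$ while $L$ is allowed to range over all of $\D(\ha)$.

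The main obstacle I expect is verifying that the comparison map is genuinely \emph{canonical} and compatible with the triangulated and $\ZZ^2$-graded structure, rather than just exhibiting an abstract isomorphism of $\k$-vector spaces. This requires tracking the signs and the two shifts $[1]$ and $\{1\}$ through the preduality functor $V^e$ and through the tensor-Hom adjunction, and checking that the evaluation morphism coming from $\Theta_\ha$ intertwines them correctly. A clean way to discharge this is to first establish the isomorphism for $L=\ha$ (the free module), where it reduces to the definition of $\Theta_\ha$ as the $V^e$-dual of $\ha$ together with $\Hom_{\D(\ha)}(\Theta_\ha,M)\cong DM$ on finite-dimensional modules, and then extend to arbitrary perfect $L$ by d\'evissage: both sides are exact functors (triangulated and coproduct-compatible) in $L$, so an isomorphism on the generator $\ha$ propagates to all of $\D(\ha)$ by a five-lemma argument over triangles. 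Since $\ha$ is homologically smooth, $\ha$ generates $\per\ha$, and the finite-dimensionality of $M$ guarantees the relevant Hom-spaces are finite-dimensional so that duality behaves well; this reduces the whole proof to the single base case plus formal triangulated bookkeeping, which is precisely the structure of Keller's original argument in \cite{K8}.
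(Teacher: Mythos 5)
First, a point of reference: the paper does not prove this statement at all --- it is quoted as Keller's Lemma~3.4 from \cite{K8}, so there is no internal argument to compare yours against. The only fair benchmark is Keller's original proof, which is a short chain of canonical isomorphisms.

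Measured against that, your outline assembles the right ingredients (homological smoothness of $\ha$, perfectness of $\Theta_\ha$ over $\Ae{A}$, finite-dimensionality of $M$, reduction to the free module plus d\'evissage), but two steps as written do not work. First, your base case is garbled: for $L=\ha$ the claim to be proved is $\Hom_{\D(\ha)}(\Theta_\ha,M)\cong D\Hom_{\D(\ha)}(M,\ha)$, and the identity you propose to use, $\Hom_{\D(\ha)}(\Theta_\ha,M)\cong DM$, is not equivalent to it: $\Hom_{\D(\ha)}(M,\ha)$ is a derived dual of $M$ over $\ha$ and differs from $M$ already for $\ha=\k[x]$ and $M=\k$, where $\Hom_{\D(\ha)}(\k,\k[x])=0$ while $D\k=\k$. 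The identity that actually powers the proof, and which you never state, is the smoothness isomorphism $\RHom_{\Ae{A}}(\ha,\Ae{A})\otimes^{\mathbf{L}}_{\Ae{A}}Y\xrightarrow{\sim}\RHom_{\Ae{A}}(\ha,Y)$ for an arbitrary bimodule $Y$; taking $Y=L\otimes_\k DM$ and combining with $\RHom_\k(M,L)\cong L\otimes_\k DM$ and $DDM\cong M$ (the two places where $M\in\D_{fd}(\ha)$ enters) yields the whole statement in one stroke, for all $L$ simultaneously, with no pairing argument and no induction. Second, your ``perfect pairing'' step is asserted rather than constructed: you neither define the trace map nor prove nondegeneracy, and finite-dimensionality of $M$ alone does not supply either. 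Finally, note that $L$ ranges over all of $\D(\ha)$, not over $\per\ha$; if you do run the d\'evissage, the subcategory of $L$ on which the comparison map is invertible must be shown to be localizing, using that both sides send coproducts in $L$ to products (which holds because smoothness puts $M$ in $\per\ha$, hence makes it compact) --- your phrase ``extend to arbitrary perfect $L$'' suggests you are only covering the thick subcategory generated by $\ha$.
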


Let $\hh{N}=m+l\XX$.
A triangulated category $\D$ is called \emph{Calabi-Yau-$\hh{N}$} (CY-$\hh{N}$)
if, for any objects $X,Y$ in $\hh{D}$ we have a natural isomorphism
\begin{gather}\label{eq:serre}
    \mathfrak{S}:\Hom (X,Y)
        \xrightarrow{\sim} \mathrm{D}\Hom (Y,X[\hh{N}]).
\end{gather}
Further, an object $S$ is \emph{$\hh{N}$-spherical} if
$\Hom^{\ZZ^2}(S, S)=\k \oplus \k[-\hh{N}]$
and \eqref{eq:serre} holds functorially for $X=S$ and $Y$ in $\D$,
where
\[
    \Hom^{\ZZ^2}(X,Y)\colon=\bigoplus_{m,l\in\ZZ} \Hom(X,Y[m+l\XX]).
\]
By Lemma~\ref{lem:CY},
if $\Theta_{\ha}\cong\ha[-\hh{N}]$, then $\D_{fd}(\ha)$
is Calabi-Yau-$\hh{N}$.
We are in particular interested in the case when $\hh{N}=\XX$ or $\hh{N}=N\in\ZZ$.
In these two cases, there is a \emph{twist functor} $\Phi_S\in\Aut\D$
for each $\hh{N}$-spherical object $S$,
defined by
\begin{equation}\label{eq:sphtwist+}
    \Phi_S(X)=\Cone\left(S\otimes\Hom^{\ZZ^2}(S,X)\to X\right)
\end{equation}
with inverse
\begin{equation}\label{eq:sphtwist-}
    \Phi_S^{-1}(X)=\Cone\left(X\to S\otimes\Hom^{\ZZ^2}(X,S)^\vee \right)[-1].
\end{equation}
Note that the graded dual of a graded $\k$-vector space
$V=\oplus_{m,l\in\ZZ} V_{m,l}[m+l\XX]$
is
\[V^\vee=\bigoplus_{m,l\in\ZZ} \mathrm{D} V_{m,l}[-m-l\XX].\]
\subsection{Deformed Calabi-Yau completion}
Let $\Theta_\ha$ be the inverse dualizing complex of $\ha$
and $\theta=\Theta_{\ha}[\XX-1]$.
The Calabi-Yau-$\XX$ completion of $\ha$ is the tensor DG category
\[
    \Pi_\XX(\ha)=T_{\ha}(\theta)\colon=\ha\oplus\theta\oplus(\theta\otimes_{\ha}\theta)\oplus\cdots.
\]
Moreover, let $c$ be an element of the Hochschild homology
\begin{gather}\label{eq:HH}
    \HH_{\XX-2}(\ha)=\operatorname{Tor}_{\XX-2}^{\Ae{A}}(\ha,\ha)
\end{gather}
or a closed morphism of degree $1$ form $\Theta$ to $\ha$ (i.e., in $\Hom_{\D(\Ae{A})}(\theta,\ha[1])$ (cf. \cite[\S~5.1]{K8}).
\begin{definition}
The deformation $\Pi_\XX(\ha,c)$ obtained from $\Pi_\XX(\ha)$
by adding $c$ to the differential is called a \emph{deformed Calabi-Yau completion} of $\ha$
with respect to $c$.
\end{definition}

\begin{theorem}\cite[Thm.~3.17]{Y}
Suppose $\ha$ is finitely cellular.
Assume that the element $c \in \HH_{\XX-2}(\ha)$ can be lifted to
an element $\tilde{c}$ of the negative cyclic homology $\operatorname{HN}_{\XX-2}(\ha)$.
Then the deformed Calabi-Yau completion
$\Pi_\XX(\ha,c)$ is homologically smooth and Calabi-Yau-$\XX$.
\end{theorem}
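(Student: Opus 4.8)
The plan is to establish the two assertions---homological smoothness and the Calabi-Yau-$\XX$ property---by first treating the undeformed completion $\Pi_\XX(\ha)=T_{\ha}(\theta)$ and then controlling the perturbation coming from the deformation datum $c$. The central tool throughout is the canonical short exact sequence of $\Pi$-bimodules attached to any tensor category $\Pi=T_{\ha}(\theta)$,
\begin{gather}\label{eq:tensor-res}
    0\to \Pi\otimes_{\ha}\theta\otimes_{\ha}\Pi\to \Pi\otimes_{\ha}\Pi\to \Pi\to 0,
\end{gather}
which exhibits $\Pi$ as a two-term complex of induced (free) bimodules. Since $\ha$ is homologically smooth, $\theta=\Theta_{\ha}[\XX-1]$ is perfect over $\Ae{A}=\ha\otimes\ha\op$, so both outer terms of \eqref{eq:tensor-res} are perfect over the enveloping category $\Pi^{e}=\Pi\otimes\Pi\op$; hence $\Pi_\XX(\ha)$ is homologically smooth.

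For the deformed completion I would argue that homological smoothness is a deformation-invariant property in this situation: adding $c$ to the differential leaves the underlying graded bimodules in \eqref{eq:tensor-res} unchanged and merely perturbs the differentials, so the perturbed sequence still resolves $\Pi_\XX(\ha,c)$ by perfect $\Pi^{e}$-bimodules. The hypothesis that $\ha$ is finitely cellular guarantees that the relevant cofibrant objects are finitely generated and that the perturbation argument (a standard filtration/convergence argument on the tensor filtration) is valid. This gives the smoothness half of the statement for $\Pi_\XX(\ha,c)$.

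For the Calabi-Yau-$\XX$ property I would invoke Lemma~\ref{lem:CY}: it suffices to produce a bimodule quasi-isomorphism $\Theta_{\Pi}\simeq \Pi[-\XX]$ for the inverse dualizing complex of $\Pi=\Pi_\XX(\ha)$, since then $L\otimes_{\ha}\Theta_{\Pi}\simeq L[-\XX]$ yields the functorial Serre duality \eqref{eq:serre} with $\hh{N}=\XX$ on $\D_{fd}(\Pi)$. Applying $\RHom_{\Pi^{e}}(-,\Pi^{e})$ to \eqref{eq:tensor-res} and using $\Theta_{\ha}\cong\theta[1-\XX]$ together with the reflexivity of the inverse dualizing complex (the canonical identification of $\ha$ with its double $\Ae{A}$-dual), one obtains a triangle whose terms recombine, via the very exact sequence \eqref{eq:tensor-res} again, into the identification $\Theta_{\Pi}\simeq\Pi[-\XX]$ for the undeformed completion.

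The genuinely delicate step, and the one I expect to be the main obstacle, is upgrading this to the deformed completion: the quasi-isomorphism $\Theta_{\Pi}\simeq\Pi[-\XX]$ must be promoted to one that commutes with the \emph{perturbed} differential. Deforming the underlying bimodule is automatic, but deforming the self-duality datum compatibly is not, and this is exactly where the hypothesis that $c\in\HH_{\XX-2}(\ha)$ lifts to a negative cyclic class $\tilde c\in\operatorname{HN}_{\XX-2}(\ha)$ enters. The point is that $c$, viewed as a bimodule morphism $\theta\to\ha[1]$, only records the deformation of the differential, whereas a left Calabi-Yau structure on a smooth category is precisely a negative cyclic class whose underlying Hochschild class induces $\Theta\cong\ha[-\XX]$; the lift $\tilde c$ supplies the cyclic-invariance (closedness) needed to transport the canonical Calabi-Yau structure of $\Pi_\XX(\ha)$ across the perturbation rather than merely the bimodule isomorphism. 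Concretely, I would interpret $\tilde c$ as deforming the canonical negative cyclic class of $\Pi_\XX(\ha)$ and check along the filtration of the tensor category that the resulting duality survives; combining this deformed duality with the deformed smoothness established above then yields that $\Pi_\XX(\ha,c)$ is homologically smooth and Calabi-Yau-$\XX$.
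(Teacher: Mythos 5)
The first thing to say is that the paper does not prove this statement at all: it is quoted verbatim from Yeung \cite{Y} (Thm.~3.17), with the surrounding remark pointing to Keller's erratum \cite{K18} to \cite{K8}. So there is no in-paper proof to compare against; what can be judged is whether your outline is a viable reconstruction of the cited argument. At the level of architecture it is: the two-term bimodule resolution of a tensor category $T_{\ha}(\theta)$ by induced bimodules, the perfectness of $\theta=\Theta_{\ha}[\XX-1]$ over $\Ae{A}$ coming from smoothness of $\ha$, the dualization of that resolution combined with reflexivity to get $\Theta_{\Pi}\simeq\Pi[-\XX]$ for the undeformed completion, and the identification of the negative cyclic lift $\tilde c$ as the datum that makes the duality (not just the underlying bimodule) survive the deformation --- all of this is faithful to the Keller--Yeung strategy, and your explanation of \emph{why} a Hochschild class alone is insufficient is exactly the right diagnosis.

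The genuine gap is that the one step you yourself flag as ``the main obstacle'' is never carried out, and it is precisely the step at which the original published argument in \cite{K8} was wrong and had to be corrected in \cite{K18}. Saying that you ``would interpret $\tilde c$ as deforming the canonical negative cyclic class of $\Pi_\XX(\ha)$ and check along the filtration that the duality survives'' is a statement of intent, not a proof: the actual content of \cite{Y} is the construction of a class in $\operatorname{HN}_{\XX}(\Pi_\XX(\ha,c))$ out of $\tilde c$ together with a nondegeneracy argument showing that its image in Hochschild homology induces a bimodule quasi-isomorphism $\Theta_{\Pi_\XX(\ha,c)}\simeq\Pi_\XX(\ha,c)[-\XX]$ compatible with the perturbed differential. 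Without that construction one cannot rule out exactly the failure mode that the erratum exposed (a Hochschild class $c$ with no cyclic lift, for which the deformed completion is smooth but not Calabi-Yau). A secondary, smaller issue: the claim that smoothness is ``deformation-invariant'' needs the observation that adding $c$ strictly lowers the tensor-degree filtration, so the perturbed two-term complex of induced bimodules still resolves $\Pi_\XX(\ha,c)$; you allude to this filtration argument but should make it explicit, since the finitely cellular hypothesis is what guarantees the relevant convergence. (Also, the expression $L\otimes_{\ha}\Theta_{\Pi}$ should read $L\otimes_{\Pi}\Theta_{\Pi}$.)
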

\begin{remark}
Recently, Keller \cite{K18} corrected some error in \cite{K8}.
As remarked in \cite{K18}, the assumption in the above theorem can be
satisfied in the case of Ginzburg dga which
we will deal with in Section~\ref{sec:CYS}.
\end{remark}

Moreover, we prove a lemma,
which is a slightly weaker Calabi-Yau-$\XX$ version of \cite[Lemma~4.4 and Remark~5.3]{K8}.
\begin{definition}\cite[Def.~7.2]{KQ1}
A functor $\hh{L}\colon\D(\ha)\to\D(\Pi_\XX(\ha,c))$ is a \emph{Lagrangian(-$\XX$) immersion} if
\begin{itemize}
\item for $L,M\in\D_{fd}(\ha)$, we have
\begin{gather}\label{eq:HOM}
    \RHom_\Pi(\hh{L}(L),\hh{L}(M))=\RHom_{\ha}(L,M)
        \oplus D\RHom_{\ha}(M,L)[-\XX].
\end{gather}
\end{itemize}
In particular, it is fully faithful restricted to the finite dimensional derived categories, i.e.
\begin{gather}\label{eq:hom}
    \Hom^{}_{\D_{fd}(\Pi_\XX(\ha,c))} (\hh{L}(L),\hh{L}(M))=\Hom^{}_{\D_{fd}(\ha)} (L,M)
\end{gather}
for any $L,M\in\D_{fd}(\ha)$.
\end{definition}

\begin{lemma}\label{lem:L-inf}
The canonical projection (on the first component) $\Pi_\XX(\ha,c)\to\ha$ induces
a Lagrangian immersion $$\hh{L}\colon\D_{fd}(\ha) \to \D_{fd}(\Pi_\XX(\ha,c)).$$
Moreover, the image of $\hh{L}$ is an $\XX$-baric heart of $\Pi_\XX(\ha,c)$.
\end{lemma}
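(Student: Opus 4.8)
The plan is to realize $\hh{L}$ as the restriction-of-scalars functor along the canonical dg-projection $\pi\colon\Pi_\XX(\ha,c)\to\ha$ onto the first tensor factor, and to compute the morphism spaces between objects in its image by combining the standard bimodule resolution of a tensor algebra with the defining duality of the inverse dualizing complex. Abbreviate $\Pi=\Pi_\XX(\ha,c)$. Since restriction along $\pi$ leaves the underlying complex of $\k$-modules unchanged, $\hh{L}(L)$ has the same (finite-dimensional) total cohomology as $L$; hence $\hh{L}$ is exact and maps $\D_{fd}(\ha)$ into $\D_{fd}(\Pi)$, so the functor is well defined. Because $\pi\circ\iota=\id_\ha$ for the inclusion $\iota\colon\ha\hookrightarrow\Pi$, restricting $\hh{L}M$ back along $\iota$ recovers $M$, which is the fact that drives the Hom computation.

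For the Lagrangian-immersion identity \eqref{eq:HOM} I first treat the undeformed completion $\Pi_\XX(\ha)=T_\ha(\theta)$ with $\theta=\Theta_\ha[\XX-1]$, where one has the tautological short exact sequence of $\Pi$-bimodules
\[
    \Pi\otimes_\ha\theta\otimes_\ha\Pi\longrightarrow\Pi\otimes_\ha\Pi\longrightarrow\Pi ,
\]
the standard projective bimodule resolution of the diagonal. Tensoring it with $\hh{L}L$ produces a two-term resolution of $\hh{L}L$ by the induced modules $\Pi\otimes_\ha L$ and $\Pi\otimes_\ha\theta\otimes_\ha L$; applying $\RHom_\Pi(-,\hh{L}M)$ and the induction--restriction adjunction then yields a distinguished triangle
\[
    \RHom_\ha(L, M\otimes_\ha^{\mathbf{L}}\theta)\longrightarrow \RHom_\Pi(\hh{L}L,\hh{L}M)\longrightarrow\RHom_\ha(L,M)\xrightarrow{+1}.
\]
Here $M\otimes_\ha^{\mathbf{L}}\theta$ is a shift of $M\otimes_\ha^{\mathbf{L}}\Theta_\ha$, and Lemma~\ref{lem:CY} identifies $\RHom_\ha(L,M\otimes_\ha^{\mathbf{L}}\Theta_\ha)$ with $D\RHom_\ha(M,L)$; matching shifts, the left-hand term becomes $D\RHom_\ha(M,L)[-\XX]$. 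The decisive point is the $\ZZ^2$-grading: the two outer terms sit in \emph{distinct} $\XX$-degrees (since $\theta$ carries exactly one unit of the extra grading), so the connecting morphism, being homogeneous, must vanish. The triangle therefore splits canonically into the direct sum \eqref{eq:HOM}, and passing to cohomological degree $0$ in $\XX$-degree $0$ gives the full-faithfulness \eqref{eq:hom}.

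Passing from $\Pi_\XX(\ha)$ to the deformed $\Pi_\XX(\ha,c)$ is the step I expect to be the main obstacle. The objects $\hh{L}L,\hh{L}M$ are pulled back along $\pi$, which annihilates every $\theta$-factor and hence the entire $c$-part of the differential (recall $c\in\HH_{\XX-2}(\ha)$ lowers the tensor degree). Filtering $\Pi$ by powers of the two-sided ideal generated by $\theta$ makes $c$ a strictly lower-order perturbation, and the associated spectral sequence computing $\RHom_\Pi(\hh{L}L,\hh{L}M)$ has first page the undeformed answer above; one must verify that it degenerates there for $\ZZ^2$-degree reasons, so that adding $c$ creates no new morphisms between $\pi$-restricted modules and does not disturb the splitting. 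This is precisely the Calabi-Yau-$\XX$ refinement of Keller's Lemma~4.4 (and Remark~5.3), and checking the compatibility of the filtration with the deformed differential is where the care is required.

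Finally, for the $\XX$-baric heart claim I verify the two axioms. Condition $(1)$, in its refined Calabi-Yau-$\XX$ form \eqref{eq:01}, is immediate from \eqref{eq:HOM}: the total graded space $\Hom(\hh{L}L,\hh{L}M[k\XX])$ is concentrated in $\XX$-degrees $k\in\{0,1\}$, so it vanishes for all $k\notin\{0,1\}$ and in particular whenever $k<0$. For condition $(2)$ I use that $\Pi$ has the same objects (quiver vertices) as $\ha$ and is graded by the number of $\theta$-tensor factors, a grading that coincides with the $\XX$-weight; this tensor-degree filtration induces a baric filtration on $\D_{fd}(\Pi)$ whose weight pieces are the $\XX$-shifts of the triangulated subcategory generated by the $\pi$-restricted simples, namely of $\im\hh{L}=\D_{fd}(\ha)$. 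Decomposing an arbitrary object of $\D_{fd}(\Pi)$ along this weight structure produces the required tower of triangles with subquotients in $\D_{fd}(\ha)[k_i\XX]$, so that $\im\hh{L}$ is an $\XX$-baric heart of $\Pi_\XX(\ha,c)$.
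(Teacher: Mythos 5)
Your proposal is correct in outline but takes a genuinely different route from the paper. The paper's proof is a d\'evissage: it asserts that \eqref{eq:HOM} holds for (double shifts of) simple $\ha$-modules ``by definition'' --- i.e.\ reading the arrows and dual arrows of the completed quiver --- and then extends to arbitrary objects of $\D_{fd}(\ha)$ by induction on simple filtrations; the $\XX$-baric heart claim is likewise dispatched by filtering into shifts of simples and invoking the non-negative $\XX$-grading of $\ha$. You instead prove \eqref{eq:HOM} for all pairs at once via the standard bimodule resolution $\Pi\otimes_\ha\theta\otimes_\ha\Pi\to\Pi\otimes_\ha\Pi\to\Pi$ of the tensor algebra, the duality of Lemma~\ref{lem:CY}, and the observation that the connecting map of the resulting triangle vanishes because its source and target sit in distinct $\XX$-degrees --- this is essentially Keller's original argument for \cite[Lem.~4.4]{K8}, upgraded by the extra grading, and it has the merit of not presupposing the simples case that the paper leaves implicit. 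Two caveats. First, a bookkeeping slip: after tensoring the resolution with $\hh{L}L$ and adjunction, the outer term is $\RHom_\ha(L\otimes_\ha\theta,M)$, not $\RHom_\ha(L,M\otimes_\ha^{\mathbf{L}}\theta)$; it is the former that Lemma~\ref{lem:CY} identifies with $D\RHom_\ha(M,L)$ up to the shift $[-\XX]$, so the conclusion stands but the displayed triangle should be corrected. Second, you flag the passage to the deformed differential as an open step; in fact the $\ZZ^2$-grading closes it for you, since the deformed differential must be homogeneous of degree $(1,0)$ and $\ha$ and $\theta$ sit in distinct $\XX$-degrees, so the tensor-degree filtration you introduce is respected on the nose and your spectral sequence degenerates for exactly the degree reasons you anticipate. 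Your verification of condition \eqref{eq:01} from the $\XX$-degree concentration of \eqref{eq:HOM}, and of the filtration axiom via the tensor-degree weight structure, matches the paper's second paragraph in substance.
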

\begin{proof}
By definition, \eqref{eq:HOM} holds for any (double) shifts of simple $\ha$-modules.
Denote by $\Sim\D_{fd}(\ha)$ the set of all shifts of simple $\ha$-modules.
Note that any object $M$ in $\D_{fd}(\ha)$ admits a simple filtration with factors
in $\Sim\D_{fd}(\ha)$ (which is a refinement of the canonical filtration with respect to the heart
$\ha-\mod$-the category of $\ha$-modules).
Thus, \eqref{eq:HOM} follows by induction
(on the numbers of factors of simple filtration of $M$ and $L$).

For the second statement: note that any object in $\D_{fd}(\Pi_\XX(\ha,c))$ has a filtration
with factors in $$\{\Sim\D_{fd}(\ha)[m+l\XX]\}_{m,l\in\ZZ}.$$
Moreover, $\ha$ has non-negative $\XX$-grading.
Thus $\< \Sim\D_{fd}(\ha)[m] \>_{m\in\ZZ}=\D_{fd}(\ha)$ is an $\XX$-baric heart.
\end{proof}

\section{$q$-Deformation of topological Fukaya categories}\label{sec:CYS}
Recall that we have a graded marked surface $\gms=(\surf,\M,\Y,\grad)$
with dual arc systems $(\TT,\TT^*)$.
\subsection{Quivers with superpotential}
\begin{definition}\label{con:Qs}
The \emph{$\ZZ^2$-graded quiver with superpotential}
$(\widetilde{Q}_{\TT}, W_{\TT})$ is defined as follows:
\begin{itemize}
  \item the vertices in $\widetilde{Q}_{\TT}$ are arcs in $\TT$;
  \item for each arrow $a_{}\colon \wg\to \wg'$ in $\widetilde{Q}_\TT$, add a dual arrow
  $a^*_{}\colon \wg'\to \wg$, where their degrees in $\widetilde{Q}_{\TT}$ are
  \[
    \begin{cases}
        \DEG a_{}=\deg a_{},\\
        \DEG a^*_{}=(2-\XX)-\deg a_{};\end{cases}
  \]
  \item for each $\TT$-polygon $D$, let its edges be $\wg_1,\ldots,\wg_m$
  in clockwise order (and follows by the open boundary arc of $D$).
  Then there are graded arrows
  $$\xymatrix{\wg_i \ar@<.5ex>[r]^{ a_{ij} }
    \ar@{<-}@<-.5ex>[r]_{ a^*_{ji} }  & \wg_j}
    \quad\text{for}\quad 1\leq i<j\le m-1.$$
  in $\widetilde{Q}_{\TT}$ and let (composing from left to right)
  \begin{gather}\label{eq:W}
    W_D=\sum_{1\leq i<j<k\leq m}  a_{ij} a_{jk} a^*_{ik};
  \end{gather}
  \item there is also a loop $\wg^*$ at each vertex $\wg$ of degree $1-\XX$ in $\widetilde{Q}_{\TT}$;
  \item the superpotential $W_{\TT}$ is the sum of $W_D$'s for all $\TT$-polygons $D$.
\end{itemize}
\end{definition}

\begin{figure}[h]\centering
\begin{tikzpicture}[scale=.9]
\foreach \j in {1,2,3,4}
{\draw[blue] (-54-72*\j:1.7)node(v\j){$\bullet$}(-54-72*\j:2)node{$\j$};;}
\foreach \j/\k in {1/2,2/3}{
    \draw[blue](-54-72-72*\k:1.7)node(w\k){$\bullet$};
    \draw[dashed,-stealth,blue] (v\j)to(w\k);
    }
\foreach \j in {1,2,3}{\foreach \k in {\j+1}{
    \draw[blue](-54-72-72*\k:1.7)node(w\k){$\bullet$};
    \draw[-stealth,blue] (v\j)to(w\k);
    }}
\draw[blue](-54+72:1.7)node(v4){$\bullet$}
    (0:3)node[blue](v5){$\bullet$}node[right,blue]{$5$}
    (-25:3)node[blue](v6){$\bullet$}node[left,blue]{$6$};

\draw[-stealth,blue] (v5)to(v4);
\draw[-stealth,blue] (v6)to(v5);
\end{tikzpicture}
\quad
\begin{tikzpicture}[scale=.9]
\foreach \j in {1,2,3,4}
{\draw[red] (-54-72*\j:1.7)node(v\j){$\bullet$}(-54-72*\j:2)node{$\j$};;}
\foreach \j in {1,2,3}{\foreach \k in {\j,...,3}{
    \draw[red](-54-72-72*\k:1.7)node(w\k){$\bullet$};
    \draw[-stealth,red] (v\j.-90-36*\j-36*\k-60)to(w\k.-90-36*\j-36*\k+60);
    \draw[-stealth,thin, Emerald] (w\k.-90-36*\j-36*\k+120)to(v\j.-90-36*\j-36*\k-120);
    }}
\draw[red](-54+72:1.7)node(v4){$\bullet$}
    (0:3)node[red](v5){$\bullet$}node[right,red]{$5$}
    (-25:3)node[red](v6){$\bullet$}node[left,red]{$6$};

\draw[-stealth,red] (v5.190)to(v4.-40);
\draw[-stealth,red] (v6.100)to(v5.-120);
\draw[-stealth,thin, Emerald] (v4.15)to(v5.135);
\draw[-stealth,thin, Emerald] (v5.-65)to(v6.45);
\end{tikzpicture}
\caption{From the quiver with relation to the double graded quiver with potential}\label{fig:CY double}
\end{figure}
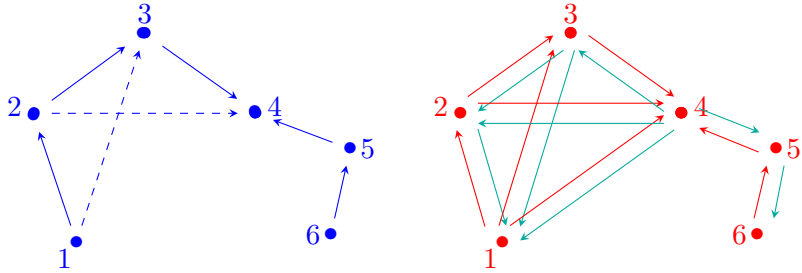

\begin{lemma}
The superpotential $W_\TT$ defined as above is homogenous of degree $3-\XX$.
\end{lemma}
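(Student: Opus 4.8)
We must show that each term $W_D$ of the superpotential $W_{\TT}$ is homogeneous of degree $(3,-1)$ in the $\ZZ^2$-grading. Since $W_{\TT}=\sum_D W_D$ and a sum is homogeneous only if its summands share a common degree, it suffices to check that every cubic monomial $a_{ij}a_{jk}a^*_{ki}$ appearing in (the various) $W_D$ has degree exactly $(3,-1)$.

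**Plan.** The plan is to compute $\DEG(a_{ij}a_{jk}a^*_{ki})$ by adding the degrees of the three factors, using that the degree of a composite is the sum of the degrees of its constituents in a $\ZZ^2$-graded quiver path algebra. First I would record the degree data from the Construction: an ordinary arrow $a$ has $\DEG a=(\deg a,0)$, while a dual arrow $a^*$ has $\DEG a^*=(2,-1)-(\deg a,0)=(2-\deg a,-1)$. The two arrows $a_{ij}$ and $a_{jk}$ in the monomial are ordinary arrows of $Q_\TT$, so their degrees are $(\deg a_{ij},0)$ and $(\deg a_{jk},0)$; the third factor $a^*_{ki}$ is the dual of the arrow $a_{ki}$, so $\DEG a^*_{ki}=(2-\deg a_{ki},-1)$.

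**Key computation.** Adding these, the second ($q$-)coordinate is $0+0+(-1)=-1$, which already gives the claimed second component and reflects that exactly one dualized arrow appears in each cubic term. For the first (cohomological) coordinate we get
\[
\deg a_{ij}+\deg a_{jk}+(2-\deg a_{ki})
= 2+\bigl(\deg a_{ij}+\deg a_{jk}-\deg a_{ki}\bigr).
\]
Thus the statement reduces to the grading identity
\[
\deg a_{ij}+\deg a_{jk}-\deg a_{ki}=1,
\]
i.e. $\deg a_{ij}+\deg a_{jk}=\deg a_{ki}+1$, for the three arrows bounding a single triangle $i<j<k$ inside a polygon $D$ of $\TT$.

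**Main obstacle.** The crux is therefore this last grading identity, which is a statement about intersection indices of graded arcs rather than a formal manipulation. I would prove it geometrically: by construction the grading of an arrow equals the degree of the corresponding intersection of graded closed arcs (equivalently, the intersection index $i_p$ defined via the foliation $\grad$), and the three arrows $a_{ij},a_{jk},a_{ki}$ correspond to the three consecutive angles at the vertices of the triangle cut out inside the polygon $D$. The additivity of the intersection index around a contractible triangle in the graded surface, together with the normalization built into the definition of $i_p$ (the counterclockwise rotation $\kappa$ by an angle less than $\pi$ contributing the $+1$ shift), yields $\deg a_{ij}+\deg a_{jk}-\deg a_{ki}=1$. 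The one delicate point is bookkeeping the clockwise ordering $\gamma_1,\dots,\gamma_m$ of the edges of $D$ against the counterclockwise convention in the definition of $i_p$, so that the sign of the cocycle relation comes out as $+1$ and not $-1$; checking this orientation convention on the model polygon is where I expect the real work to lie. Once the identity is established for one triangle, homogeneity of each $W_D$, and hence of $W_\TT$, follows immediately by summation.
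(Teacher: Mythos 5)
Your reduction is exactly the paper's: you expand $\DEG a_{ij}+\DEG a_{jk}+\DEG a^*_{ki}$, get the second coordinate $0+0+(-1)=-1$ for free, and reduce the first coordinate to the cocycle identity $\deg a_{ij}+\deg a_{jk}-\deg a_{ik}=1$ (your $a_{ki}$ should be read as $a_{ik}$, since only $a_{ij}$ with $i<j$ is defined and $a^*_{ki}$ is the dual of $a_{ik}$; this is harmless, and your bookkeeping $\DEG a^*_{ki}=(2-\deg a_{ik},-1)$ is the correct one). Where you genuinely diverge is in how that identity is established. The paper localizes everything at a single point: since $\TT$ is full formal, the polygon $D$ contains a unique marked point $Y_D$, and the closed arcs $\eta_1,\dots,\eta_m$ dual to the edges of $D$ all emanate from $Y_D$; by the simple--projective duality recorded in the remark on the Ext-quiver, $\deg a_{ij}=1-d_{ij}$ with $d_{ij}=i_{Y_D}(\eta_i,\eta_j)$, and the additivity $d_{ij}+d_{jk}=d_{ik}$ (the composition rule \eqref{eq:d_ij}) is immediate from concatenating the defining paths in $\pi_1(\mathbb{P}(T_{Y_D}\SS))$ at that one point. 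Your proposed mechanism --- a rotation count around a contractible triangle with intersections at three \emph{distinct} vertices --- is the natural picture only for consecutive indices, where $a_{i,i+1}$ really is an angle at a corner of $D$; for general $i<j<k$ the arrows $a_{ij}$ are Ext-quiver arrows rather than single angles, so your triangle argument would still need the input \eqref{eq:d_ij} to even identify $\deg a_{ij}$ geometrically, and the orientation check you defer as ``the real work'' is precisely what the paper's dual formulation makes trivial. Your skeleton is sound and the computation is right; to close the one open step, replace the triangle picture by the localization at $Y_D$.
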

\begin{proof}
It suffices to show that, for any $\TT$-polygon $D$, $W_D$ is homogenous of degree $3-\XX$.
Since $\TT$ is full formal, there is a unique marked point $Y_D$ in $\Y$ contained in $D$.
Then the closed arcs in $\TT^*$ starting at $Y$,
denoted by $\we_1,\ldots,\we_m$ (in clockwise order,
cf. Figure~\ref{fig:QR} for $m=4$),
are the dual arcs of edges $\wg_1,\ldots,\wg_m$ of $D$.
Note that an arc appears twice in $\{\we_i\}$ if it is a loop based at $Y$.
Suppose that the indices of intersection at $Y$ between graded arrows are
$$d_{ij}=\ind_Y(\we_i, \we_j),
  \quad\text{for}\quad 1\leq i<j\le m-1.$$
By the composition rule, we have
\begin{gather}\label{eq:d_ij}
    \deg d_{ij} = \sum_{k=i}^{j-1} d_{k k+1}
\end{gather}
for any $1\leq i<j\le m$.
Hence the corresponding arrow $a_{ij}$ in $Q_{\TT}$ has degree
\begin{gather}\label{eq:a_ij}
    \deg a_{ij}=1- d_{ij},
\end{gather}
and we have
\[\begin{array}{rl}
    &\DEG a_{ij}+\DEG a_{jk}+\DEG a^*_{ik}\\
    =& (1-d_{ij})+(1-d_{jk})+\big( (2-\XX)-(1+d_{ik}) \big)\\
    =&3-\XX
\end{array}\]
for $1\leq i<j<k\leq m$, as required.
\end{proof}


\subsection{Calabi-Yau-$\XX$ and cluster-$\XX$ categories of surfaces}

\begin{definition}\label{def:GXQW}
The \emph{Calabi-Yau-$\XX$ Ginzburg $\ZZ^2$-dg algebra} $\GAX$ is defined as follows:
\begin{itemize}
\item the underlying graded algebra of $\GAX$ is the completion of
the graded path algebra $\k \widetilde{Q}_\TT$;
\item the differential $\diff=\diff_\TT$ of degree $1$
is the unique continuous linear endomorphism satisfying $\diff^2=0$ and the Leibniz rule
(with respect to degree $1$) and it takes the following values:
\begin{itemize}
\item[$\blacktriangleright\quad$] $\diff a = \partial_{a^*} W_{\TT}$ for $a\in (\widetilde{Q}_\TT)_1$;
\item[$\blacktriangleright\quad$] $\diff \displaystyle{\sum_{\gamma \in (\widetilde{Q}_\TT)_0}} \gamma^* =
    \displaystyle{\sum_{a\in (\widetilde{Q}_\TT)_1 }} \, [a,a^*]$.
\end{itemize}
\end{itemize}
Note that we have the following convention:
\begin{itemize}
\item $\partial abc=(-1)^{\deg c}ab+(-1)^{\deg a}bc+(-1)^{\deg b}ca$
for any term $abc$ in $W_\TT$;
\item $[a,a^*]=(-1)^{\deg a}aa^*+(-1)^{\deg a^*}a^*a$;
\end{itemize}
to ensure $\diff^2=0$. 
\end{definition}

Let $\D(\GAX)$ be the derived category of $\ZZ^2$-dg $\GAX$-modules,
$\D_{fd}(\GAX)$ the finite dimensional derived category and
$\per\GAX$ the perfect derived category.
They satisfy $\D_{fd}(\GAX)\subset\per\GAX\subset\D(\GAX)$.

Equivalently, we have the following.
Let $\hh{R}_\TT$ be the discrete $\k$-category associated to the vertex set of $Q_\TT$
and $\ha_\TT$ the path category of $Q_\TT$.
Then $\GAX$ is the tensor category over $\hh{R}_\TT$ of the bimodule
\[
    \widetilde{Q}_\TT=Q_\TT\oplus \left(Q_\TT\right)^\vee[\XX-2]\oplus
        \hh{R}_\TT[\XX-1].
\]
Denote by $c_\TT$ the image of $W_\TT$ in \eqref{eq:HH}.

\begin{theorem}\cite[Prop.~6.3]{K3} \cite[Thm.~3.17]{Y}
\label{thm:CY}
The Ginzburg dga $\GAX$ is quasi-isomorphic to
 the deformed Calabi-Yau completion $\Pi_\XX(\ha_\TT,c_\TT)$.
Hence it is homologically smooth and Calabi-Yau-$\XX$.
\end{theorem}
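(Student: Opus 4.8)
The plan is to prove the two assertions in turn. The quasi-isomorphism $\GAX \simeq \Pi_\XX(\ha_\TT, c_\TT)$ is the $\ZZ^2$-graded (Calabi-Yau-$\XX$) counterpart of Keller's identification \cite[Prop.~6.3]{K8} of a Ginzburg dga with a deformed Calabi-Yau completion; the homological smoothness and the Calabi-Yau-$\XX$ property then follow from Yeung's theorem \cite[Thm~3.17]{Y} recalled above, once its hypotheses are verified.

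For the identification I would first compute the inverse dualizing complex $\Theta_{\ha_\TT}$. Since $\ha_\TT = \k Q_\TT$ is the path category of the graded quiver $Q_\TT$, free as a graded algebra over the semisimple part $\hh{R}_\TT$, it admits the standard bimodule resolution
\[
0 \longrightarrow \ha_\TT \otimes_{\hh{R}_\TT} \k (Q_\TT)_1 \otimes_{\hh{R}_\TT} \ha_\TT \xrightarrow{\ u\ } \ha_\TT \otimes_{\hh{R}_\TT} \ha_\TT \longrightarrow \ha_\TT \longrightarrow 0,
\]
with $u(a \otimes x \otimes b) = ax \otimes b - a \otimes xb$; this is a resolution of dg bimodules because $\diff_\TT$ is a graded derivation. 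Dualizing over the enveloping algebra $\ha_\TT \otimes \ha_\TT\op$ presents $\Theta_{\ha_\TT}$ as the two-term complex $[\,\ha_\TT \otimes_{\hh{R}_\TT} \ha_\TT \to \ha_\TT \otimes_{\hh{R}_\TT} (Q_\TT)^\vee \otimes_{\hh{R}_\TT} \ha_\TT\,]$, with the loop summand $\hh{R}_\TT$ and the dual-arrow summand $(Q_\TT)^\vee$ in adjacent cohomological degrees. Setting $\theta = \Theta_{\ha_\TT}[\XX - 1]$ and passing to the tensor category $\Pi_\XX(\ha_\TT) = T_{\ha_\TT}(\theta)$, the underlying graded algebra is exactly that of $\GAX$ via the decomposition $\widetilde{Q}_\TT = Q_\TT \oplus (Q_\TT)^\vee[\XX-2] \oplus \hh{R}_\TT[\XX-1]$: tracking the bidegree, the shift $[\XX-1]$ sends the loop part to $\ZZ^2$-degree $(1,-1)$, matching $\gamma^*$, and each dual arrow of cohomological degree $-\deg a$ to $(2-\deg a,-1)$, matching $\DEG a^*$.

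It then remains to match the differentials, which is the core of Keller's computation transported to the present bigraded setting. I would check that the canonical differential of the Calabi-Yau completion, namely the dual $u^\vee$ of the resolution map, reproduces the loop relation $\diff \sum_\gamma \gamma^* = \sum_a [a,a^*]$, while deforming by the class $c_\TT$ of $W_\TT$ in $\HH_{\XX-2}(\ha_\TT)$ of \eqref{eq:HH} adds to the differentials of the arrows and dual arrows the cyclic derivatives $\partial_{a^*} W_\TT$ and $\partial_a W_\TT$; a degree count, using that $W_\TT$ is homogeneous of degree $(3,-1)$, confirms these land in the correct bidegrees. Together with the internal differential $\diff_\TT$ already carried by $\ha_\TT$, this is precisely the differential $\diff$ defining $\GAX$, yielding the desired quasi-isomorphism.

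Finally, $\ha_\TT$ is finitely cellular, being the path category of a finite graded quiver, and $c_\TT$ lifts to negative cyclic homology by \cite{K18} as noted in the Remark above, so \cite[Thm~3.17]{Y} applies and shows that $\Pi_\XX(\ha_\TT, c_\TT)$, hence $\GAX$, is homologically smooth and Calabi-Yau-$\XX$. I expect the main obstacle to be the bookkeeping of the extra $\XX$-grading together with the signs prescribed by the conventions for $\partial$ and $[a,a^*]$: one must check that they conspire to give $\diff^2 = 0$ and to match the canonical-plus-deformation differential of the completion. The one genuinely new feature beyond \cite{K8} is that $\ha_\TT$ now carries a nonzero internal differential $\diff_\TT$ encoding the relations $R^{(0)}_\TT$, so that the presentation of $\Theta_{\ha_\TT}$ and the deformation must be carried out compatibly with $\diff_\TT$, rather than in the hereditary case.
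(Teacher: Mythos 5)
Your outline is correct and coincides with what the paper does: the paper offers no proof of Theorem~\ref{thm:CY} beyond citing \cite[Prop~6.3]{K8} and \cite[Thm~3.17]{Y}, after setting up exactly the bimodule decomposition $\widetilde{Q}_\TT=Q_\TT\oplus (Q_\TT)^\vee[\XX-2]\oplus \hh{R}_\TT[\XX-1]$ that you use. Your reconstruction of the cited arguments (standard resolution of $\ha_\TT$ over $\hh{R}_\TT$, identification of the tensor algebra on $\Theta_{\ha_\TT}[\XX-1]$ with $\k\widetilde{Q}_\TT$, deformation by $c_\TT$, then Yeung's criterion) is the intended route.
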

Moreover, we obtain the following statement by applying Lemma~\ref{lem:L-inf}.
\begin{corollary}\label{cor:X-heart}
The projection $\Pi_\XX(\ha_\TT,c)\to\ha_\TT$ induces
a Lagrangian immersion
\begin{gather}\label{eq:Lagrangian}
    \hh{L}_\TT\colon \Dinf \to \D_{fd}(\GAX)
\end{gather}
and its image is an $\XX$-baric heart of
\begin{equation}\label{eq:DX=}
    \DX\colon=\D_{fd}(\GAX).
\end{equation}
\end{corollary}
By abuse of notations, we will identify $\Dinf=\DT$ with its image under $\hh{L}_\TT$ sometimes.

In the case for $\D_\XX=\DX$, we will study the principal part of the spaces of
open/closed $q$-stability conditions.
\begin{definition}\label{def:QStap}
Denote by $\CStab_s\DX$ the subspace of $\QStab_s\DX$
consisting of closed $q$-stability conditions $\Psi(\sigma)$
for $\Psi\in\ST\DX$ and $\sigma$ is induced from
some triple $(\DT,\ns,s)$ as in Theorem~\ref{thm:IQ} for some $\ns\in\Stap\DT$ and $s\in\bC$.
Similarly, denote by $\OStab_s\DX$ the open version.
\end{definition}


Applying the proof of \cite[Thm.~6.7]{IQ1}, we have the corresponding statement.
\begin{theorem}\label{thm:p=c}
There is a canonical triangle equivalence
between the topological Fukaya category
\[  \TFuk(\gms)\colon=\per\ha_\TT  \]
and the cluster-$\XX$ category
\begin{gather}\label{eq:CXT}
    \CXT\colon=\per\GAX/\D_{fd}(\GAX).
\end{gather}
\end{theorem}

\subsection{Graded decorated marked surfaces}
We fix the dual arc systems $(\TT,\TT^*)$ on $\gms$.
\begin{definition}\label{def:DMS}
We introduce the following notions.
\begin{itemize}
\item The \emph{decorated marked surface} (DMS) $\surfo$ of $\surf$ is obtained from $\surf$ by decorating a set
$\Tri=\{Z_i\}_{i=1}^\aleph$ of points in the interior of $\surf$, where $|\Tri|=|\Y|=|\M|$.
\item A \emph{(topological) cut} $\cut=\{c_i\}$ is a set of curves on $\surf$,
pairing (connecting) points in $\Tri$ and $\Y$ with no intersections or self-intersections.
See green arcs in Figure~\ref{fig:-cut}.
\item A \emph{grading} $\Lambda$ on $\surfo$ (with respect to $\cut$) is a class in $\coho{1}(\mathbb{P}T(\surf\setminus\Tri),\ZZ^2)$, with values
\begin{itemize}
\item $(1,0)$ on each (clockwise) loop $\{p\}\times\R\mathbb{P}^1$ on $T_p(\surf\setminus\Tri)$ for $p\notin\Tri$,
\item $(-2,1)$ on each (clockwise) loop $l_Z\times\{x\}$ on $\surf$ around any $Z\in\Tri,x\in\R\mathbb{P}^1$, and
\item $(?,0)$ on any simple closed curve $\alpha$ (i.e., a loop that is homeomorphic to $S^1$) on $\surf$ that does not intersect with $\cut$.
\end{itemize}
\item A grading $\Lambda$ on $\surfo$ is compatible with (/induced from)
the grading $\grad\in\coho{1}(\PP T\surf)$ if
$\Lambda(\alpha)=\left(\grad(\alpha),0\right)$,
for any simple closed curve $\alpha$ on $\surf$ that does not intersect with $\cut$.
\item A graded DMS $\surfo=(\surfo,\uc,\Lambda)$ associated to $\gms$ is
a DMS such that $(\Lambda,\cut)$ are compatible with $\grad$.
\end{itemize}
\end{definition}
Open curves in $\surfo$ still connect points in $\M$
while closed curves connect points in $\Tri$.
The notions of arcs and (full formal) open arc system are as before.
\begin{itemize}
\item Denote by $\ACC(\surfo)$ the set of admissible closed curves,
where a closed curve is admissible if
it does not cut out a once-decorated monogon by one of its self-intersections in $\surfo-\Tri$.
\item Denote by $\CA(\surfo)\subset\ACC(\surfo)$
the set of closed arcs on $\surfo$, which requires that
they connect different decorations.
\item The \emph{mapping class group} $\MCG_\bullet(\surfo)$
is the group of isotopy classes of diffeomorphisms of $\surfo$,
where all diffeomorphisms and isotopies are required to fix $\M$ and $\Tri$ setwise.
\item The kernel of the forgetful map $\MCG_\bullet(\surfo)\to\MCG_\bullet(\surf)$ is
the \emph{surface braid group} $\SBr(\surfo)$,
that is, the fundamental group of the configuration space of $|\Tri|$ points in
(the interior of) $\surf$, based at the set $\Tri$.
\item Each closed arc $\eta\in\CA(\surfo)$ induces a (positive) \emph{braid twist} $B_\eta\in\MCG_\bullet(\surfo)$.
The braid twist group $\BT(\surfo)$ is the subgroup of $\MCG_\bullet(\surfo)$
generated by $\{B_\eta\mid\eta\in\CA(\surfo)\}$.
Then $\BT(\surfo)\subset\SBr(\surfo)$, cf. \cite{QQ}.
\end{itemize}


We have the following characterization of cuts.

\begin{lemma}\label{lem:cuts}
The set $\Cut(\surfo)$ of cuts is a $\SBr(\surfo)$-torsor.
\end{lemma}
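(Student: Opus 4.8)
The plan is to show that the surface braid group $\SBr(\surfo)$ acts freely and transitively on the set $\Cut(\surfo)$ of cuts. Recall that a cut consists of $\aleph$ disjoint simple arcs, each connecting a distinct decorating point $Z_i \in \Tri$ to a distinct marked point $Y \in \Y$. The first task is to define the action: given a cut $\tcut = \{c_i\}$ and a mapping class $\varphi \in \SBr(\surfo)$, set $\varphi \cdot \tcut = \{\varphi(c_i)\}$. Since $\SBr(\surfo)$ fixes both $\Y$ and $\Tri$ setwise (it lies in the kernel of $\MCG(\surfo) \to \MCG(\surf)$, hence even fixes the marked points $\Y$ pointwise up to the forgetful structure), the image $\{\varphi(c_i)\}$ is again a collection of disjoint simple arcs realizing the same incidence pattern between decorating and marked points, so it is again a cut. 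That this is a well-defined group action on isotopy classes is routine.

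\textbf{Transitivity.} Given two cuts $\tcut = \{c_i\}$ and $\tcut' = \{c_i'\}$, I would construct a homeomorphism carrying one to the other. The key geometric fact is that cutting $\surfo$ along all arcs of $\tcut$ produces a surface with the decorating points absorbed into the boundary; more precisely, a full system of disjoint arcs joining $\Tri$ to $\partial$-data gives a contractible "ribbon" neighborhood, so any two such systems are related by an ambient isotopy \emph{after} possibly composing with a braiding of the points $\Tri$. Concretely, I would first isotope $\tcut'$ so that each $c_i'$ shares its marked-point endpoint with the corresponding $c_{\pi(i)}$ for some permutation $\pi$, then use the change-of-coordinates principle for surfaces (the fact that the complement of a maximal disjoint arc system is a disjoint union of disks) to produce a homeomorphism $\varphi$ fixing $\Y$ and $\Tri$ setwise with $\varphi(\tcut) = \tcut'$. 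Since $\varphi$ induces the identity in $\MCG(\surf)$ (it moves only the interior decorating points and the cut arcs), it lies in $\SBr(\surfo)$.

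\textbf{Freeness.} Suppose $\varphi \cdot \tcut = \tcut$, i.e. $\varphi$ fixes the cut $\tcut$ up to isotopy. I would argue that $\varphi$ is then isotopic to the identity through maps fixing $\Y$ and $\Tri$. The cut arcs $\tcut$ together with the boundary of $\surfo$ cut the surface into pieces; since $\varphi$ preserves each cut arc (after permuting the decorating points, but freeness requires showing this permutation is trivial), $\varphi$ descends to a mapping class of the cut-open surface fixing the boundary. Because $\SBr(\surfo)$ is identified with $\pi_1$ of a configuration space based at $\Tri$, an element fixing a full cut corresponds to a loop of configurations that can be contracted rel the arc system; hence $\varphi$ is trivial in $\SBr(\surfo)$.

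\textbf{The main obstacle} will be the freeness step, and within it the verification that the induced permutation of $\Tri$ is trivial. A priori $\varphi$ fixing $\{c_i\}$ as a \emph{set} of isotopy classes could permute the arcs and hence the decorating points; ruling this out requires using that the marked-point endpoints in $\Y$ are pointwise distinguished (each $c_i$ connects $Z_i$ to a \emph{specific} $Y$), which pins down the bijection and forces the permutation to be the identity. Once the permutation is trivial, the contractibility of each complementary region (a disk) makes the isotopy-to-identity argument a standard application of the Alexander method / change-of-coordinates principle for mapping class groups. I would cite the analogous torsor statements for arc systems in the references (e.g. the decorated-surface setup of \cite{QQ,QZ2}) to streamline this.
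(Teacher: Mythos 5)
Your overall structure (freeness via the cut-open surface, transitivity via an explicit homeomorphism) matches the paper's, and your treatment of freeness — including the observation that elements of $\SBr(\surfo)$ fix $\Y$ pointwise, which kills the potential permutation of the arcs — is sound and in fact more careful than the paper's one-line version. The genuine gap is in transitivity. First, the change-of-coordinates principle you invoke does not apply in the form you state: a cut is not a maximal arc system, and the complement of a cut in $\surfo$ is not a union of disks — it is homeomorphic to $\surf$ with slits, carrying the full genus and boundary topology of $\surf$. Second, and more seriously, even granting that some homeomorphism $\varphi$ with $\varphi(\tcut)=\tcut'$ exists (which it does, since the two cut-complements are homeomorphic as marked surfaces), your parenthetical claim that $\varphi$ ``moves only the interior decorating points and the cut arcs'' and hence lies in $\SBr(\surfo)$ does not follow. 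Two cuts can differ by, say, a Dehn twist along an essential curve of $\surfo$, and the obvious homeomorphism carrying one to the other is that Dehn twist, which is nontrivial in $\MCG(\surf)$ and so is \emph{not} in $\SBr(\surfo)$. The existence of some $\varphi\in\MCG(\surfo)$ with $\varphi(\tcut)=\tcut'$ does not by itself produce one in the kernel of the forgetful map.

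The paper closes exactly this gap by constructing the required element explicitly as a composition of point-pushing maps: push the decorations $\Tri$ along the arcs of $\tcut$ into a neighbourhood of $\Y$, rearrange them there onto the germs of the arcs of $\tcut'$, and push them back out along $\tcut'$. Each of these three maps becomes isotopic to the identity once the decorations are forgotten, so the composite lies in $\SBr(\surfo)$ by construction and visibly carries $\tcut$ to $\tcut'$. To repair your proof you should replace the change-of-coordinates step with this (or an equivalent) construction that guarantees membership in $\SBr(\surfo)$ from the outset, rather than trying to verify it a posteriori for an arbitrary $\varphi$.
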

\begin{proof}
First, suppose that there is an element $b\in\SBr(\surfo)$ and a cut $\cut$
such that $b(\cut)=\cut$.
As $\SBr(\surfo)=\MCG_\bullet(\surfo)/\MCG_\bullet(\surf)$,
we can assume that $b$ acts as the identity on $\surfo-\cut$
and hence $b$ is the identity on $\surfo$.
Second, for any two cuts $\cut_1$ and $\cut_2$,
one can construct an element $b\in\SBr(\surfo)$, satisfying $b(\cut_1)=b(\cut_2)$, as follows:
\begin{itemize}
  \item Take a homeomorphism/diffeomorphism $b_1\colon\surfo\to\surfo$
  that acts as identity outside the neighbourhood of $\cut_1=\{c_i^1\}$
  and pulls decorations $\Tri=\{Z_i\}$ along $\{c_i^1\}$ to the neighbourhood of $\Y=\{Y_i\}$;
  \item Then take another homeomorphism $b_{21}\colon\surfo\to\surfo$
  that moves the decorations (around $\Y$) on the arcs $\cut_2=\{c_i^2\}$;
  \item Finally, take a homeomorphism $b_2\colon\surfo\to\surfo$
  that moves the decorations along $\{c_i^2\}$ back to $\Tri$.
\end{itemize}
The composition $b_2\circ b_{21}\circ b_1$ gives the required $b$.
This completes the proof.
\end{proof}

\subsection{Topological log surfaces}

To understand the grading of arcs/curves on $\surfo$, we introduce log surface.
In Section~\ref{sec:MC}, we will discuss an alternative construction of log surface.
Such a surface can be viewed as the $q$-deformation of $\surfo$.

\begin{definition}\label{con:top log}
The \emph{(topological) log surface} $\LS$ of $\surfo$ with respect to $\cut$ is constructed as follows:
\begin{itemize}
\item For every $m\in\ZZ$, take a copy of $\surfo$, denoted by $\surfo^{(m)}$ with the cut $\cut^{(m)}$.
    Cut it along $c_i^{(m)}\in\cut^{(m)}$ from the decorating point.
\item Gluing $c_{i+}^{(m)}$ with $c_{i-}^{m+1}$ for $m\in\ZZ$ and $c_i\in\cut$,
we obtain $\LS$.
\item $\LS$ inherits a ($\ZZ$-)grading $\Lambda_1\in\coho{1}(\mathbb{P}T(\LS\setminus\Tri),\ZZ)$ from $\Lambda$,
which is the projection of $\Lambda$ to the first $\ZZ$-coordinate on each sheet.
\item There is a deck transformation $q$ such that $q(\surfo^{(m)})=\surfo^{(m+1)}$,
which is also known as the Adams grading of $\LS$.
\item Denote by $\pi_{\Tri}\colon\LS\to\surfo$ the covering map.
For any (graded) curve on $\surfo$ (and is in a minimal position with respect to $\cut$),
there are $\ZZ$ lifts on $\LS$ via the deck transformation $q$.
Denote by $\CA(\LS)$ and $\wCA(\LS)$
the set of all lifts of $\CA(\surfo)$ and $\wCA(\surfo)$ respectively.
Similarly for $\ACC(\LS)$ and $\wACC(\LS)$.
\item The grading of $\LS$ inherits from the first grading $\Lambda_1$ of $\Lambda$ on $\surfo$,
which will be denoted by $\log\lambda$.
\end{itemize}
\end{definition}

The mapping class group $\MCG_\bullet(\surfo)$ acts canonically on $\LS$ via the action on each sheet.
Denote by $\MCG^\circ(\LS)$ the mapping class group of $\LS$,
which is generated by $q$ and homeomorphisms induced from $\MCG(\surfo)$.
Clearly, $q$ is in the center of $\MCG^\circ(\LS)$.

A cut $\cut$ is compatible with $\TT$ if they do not intersect.
From now on, we will fix the log surface $\LS$ with respect to an initial cut $\uc$
that is compatible with the initial arc system $\TT$ of $\gms$.
In fact, $\uc$ is uniquely determined by $\TT$.
Moreover,
the (open full formal) arc system $\TT$ is then also an arc system of $\surfo$.

The lifts of graded closed arcs in $\LS$ provide a topological model for $\DX$.
More precisely, we have the following.

\begin{theorem}\cite[Thm.~A]{IQZ}\label{thm:IQZ0}
There is a map
\begin{gather}\label{eq:X lift}
    \Xxx\colon \wACC(\LS)\to\Ind\DX.
\end{gather}
Then restricted to $\wCA(\LS)$, $\Xxx$ gives a bijection between
the set of graded closed arcs and the set of reachable spherical objects (denoted by $\Sph\DX$).
Let $\ST\DX$ be the spherical twists group,
the subgroup of $\Aut\DX$ generated by spherical twist $\Phi_{S}$ of $S\in\Sph\DX$.
Then $X$ induces an isomorphism
\begin{equation}\label{eq:BT=ST}
    \iota_\TT\colon\BT(\surfo)\cong\ST\DX,
\end{equation}
sending the braid twist $B_\eta$ of a closed arc $\eta$
to the spherical twist $\Phi_{\Xxx(\eta)}$.
\end{theorem}

Furthermore, similar to \cite[Thm.~9.9]{BS} (cf. \cite[\S~4.4]{KQ2}),
we can consider a quotient group $\Aut^\circ\DX$ of certain subgroup of $\Aut\DX$,
that fits in the short exact sequence
\[
    0\to\ST\DX\to\Aut^\circ\DX\to\MCG(\surf)\to0.
\]

\begin{remark}\label{rem:n.d.}
The numerical data $\num(\LS)$ of $\LS$ is given by $\num(\gms)$ in Definition~\ref{def:numerical}.
\end{remark}

\subsection{Topological immersion}
\begin{figure}[hb]\centering
\begin{tikzpicture}[scale=.28,arrow/.style={->,>=stealth,thick}]
\draw[thick] (0,0) circle (10);
\draw[blue,dashed, \separated ,very thick, font=\scriptsize] (180-360/7*5:10)
    .. controls +(90:6) and +(160:6) .. (180-360/7*4:10);
\draw[blue,dashed, \separated ,very thick, font=\scriptsize] (180-360/7*5:10)
    edge [bend right=-35] (180-360/7*3:10)
    edge [bend right=-10] (180-360/7*2:10);
\draw[blue,dashed, \separated ,very thick, font=\scriptsize](180-360/7*2:10)
    .. controls +(-100:6) and +(-45:6) .. (180-360/7*1:10)
    .. controls +(-45:6) and +(-0:6) .. (180-360/7*0:10)
    .. controls +(-0:6) and +(45:6) .. (180-360/7*6:10);
\foreach \j in {1,...,7}
\draw(180-\j*360/7:10)node[cyan]{$\bullet$};

\draw[red,ultra thick, font=\scriptsize] (360/7*5:10)
    edge [bend right=5]node[above right]{ } (360/7*1:10)
    edge [bend right=15]node[right]{ } (360/7*2:10)
    edge [bend right=40]node[left]{ } (360/7*3:10)
    edge [bend right=25]node[left]{ } (360/7*4:10);
\draw[red,ultra  thick, font=\scriptsize] (360/7*1:10)
    to[bend left=-45] node[right]{ }(360/7*0:10)
    to[bend left=-45] node[right]{ }(-360/7:10);

\foreach \j in {1,...,7}
\draw(\j*360/7:10)node[white]{$\bullet$} node[red]{$\circ$};
\end{tikzpicture}
\quad
\begin{tikzpicture}[scale=0.28,arrow/.style={->,>=stealth,thick}]
\draw[thick] (0,0) circle (10);
\draw[blue,dashed, \separated ,very thick, font=\scriptsize] (180-360/7*5:10)
    .. controls +(90:6) and +(160:6) .. (180-360/7*4:10);
\draw[blue,dashed, \separated ,very thick, font=\scriptsize] (180-360/7*5:10)
    edge [bend right=-35] (180-360/7*3:10)
    edge [bend right=-10] (180-360/7*2:10);
\draw[blue,dashed, \separated ,very thick, font=\scriptsize](180-360/7*2:10)
    .. controls +(-100:6) and +(-45:6) .. (180-360/7*1:10)
    .. controls +(-45:6) and +(-0:6) .. (180-360/7*0:10)
    .. controls +(-0:6) and +(45:6) .. (180-360/7*6:10);
\foreach \j in {1,...,7}
\draw(180-\j*360/7:10)node[cyan]{$\bullet$};

\draw[red,ultra  thick, font=\scriptsize] (360/7*5:7)
    edge [bend right=5]node[right]{} (360/7*1:7)
    edge [bend right=10]node[right]{} (360/7*2:7)
    edge [bend right=15]node[left]{} (360/7*3:7)
    edge [bend right=25]node[left]{} (360/7*4:7);
\draw[red,ultra  thick, font=\scriptsize] (360/7*1:7)
    to[bend left=-25] node[right]{}(360/7*0:7)
    to[bend left=-25] node[right]{}(-360/7:7);

\foreach \j in {1,...,7}
\draw[ultra thick, green](\j*360/7:10)to(\j*360/7:7)node[white]{$\bullet$} node[red]{$\circ$};
\end{tikzpicture}
\caption{Constructing DMS $\surfo$ via pulling closed marked points inside}\label{fig:-cut}
\end{figure}
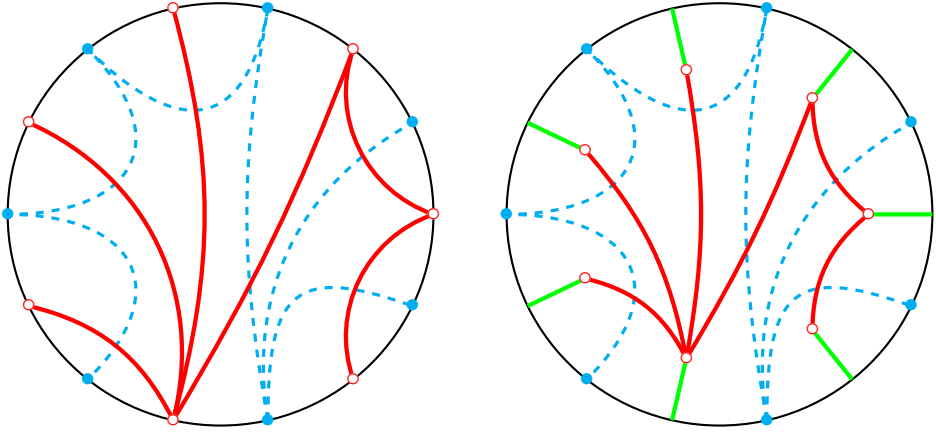

The DMS $\surfo$ can be thought as obtained from $\surf$ by
pulling the closed marked points along $\uc$ that become decorations, cf. Figure~\ref{fig:-cut}.
This operation induces a map
\begin{gather}\label{eq:imc}
    \imc\colon\wCA(\gms)\to  \wCA(\surfo^{(0)})\subset\wCA(\LS)
\end{gather}
in \cite[(6.2)]{IQZ}, cf. \cite[Fig.~24]{IQZ},
that takes a graded closed arc in $\gms$ to
a graded closed arc in the 0-sheet $\surfo^{(0)}$ of $\LS$.

Moreover, we have the following topological realization of Lagrangian immersion.
\begin{theorem}\cite[Thm.~6.6]{IQZ}\label{thm:imc}
The following commutative diagram commutes
\begin{gather}\label{eq:L=im}
\xymatrix@C=3pc{
    \wCA(\gms) \ar[r]^{\imc} \ar[d]_{\Xinf} & \wCA(\LS)
        \ar[d]^{\Xxx} \\
    \Ind\DT \ar[r]^{ \hh{L}_\TT } & \Ind\DX.
}\end{gather}
\end{theorem}

Also notice that the full formal closed arc system $\TT^*$ on $\gms$, dual to $\TT$,
becomes the closed arc system $$\TT^*_\Tri\colon=\imc(\TT^*)$$
on $\surfo$, still dual to $\TT$ (regarded on $\surfo$).

\begin{remark}\label{rem:Ext-Q}
Similar to Remark~\ref{rem:Ext-q},
graph duality between $\TT^*_\Tri$ and $\TT$ corresponds to the simple-projective duality.
More precisely,
$\widetilde{Q}_\TT$ can be obtained from the Ext-quiver
$Q^{\Ext}(\GAX)$ by the degree changing: $d\mapsto 1-d$.
And $Q^{\Ext}(\GAX)$ can be identified with
the intersection quiver of $\TT^*_\Tri$.

Furthermore,
the Ext-quiver $Q^{\Ext}(\GAX)$ is the Calabi-Yau-$\XX$ double,
in the sense of \cite[Def.~6.2]{KQ1}, of the Ext-quiver $Q^{\Ext}(\ha^0_\TT)$.
\end{remark}

\part{Geometric Part}\label{part:Geo}
\section{Quadratic differentials on Riemann surfaces}\label{sec:QDR}
In this section,
we review the theory of Bridgeland-Smith \cite{BS} (cf. \cite{KQ2}) and Haiden-Katzarkov-Kontsevich \cite{HKK},
that the spaces of stability conditions on Fukaya type categories associated to Riemann surfaces
can be realized by the moduli spaces of framed quadratic differentials (of certain type) on the Riemann surfaces.
\subsection{Preliminaries}\label{sec:HKK}
Let $\rs$ be a Riemann surface and $\omega_\rs$
its holomorphic cotangent bundle.
A \emph{meromorphic quadratic differential} $\phi$ on $\rs$ is a meromorphic section
of the line bundle $\omega_{\rs}^{2}$.
In terms of a local coordinate $z$ on $\rs$, such a $\phi$  can be written as $\phi(z)=g(z)\, \diff z^{\otimes 2}$,
where $g(z)$ is a meromorphic function.
Denote by $\Zer_j(\phi)$ the set of zeroes of $\phi$ of order $j$
and $\Zer(\phi)=\bigsqcup_{j>0}\Zer_j(\phi)$.
Similar for the set $\Pol(\phi)=\bigsqcup_{j>0}\Pol_j(\phi)$ of poles.
Set $\Crit(\phi):=\Zer(\phi)\cup\Pol(\phi)$.

At a point of $\rs^\circ=\rs \setminus \Crit(\phi)$,
there is a  distinguished local coordinate $\omega$,
uniquely defined up to transformations of the form $\omega \mapsto \pm\, \omega+\operatorname{const}$,
with respect to which one has $\phi(\omega)=\diff \omega^{\otimes 2}$.
In terms of a local coordinate $z$, we have $w=\int \sqrt{g(z)}\diff z$.
A quadratic differential $\phi$ on $\rs$ determines the \emph{$\phi$-metric} on $\surp$,
defined locally by pulling back the Euclidean metric on $\kong{C}$ via the distinguished coordinate $\omega$.
Therefore, there are geodesics on $\surp$ and each geodesic has a constant phase,
with respect to $\omega$.

Next, we summarize the global structure of the horizontal trajectories and horizontal strip decompositions
(cf. \cite[\S~3]{BS} and \cite[\S~2]{HKK}).

\begin{definition}
A \emph{horizontal trajectory} of $\phi$ on $\surp$
is a maximal horizontal geodesic $\gamma\colon(0,1)\to\surp$
with respect to the $\phi$-metric.
The horizontal trajectories of a meromorphic quadratic differential $\phi$
provide the \emph{horizontal foliation} on $\rs$.
\end{definition}

The following are types of trajectories of a quadratic differential $\phi$
we are interested in:
\begin{itemize}
\item \emph{saddle trajectories} with both ends in $\Zer(\phi)$;
\item \emph{separating trajectories} with one end in $\Zer(\phi)$ and the other
in $\Pol(\phi)$;
\item \emph{generic trajectories} with both ends in $\Pol(\phi)$;
\item \emph{closed trajectories} that are simple closed curves;
\item \emph{recurrent trajectories} that are recurrent in at least one direction.
\end{itemize}

\begin{definition}
By removing all horizontal separating trajectories (which are finitely many) from $\surp$,
the remaining open surface splits as a disjoint union of connected components.
Each component is one of the following types:
\begin{itemize}
\item a \emph{half-plane}, i.e., it is isomorphic to
$\{z\in \kong{C}\colon\operatorname{Im}(z)>0\}$
equipped with the differential $\diff z^{\otimes 2}$.
It is swept out by generic trajectories which connect a fixed pole to itself.
\item
a \emph{horizontal strip}, i.e., it is isomorphic to
$\{z\in \kong{C}\colon a<\operatorname{Im}(z)<b\}$
equipped with the differential $\diff z^{\otimes 2}$.
It is swept out by generic trajectories which connect two (not necessarily distinct) poles.
\item a \emph{ring domain}, i.e., it is isomorphic to
$\{z\in\kong{C}\colon a<|z|<b\}\subset\kong{C}^*$
equipped with the differential $r\diff z^{\otimes 2}/z^2$ for some $r\in\kong{R}_{<0}$.
It is swept out by closed trajectories.
\item a \emph{spiral domain},
defined to be the interior of the closure of a recurrent trajectory.
\end{itemize}
We call this union the \emph{horizontal strip decomposition} of $\rs$ with respect to $\phi$.
\end{definition}

A quadratic differential $\phi$ on $\rs$ is called \emph{saddle-free}, if it has no saddle trajectory.
Note the following:
\begin{itemize}
\item In each horizontal strip, the trajectories are isotopy to each other.
\item If $\phi$ is saddle-free, then $\Pol(\phi)$ must be nonempty and,
by \cite[Lemma~3.1]{BS}, $\phi$ has no closed or recurrent trajectories.
Thus, in the horizontal strip decomposition of $\rs$ with respect to $\phi$,
there are only half-planes and horizontal strips.
\item If $\phi$ is saddle-free, then the boundary of any strips consists of separating trajectories.
\item If $\phi$ is saddle-free, then there is a unique geodesic in each horizontal strip,
the \emph{saddle connection}, connecting the two zeroes on its boundary.
\end{itemize}

\subsection{Classical type of singularities in the Calabi-Yau-3 setting}\label{sec:GMN}
\begin{definition}\label{def:cqd}
A \emph{classical type of singularity} $p$ of a meromorphic quadratic differential has the local form
\begin{gather}\label{eq:cla}
    z^k g(z)\diff z^{\otimes2},
\end{gather}
where $g(z)$ is some non-zero holomorphic function and
\begin{itemize}
  \item if $k>0$, then $p$ is a \emph{zero} of order $k$;
  \item if $k<0$, then $p$ is a \emph{pole} of order $k$.
\end{itemize}
\end{definition}

The foliation near a singularity $p$ is shown in Figure~\ref{fig:foli}, where
$p$ is a zero of order $1$ and $2$ in the left two pictures and
$p$ is a pole of order $3$ and $4$ in the right two pictures.

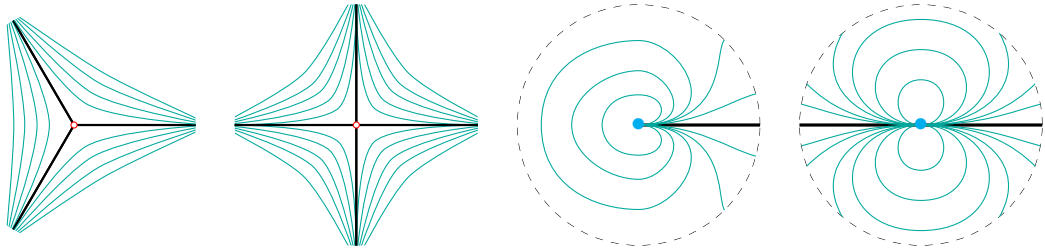
\begin{figure}[ht]\centering
\begin{tikzpicture}[scale=.4] \clip(0,0) circle (4);
\foreach \k in {1,2,0}
{  \path (120*\k:4.5) coordinate (v2)
          (120*\k+120:4.5) coordinate (v1)
          (120*\k+60:2.25) coordinate (v3)
          (0,0) coordinate (v4);
  \draw[thick](v2)to(v4)to(v1);
  \foreach \j in {.36,.54,.72,.88}
    {
      \path (v4)--(v3) coordinate[pos=\j] (m0);
      \draw[Emerald] plot [smooth,tension=.5] coordinates {(v1)(m0)(v2)};}}
\draw[red,fill=white](0,0)circle(.1);
\end{tikzpicture}
\quad
\begin{tikzpicture}[scale=.4] \clip(0,0) circle (4);
\foreach \k in {0,1,2,3}
{  \path (90*\k:4.5) coordinate (v2)
          (90*\k+90:4.5) coordinate (v1)
          (90*\k+45:2.25) coordinate (v3)
          (0,0) coordinate (v4);
  \foreach \j in {.36,.54,.72,.88,1.08}
    {\path (v4)--(v3) coordinate[pos=\j] (m0);
     \draw[Emerald] plot [smooth,tension=.5] coordinates {(v1)(m0)(v2)};}
  \draw[thick](v2)to(v4)to(v1);}
\draw[red,fill=white](0,0)circle(.1);
\end{tikzpicture}
\quad
\begin{tikzpicture}[scale=.4,rotate=0]\clip(0,0) circle (4);
\draw[dashed, thin](0,0)circle(4);
\draw[very thick](0,0)to(0:4)(-90:4.5);
\draw[Emerald] (0,0)
    .. controls +(0:2) and +(195:1) ..(15:4);
\draw[Emerald] (0,0)
    .. controls +(0:2) and +(165:1) ..(-15:4);
\draw[Emerald] (0,0)
    .. controls +(0:3) and +(225:.5) ..(45:4);
\draw[Emerald] (0,0)
    .. controls +(0:3) and +(125:.5) ..(-45:4);

\draw[Emerald] (0,0)
    .. controls +(0:3) and +(0:1.5) ..(90:2.8)
    .. controls +(180:1.5) and +(90:2) ..(180:3.2)
    .. controls +(-90:2) and +(180:1.5) ..(-90:2.8)
    .. controls +(0:1.5) and +(0:3) .. (0,0);
\draw[Emerald] (0,0)
    .. controls +(0:2) and +(0:1) ..(90:1.8)
    .. controls +(180:1) and +(90:1) ..(180:2.2)
    .. controls +(-90:1) and +(180:1) ..(-90:1.8)
    .. controls +(0:1) and +(0:2) .. (0,0);
\draw[Emerald] (0,0)
    .. controls +(0:1) and +(0:1) ..(90:1)
    .. controls +(180:1) and +(90:.3) ..(180:1.2)
    .. controls +(-90:.3) and +(180:1) ..(-90:1)
    .. controls +(0:1) and +(0:1) .. (0,0);
\draw[\separated](0,0)node{$\bullet$};
\end{tikzpicture}
\quad
\begin{tikzpicture}[scale=.4,rotate=0]\clip(0,0) circle (4);
\draw[dashed, thin](0,0)circle(4);
\draw[very thick](180:4)to(0:4)(-90:4.5);
\draw[Emerald] (0,0)
    .. controls +(0:2) and +(195+30:1) ..(20:4);
\draw[Emerald] (0,0)
    .. controls +(0:2) and +(165-30:1) ..(-20:4);

\draw[Emerald] (0,0)
    .. controls +(0:2) and +(195+105:2) ..(50:4);
\draw[Emerald] (0,0)
    .. controls +(0:2) and +(165-105:2) ..(-50:4);

\draw[Emerald] (0,0)
    .. controls +(180:2) and +(180-195-105:2) ..(180-50:4);
\draw[Emerald] (0,0)
    .. controls +(180:2) and +(180-165+105:2) ..(180--50:4);

\draw[Emerald] (0,0)
    .. controls +(0:2) and +(195:1) ..(10:4);
\draw[Emerald] (0,0)
    .. controls +(0:2) and +(165:1) ..(-10:4);

\draw[Emerald] (0,0)
    .. controls +(180:2) and +(15:1) ..(195-5:4);
\draw[Emerald] (0,0)
    .. controls +(180:2) and +(-15:1) ..(165+5:4);

\draw[Emerald] (0,0)
    .. controls +(180:2) and +(45:1) ..(200:4);
\draw[Emerald] (0,0)
    .. controls +(180:2) and +(-45:1) ..(160:4);

\draw[Emerald] (0,0)
    .. controls +(0:3) and +(0:3) ..(90:3.5)
    .. controls +(180:3) and +(180:3) .. (0,0);
\draw[Emerald] (0,0)
    .. controls +(0:2) and +(0:2) ..(90:2.5)
    .. controls +(180:2) and +(180:2) .. (0,0);
\draw[Emerald] (0,0)
    .. controls +(0:1) and +(0:1) ..(90:1.5)
    .. controls +(180:1) and +(180:1) .. (0,0);
\draw[Emerald] (0,0)
    .. controls +(0:3) and +(0:3) ..(-90:3.5)
    .. controls +(180:3) and +(180:3) .. (0,0);
\draw[Emerald] (0,0)
    .. controls +(0:2) and +(0:2) ..(-90:2.5)
    .. controls +(180:2) and +(180:2) .. (0,0);
\draw[Emerald] (0,0)
    .. controls +(0:1) and +(0:1) ..(-90:1.5)
    .. controls +(180:1) and +(180:1) .. (0,0);

\draw[\separated](0,0)node{$\bullet$};
\end{tikzpicture}
\caption{Foliations near classical type of zeroes/poles}\label{fig:foli}
\end{figure}

Next, we briefly review Bridgeland-Smith's result (cf. \cite{BS,KQ2}).
For simplicity, we will not consider the case when the pole has order 1 or 2.
\begin{definition}\label{def:GMN}
A \emph{GMN differential} on $\rs$ is a meromorphic differential
with classical type of singularities satisfying:
\begin{itemize}
\item all the zeroes are simple zeroes (i.e., of order $1$);
\item the order of any pole is at least 3.
\end{itemize}

The \emph{real (oriented) blow-up} of $(\rs,\phi)$ is a differentiable surface $\rs^\phi$ obtained from the underlying differentiable surface by replacing a pole $P\in\Pol(\phi)$
with order $\ord_\phi(P)>3$ by a boundary $\partial_P$,
where the points on the boundary correspond to the real tangent directions at $P$.
We mark the points on $\partial_P$ that correspond to the distinguished tangent directions.
Thus there are $\ord_\phi(P)-2$ (open) marked points on $\partial_P$.
We denote by $\rs^\phi$ the \emph{(open) marked surface} associated to $(\rs,\phi)$,
that consists of the surface $\rs^\phi$ together with those marked points.
The \emph{decorated marked surface} $\rs^\phi_{\Zer(\phi)}$ associated to $(\rs,\phi)$ is
obtained from $\rs^\phi$ by decorating the set $\Zer(\phi)$ of zeroes of $\phi$.
\end{definition}

\begin{example}\label{ex:a2}
Definition~\ref{def:GMN} fits perfectly with triangulated surfaces in the theory of cluster algebras \cite{FST}.
For instance, the foliation of a quadratic differential on $\PP^1$ with type $(1,1,1,-7)$,
i.e., it has three simple zeroes and one order $7$ pole,
is shown in Figure~\ref{fig:Quad A2}.
The left picture is the real blow-up part and the right picture is the neighbourhood of the pole.
Note that, in the pictures,
\begin{itemize}
\item the blue vertices are poles or marked points;
\item the red vertices are simple zeroes;
\item the green arcs are geodesics;
\item the black arcs are separating trajectories;
\item the red arcs are the saddle connections in the horizontal strips.
\end{itemize}
Observe that the isotopy classes of separating trajectories
in the left picture of Figure~\ref{fig:Quad A2}
form a \emph{WKB-triangulation} of the marked surface (a pentagon in this case).
In general, such a WKB-triangulation will be formed
provided the GMN differential is saddle-free.
\end{example}

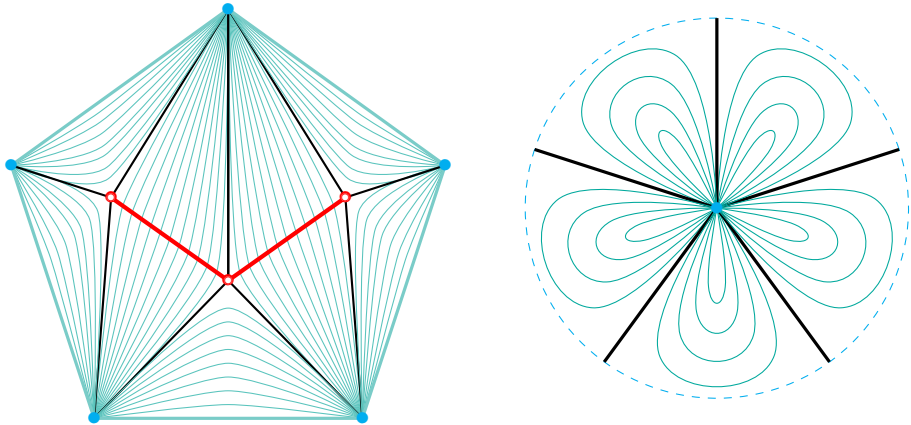
\begin{figure}[ht]\centering
\begin{tikzpicture}[scale=.6]
    \path (18+72:5) coordinate (v2)
          (18+72*3:5) coordinate (v1)
          (18+72*2:2.7) coordinate (v3)
          (0,-1) coordinate (v4);
  \foreach \j in {.1, .18, .26, .34, .42, .5,.58, .66, .74, .82, .9}
    {
      \path (v3)--(v4) coordinate[pos=\j] (m0);
      \draw[Emerald!60, thin] plot [smooth,tension=.3] coordinates {(v1)(m0)(v2)};
    }
\draw[thick](v4)to(v1)to(v3)to(v2)to(v4);

    \path (18+72:5) coordinate (v2)
          (18+72*4:5) coordinate (v1)
          (18+72*0:2.7) coordinate (v3);
  \foreach \j in {.1, .18, .26, .34, .42, .5,.58, .66, .74, .82, .9}
    {
      \path (v3)--(v4) coordinate[pos=\j] (m0);
      \draw[Emerald!60, thin] plot [smooth,tension=.3] coordinates {(v1)(m0)(v2)};
    }
\draw[thick](v4)to(v1)to(v3)to(v2)to(v4);
    \path (18+72:5) coordinate (v2)
          (18+72*2:5) coordinate (v1)
          (18+72*2:2.7) coordinate (v3);
    \path (v1)--(v2) coordinate[pos=.4] (v4);
  \foreach \j in {.2,.32,.45,.55,.68,.8, .9}
    {
      \path (v3)--(v4) coordinate[pos=\j] (m0);
      \draw[Emerald!60, thin] plot [smooth,tension=.4] coordinates {(v1)(m0)(v2)};
    }
    \path (18+72*3:5) coordinate (v2);
    \path (v1)--(v2) coordinate[pos=.4] (v4);
  \foreach \j in {.2,.32,.45,.55,.68,.8, .9}
    {
      \path (v3)--(v4) coordinate[pos=\j] (m0);
      \draw[Emerald!60, thin] plot [smooth,tension=.4] coordinates {(v1)(m0)(v2)};
    }
    \path (18+72:5) coordinate (v2)
          (18+72*0:5) coordinate (v1)
          (18+72*0:2.7) coordinate (v3);
    \path (v1)--(v2) coordinate[pos=.4] (v4);
  \foreach \j in {.2,.32,.45,.55,.68,.8, .9}
    {
      \path (v3)--(v4) coordinate[pos=\j] (m0);
      \draw[Emerald!60, thin] plot [smooth,tension=.4] coordinates {(v1)(m0)(v2)};
    }
    \path (18+72*4:5) coordinate (v2);
    \path (v1)--(v2) coordinate[pos=.4] (v4);
  \foreach \j in {.2,.32,.45,.55,.68,.8, .9}
    {
      \path (v3)--(v4) coordinate[pos=\j] (m0);
      \draw[Emerald!60, thin] plot [smooth,tension=.4] coordinates {(v1)(m0)(v2)};
    }
    \path (18+72*3:5) coordinate (v2)
          (18+72*4:5) coordinate (v1)
          (0,-1) coordinate (v3);
    \path (v1)--(v2) coordinate[pos=.5] (v4);
  \foreach \j in {.2,.3,.4,.5,.6,.7,.8,.9}
    {
      \path (v3)--(v4) coordinate[pos=\j] (m0);
      \draw[Emerald!60, thin] plot [smooth,tension=.4] coordinates {(v1)(m0)(v2)};
    }
\draw[thick](18+72*2:5)to(18+72*2:2.7)(18:5)to(18:2.7);
\foreach \j in {1,2,3,4,5}
    {\draw[Emerald!50,very thick]
     (18+72*\j:5)to(90+72*\j:5);}
\foreach \j in {1,2,3,4,5}
    {\draw[\separated]
        (18+72*\j:5)node{$\bullet$};}
\draw[red,ultra thick]
  (18+72*2:2.7)node{$\bullet$}node[white]{\tiny{$\bullet$}}node{\tiny{$\circ$}}to
  (0,-1)node{$\bullet$}node[white]{\tiny{$\bullet$}}node{\tiny{$\circ$}}to
  (18+72*0:2.7)node{$\bullet$}node[white]{\tiny{$\bullet$}}node{\tiny{$\circ$}};
\end{tikzpicture}
\quad
\begin{tikzpicture}[scale=.7,rotate=18]
\draw[dashed, \separated](0,0)circle(3.6)(-90:4.5);
\foreach \j in {0,1,2,3,4}
{
  \draw[very thick](72*\j+0:3.6)to(0,0);
  \draw[Emerald] plot [smooth,tension=1] coordinates
    {(0,0)(15+72*\j:3)(72-15+72*\j:3)(0,0)};
  \draw[Emerald] plot [smooth,tension=1] coordinates
    {(0,0)(20+72*\j:2.5)(72-20+72*\j:2.5)(0,0)};
  \draw[Emerald] plot [smooth,tension=1] coordinates
    {(0,0)(25+72*\j:2)(72-25+72*\j:2)(0,0)};
  \draw[Emerald] plot [smooth,tension=1] coordinates
    {(0,0)(30+72*\j:1.5)(72-30+72*\j:1.5)(0,0)};
}
\draw[\separated](0,0)node{$\bullet$};
\end{tikzpicture}
\caption{The foliation of a GMN differential on $\PP^1$ for the $A_2$ case}
\label{fig:Quad A2}
\end{figure}

Let $\surfo^3$ be a decorated marked surface with
$\M$ the set of open marked points and $\Tri$ the set of decorations in the interior.
Its numerical data satisfies (cf. \cite[Def.~3.1]{QQ})
\begin{gather}\label{eq:Delta}
    |\Tri|=4g+2|\partial\surf|+|M|-4 \quad(\ge1),
\end{gather}
where $g$ is the genus of $\surf$.

\begin{definition}\label{def:SFquad}
A \emph{$\surfo^3$-framed quadratic differential} $(\rs,\phi,\psi)$
consists of a GMN differential $\phi$ on a Riemann surface $\rs$ together
with a diffeomorphism $\psi\colon\surfo^3 \to\rs^\phi_{\Zer(\phi)}$
preserving the marked/decorating points setwise.
Two $\surfo^3$-framed quadratic differentials $(\rs_1,\phi_1,\psi_1)$ and $(\rs_2,\phi_2,\psi_2)$
are equivalent if
\begin{itemize}
\item there exists a biholomorphism $f\colon\rs_1\to\rs_2$
such that $f^*(\phi_2)=\phi_1$ and,
\item $\psi_2^{-1}\circ f_*\circ\psi_1\in\Homeo_0(\surfo^3)$,
where $f_*\colon(\rs_1)^{\phi_1}_{\Zer(\phi_1)}\to(\rs_2)^{\phi_2}_{\Zer(\phi_2)}$ is the induced homeomorphism.
Recall that $\Homeo_0$ consists of isotopy classes of homeomorphisms that are isotopy to identity.
\end{itemize}
\end{definition}

We denote by $\FQuad_3(\surfo^3)$ the moduli space of $\surfo^3$-framed quadratic differentials.
For $\surfo^3$, there is a canonical associated Calabi-Yau-3 category $\D_3(\surfo^3)$,
which can be constructed as follows.
\begin{itemize}
  \item Take any triangulation $T$ of $\surf=\rs^\phi$
  (or a WKB-triangulation mentioned above)
    and consider the corresponding quiver with potential $(Q_T,W_T)$, where
    \begin{itemize}
\item the vertices of $Q_T$ correspond to the open arcs in $T$;
\item the arrows of $Q_T$ correspond to the angles of triangles in $T$;
\item the terms/cycles of $W_T$ correspond to triangles in $T$.
\end{itemize}
  \item Let $\Gamma_T$ be the Ginzburg dg algebra of $(Q_T, W_T)$
  and then $\D_3(\surfo^3)$ is the finite dimensional derived category of $\Gamma_T$.
\end{itemize}

Recall that a triangulation is a maximal compatible collection of open arcs on $\surf$.
Moreover, \cite[Thm.~A]{BQZ} shows that $\D_3(\surfo^3)$ is unique (up to natural transformation) in a suitable sense.
Furthermore, $\D_3(\surfo^3)$ can be embedded into the derived Fukaya category of a symplectic 3-fold,
constructed from fibrations of $\surf$ via a quadratic differential \cite{S}.

Let $\widetilde{\surfo^3}$ be the branched double cover (\emph{spectral cover}) of $\surfo^3$, branching at $\Tri$.
We finish this subsection with the following result, whose original version/idea is due to \cite{BS},
while the quoted upgraded version is taken from \cite{KQ2}.

\begin{theorem}\cite{BS,KQ2}\label{thm:BSKQ}
There is an isomorphism
\begin{gather}\label{eq:chi_3}
    \chi_3\colon \widehat{\Ho{}}(\surfo^3)\to\Grot(\D_3(\surfo^3)),
\end{gather}
where the hat homology $\widehat{\Ho{}}(\surfo^3)=\Ho{1}(\widetilde{\surfo^3};\ZZ)^-$
is the anti-invariant part of $\Ho{1}(\widetilde{\surfo^3};\ZZ)$ with respect to the covering involution
(cf. Definition~\ref{def:hhg} for the precise definition in the general setting).
Moreover, there is an isomorphism $\Xi_3$ of complex manifolds that fits into the commutative diagram
\begin{gather}\label{eq:i 3}
\xymatrix@C=5pc{
    \FQuad_3^\circ(\surfo^3)\ar[r]^{\Xi_3} \ar[d]_{\Pi_3} &
        \Stap\D_3(\surfo^3) \ar[d]^{\hh{Z}_3}\\ 
    \Hom(  \widehat{\Ho{}}(\surfo^3),\bC) \ar[r]^{ \chi_3} &
        \Hom(  \Grot(\D_3(\surfo^3)),\bC)
,}\end{gather}
such that the period map $\Pi_3\colon \phi\mapsto\int\sqrt{\phi}$ becomes the map $\hh{Z}_3$ (i.e., central charge).
More precisely, \eqref{eq:formula} holds for
\begin{gather}\label{eq:X3}
    X=\xs\colon \CA(\surfo^3)\to\Sph\D_3(\surfo^3)/[1]
\end{gather}
in \cite[Thm.~6.6]{QQ}, where $\CA(\surfo^3)$ the set of closed arcs in $\surfo^3$
and $\Sph\D_3(\surfo^3)$ the set of reachable spherical objects in $\D_3(\surfo^3)$.

Furthermore, for $\sigma=\Xi_3(\phi)$, the map $\xs$ induces one-one correspondence between
saddle connections of $\phi$ on $\surfo^3$ and $\sigma$-semistable objects in $\D_3(\surfo^3)$.
\end{theorem}
Here, $^\circ$ in $\FQuad_3^\circ$ and $\Stap$ means taking the \emph{principal component}
(cf. \cite[\S~4]{KQ2}).

We also have the moduli space of $\surf^3$-framed quadratic differentials such that
\begin{gather}\label{eq:surf3}
    \FQuad(\surf^3)=\FQuad^\circ(\surfo^3)/\BT(\surfo^3)\cong
    \Stab\D_3(\surfo^3)/\ST\D_3(\surfo^3),
\end{gather}
noticing that \eqref{eq:X3} induces the isomorphism between $\BT(\surfo^3)$ and $\ST\D_3(\surfo^3)$,
cf. \cite[Thm.1]{QQ}, which is the Calabi-Yau-3 version of \eqref{eq:BT=ST}.

\subsection{Exponential type singularities in the Calabi-Yau-$\infty$ setting}\label{sec:Exp}
\begin{definition}
\cite{HKK}
The \emph{exponential type singularity} $p$ of index $(k,l)\in\ZZ_{>0}\times\ZZ$
of a quadratic differential $\phi$ on a Riemann surface $\rs$
is a singularity $p$ with local coordinate as
\begin{gather}\label{eq:exp}
    e^{z^{-k}}z^{-l} g(z)\diff z^{\otimes2}
\end{gather}
for some non-zero holomorphic function $g(z)$.

A \emph{flat surface} $(\rs,\phi)$ is a Riemann surface $\rs$ together with quadratic differential $\phi$
whose singularities are all of exponential type as above.

\end{definition}
By \cite[Prop.~2.5]{HKK}, for a singularity $p$ in $\rsp$ with local coordinate \eqref{eq:exp},
there are exactly $k$ additional points, with respect to the $\phi$-metric completion in the neighborhood of $p$;
all of which are $\infty$-angle singularities, also known as the conical singularity with infinity angle
described as below.

\begin{definition}\label{def:conical}
Consider the spaces
\begin{gather}\label{eq:C_m}
    C_N \colon=\ZZ_N\times\RR\times\RR_{\geq0}/\sim, \qquad (k,x,0)\sim (k+1,-x,0)
        \text{ for } x\le 0
\end{gather}
for $N\in\ZZ_{>0}$ and
\begin{gather}\label{eq:C_infty}
    C_\infty\colon=\ZZ\times\RR\times\RR_{\geq0}/\sim, \qquad (k,x,0)\sim (k+1,-x,0)
        \text{ for } x\le 0.
\end{gather}
The origin $0=(0,0,0)\sim(k,0,0)$ is a distinguished point in these spaces.
The standard flat structure of $\RR^2$ makes $C_?\setminus\{0\}$ a smooth flat surface
with metric completion $C_?$.
The origin is a \emph{conical singularity with $N\pi$ angle} for $C_N$, $N\ne2$
and is a \emph{conical singularity with infinity angle} for $C_\infty$.
\end{definition}

Note that the conical singularities with $N\pi$ angle are exactly classical type zeroes of order $N-2$
mentioned above, for $N\ge3$.
For GMN differentials (i.e., in BS' setting), we always have $N=3$.
We will discuss the $N\ge3$ case in Section~\ref{sec:BS-N}.
Furthermore, the local behavior in the origin of $C_\infty$ is similar to zeroes instead of poles
since a trajectory attending to such points has finite length.

Let $\rs_\sg$ be the set of singularities of the flat surface $\rsp$
(where the smooth part of $\rsp$ is just $\rs$).
We have the following notions.
\begin{itemize}
\item A \emph{saddle connection} is a maximal geodesic converging towards closed points in $\rs_\sg$
(i.e., zeroes or points in the metric completion)
in both directions. Note that saddle trajectories are saddle connections which are horizonal.
Also, the endpoints are not required to be distinct.
\item The \emph{core} $\Core(\phi)$ is the convex hull of $\rs_\sg$
by \cite[Prop.~2.2]{HKK}, which contains all saddle connections.
Moreover, $\Core(\phi)$ is a deformation retract of $\rsp$ provided that
each point on $\rsp$ has finite distance to $\Core(\phi)$ (with respect to the $\phi$-metric),
cf. \cite[Prop.~2.3]{HKK}.
\end{itemize}


\begin{definition}
The \emph{graded marked surface} $\rs^\phi$ associated to a flat surface $(\rs,\phi)$ is
the \emph{real blow-up} of $\rs$ at all the singularities of $\phi$. So
\begin{itemize}
\item Each boundary component of $\rsp$ corresponds to the blow-up of a singularity $p$ of exponential type $(k,l)$.
\item The closed marked points correspond to the additional $k$ points
with respect to $\phi$-metric completion mentioned above.
\item The open marked points correspond to the directions between closed marked points
(hence are dual to closed marked points on the boundaries).
\item The grading is given by the horizontal foliation.
\end{itemize}
\end{definition}

\begin{remark}
In fact, any graded marked surface $(\surf,\M,\Y,\grad)$ in Section~\ref{sec:TFuk} is
the real blow-up of some flat surfaces with conical singularities of infinity angle.
\end{remark}

\begin{definition}\label{def:nn}
Let $\gms$ be a graded marked surface with the numerical data $$\num(\gms)=(g,b;\uk,\ul;\LP_g).$$
A \emph{$\gms$-framed quadratic differential} $(\rs,\phi,h)$ on $\gms$ consists of
a quadratic differential $\phi$ on some Riemann surface $\rs$
together with a diffeomorphism $h\colon\gms\to \rsp$ such that
the numerical data of the grading $h^*(\phi)$ on $\gms$
  (the pull-back of the $\phi$-induced grading on $\rs$) equals $\num(\gms)$.
Two $\gms$-framed quadratic differentials $(\rs_i,\phi_i,h_i)$ are equivalent if
\begin{itemize}
\item there exists a biholomorphism $f\colon\rs_1\to\rs_2$
such that $f^*(\phi_2)=\phi_1$;
\item $h_2^{-1}\circ f_*\circ h_1\in\Diff_0(\gms)$,
where $f_*\colon\rs_1^{\phi_1}\to\rs_2^{\phi_2}$ is the induced diffeomorphism.
\end{itemize}
Denote by $\FQuad_\infty(\gms)$ the space of $\gms$-framed quadratic differentials on $\gms$.
For simplicity, we will only mention a quadratic differential $\phi$ on $\gms$
(omitting $\rs$ and $h$).
\end{definition}

Now we recall the main result in \cite{HKK}, where the surjection of the $\Xi_\infty$ is shown by \cite{T}.

\begin{theorem}\cite{HKK,T}\label{thm:HKK}
There is the following identification
\begin{gather}\label{eq:chi_infty}
    \chi_\infty\colon \Ho{1}(\gms,\partial\gms;\ZZ_\Sp)\to\Grot(\DI),
\end{gather}
where $\ZZ_\Sp=\ZZ\otimes_{\ZZ_2}\Sp$ for $\Sp$ being
the canonical double cover of $\gms$. 
Moreover, there is an isomorphism $\Xi_\infty$ of complex manifolds that fits into the commutative diagram
\begin{gather}\label{eq:i infinity}
\xymatrix@C=6pc{
    \FQuad_\infty(\gms)\ar[r]^{\Xi_\infty} \ar[d]_{\Pi_\infty} &
        \Stab\DI \ar[d]^{\hh{Z}_\infty}\\ 
    \Hom(  \Ho{1}(\gms,\partial\gms;\ZZ_\Sp),\bC) \ar[r]^{ \chi_\infty} &
        \Hom(  \Grot(\DI),\bC)
,}\end{gather}
such that the period map $\Pi_\infty\colon \phi\mapsto\int\sqrt{\phi}$ becomes the map $\hh{Z}_\infty$
(i.e., central charge).
More precisely, \eqref{eq:formula} holds for $X=\Xinf$ in \eqref{eq:CA0} in Theorem~\ref{thm:IQZ}.

Furthermore, for $\ns=\Xi_\infty(\phi)$, the map $\Xinf$ induces a one-one correspondence
between saddle connections of $\phi$ on $\gms$ and $\ns$-semistable objects in $\DI$.
\end{theorem}

\section{$q$-Deformation of quadratic differentials}\label{sec:q_def}
In this section, we fix a complex number $s \in \bC$ with $\Re(s)>2$ and
set $q_s=e^{\ii \pi s}$.

\subsection{Multi-valued quadratic differentials}
Now we introduce the $q$-deformed generalization of GMN-differentials in Section~\ref{sec:GMN}.
\label{sec:q_quad}
We first define $q_s$-quadratic differentials on a disk.
Fix the following notations:
\begin{itemize}
\item $D:=\{z \in \bC\,\vert\,|z|<1\,\}$ is the unit disk;
\item $D^*:=D\setminus\{0\}$ is the punctured unit disk;
\item $\widetilde{D^*}$ is the universal cover of $D^*$.
\end{itemize}
In fact, $\widetilde{D^*}$ is isomorphic to the half plane $\{w \in \bC\,\vert\, \Re(w)<0\,\}$ with covering map
\begin{gather*}\begin{array}{rcl}
    \widetilde{D^*} &\longrightarrow& D^*\\
                w &\mapsto& e^w.
\end{array}
\end{gather*}
Consider a holomorphic quadratic differential
$$e^{(ks+l+2)w}\diff w^{\otimes2}$$
on $\widetilde{D^*}$, where $k,l \in \ZZ$.
Set $z=e^w$. Then $\diff z= e^w \diff w$ and
one can view
$$\xi=z^{ks+l}\diff z^{\otimes2}$$
as a multi-valued quadratic differential on $D$ ramified at $0$.
Let $\gamma$ be a counter-clockwise loop around $0$. Since
the action of the deck transformation group $\pi_1(D^*)\cong \langle \gamma \rangle$
on $\widetilde{D^*}$ is given by $\gamma(w)=w+2 \pi \ii$, the monodromy of $\xi$ is given by
$\gamma^* \xi=q_s^{2k}\xi.$

Now we turn to general case.
Let $\rs$ be a compact Riemann surface and $\Ram \subset \rs$
a set of finite points.
Denote by
\begin{gather}\label{eq:Pi}
    \Pi \colon \widetilde{\rsp}\to \rsp
\end{gather}
the universal cover of $\rsp:=\rs \setminus \Ram$.
In our setup, a \emph{multi-valued quadratic differential on $\rs$ ramified at $\Ram$}
refers to a holomorphic quadratic differential on  $\widetilde{\rsp}$.
The fundamental group $\pi_1(\rsp,*)$ acts on $\widetilde{\rsp}$ as deck transformations.
We denote $\gamma^* \xi$ the pull-back of a multi-valued quadratic differential $\xi$
via the action of a deck transformation $\gamma \in \pi_1(\rsp,*)$.

\begin{definition}
\label{def:q_quad}
A \emph{$q_s$-quadratic differential $\xi$ on $\rs$ ramified at $\Ram$}
is a multi-valued quadratic differential on  $\rs$ ramified at $\Ram$
such that, for any $\gamma \in \pi_1(\rsp,*)$,
\begin{gather}\label{eq:m gamma}
    \gamma^* \xi=q_s^{2m(\gamma)}\xi
\end{gather}
for some $m(\gamma) \in \ZZ$.
\end{definition}

For a point $p \in \rs$, let $D_p$ be a small disk (neighbourhood) centered at $p$ with coordinate $z$.
Similarly as above, $D_p^*=D_p\setminus\{p\}$ and $\widetilde{D^*_p}$ its universal cover.
Then $D_p^* \subset \rsp$ for any $p \in \Ram$ and
the pull-back
\begin{gather}\label{eq:Pi-1}
    \Pi^{-1}(D_p^*)=\coprod_\alpha \widetilde{D^*_p} ^\alpha
\end{gather}
is the disjoint unions of countable copies of $\widetilde{D^*_p}$.
We say a $q_s$-quadratic differential $\xi$ has the local form
\begin{gather}\label{eq:xi}
    \xi=c\, z^{k_p s+l_p}\diff z^{\otimes2}
\end{gather}
on $D_p$ if
$$\xi=c\,e^{(k_p s+l_p+2)w}\diff w^{\otimes2}$$
on some connected component $\widetilde{D^*_p} ^0 \subset \Pi^{-1}(D_p^*)$,
for some $c\in \bC$ and $k_p, l_p \in \ZZ$.
Note that in such a case, by Definition~\ref{def:q_quad},
$\xi$ can be written as
\[\xi =c\, q_s^{2m_\alpha}\,e^{(k_p s+l_p+2)w}\diff w^{\otimes2}, \]
for some $m_\alpha \in \ZZ$,
on any connected component $\widetilde{D^*_p} ^\alpha \subset \Pi^{-1}(D_p^*)$.

Now we introduce the notion of zeroes and poles for $q_s$-quadratic differentials.

\begin{definition}
Let $\xi$ be a $q_s$-quadratic differential and
assume that it has the local form \eqref{eq:xi} on $D_p$ for some $p \in \Ram$,
where $c \in \bC$ and $k_p,l_p \in \ZZ$.
We call $p$
\begin{itemize}
\item a \emph{zero of order $(k_p s+l_p)$}
    if $\Re (k_p s+l_p)>0$;
\item a \emph{pole of order $-(k_p s+l_p)$}
    if $\Re (k_p s+l_p)<0$.
\end{itemize}
And a pole $p$ of $\xi$ is a \emph{higher order pole} if $\Re (k_p s+l_p)<-2$.
Moreover,
\begin{itemize}
\item a zero $p$ of $\xi$ is called \emph{$s$-simple} if $\xi$ has the local form
\begin{gather}\label{eq:s-simple}
    \xi=c z^{s-2}\diff z^{\otimes 2};
\end{gather}
\item a pole $p$ of $\xi$ is called an \emph{$s$-pole of type $(k,l)\in\ZZ^2$} if $\xi$ has the local form
\begin{gather}\label{eq:s-pole}
    \xi=c z^{-k(s-2)-l}\diff z^{\otimes 2}.
\end{gather}
\end{itemize}

Associating to a $q_s$-quadratic differential $\xi$, there is
a multi-set of pairs of integers
\begin{gather}\label{eq:ukul}
    (\uk,\ul)\colon=\{(k_1,l_1),\dots,(k_b,l_b)\}
\end{gather}
defined via gathering all types of $s$-poles of $\xi$,
where $b$ is the number of poles of $\xi$.
We call the multi-set $(\uk,\ul)$ the \emph{$s$-polar type of $\xi$}.
\end{definition}
\begin{remark}
If $s\in \QQ$, the pair of integers $(k_i,l_i)$ is not uniquely determined by the local form $\xi$ around the corresponding $s$-pole.
Therefore the $s$-polar type $(\uk,\ul)$ for $\xi$ carries the additional information.
\end{remark}

Recall that a $q_s$-quadratic differential $\xi$ defines the
monodromy representation
\begin{gather}\label{eq:rho}
\begin{array}{rcl}
    \rho_s \colon \pi_1(\rsp,*)& \to&  \bC^*\\
        \gamma &\mapsto& q_s^{2m(\gamma)},
\end{array}
\end{gather}
where $m(\gamma)$ is given by \eqref{eq:m gamma}.

\begin{assumption}
\label{ass:trivial}
For a $q_s$-quadratic differential $\xi$,
there is a simply connected open domain $ U \subset \rs$ such that:
\begin{itemize}
  \item $U$ contains all ramification points $\Ram$, i.e., $\Ram \subset U$;
  \item the induced monodromy representation
$\rho_s \circ \iota_* \colon \pi_1(\rs \setminus U,*) \to \bC^*$,
via the inclusion $\iota \colon \rs \setminus U \to \rsp$,
 is trivial.
\end{itemize}
\end{assumption}

In what follows, we first introduce a special class of $q_s$-quadratic differentials
and then consider their moduli spaces.
\begin{definition}
\label{def:BS_quad}
Let $g$ be a non-negative integer
and $(\uk,\ul)$, as in \eqref{eq:ukul}, a multi-set of pairs of integers
satisfying $k_i>0$ and \eqref{eq:4-4g}.
A \emph{CY-$s$ type $q_s$-quadratic differential} $\xi$ on genus $g$ Riemann surfaces $\rs$
is a $q_s$-quadratic differential satisfying:
\begin{itemize}
\item all zeroes are $s$-simple;
\item the number of $s$-poles is $b$ and the $s$-polar type is $(\uk,\ul)$;
\item all poles are higher order $s$-poles, namely,
\begin{gather}\label{eq:higher}
    \Re(k_i (s-2)+l_i)>2,\qquad\forall i=1,\dots,b;
\end{gather}
\item Assumption~\ref{ass:trivial}.
\end{itemize}
We fix the following convention of notations.
\begin{itemize}
  \item $\Zer(\xi)=\{Z_1,\ldots,Z_\aleph\}$ is the set of $\aleph$ $s$-simple zeroes.
  \item $\Pol(\xi)=\{p_1,\ldots,p_b\}$ is the set of $s$-poles with $p_i$ of type $(k_i,l_i)$.
\end{itemize}
\end{definition}
In the setting above, the set of ramification points is $\Ram=\Zer(\xi)\bigcup\Pol(\xi)$.

The next statement clarifies the relationship between the number $\aleph$ of $s$-simple zeroes of a CY-$s$ type $q_s$-quadratic differential
and its $s$-polar type.
\begin{lemma}
\label{lem:zeroes}
Let $\xi$ be a CY-$s$ type $q_s$-quadratic differential of $s$-polar type $(\uk,\ul)$ as in \eqref{eq:ukul}. Then we have
\begin{gather}\label{eq:n}
    \aleph=\sum_{i=1}^b k_i.
\end{gather}
\end{lemma}
\begin{proof}

For the case $s \notin \QQ$, denote
by $\gamma_{j}$ an anticlockwise loop around $Z_j$ and by $\delta_i$ an anticlockwise loop around $p_i$.
Then Assumption~\ref{ass:trivial} implies that
\[
\prod_{j=1}^{\aleph} \rho_s(\gamma_j) \cdot \prod_{i=1}^b \rho_s(\delta_i)=1.
\]
Since $\rho_s(\gamma_j)=q_s^2$ and $\rho(\delta_i)=q_s^{-2k_i}$, the above equality
yields
\[
    \left(2\aleph-2\sum_{i=1}^b k_i \right) s\in 2\ZZ.
\]
Therefore, by the fact that $s \notin \QQ$, we have \eqref{eq:n}.

For the case $s \in \QQ$, we can assume
$s=j/m$ for some $j,m \in \ZZ$. Note that $\xi^m$ is a single-valued meromorphic
section of $\omega_S^{\otimes 2m}$ with $\aleph$ zeroes of order $m(s-2)$ and $b$
poles of orders $m(k_i(s-2)+l_i)$, for $1\le i\le b$.
By the Riemann-Roch theorem, we obtain
\[
    m(4g-4)=  m \aleph (s-2)-m\sum_{i=1}^b (k_i(s-2)+l_i)
\]
which implies \eqref{eq:n} noticing \eqref{eq:4-4g}. This completes the proof.
\end{proof}

\begin{remark}
When $s=3$, we can recover the GMN differential on Riemann surfaces in Definition~\ref{def:GMN} (\cite{BS}).
For instance, when $\rs$ is $\PP^1$ with numerical data (i.e., the normal $A_2$ case)
(cf. Example~\ref{ex:num})
$$g=0,\;b=1,\;(k,l)=(3,4),$$
we will get the corresponding CY-3 $A_2$ example in Example~\ref{ex:a2} with singularity type $(1,1,1,-7)$,
whose foliation as shown in Figure~\ref{fig:Quad A2}.
\end{remark}

\subsection{Log surfaces}
\label{sec:log_surf}
Continuing Section~\ref{sec:q_quad}, we consider a CY-$s$ type $q_s$-quadratic differential $\xi$
on a genus $g$ Riemann surface $\rs$ with an $s$-polar type $(\uk,\ul)$.
As before, denote by $\gamma_{j}$ the loop around $Z_j$ and by $\delta_i$ the loop around $p_i$
in the simply connected domain $U \subset \rs$.

Define the representation of $\pi_1(\rsp,*)$ on
the infinite cyclic group $\langle q \rangle\cong\ZZ$ as follows.
To start with, we have a representation
\[
\underline{\rho} \colon \pi_1(U \setminus \Ram, *) \to \langle q \rangle
\]
defined by setting $\underline{\rho}(\gamma_j):=q$ and $\underline{\rho}(\delta_i):=q^{-k_i}$.
Note that for a loop $\Gamma\subset U$ encircling all points in $\Ram$,
we have $\underline{\rho}(\Gamma)=1$.
Then we can extend  $\underline{\rho}$ to the representation
\begin{gather}\label{eq:rho0}
    \rho \colon \pi_1(\rsp,*)\to \langle q \rangle,
\end{gather}
such that
\begin{itemize}
  \item the restriction of $\rho$ on $U \setminus \Ram$
  is $\underline{\rho}$,
  \item $\rho(\gamma)=1$ for any loop $\gamma \in \pi_1(\rs \setminus U)$.
\end{itemize}

Using the representation $\rho$, we introduce logarithmic surfaces.
\begin{definition}
The \emph{logarithmic ($\log$) surface $\log \rsp$ associated to
$\xi$} is defined to be the covering space $\pi \colon \log \rsp
\to \rsp$ corresponding to the representation $\rho$. Since $\xi$ is
a single-valued quadratic differential on $\log \rsp$,
the latter has a natural grading in the sense of Definition~\ref{def:GMS}
given by the horizontal foliation of $\xi$.
The infinite cyclic group $\langle q\rangle$ acts on $\log \rsp$
as deck transformations.
\end{definition}

We end this subsection with one more remark.
\begin{remark}
For an $s$-simple zero $Z_j$ with loop $\gamma_j$ around it,
we have $\rho(\gamma_j)=q$.
Taking $p=Z_j$, we know that $\Pi^{-1}(D^*_p)$ in \eqref{eq:Pi-1}
consists of just one copy of $\widetilde{D^*_p}$.
Similarly, for an $s$-pole $p_i$ of type $(k_i,l_i)$ with loop $\delta_i$ around it,
we have $\rho(\delta_i)=k_i$.
Taking $p=p_i$, we know that $\Pi^{-1}(D^*_p)$ in \eqref{eq:Pi-1} becomes
\begin{gather}\label{eq:PiDp}
    \Pi^{-1}(D_p^*)=\coprod_{i=1}^{k_i} \widetilde{D^*_p} ^i
\end{gather}
consisting of $k_i$ copy of $\widetilde{D^*_p}$.
The deck transformation $q$ maps $\widetilde{D^*_p} ^i$ to $\widetilde{D^*_p} ^{i+1}$,
where the subscript $i,i+1$ is considered modulo $k_i$.
\end{remark}

\subsection{Hat homology groups}\label{sec:HatH}

 Let $\log \rsp$ be the log surface associated to
a $q_s$-quadratic differential $\xi$ on $\rs$ and
\begin{gather}
    \psi_s:=\sqrt{\xi}
\end{gather}
the square root of $\xi$.
Note that $\psi_s$ is a
double-valued holomorphic $1$-form on $\log \rsp$ and it defines the monodromy representation
\begin{gather}\label{eq:rep1/2}
\begin{array}{rcl}
\rho^{\frac{1}{2}} \colon \pi_1(\rsp,*)
&\to& \langle q,-q \rangle\\
\gamma &\mapsto& \pm q^{m(\gamma)},
\end{array}
\end{gather}
where
$\gamma^* \psi_s=\pm q_s^{m(\gamma)}\psi_s$.
The image of $\rho^{\frac{1}{2}}$ is given by
\[
    \Im \rho^{\frac{1}{2}}=
    \langle q \rangle \quad \text{or}\quad
        \langle q,-q \rangle.
\]
Now we consider the Riemann surface associated to $\psi_s$.
\begin{definition}
The \emph{spectral cover }
\[
    \widehat{\Sp} \colon \log \widehat{\rs}^{\circ}  \to \log \rsp
\]
is defined to be the covering space corresponding
to the representation $\rho^{\frac{1}{2}}$.
\end{definition}

The holomorphic $1$-form $\psi_s$ is single-valued on $\log \widehat{\rs}^{\circ}$.
Note that
\begin{itemize}
  \item $\log \widehat{\rs}^{\circ}$ is a double cover of $\log \rsp$ if
    $\Im \rho^{\frac{1}{2}}=\langle q,-q \rangle$;
  \item $\log \widehat{\rs}^{\circ}=\log \rsp$ if $\Im \rho^{\frac{1}{2}}=\langle q \rangle$.
\end{itemize}
Let $\widehat{\rs}^{\circ}$ be the covering space $\Sp \colon \widehat{\rs}^{\circ} \to \rsp$
corresponding to the representation
\[
\pi_1(\rsp,*) \to \{\pm 1\},
\]
which is obtained by setting $q \mapsto 1$ in the definition of $\rho^{\frac{1}{2}}$.
The above constructions give rise to the following commutative diagram:
\[
\xymatrix {
 \log \widehat{\rs}^{ \circ} \ar[d]_{\widehat{\Sp}} \ar[rr]^{\widehat{\pi}}
 && \widehat{\rs}^{\circ} \ar[d]^{\Sp}  \\
 \log \rs^{ \circ} \ar[rr]^{\pi} && \rsp .
}
\]
 Moreover, the double cover $\Sp \colon \widehat{\rs}^{\circ} \to \rsp$
can be extended to a branched double cover
$$\operatorname{Sp} \colon \widehat{\rs} \to \rs,$$
branching at the poles of $\xi$ with odd winding numbers (equivalently, with odd $l_i$).

To introduce an appropriate integration of $\psi_s$ on
$\log \widehat{\rs}^{\circ}$,
we define the following homology group.
\begin{definition}\label{def:hhg}
The \emph{hat homology group} of $\xi$ is defined as follows.
\begin{itemize}
\item If $\log \widehat{\rs}^{\circ} =\log \rsp$, then
$$ \HHo{}(\xi)\colon=\Ho{1}(\log \rsp;\ZZ). $$
\item If $\log \widehat{\rs}^{\circ}$ is a double cover of $\log \rsp$,
denote by $\tau$ the covering involution and
\[  \HHo{}(\xi):=\Ho{1}(\log \rsp;\ZZ)^-, \]
\end{itemize}
where $\Ho{1}(\log \rsp;\ZZ)^-$ is the $\tau$-anti-invariant part, i.e., consists of
$\gamma \in \Ho{1}(\log \widehat{\rs}^{\circ};\ZZ)$ satisfying $\tau_* \gamma=- \gamma$.
The group $\HHo{}(\xi)$ has an $R=\ZZ[q^{\pm 1}]$-module structure
induced from the action of the
deck transformation group $\langle q,-q \rangle$ (or $\langle q \rangle$)
on $\log \widehat{\rs}^{\circ}$.
\end{definition}

\begin{definition}
The \emph{period} of $\xi$ is the integration of $\psi_s$ on cycles in $\HHo{}(\xi)$,
which is an $R$-linear map:
\begin{gather}\label{eq:pp}
    \begin{array}{rcl}
    Z_{\xi} \colon \HHo{}(\xi) &\to& \bC_s\vspace{.5pc}\\
    \gamma &\mapsto& \displaystyle{\int_\gamma}\psi_s.
\end{array}\end{gather}
\end{definition}
Note that the construction of $\rho$ implies that
\begin{gather}\label{eq:GG}
    q^* \psi_s=q_s \psi_s
\end{gather}
and hence
\[
    \int_{q_* \gamma}\psi_s=\int_{\gamma}q^* \psi_s=
        q_s \int_{\gamma} \psi_s
\]
for any cycle $\gamma$ in $\HHo{}(\xi)$.

The rest of the subsection is devoted to the calculation of the rank of $\HHo{}(\xi)$.
\begin{proposition}
\label{prop:rank1}
Let $\xi$ be a CY-$s$ type $q_s$-quadratic differential of genus $g$
and $s$-polar type $(\uk,\ul)$ as in \eqref{eq:ukul}.
Then the associated hat homology group
$\HHo{}(\xi)$ is a free $R$-module of rank
\[
    n=2g-2+b+\sum_{i=1}^b k_i.
\]
\end{proposition}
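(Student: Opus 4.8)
The plan is to reduce the whole statement to a group-homology computation over $\pi_1(\rs^{\circ})$ and then read off both the rank and the freeness from an explicit two-term free resolution. Since $\rs^{\circ}=\rs\setminus\Ram$ is an open (punctured) surface, it is aspherical, a $K(F,1)$ with $F=\pi_1(\rs^{\circ},*)$ free of rank
\[
    N=2g+|\Ram|-1=2g+(\aleph+b)-1,
\]
and by Lemma~\ref{lem:zeros} we have $\aleph=\sum_{i=1}^b k_i$, so $N-1=2g-2+b+\sum_i k_i=n$. The surfaces $\log\rs^{\circ}$ and $\log\hat{\rs}^{\circ}$ are regular covers of $\rs^{\circ}$ with deck groups $\langle q\rangle$ and $\langle q,-q\rangle\cong\langle q\rangle\times\langle\tau\rangle$ (with $\tau=-1$ of order $2$), so $\Ho{1}$ of either cover is computed as the homology of $\rs^{\circ}$ with coefficients in the relevant group ring, via the standard resolution $0\to R[F]^N\to R[F]\to\ZZ\to0$. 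Under this identification $\HHo{}(\xi)$ becomes the $\tau$-anti-invariant part of $\Ho{1}(\log\hat{\rs}^{\circ};\ZZ)$, with the case $\Im\rho^{\frac{1}{2}}=\langle q\rangle$ recovered as the degenerate case $\tau=\mathrm{id}$.

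Next I would package the sign and the $q$-twist into a single rank-one $R$-local system $\mathcal{M}$ on $\rs^{\circ}$ with monodromy $\chi\colon F\to R^{\times}$, $\chi(\gamma)=\epsilon(\gamma)\,q^{m(\gamma)}$, where $\epsilon(\gamma)=\pm1$ is the sign in $\rho^{\frac{1}{2}}(\gamma)=\epsilon(\gamma)q^{m(\gamma)}$. Because the two-term complex computing $\Ho{1}(\log\hat{\rs}^{\circ})$ is concentrated so that $H_1$ is an honest kernel, and since $(-)^{\tau=-1}=\ker(\tau+1)$ is left exact and hence commutes with kernels, taking anti-invariants gives
\[
    \HHo{}(\xi)\cong \Ho{1}(\rs^{\circ};\mathcal{M})=\ker\bigl(\delta\colon R^N\to R\bigr),
    \qquad \delta\bigl((r_i)\bigr)=\textstyle\sum_i(\chi(g_i)-1)\,r_i .
\]
Reading the monodromy off the local forms, a loop around an $s$-simple zero gives $\chi=q$, a loop around an $s$-pole of type $(k_i,l_i)$ gives $\chi=(-1)^{l_i}q^{-k_i}$, and the $2g$ handle generators (supported away from $\Ram$) give $\chi=1$. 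Hence $\im\delta$ is the ideal $I=(q-1)$ when all $l_i$ are even (the unramified case $\log\hat{\rs}^{\circ}=\log\rs^{\circ}$), and $I=(q-1,\,2)$ when some $l_i$ is odd (the ramified case). As $2$ is a unit in $\QQ(q)$ and $q-1\neq0$, in either case $\delta\otimes_R\QQ(q)$ is surjective, so $\operatorname{rank}_{\QQ(q)}\ker\delta=N-1=n$, giving the asserted rank.

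Finally I would deduce freeness from the projective dimension of $I$. When $I=(q-1)\cong R$ the sequence $0\to\ker\delta\to R^N\xrightarrow{\delta}I\to0$ splits, so $\ker\delta$ is stably free; when $I=(q-1,2)$ the pair $2,q-1$ is a regular sequence, whence $R/I\cong\ZZ_2$ admits a length-two Koszul resolution and $\operatorname{pd}_R I=1$. In both cases $\operatorname{pd}_R I\le1$, so the dimension shift $\Ext^{\ge1}_R(\ker\delta,-)\cong\Ext^{\ge2}_R(I,-)=0$ shows $\ker\delta$ is projective; since finitely generated projective modules over the Laurent polynomial ring $R=\ZZ[q^{\pm1}]$ are free (Swan), $\HHo{}(\xi)=\ker\delta$ is free of rank $n$. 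The main obstacle is the ramified case: the factor $2$ makes $I=(q-1,2)$ non-principal (a height-two maximal ideal with $R/I=\ZZ_2$), so freeness cannot be obtained from a splitting and instead rests on the complete-intersection structure of $I$ together with projectivity implying freeness over $\ZZ[q^{\pm1}]$; one also has to verify carefully that the $\tau$-anti-invariant extraction really produces the kernel of $\delta$ with the twisted monodromy $\chi$, which is the delicate compatibility between the analytic monodromy $\rho^{\frac{1}{2}}$ of $\psi_s=\sqrt{\xi}$ and the $R$-module structure on $\HHo{}(\xi)$.
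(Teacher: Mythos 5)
Your argument is essentially correct but follows a genuinely different route from the paper. The paper works with the bouquet-of-circles model of $\rs^{\circ}$: it first proves (Lemmas~\ref{lem:bouquet1}--\ref{lem:bouquet2}) that the relative homology of the $\langle q\rangle$-cover of a bouquet of $m$ circles is free of rank $m$ over $R$, deduces freeness and the rank of $\Ho{1}(\log\rs^{\circ};\ZZ)$ from the relative homology exact sequence, and in the ramified case applies Riemann--Hurwitz to $\hat{\rs}\to\rs$ and obtains the rank of the $\tau$-anti-invariant part by subtracting the rank of the invariant part from $2n$. You instead identify $\HHo{}(\xi)$ with $\Ho{1}(F;\mathcal{M})$ for a rank-one $R$-local system $\mathcal{M}$ on the free group $F=\pi_1(\rs^{\circ})$ (the left-exactness argument for pulling $\ker(\tau+1)$ inside the kernel $\Ho{1}$ is the right justification), read off the rank from surjectivity of $\delta\otimes\QQ(q)$, and get freeness from $\operatorname{pd}_R\,\im\delta\le 1$ together with triviality of $K_0$ of $\ZZ[q^{\pm1}]$. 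This buys two things the paper's proof does not make explicit: it avoids the Riemann--Hurwitz computation entirely, and it actually \emph{proves} that the anti-invariant part is free, whereas the paper's final step only computes its rank (an involution on a free $R$-module need not have free anti-invariants a priori), so your route is arguably more complete on that point.

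One claimed step is inaccurate, though harmlessly so: the assertion that the $2g$ handle generators give $\chi=1$. Assumption~\ref{ass:trivial} forces the monodromy of $\xi$ itself to be trivial on $\pi_1(\rs\setminus U)$, i.e.\ $m(\gamma)=0$ there, but the sign $\epsilon(\gamma)$ of the monodromy of $\psi_s=\sqrt{\xi}$ around a handle loop can still be $-1$ (this is exactly the obstruction to $\xi$ being a global square on $\rs\setminus U$, and it is what makes the spectral cover connected even when all $l_i$ are even). Consequently your dichotomy ``$I=(q-1)$ iff all $l_i$ are even'' and the identification of that case with $\log\hat{\rs}^{\circ}=\log\rs^{\circ}$ are both slightly off: the correct dichotomy is governed by $\Im\rho^{\frac{1}{2}}$, and an unramified but connected double cover can occur. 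None of this damages the conclusion, since in every case $\im\delta$ is a nonzero ideal of the two-dimensional regular ring $R$ (it always contains $q-1$), so $\rank\ker\delta=N-1=n$ and $\operatorname{pd}_R\,\im\delta\le1$ hold automatically, and your projectivity-plus-Swan argument goes through unchanged.
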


To prove the statement, we prepare several lemmas first.
Let $B^{(m)}$ be a bouquet of $m$ circles joined at
the point $*$, namely,
\[
B^{(m)}=S^1 \vee \cdots \vee S^1.
\]
The fundamental group of $B^{(m)}$ is a free group
with $m$ generators, i.e.,
\[
\pi_1(B^{(m)},*) \cong \langle \gamma_1,\dots,\gamma_m \rangle,
\]
where, for $1\leq i\leq m$, each $\gamma_i$ is a loop around the $i$-th circle of $B^{(m)}$.

Let $$\rho \colon \pi_1(B^{(m)},*) \to \langle q \rangle$$
be any given surjective group homomorphism and take the covering
space $$\pi \colon \widetilde{B}^{(m)} \to B^{(m)}$$ corresponding to $\rho$.
Set $\widetilde{*}=\pi^{-1}(*)$.
\begin{lemma}
\label{lem:bouquet1}
The relative homology group $\Ho{1}( \widetilde{B}^{(m)} ,\widetilde{*};\ZZ)$
is a free $R$-module of rank $m$ generated by the classes
$[\widetilde{\gamma}_1],\dots,[\widetilde{\gamma}_m]$,
where $\widetilde{\gamma}_i$ is a lift
of the loop $\gamma_i$ around the $i$-th circle in $B^{(m)}$.
\end{lemma}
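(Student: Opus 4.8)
The plan is to recognize $\widetilde{B}^{(m)}$ as a $1$-dimensional CW complex (a graph) carrying a free action of the deck group $\langle q\rangle$, and to read off the relative homology directly from its cellular chain complex. First I would record the cell structure of the cover. Since $\rho$ is surjective, $\ker\rho$ is normal and the covering $\pi\colon\widetilde{B}^{(m)}\to B^{(m)}$ is regular (in particular connected) with deck group $\pi_1(B^{(m)},*)/\ker\rho\cong\langle q\rangle$. Consequently the fiber $\widetilde{*}=\pi^{-1}(*)$ is a single free $\langle q\rangle$-orbit, i.e.\ a $\langle q\rangle$-torsor, so after choosing a base lift the group of $0$-chains is the free $R$-module of rank $1$. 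Likewise, by unique path lifting the lifts of each loop $\gamma_i$ are in bijection with the fiber (one edge-lift starting at each vertex), and they form a single free $\langle q\rangle$-orbit of edges; hence the group of $1$-chains $C_1(\widetilde{B}^{(m)})$ is the free $R$-module $\bigoplus_{i=1}^m R\cdot\widetilde{\gamma}_i$ of rank $m$.

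The second step is the relative computation. Because $\widetilde{*}$ is precisely the $0$-skeleton of $\widetilde{B}^{(m)}$, the relative cellular chain complex $C_\bullet(\widetilde{B}^{(m)},\widetilde{*})$ vanishes in degree $0$ and equals $C_1(\widetilde{B}^{(m)})$ in degree $1$, with no cells in higher degrees. Thus every relative boundary map is zero and
\[
    \Ho{1}(\widetilde{B}^{(m)},\widetilde{*};\ZZ)=C_1(\widetilde{B}^{(m)}),
\]
which by the first step is the free $R$-module generated by $[\widetilde{\gamma}_1],\dots,[\widetilde{\gamma}_m]$, as claimed. The $R$-module structure induced by the deck transformations $\langle q\rangle$ is by construction the one used to identify $C_1$, so the named lifts are exactly the free generators.

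I expect no serious obstacle here. The only points needing care are (i) checking that $\langle q\rangle$ acts freely and transitively on the fiber, which is where surjectivity of $\rho$ (equivalently connectedness of the cover) enters, and (ii) being careful that passing to the pair relative to the \emph{full} $0$-skeleton genuinely kills the degree-$0$ chains, so that $\Ho{1}$ of the pair is the entire module of $1$-chains rather than a proper subquotient. If one prefers to avoid manipulating the cellular complex of an infinite graph, an alternative is the long exact sequence of the pair $(\widetilde{B}^{(m)},\widetilde{*})$ combined with $\Ho{1}(\widetilde{*};\ZZ)=0$ and $\Ho{0}(\widetilde{*};\ZZ)\cong R$; but the direct chain-level argument is cleaner and exhibits the $R$-generators explicitly.
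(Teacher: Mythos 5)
Your argument is correct, and it takes a cleaner route than the paper's. The paper works at the level of paths: it observes that every path between points of $\widetilde{*}$ is homotopic to a concatenation of lifts of the $\gamma_i^{\pm 1}$, concludes that the $[\widetilde{\gamma}_i]$ generate over $R$, and then simply asserts that the classes $[(q_*)^{n_i}\widetilde{\gamma}_i]$ are linearly independent ("clearly"). You instead equip $\widetilde{B}^{(m)}$ with its natural CW structure as a cover of the bouquet (one free $\langle q\rangle$-orbit of vertices, $m$ free orbits of edges) and note that, since $\widetilde{*}$ is the entire $0$-skeleton, the relative cellular chain complex is concentrated in degree $1$, so $\Ho{1}(\widetilde{B}^{(m)},\widetilde{*};\ZZ)$ \emph{is} the free $R$-module on the $1$-cells. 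This buys you generation and linear independence in one stroke and supplies exactly the justification the paper's proof leaves implicit; the paper's version has the mild advantage of not invoking any cell structure, but at the cost of the unproved independence claim. One small caveat: your parenthetical alternative via the long exact sequence of the pair is not really available as stated, since extracting $\Ho{1}$ of the pair from that sequence requires already knowing $\Ho{1}(\widetilde{B}^{(m)};\ZZ)$ --- which is precisely what the subsequent lemma deduces \emph{from} this one --- so the direct chain-level computation you lead with is the right choice.
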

\begin{proof}
For each $1\leq i\leq m$, take a lift $\widetilde{\gamma}_i$ of $\gamma_i$ on $\widetilde{B}^{(m)}$.
Then all other lifts are of the forms $(q_*)^{n_i}\, \widetilde{\gamma}_i$ for some $n_i \in \ZZ$.
Due to the fact that any path which connects two points in $\widetilde{*}$ is homotopy equivalent to
the composition of lifts of $\gamma_i^{\pm 1}$,
any relative homology class in $\Ho{1}( \widetilde{B}^{(m)} ,\widetilde{*};\ZZ)$
is a linear combination of classes $[\widetilde{\gamma}_i]$'s
over $R$. It is clear that the set
$$\big\{ [(q_*)^{n_i} \widetilde{\gamma}_i]\;\mid\; i=1,\dots,m,n_i \in \ZZ\big\}$$
is linearly independent. The claim follows.
\end{proof}

\begin{lemma}
\label{lem:bouquet2}
The homology group $\Ho{1}(\widetilde{B}^{(m)};\ZZ)$ is a free
$R$-module of rank $m-1$.
\end{lemma}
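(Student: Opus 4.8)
The plan is to feed Lemma~\ref{lem:bouquet1} into the long exact sequence of the pair $(\widetilde{B}^{(m)},\widetilde{*})$ and then to make the connecting map explicit. Since $\widetilde{*}=\pi^{-1}(*)$ is a single free $\langle q\rangle$-orbit, it is a discrete set on which $\langle q\rangle$ acts by translation, so $\Ho{0}(\widetilde{*};\ZZ)\cong R$ as a free $R$-module of rank $1$, while $\Ho{1}(\widetilde{*};\ZZ)=0$. Because $\rho$ is surjective, $\widetilde{B}^{(m)}$ is connected, so $\Ho{0}(\widetilde{B}^{(m)};\ZZ)\cong\ZZ\cong R/(q-1)R$, and $\widetilde{*}$ meets this single path-component, forcing $\Ho{0}(\widetilde{B}^{(m)},\widetilde{*};\ZZ)=0$. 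The long exact sequence therefore collapses to a short exact sequence of $R$-modules
\begin{gather*}
    0\to \Ho{1}(\widetilde{B}^{(m)};\ZZ)\to \Ho{1}(\widetilde{B}^{(m)},\widetilde{*};\ZZ)
        \xrightarrow{\;\partial\;} \Ho{0}(\widetilde{*};\ZZ)\to \Ho{0}(\widetilde{B}^{(m)};\ZZ)\to 0,
\end{gather*}
which by Lemma~\ref{lem:bouquet1} reads $0\to \Ho{1}(\widetilde{B}^{(m)};\ZZ)\to R^{\oplus m}\xrightarrow{\partial} R\to R/(q-1)R\to 0$.

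Next I would compute $\partial$ on the basis $[\widetilde{\gamma}_i]$ of Lemma~\ref{lem:bouquet1}. Writing $\rho(\gamma_i)=q^{a_i}$, the chosen lift $\widetilde{\gamma}_i$ runs from the base vertex $\widetilde{v}$ to $q^{a_i}\widetilde{v}$, so the connecting homomorphism sends $[\widetilde{\gamma}_i]\mapsto(q^{a_i}-1)\widetilde{v}$; under the identification $\Ho{0}(\widetilde{*};\ZZ)=R\cdot\widetilde{v}$ this is $\partial(e_i)=q^{a_i}-1$. Since $\rho$ is surjective we have $\gcd_i(a_i)=1$, and the standard identity $(q^{a_1}-1,\dots,q^{a_m}-1)=(q^{\gcd_i(a_i)}-1)=(q-1)$ shows $\im\partial=(q-1)R$, consistent with the cokernel above.

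The main obstacle is to upgrade $\ker\partial$ from a \emph{submodule of a free module} to an honestly \emph{free} module of the correct rank: over $R=\ZZ[q^{\pm1}]$, which is not a principal ideal domain, neither ``submodule of free is free'' nor ``stably free is free'' is available. I would circumvent this by normalizing $\rho$. Since $\Aut(F_m)\twoheadrightarrow\GL_m(\ZZ)$, and since $\gcd_i(a_i)=1$ allows one to choose $M\in\GL_m(\ZZ)$ carrying $(a_1,\dots,a_m)$ to $(1,0,\dots,0)$, I can realize $M$ by an automorphism of $F_m=\pi_1(B^{(m)})$, i.e.\ a self-homotopy-equivalence of $B^{(m)}$. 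This lifts to a $\langle q\rangle$-equivariant homotopy equivalence between $\widetilde{B}^{(m)}$ and the cover attached to the normalized representation $\rho'$ with $\rho'(\gamma_1)=q$ and $\rho'(\gamma_i)=1$ for $i\ge 2$, hence induces an $R$-module isomorphism on $\Ho{1}$. For $\rho'$ the equivariant cellular chain complex is just $R^{\oplus m}\xrightarrow{\partial'}R$ with $\partial'(e_1)=q-1$ and $\partial'(e_i)=0$ for $i\ge 2$; since $q-1$ is a non-zero-divisor in the integral domain $R$, the relation $(q-1)x_1=0$ forces $x_1=0$, so $\ker\partial'=\{0\}\times R^{\oplus(m-1)}\cong R^{\oplus(m-1)}$. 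Transporting back yields $\Ho{1}(\widetilde{B}^{(m)};\ZZ)\cong R^{\oplus(m-1)}$, free of rank $m-1$, as claimed. (Alternatively one could invoke freeness of finitely generated projective modules over the Laurent ring $\ZZ[q^{\pm1}]$ to split the short exact sequence directly, but the normalization argument keeps everything elementary.)
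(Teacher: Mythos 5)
Your proof is correct, and it reaches the same short exact sequence
\begin{equation*}
0\to \Ho{1}(\widetilde{B}^{(m)};\ZZ)\to \Ho{1}(\widetilde{B}^{(m)},\widetilde{*};\ZZ)\cong R^{\oplus m}\to R\to 0
\end{equation*}
that the paper's proof uses (the paper phrases the last term as the reduced group $\widetilde{\Ho{}}_0(\widetilde{*};\ZZ)\cong R$, you keep the unreduced four-term version; these are equivalent). The genuine difference is what happens after that. The paper stops there and asserts the result, which implicitly uses that the sequence splits (so $\Ho{1}\oplus R\cong R^{\oplus m}$) \emph{and} that a stably free module over $R=\ZZ[q^{\pm1}]$ of this form is free --- true, but not a formal triviality, exactly as you point out. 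You instead compute the connecting map explicitly as $\partial(e_i)=q^{a_i}-1$ and then normalize the representation $\rho$ by realizing a suitable element of $\GL_m(\ZZ)$ through an automorphism of the free group $F_m=\pi_1(B^{(m)})$, which gives an equivariant homotopy equivalence of covers and reduces to the case $\rho'(\gamma_1)=q$, $\rho'(\gamma_i)=1$ for $i\ge2$, where $\ker\partial'\cong R^{\oplus(m-1)}$ is visibly free because $q-1$ is a non-zero-divisor. This buys you a self-contained, elementary justification of freeness that the paper leaves to the reader (or to nontrivial facts about projective modules over Laurent polynomial rings), at the cost of invoking surjectivity of $\Aut(F_m)\to\GL_m(\ZZ)$ and the lifting of homotopy equivalences to covers. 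Both routes are valid; yours closes a small gap in the paper's argument rather than diverging from it.
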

\begin{proof}
Consider the exact sequence
\[
0 \to \Ho{1}(\widetilde{B}^{(m)};\ZZ)\to
\Ho{1}( \widetilde{B}^{(m)} ,\widetilde{*};\ZZ)
\to \widetilde{\Ho{}}_0(\widetilde{*};\ZZ)\to 0,
\]
where $\widetilde{\Ho{}}_0(\widetilde{*};\ZZ)$ is the
$0$-th reduced homology group of $\widetilde{*}$.
By definition and  Lemma~\ref{lem:bouquet1},
we have $\widetilde{\Ho{}}_0(\widetilde{*};\ZZ) \cong R$ and
$\Ho{1}( \widetilde{B}^{(m)} ,\widetilde{*};\ZZ) \cong R^{\oplus m}$.
Therefore the proof completes.
\end{proof}

\begin{proposition}
\label{prop:rank2}
Let $\log \rsp$ be the logarithmic surface associated to
a CY-$s$ type $q_s$-quadratic differential of genus $g$
and $s$-polar type $(\uk,\ul)$ as in \eqref{eq:ukul}.
Then the homology group $\Ho{1}(\log \rsp;\ZZ)$ is a free $R$-module
of rank
\begin{gather}
    n=2g-2+b +\sum_{i=1}^b k_i.
\end{gather}
\end{proposition}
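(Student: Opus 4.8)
The plan is to reduce Proposition~\ref{prop:rank2} to the two bouquet computations (Lemma~\ref{lem:bouquet1} and Lemma~\ref{lem:bouquet2}) by exploiting the homotopy type of the punctured surface $\rs^{\circ}$. First I would record, using Lemma~\ref{lem:zeros}, that the ramification locus $\Ram=\Tri\cup\Pol(\xi)$ consists of exactly $\aleph=\sum_{i=1}^b k_i$ $s$-simple zeros together with the $b$ $s$-poles, so that $\rs^{\circ}$ is a closed orientable surface of genus $g$ with $N:=b+\sum_{i=1}^b k_i$ punctures. Being connected and non-compact, $\rs^{\circ}$ deformation retracts onto a finite wedge of circles $B^{(m)}$, and the number $m$ of circles is pinned down by the Euler characteristic: from $\chi(\rs^{\circ})=(2-2g)-N$ and $\chi(B^{(m)})=1-m$ one gets $m=1-\chi(\rs^{\circ})=2g-1+N$.

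Next I would transport the covering $\log\rs^{\circ}\to\rs^{\circ}$ along this homotopy equivalence. Fix a homotopy equivalence $f\colon\rs^{\circ}\to B^{(m)}$; it induces an isomorphism $\pi_1(\rs^{\circ},*)\cong\pi_1(B^{(m)},*)$ under which $\rho$ corresponds to a homomorphism $\bar\rho\colon\pi_1(B^{(m)})\to\langle q\rangle$. Since the meridian $\gamma_j$ around any $s$-simple zero satisfies $\rho(\gamma_j)=q$, the map $\bar\rho$ is surjective, so the hypotheses of Lemma~\ref{lem:bouquet2} are met for the associated cover $\widetilde{B}^{(m)}$. By functoriality of covering spaces under homotopy equivalence, $f$ lifts to a homotopy equivalence $\log\rs^{\circ}\to\widetilde{B}^{(m)}$ intertwining the two deck transformations $q$; passing to $\Ho{1}(-;\ZZ)$ then produces an isomorphism of $R$-modules
\[
    \Ho{1}(\log\rs^{\circ};\ZZ)\cong\Ho{1}(\widetilde{B}^{(m)};\ZZ).
\]
Lemma~\ref{lem:bouquet2} identifies the right-hand side as a free $R$-module of rank $m-1=2g-2+N=2g-2+b+\sum_{i=1}^b k_i=n$, which is the desired conclusion.

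The main obstacle I anticipate is not the numerology but the $\langle q\rangle$-equivariance of the lifted homotopy equivalence: one must check that the identification of $\log\rs^{\circ}$ with $\widetilde{B}^{(m)}$ is compatible with the deck action $q$ on both sides (equivalently, that $f$ matches $\rho$ with $\bar\rho$ on the nose, not merely up to an automorphism of $\langle q\rangle$), so that the resulting homology isomorphism is genuinely $R$-linear rather than only $\ZZ$-linear. This is exactly where the explicit normalization $\rho(\gamma_j)=q$, $\rho(\delta_i)=q^{-k_i}$ from the preceding construction is used, and it is the point that needs to be stated carefully.
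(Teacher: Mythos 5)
Your proposal is correct and follows essentially the same route as the paper: both use Lemma~\ref{lem:zeros} to count $|\Ram|=b+\sum_{i=1}^b k_i$, identify $\rs^{\circ}$ up to homotopy with a bouquet of $2g-1+b+\sum_i k_i$ circles, and conclude via Lemma~\ref{lem:bouquet2}. Your version merely spells out two details the paper leaves implicit --- the surjectivity of $\rho$ (needed as a hypothesis of the bouquet lemmas, and guaranteed by $\rho(\gamma_j)=q$) and the $\langle q\rangle$-equivariance of the lifted homotopy equivalence ensuring the homology isomorphism is $R$-linear --- both of which are worthwhile but do not change the argument.
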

\begin{proof}
By Lemma~\ref{lem:zeroes}, we have
\[
    |\Ram|=b+\aleph=b+\sum_{i=1}^b k_i.
\]
Note that an oriented surface of genus $g$ with $|\Ram|$ points excluded
is homotopy equivalent to a bouquet of $2g-1+|\Ram|$ circles.
Therefore $\rsp=\rs \setminus \Ram$ is homotopy equivalent to the bouquet $B^{(n+1)}$
and $\log \rsp$ is its covering that corresponds to $\rho$ in \eqref{eq:rho0}.
As $\rho(\gamma_j)q$ for loop $\gamma_j$ around any $s$-zero $Z_j$, $\rho$ is surjective.
Hence the claim follows from Lemma~\ref{lem:bouquet2}.
\end{proof}

\emph{\noindent  Proof of Proposition~\ref{prop:rank1}.}
If $\log \widehat{\rs}^{\circ}=\log \rsp$,
the statement follows from Proposition~\ref{prop:rank2}.
Now consider the case when $\log \widehat{\rs}^{\circ}$ is a double cover of $\log \rsp$.
We first compute the rank of $\Ho{1}(\log \widehat{\rs}^{\circ};\ZZ)$ over $R$.
Recall that we have a underlying spectral/branched double cover $\widehat{\Sp}:\widehat{\rs}^{\circ}\to\rsp$
for $\widehat{\rs}^{\circ}=\widehat{\rs} \setminus \widehat{\Ram}$
and $\widehat{\Ram}=\widehat{\Sp}^{-1}(\Ram)$.
The Riemann-Hurwitz formula implies that
\[
    2\widehat{g}-2=2(2g-2)+b_1,
\]
where $\widehat{g}$ is the genus of $\widehat{\rs}$ and $b_1$ is the number
of odd integers in $\{l_1,\dots,l_b\}$.
In addition,
we have
\[
    |\widehat{\Ram}|=2|\Ram|-b_1.
\]
Therefore the surface $\widehat{\rs}^{\circ}$ is homotopy equivalent
to the bouquet of $m$ circles where
\[
    m=2\widehat{g}-1+|\widehat{\Ram}|=4g-3+2|\Ram|=2n+1.
\]
Then, by Proposition~\ref{prop:rank2}, the homology group $\Ho{1}(\log \widehat{\rs}^{\circ};\ZZ)$ is
a free $R$-module of rank $2n$.
Since the $\tau$-invariant part of $\Ho{1}(\log \widehat{\rs}^{\circ};\ZZ)$
can be identified with $\Ho{1}(\log \rsp;\ZZ)$, the rank of
$\tau$-anti-invariant part is given by
\[
\rank_R \Ho{1}(\log \widehat{\rs}^{\circ};\ZZ)-
\rank_R \Ho{1}(\log \rsp;\ZZ)=2n-n=n.
\]\qed

\subsection{Comparing log surfaces}\label{sec:MC}
Keep the notations as above, i.e.
$\xi$ is a CY-$s$ type $q_s$-quadratic differential of genus $g$ and
$s$-polar type $(\uk,\ul)$ on $\rs$ ramified at $$\Ram=\Zer(\xi)\bigcup\Po.$$

As in Section~\ref{sec:log_surf}, near each $s$-simple zero $Z \in \Zer(\xi)$,
we can take a disk $D_Z \subset \rs$
with center $Z$ and the restriction of covering map
$\Sp\colon\log \rsp \to \rsp$ on $D_Z^*=D_Z\setminus\{Z\}$ is the
universal covering $\widetilde{D_Z^*} \subset \log \rsp$.
By adding a point $\widetilde{Z}$ on $\widetilde{D_Z^*}$ as the fiber of $Z$,
we can extend the covering $\widetilde{D_Z^*} \to D_Z^*$ to a branched $\ZZ$-covering
$\widetilde{D_Z} \to D_Z$, branching at $\widetilde{Z}$.
\begin{lemma}
The construction above, i.e., adding $\widetilde{Z}$ can be viewed as
the \emph{metric completion} with respect to the $\xi$-metric on $\log \rsp$.
\end{lemma}
\begin{proof}
Take coordinate $w \in \widetilde{D_Z^*}$ and $z \in D_Z^*$ satisfying $z=e^w$.
Then $\xi$ on $\widetilde{D_Z^*}$ can be written as
\[
    \xi|_{\widetilde{D_Z^*}}=(e^w)^{s-2}(\diff e^w)^{\otimes 2}=e^{sw}\diff w^{\otimes 2}.
\]
We identify
\[
    \widetilde{D_Z^*}\cong\{w \in \bC\,\vert\, \Re w<0\,\}\subset
    \bC \cup \{\infty\}=\bC \PP^1.
\]
Then due to \cite[Prop.~2.1 and Lemma~2.5]{HKK}, the point $\infty$
is an exponential type singularity and we can obtain a single additional point
$\widetilde{Z}$ by the (partial) completion of $\widetilde{D_Z^*}$
with respect to the $\xi$-metric.
\end{proof}

Denote by $\logrs$ the topological surface obtained from $\log \rsp$
by adding $\widetilde{Z_j}$ for each $Z_j \in \Zer(\xi)$.
When there is no confusion, we will identify $\widetilde{Z_j}$ with $Z$
and use the notation $\Tri=\Zer(\xi)$.

Next we consider an $s$-pole $p \in \Po$ of type $(k,l)$ of $\xi$,
and take a disk $D_p\subset \rs$ with center $p$.
Then the real blow-up of $D_p$ at $p$ is an annulus $A_p$ obtained by replacing $p$
with a boundary $\partial_p\simeq S^1$.

Now we consider the fiber of $D_p^{*}$ in $\log \rsp$, as calculated in \eqref{eq:PiDp},
consisting of $k$ connected component $\widetilde{D^*_p} ^i$.
Corresponding to the real blow-up of $D_p$ at $p$,
we add a real line $\RR$ on each $\widetilde{D^*_i} ^i$ as the fiber of $\partial_p \subset A_p$
and extend the universal covering $\widetilde{D^*_p}^i \to D_p^*$ to the following one.

\begin{lemma}
The construction above leads to an extended universal covering
$$\widetilde{D^*_p}^i\cup \RR=\colon \widetilde{A_p}^i \longrightarrow A_p.$$
In the coordinate
$$\widetilde{D_p^*} ^i\cong \{x+\mathbf{i}y\,\vert\,x<0, y \in \RR  \},$$
the real line $\RR$ corresponds to
$\{-\infty+ \mathbf{i}y \,\vert\, y \in \RR   \}$.
\end{lemma}

We call this construction an \emph{real blow-up of $\log \rsp$} at a pole $p$.
Denote by $(\log \rsp)^{\xi}$ the surface obtained from $\log \rs^{\circ}$
by performing real blow-up at all poles of $\xi$.

\begin{definition}
The log surface $\logrsx$ associated to
a CY-$s$ type $q_s$-quadratic differential $\xi$ on $\rs$
is the topological surface obtained from $\log \rsp$
by adding back all zeroes in $\Zer(\xi)$ as above and performing real blow-up at all poles in $\Pol(\xi)$.
\end{definition}

\begin{remark}\label{rem:num}
We define the numerical data
$$\num(\logrsx)=
    \num(\rs,\xi)=(g,b;\uk,\ul;\LP_g)$$
of $\logrsx$ as follows:
\begin{itemize}
  \item The genus $g$ and the number $b$ of boundary components are inherited from $\rs$.
  \item $(\uk,\ul)$ is given by the polar type of $\xi$.
  \item The Lekili-Polishchuk data $\LP_g$ is defined the same way as in Definition~\ref{def:numerical},
  via the winding numbers on $\rs$ with respect to $\xi$.
  Note that, when choosing (e.g. non-separating) curves, one needs to avoid
  the simply connected domain $U$ in Assumption~\ref{ass:trivial}.
\end{itemize}
\end{remark}
Then we have the following, which basically says that the winding number of
an exponential pole of polar type $(k,l)$ with local coordinate \eqref{eq:exp}
matches the winding number of a CY-$s$ pole of polar type $(k,l)$ with local coordinate \eqref{eq:s-pole}.
Cf. Figure~\ref{fig:cut} for illustration for the case when $(k,l)=(3,4)$ and $s=3$.

\begin{theorem}\label{thm:winding}
Let $\gms$ be a graded marked surface and
$(\rs,\xi)$ a Riemann surface with a CY-$s$ type $q_s$-quadratic differential.
If $\num(\gms)=\num(\rs,\xi)$,
then there is a homeomorphism
\[
    h \colon \LS \to  \logrsx,
\]
which commutes with the action of the deck transformation group $\langle q \rangle$.
Moreover, the numerical data $\num\logrsx$ matches $\num(\LS)$ under $h$.
\end{theorem}
\begin{proof}
The data $(g,b,\uk)$ is trivial to check while the tricky part is
to show that winding numbers (i.e., index $l_i$) matches.
This becomes the calculation of winding numbers in the following two situations:
\begin{itemize}
  \item of a loop $\gamma$ around an exponential type singularity,
  where the local coordinate of the quadratic differential is in the form of
  $e^{z^{-k}}z^{-l} g(z)\diff z^{\otimes2}$ as in \eqref{eq:exp}.
  This gives the grading change in $\LS$;
  \item of a loop $\gamma$ that contains an $s$-pole of type $(k,l)$ together
  with $k$ $s$-simple zeroes.
  This gives the corresponding grading change in $\logrsx$,
  cf. left picture in Figure~\ref{fig:cut}.
\end{itemize}
The two calculations are shown in Proposition~\ref{pp:1} and Proposition~\ref{pp:2}, respectively,
in the next subsection.
As the Lekili-Polishchuk data $\LP_g$ is determined by the winding function, the theorem follows.
\end{proof}

\subsection{Winding numbers}\label{sec:wind}
In this subsection, we compute angle changes/winding numbers of certain cycles on
a Riemann surface with a quadratic differential.

First we recall the definition of the curvature for smooth
paths in $\RR^2$ with the classical standard flat metric $\diff x^2+\diff y^2$.
Take a coordinate $(x,y) \in \RR^2$ and consider a smooth path
\[
\gamma \colon [0,1] \to \RR^2,\quad t \mapsto \gamma(t)=(x(t),y(t)).
\]
The \emph{curvature} $\kappa(t) $ of a path $\gamma(t)$ is defined by
\[
    \kappa(t):=\frac{\dot{x} \ddot{y} -\ddot{x}\dot{y}  }{(\dot{x}^2+\dot{y}^2)^{\frac{3}{2}}   },
\]
where $\dot{x}(t)=\frac{\diff}{\diff t}x(t)$ and $\ddot{x}(t)=\frac{\diff^2}{\diff t^2} x(t)$.
Then we define the \emph{angle change of  a path $\gamma$} by
\[
\AC(\gamma):=\int_0^1 \kappa(t)|\dot{\gamma}(t)|\diff t
=\int_0^1 \frac{\dot{x} \ddot{y} -\ddot{x}\dot{y}  }{\dot{x}^2+\dot{y}^2   }dt.
\]
If we identify $\RR^2$ with $\bC$ by $ w=x +\mathbf{i}y$, then the angle change can be written as
\[
\AC(\gamma)=\Im \int_0^1 \frac{\overline{\dot{w}}\ddot{w} }{|\dot{w}|^2}\diff t  =
\Im \int_0^1 \frac{\ddot{w} }{\dot{w}}\diff t.
\]
We note that if two paths $\gamma$ and $\gamma^{\prime}$ are regularly homotopic
and have the same tangent vectors at boundaries $t=0, 1$,
then $\AC(\gamma)=\AC(\gamma^{\prime})$. We also note that by definition, the angle change is
invariant under the transformation $w \mapsto \alpha w +\beta$, where $\alpha \in \bC^*$ and $\beta \in \bC$
are some constants.

Now we extend the definition of the angle change on a Riemann surface $\rs$ with
a quadratic differential $\xi$ (of various type).
Take a small open subset $U \subset \rs \setminus \Crit(\xi)$
with a coordinate $z \in U$ and consider the distinguished coordinate $$w=\int \sqrt{\xi}.$$
For a smooth path
\[
\gamma \colon [0,1] \to U,\quad t \mapsto \gamma(t)=z(t),
\]
we can similarly define the angle change of $\gamma$ by
\[
\AC(\gamma):=\Im \int_0^1 \frac{\ddot{w}(z(t))}{\dot{w}(z(t))}dt,
\]
where $\dot{w}(z(t))=\frac{\diff}{\diff t}  w(z(t))$ and
$\ddot{w}(z(t))=\frac{\diff^2}{\diff t^2}  w(z(t))$.
In the $q_s$-quadratic differential setting,
since the coordinate $w$ is determined up to $$w \mapsto \pm q_s^m w+c,$$
the angle change $\AC(\gamma)$ is also well-defined.
For a general path
$\gamma \colon [0,1] \to \rs \setminus \Crit(\xi)$, we can define the angle change
by taking an open covering $\gamma \subset \cup_{\alpha}U_{\alpha}$ and  using
the distinguished coordinate $w$ on each $U_{\alpha}$.

\begin{lemma}
\label{lem:AC_formula}
Assume that on an open subset $U \subset \rs \setminus \Crit(\xi)$ with a coordinate $z \in U$,
the quadratic differential $\xi$ takes the form $\xi=f(z)\diff z^{\otimes2}$.
Then for a path $\gamma \colon [0,1] \to U$ with the $z$-coordinate expression $z(t)$,
we have
\[
\AC(\gamma)=\frac{1}{2} \Im \int_0^1 \frac{f^{\prime}(z(t))}{f(z(t))}\dot{z}(t)dt+
\Im \int_0^1 \frac{\ddot{z}(t)}{\dot{z}(t)}dt,
\]
where $f^{\prime}(z)=\frac{\diff}{\diff t}f(z)$.
\end{lemma}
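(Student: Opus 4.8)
The plan is to reduce the statement to a direct application of the chain rule, exploiting the defining relation between the distinguished coordinate $w$ and the given coordinate $z$. By definition of the distinguished coordinate $w(z)=\int^z\sqrt{\xi}$, on $U$ we have $\diff w/\diff z=\sqrt{f(z)}$ for a chosen branch of the square root. Composing with the path $z(t)$ and using the chain rule, I would first record that
\[
\dot{w}(z(t))=\sqrt{f(z(t))}\,\dot{z}(t).
\]

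Next I would differentiate once more in $t$. Using $\tfrac{\diff}{\diff t}\sqrt{f(z(t))}=\tfrac{f'(z(t))}{2\sqrt{f(z(t))}}\,\dot{z}(t)$ and the product rule, this gives
\[
\ddot{w}(z(t))=\frac{f'(z(t))}{2\sqrt{f(z(t))}}\,\dot{z}(t)^2+\sqrt{f(z(t))}\,\ddot{z}(t).
\]
Forming the quotient $\ddot{w}/\dot{w}$, the common factor $\sqrt{f}$ cancels, and I am left with
\[
\frac{\ddot{w}(z(t))}{\dot{w}(z(t))}=\frac{f'(z(t))}{2f(z(t))}\,\dot{z}(t)+\frac{\ddot{z}(t)}{\dot{z}(t)}.
\]
Taking imaginary parts and integrating over $[0,1]$ then yields the asserted identity for $\AC(\gamma)$, after splitting the integral into its two summands and pulling the constant $\tfrac12$ out of the first.

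There is no genuine obstacle here; the computation is elementary once the chain rule is set up correctly. The one point deserving care is \emph{well-definedness} rather than difficulty. The distinguished coordinate is determined only up to an affine change $w\mapsto\pm q_s^m w+c$, and likewise the branch of $\sqrt{f}$ is ambiguous; but under any change $w\mapsto\alpha w+\beta$ with $\alpha$ constant, both $\dot{w}$ and $\ddot{w}$ scale by the same factor $\alpha$, so the ratio $\ddot{w}/\dot{w}$, and hence $\AC(\gamma)$, is unchanged. The cancellation of $\sqrt{f}$ displayed above is precisely the local manifestation of this invariance, which also guarantees that the right-hand side is independent of the chosen branch. Consequently the formula patches consistently over an open cover $\gamma\subset\bigcup_\alpha U_\alpha$, matching the global definition of $\AC(\gamma)$ for a path in $\rs\setminus\Crit(\xi)$ given just before the statement.
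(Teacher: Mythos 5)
Your computation is exactly the paper's proof: both start from $(\diff/\diff z)w(z)=\sqrt{f(z)}$, apply the chain and product rules to get $\dot{w}=\sqrt{f}\,\dot{z}$ and $\ddot{w}=\tfrac{f'}{2\sqrt{f}}\dot{z}^2+\sqrt{f}\,\ddot{z}$, and substitute into the definition of $\AC(\gamma)$. Your extra remark on invariance under $w\mapsto\alpha w+\beta$ and branch choice is a harmless elaboration of what the paper already notes when defining $\AC$, so the proposal is correct and essentially identical to the paper's argument.
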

\begin{proof}
First we note that by definition $\frac{\diff}{\diff t} w(z)=\sqrt{f(z)}$. So we have
\begin{gather*}
\frac{\diff}{\diff t}w(z(t))=\sqrt{f(z(t))}\dot{z}(t),\\
\frac{\diff^2}{\diff t^2}w(z(t))=\frac{f^{\prime}(z(t)) }{2\sqrt{f(z(t))}}\dot{z}(t)^2+\sqrt{f(z(t))}\ddot{z}(t).
\end{gather*}
Then by the definition of $\AC(\gamma)$, we have the following alternative definition of winding numbers.
\end{proof}

\begin{definition}
When $\gamma$ is a loop,
the \emph{winding number} is given by (cf. Definition~\ref{def:numerical})
\begin{gather}
  \wind{\xi}(\gamma)\colon=\frac{1}{\pi}\AC(\gamma).
\end{gather}
\end{definition}
We now compute the winding numbers
around $s$-simple zeroes, $s$-poles (of type $(k,l)$) and exponential type singularities.

\begin{lemma}
\label{lem:winding1}
Consider a $q_s$-quadratic differential $\xi=z^{s-2} \diff z^{\otimes2}$ on $\bC$.
Then for a small loop   $\gamma$ around $s$-simple zero at $0 \in \bC$, we have
\begin{gather}\label{eq:wind s}
    \wind{\xi}(\gamma)=\Re(s).
\end{gather}
\end{lemma}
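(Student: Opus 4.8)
The plan is to apply the angle-change formula of Lemma~\ref{lem:AC_formula} directly to an explicit small circle around the zero. First I would fix the concrete loop $\gamma(t)=\varepsilon\, e^{2\pi\mathbf{i}t}$ for $t\in[0,1]$ and small $\varepsilon>0$, which lies entirely in $\bC\setminus\{0\}=\bC\setminus\Crit(\xi)$ and winds once counterclockwise around the $s$-simple zero at the origin. Here $f(z)=z^{s-2}$, so the logarithmic derivative is $f'(z)/f(z)=(s-2)/z$; although $f$ itself is multi-valued when $s\notin\ZZ$, this quotient is a genuine single-valued meromorphic function, so the formula of Lemma~\ref{lem:AC_formula} applies verbatim on the chart containing $\gamma$.

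Next I would evaluate the two terms separately. Since $\dot{\gamma}(t)=2\pi\mathbf{i}\,\gamma(t)$, the first integrand becomes $\tfrac{f'(\gamma)}{f(\gamma)}\dot{\gamma}=\tfrac{s-2}{\gamma}\cdot 2\pi\mathbf{i}\,\gamma=2\pi\mathbf{i}(s-2)$, a constant in $t$, whose integral over $[0,1]$ has imaginary part $2\pi\,\Re(s-2)$; multiplying by $\tfrac12$ gives $\pi(\Re(s)-2)$. For the second term, $\ddot{\gamma}/\dot{\gamma}=2\pi\mathbf{i}$ is again constant (this is the turning number of a round circle), contributing $\Im(2\pi\mathbf{i})=2\pi$. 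Summing yields $\AC(\gamma)=\pi(\Re(s)-2)+2\pi=\pi\,\Re(s)$, and dividing by $\pi$ gives $\wind{\xi}(\gamma)=\Re(s)$, as claimed in \eqref{eq:wind s}.

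The computation is elementary, so I do not expect a genuine obstacle; the only point requiring care is well-definedness. I would therefore remark that, even though the distinguished coordinate $w=\int^z\sqrt{\xi}$ (equivalently $f=z^{s-2}$) is multi-valued, the angle change depends only on $f'/f$ together with the intrinsic turning of the path, both single-valued, in accordance with the invariance of $\AC$ under $w\mapsto\pm q_s^m w+c$ noted after its definition. I would also observe that both integrands are constant in $t$, so the answer is independent of $\varepsilon$ and of the chosen representative of the isotopy class, consistent with the regular-homotopy invariance of the angle change. This identity serves as the base case for the winding-number bookkeeping (Propositions~\ref{pp:1} and~\ref{pp:2}) feeding into Proposition~\ref{pp:winding}, where the contribution of an $s$-pole together with its $k$ enclosed $s$-simple zeros is assembled.
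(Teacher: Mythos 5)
Your proof is correct and follows essentially the same route as the paper: both apply Lemma~\ref{lem:AC_formula} to an explicit circle around the origin, compute the logarithmic-derivative term to get $\pi(\Re(s)-2)$ and the turning term to get $2\pi$, and sum to $\pi\Re(s)$. The only differences are cosmetic (parametrization over $[0,1]$ with radius $\varepsilon$ instead of the unit circle over $[0,2\pi]$) plus your added remark on single-valuedness of $f'/f$, which is a reasonable clarification but not a new idea.
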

\begin{proof}
Set  $z(t)=e^{ \mathbf{i}t}$. Then by Lemma~\ref{lem:AC_formula}, we have
\begin{align*}
\AC(\gamma)&=\frac{1}{2} \Im \int_0^{2 \pi}  \frac{(s-2)z(t)^{s-3}} {z(t)^{s-2}}\dot{z}(t)\diff t
+\Im \int_0^{2 \pi}\frac{\ddot{z}(t)}{\dot{z}(t)}dt \\
&=\frac{1}{2} \Im \int_0^{2 \pi}  \frac{(s-2)e^{ \mathbf{i}t(s-3)}} {e^{ \mathbf{i}t(s-2)}}\mathbf{i}
e^{ \mathbf{i}t}\diff t
+\Im \int_0^{2 \pi}\frac{\mathbf{i}^2 e^{ \mathbf{i}t}}{\mathbf{i}e^{ \mathbf{i}t}}dt \\
&=\frac{1}{2}\Im 2 \pi \mathbf{i}(s-2)+\Im 2\pi \mathbf{i}\\
&=\pi \Re(s).\qedhere
\end{align*}
\end{proof}

Similarly, we have the following.
\begin{lemma}
\label{lem:winding2}
Consider a $q_s$-quadratic differential $\xi=z^{-k(s-2)-l} \diff z^{\otimes2}$ on $\bC$.
Then for a small loop   $\gamma$ around $s$-pole of type $(\uk.\ul)$ at $0 \in \bC$, we have
\begin{gather}\label{eq:wind k,l}
    \wind{\xi}(\gamma)=-k(\Re(s) -2)-l +2.
\end{gather}
\end{lemma}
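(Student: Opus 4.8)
The plan is to mirror exactly the computation carried out in Lemma~\ref{lem:winding1} for the $s$-simple zero, since the only change is the exponent of $z$ in the local form of $\xi$. First I would write $\xi=f(z)\,\diff z^{\otimes2}$ with $f(z)=z^{-k(s-2)-l}$ and parametrize the small counterclockwise loop around $0$ by $z(t)=e^{\mathbf{i}t}$ for $t\in[0,2\pi]$. Then I would substitute this into the angle-change formula of Lemma~\ref{lem:AC_formula}, which expresses $\AC(\gamma)$ as the sum of a ``metric'' term involving $f'(z)/f(z)$ and a purely kinematic term involving $\ddot{z}(t)/\dot{z}(t)$.

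The key observation is that $f'(z)/f(z)=\bigl(-k(s-2)-l\bigr)/z$, so that along the unit circle the integrand $\tfrac{f'(z(t))}{f(z(t))}\dot{z}(t)$ collapses to the constant $\mathbf{i}\bigl(-k(s-2)-l\bigr)$. Taking the imaginary part, the first integral then contributes $\pi\,\Re\bigl(-k(s-2)-l\bigr)=-\pi\bigl(k(\Re(s)-2)+l\bigr)$. The second integral is identical to the corresponding term in Lemma~\ref{lem:winding1}: since $\ddot{z}(t)/\dot{z}(t)=\mathbf{i}$, it contributes $\Im\int_0^{2\pi}\mathbf{i}\,\diff t=2\pi$. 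Adding these and dividing by $\pi$ according to the definition of $\wind{\xi}(\gamma)$ yields the asserted value $-k(\Re(s)-2)-l+2$.

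The one point requiring care --- and the closest thing to an obstacle --- is the bookkeeping of real and imaginary parts: because $s$ is complex while $k,l$ are integers, I must track that $\Re\bigl(-k(s-2)-l\bigr)=-k(\Re(s)-2)-l$ and that $\Im$ of $2\pi\mathbf{i}$ times the exponent extracts precisely $2\pi$ times this real part. Beyond this sign-and-real-part accounting, the argument is a routine repetition of the preceding lemma; no global or topological input is needed, as the statement is entirely local at the $s$-pole.
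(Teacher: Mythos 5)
Your computation is correct and is exactly the argument the paper intends: the lemma is stated with only the remark ``Similarly, we have the following,'' deferring to the proof of Lemma~\ref{lem:winding1}, and your substitution of $f(z)=z^{-k(s-2)-l}$ into Lemma~\ref{lem:AC_formula} with $z(t)=e^{\mathbf{i}t}$ reproduces that calculation faithfully, including the factor $\tfrac12$ on the first term and the extraction of $\Re\bigl(-k(s-2)-l\bigr)$. Nothing further is needed.
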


\begin{proposition}\label{pp:1}
Let $\xi$ be a $q_s$-quadratic differential on $\rs$ and $U \subset \rs$ be a simply connected
domain which contains just $k$ $s$-simple zeroes and an $s$-pole of type $(k,l)$. Then
for a loop $\gamma \subset U$ encircling these zeroes and a pole, we have
\begin{gather}\label{eq:wind 2-l}
    \wind{\xi}(\gamma)=2-l.
\end{gather}
\end{proposition}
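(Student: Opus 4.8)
The plan is to evaluate $\AC(\gamma)$ directly by means of the formula in Lemma~\ref{lem:AC_formula}. Since $U$ is simply connected it carries a single holomorphic coordinate $z$ in which the loop $\gamma$ and the region it bounds both lie, and in which $\xi=f(z)\,\diff z^{\otimes2}$ for a (multi-valued) function $f$. Applying Lemma~\ref{lem:AC_formula} along $\gamma$ splits the angle change as
\[
\AC(\gamma)=\tfrac12\,\Im\oint_\gamma \frac{f'(z)}{f(z)}\,\diff z
+\Im\int_0^1\frac{\ddot z(t)}{\dot z(t)}\,\diff t,
\]
and I would treat the two terms separately.

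For the first term the key observation --- and the conceptual heart of the argument --- is that although $f$ itself is multi-valued, the logarithmic derivative $f'/f=(\log f)'$ is a genuine single-valued meromorphic $1$-form on $U$. Indeed, by Definition~\ref{def:q_quad} the monodromy of $\xi$ multiplies $f$ by the scalar $q_s^{2m(\gamma)}$, so $\log f$ picks up only an additive constant under analytic continuation and its differential is invariant. Thus $\oint_\gamma \tfrac{f'}{f}\diff z$ may be computed by the residue theorem over the region bounded by $\gamma$, whose only singularities are the $k$ $s$-simple zeros and the single $s$-pole. Since the residue of $\diff\log f$ at a point where $\xi$ has local form $c\,z^{\alpha}\diff z^{\otimes2}$ is exactly the exponent $\alpha$ (the Jacobian factor from a change of coordinate is holomorphic and contributes no residue), equations \eqref{eq:s-simple} and \eqref{eq:s-pole} give residues $s-2$ at each zero and $-k(s-2)-l$ at the pole. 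Their sum is $k(s-2)-k(s-2)-l=-l$, so $\oint_\gamma \tfrac{f'}{f}\diff z = -2\pi\ii\,l$ and the first term equals $\tfrac12\Im(-2\pi\ii\,l)=-\pi l$, using $l\in\ZZ$.

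For the second term I would invoke the Hopf Umlaufsatz: it is precisely the total turning of the tangent vector of the simple closed curve $\gamma$ measured in the flat $z$-plane, hence equals $2\pi$ for a positively oriented embedded loop. Combining the two contributions yields $\AC(\gamma)=-\pi l+2\pi=\pi(2-l)$, and dividing by $\pi$ gives $\wind{\xi}(\gamma)=2-l$ as claimed. As a consistency check, specialising this computation to a single singularity recovers Lemma~\ref{lem:winding1} (one $s$-simple zero, residue $s-2$) and Lemma~\ref{lem:winding2} (one $s$-pole, residue $-k(s-2)-l$). The main obstacle I anticipate is not the residue bookkeeping but justifying the two structural inputs cleanly: first, that $f'/f$ descends to a single-valued form so that the residue theorem is even applicable; and second, that $\gamma$ may be taken to be an embedded positively oriented loop so that the turning term is exactly $2\pi$ rather than some other multiple.
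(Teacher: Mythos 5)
Your proof is correct, but it takes a genuinely different route from the paper's. The paper proves \eqref{eq:wind 2-l} by combining the two local computations \eqref{eq:wind s} and \eqref{eq:wind k,l} with a topological surgery claim — that enlarging a loop to swallow one additional $s$-simple zero changes the winding number by $\Re(s)-2$ (illustrated in Figure~\ref{fig:wind-2}) — so the total is $\bigl(-k(\Re(s)-2)-l+2\bigr)+k(\Re(s)-2)=2-l$. You instead compute the whole angle change in one pass from Lemma~\ref{lem:AC_formula}: the observation that $f'/f\,\diff z=\diff\log f$ is a single-valued meromorphic $1$-form (the monodromy of $\xi$ being multiplicative by $q_s^{2m(\gamma)}$) lets you apply the residue theorem, the residues $s-2$ at each zero and $-k(s-2)-l$ at the pole sum to $-l$, and the Umlaufsatz supplies the $2\pi$ turning term. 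This is essentially the strategy the paper uses for the exponential-type singularity in Proposition~\ref{pp:2}, transported to the $q_s$-setting; it buys a uniform computation that recovers Lemmas~\ref{lem:winding1} and~\ref{lem:winding2} as special cases and avoids the pictorial deformation argument, at the cost of having to justify that $\gamma$ can be taken embedded and positively oriented (so the turning term is exactly $2\pi$) and that $U$ minus the critical points carries a global coordinate in which Lemma~\ref{lem:AC_formula} applies — both of which are consistent with the conventions already used in the paper's local lemmas, where $z(t)=e^{\ii t}$.
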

\begin{proof}
This follows from combing \eqref{eq:wind s}, \eqref{eq:wind k,l} and the following fact:
\begin{itemize}
  \item if the loop $\gamma$ is obtained from a loop $\gamma'$
  by including a single $s$-simple zero, then
  $$\wind{\xi}(\gamma)=\wind{\xi}(\gamma')+\Re(s-2).$$
\end{itemize}
The proof of this claim is illustrated in Figure~\ref{fig:wind-2}.
\begin{figure}[t]\centering
\begin{tikzpicture}[]
\draw[\separated,thick,->-=.5,>=stealth]
    (.8,-1.5).. controls +(0:1) and +(0:2) ..(0,1)
    (0,1).. controls +(180:2) and +(180:1) ..(-.8,-1.5);

\draw[\separated,thick,-<-=.3,>=stealth](-3,-1.5)to(-.8,-1.5);
\draw[\separated,thick,-<-=.7,>=stealth](.8,-1.5)to(3,-1.5)node[right]{\footnotesize{$\gamma$}};
\draw[\separated,thick,-<-=.5,>=stealth](-3,-2.3)to(3,-2.3)node[right]{\footnotesize{$\gamma'$}};

\draw[thin](1.1,-1.5)arc(0:-342:.3);
\draw(1.1-.3,-1.1)node{\footnotesize{$^{-1}$}};
\draw(-1.1+.3,-1.1)node{\footnotesize{$^{-1}$}};
\draw[thin](-1.1,-1.5)arc(-180:180-18:.3);
\draw[-<,>=stealth](1.1-.6,-1.5)to(1.1-.6,-1.5-.05);
\draw[-<,>=stealth](-1.1+.6,-1.5)to(-1.1+.6,-1.5+.05);

\draw(0,.2-.5)node[above]{\footnotesize{$\wind{}=\Re(s)$}}
(0,0-.5)node[white] {$\bullet$} node[red]{$\circ$} circle(.3);
\draw[->,>=stealth] (0,.3-.5)to(-.05,.3-.5);
\end{tikzpicture}
\caption{Winding changes $\Re(s)-2$ when goes around an $s$-simple zero}
\label{fig:wind-2}
\end{figure}
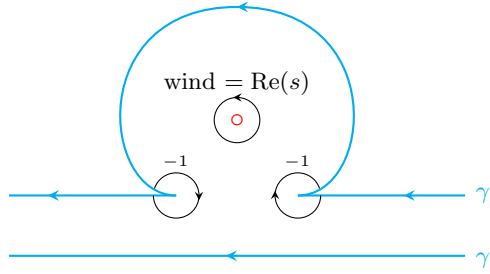
\end{proof}

Set $D=\{\,z \in \bC \mid\, |z|<2\,\}$.
\begin{proposition}\label{pp:2}
Consider a quadratic differential $\phi=e^{z^{-k}}z^{-l}  g(z) \diff z^{\otimes2} $
on $D$ with an exponential type singularity of index $(k,l)$ at $0 \in D$,
where $g(z)$ is non-zero holomorphic function on $D$.
Then, for a small loop $\gamma$ around $0$, we have
\begin{gather}\label{eq:wind exp}
    \wind{\xi}(\gamma)=2-l.
\end{gather}
\end{proposition}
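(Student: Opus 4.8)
The plan is to apply the angle-change formula of Lemma~\ref{lem:AC_formula} directly to $f(z)=e^{z^{-k}}z^{-l}g(z)$ and a concrete small loop. I would parametrize $\gamma$ by $z(t)=\varepsilon e^{\mathbf{i}t}$, $t\in[0,2\pi]$, for small $\varepsilon>0$, so that $\dot z=\mathbf{i}z$ and $\ddot z=-z$. The second term of Lemma~\ref{lem:AC_formula} is then immediate: $\ddot z/\dot z=\mathbf{i}$, whence $\Im\int_0^{2\pi}\mathbf{i}\,\diff t=2\pi$, which is exactly the $+2$ contribution to the winding number.

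The substance is the first term, involving the logarithmic derivative. Passing to $\log f=z^{-k}-l\log z+\log g(z)$ gives
\[
\frac{f'(z)}{f(z)}=-k\,z^{-k-1}-\frac{l}{z}+\frac{g'(z)}{g(z)},
\]
and I would multiply by $\dot z=\mathbf{i}z$ and integrate the three resulting pieces separately over $[0,2\pi]$. For the first piece, $-k\,\mathbf{i}z^{-k}=-k\,\mathbf{i}\varepsilon^{-k}e^{-\mathbf{i}kt}$ integrates to $0$ because $k\in\ZZ_{>0}$, so $e^{-\mathbf{i}kt}$ has vanishing mean over a full period; this is the one place where the hypothesis $k\geq1$ enters. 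The second piece is the constant $-l\,\mathbf{i}$, contributing $\tfrac12\Im(-2\pi l\,\mathbf{i})=-\pi l$. The third piece is $\tfrac12\Im\oint_{|z|=\varepsilon}(g'/g)\,\diff z$, which vanishes by the argument principle since $g$ is holomorphic and non-vanishing on $D$. Summing the two terms of Lemma~\ref{lem:AC_formula} gives $\AC(\gamma)=\pi(2-l)$, hence $\wind{\xi}(\gamma)=\AC(\gamma)/\pi=2-l$.

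The main obstacle is only apparent: the factor $e^{z^{-k}}$ is an essential singularity at $0$, so one might fear that the angle-change integral is delicate or divergent near the pole. The key observation that dissolves this is that taking the logarithmic derivative converts the essential singularity into the \emph{rational} term $-k\,z^{-k-1}$, whose contribution cancels exactly over a closed loop by periodicity. It remains to note that the answer is independent of the chosen loop, which follows from the regular-homotopy invariance of $\AC$ recorded after Lemma~\ref{lem:AC_formula}, since any two concentric small circles are regularly homotopic in $D\setminus\{0\}$ with matching tangent directions. As a consistency check, specializing the identical computation to $f=z^{s-2}$ recovers Lemma~\ref{lem:winding1} and to $f=z^{-k(s-2)-l}$ recovers Lemma~\ref{lem:winding2}, so the bookkeeping of signs agrees with the earlier cases.
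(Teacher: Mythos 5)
Your proposal is correct and follows essentially the same route as the paper's proof: both split $\AC(\gamma)$ via Lemma~\ref{lem:AC_formula} into the logarithmic-derivative term and the curvature term, observe that $\frac{\diff}{\diff z}\log f = -kz^{-k-1}-l z^{-1}+g'/g$ contributes only $-\pi l$ (the $z^{-k-1}$ piece having zero residue for $k\ge 1$ and $g'/g$ being holomorphic since $g$ is non-vanishing), and add the $2\pi$ from the curvature term. Your extra remarks on regular-homotopy invariance and the consistency checks against Lemmas~\ref{lem:winding1} and~\ref{lem:winding2} are harmless elaborations of the same argument.
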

\begin{proof}
Set $z(t)=  e^{ \mathbf{i}t}$.
We note that the formula in
Lemma~\ref{lem:AC_formula} can be written as
\[
\AC(\gamma)=\frac{1}{2}\Im \int_{\gamma}\frac{\diff}{\diff z} \log f(z) \diff z
+\Im \int_0^1 \frac{\ddot{z}(t)}{\dot{z}(t)}dt.
\]
Then the first term is
\begin{align*}
&\frac{1}{2}\Im \int_{\gamma}\frac{\diff}{\diff z} \log e^{z^{-k}}z^{-l}  g(z)  \diff z\\
=&\frac{1}{2}\Im \int_{\gamma}\frac{\diff}{\diff z} \left(z^{-k}-l \log z+\log g(z) \right) \diff z \\
=&\frac{1}{2}\Im \int_{\gamma} \left(-k z^{-k-1}-l z^{-1}+ \frac{g^{\prime}(z)}{g(z)} \right) \diff z \\
=&\frac{1}{2}\Im (-2 \pi \mathbf{i}l)\\=&-\pi l.
\end{align*}
Here we use the two facts:
\begin{itemize}
\item the residue of $z^{-k-1}$ at $z=0$ is zero since $k \ge 1$ and
\item the residue of $g^{\prime}(z) \slash g(z)$ at $z=0$ is zero
since this function is holomorphic near zero by the condition that $g(z)$ is non-vanishing on $D$.
\end{itemize}
Next, we can easily compute that
the contribution of the second term is $2 \pi$.
Thus we have the proposition follows.
\end{proof}

\subsection{Surface framing and period maps}
Let $\gms$ be a graded marked surface with numerical data $\num(\gms)$ and
an associated graded DMS $(\surfo,\uc,\Lambda)$.
Let $\LS$ be its topological log surface 
(cf. Definition~\ref{con:top log}).
Fix $s\in\CC$.

\begin{definition}\label{def:x.f.quad}
A \emph{$\LS$-framed $q$-quadratic differential} $\Theta=(\rs,\xi,h;s)$
consists of a CY-$s$ type $q_s$-quadratic differential $\xi$ on a genus $g$ Riemann surface $\rs$
of $s$-polar type $(\uk,\ul)$ together with an isotopy class of a homeomorphism
\begin{equation}\label{eq:h}
    h \colon \LS \to  \logrsx,
\end{equation}
such that \begin{itemize}
\item $h$ identifies the decoration $\Tri$ of $\surfo$ with the set $\Zer(\xi)$ of zeroes of $\xi$,
\item $h$ sends $\partial\LS$ to the set of real blow-ups of poles $\Pol(\xi)$ of $\xi$,
\item $h$ commutes with the action of the deck transformation group $\langle q \rangle$ and
\item $h$ matches the numerical data $\num(\logrsx)$ with $\num(\LS)$.
\end{itemize}
Two $q$-quadratic differential $\Theta_i=(\rs_k,\xi_i,h_i;s)$ are $\LS$-equivalent if
\begin{itemize}
\item there is a biholomorphism
$F \colon \rs_1 \to \rs_2$ satisfying $F(\Pol(\xi_1))=\Pol(\xi_2)$, together with the choice of a lift
$\widetilde{F} \colon \log \rs_1 \to \log \rs_2$
such that $\widetilde{F}^* \xi_{2}=\xi_{1}$ and
\item
$h_2^{-1} \circ \log\widetilde{F} \circ h_1\in\Homeo_0(\LS)$,
where
$$\log\widetilde{F} \colon (\log \rs_{1,\Tri}^{\circ})^{\xi_1}\to
    (\log \rs_{2,\Tri}^{\circ})^{\xi_2}
$$
is the induced homeomorphism.
\end{itemize}
Here, $\Homeo_0(\LS)$ consists of isotopy classes of homeomorphisms of $\LS$ that
commute with the deck transformation and are isotopy to identity.
\end{definition}

Denote by $\QQuad_s(\LS)$ the moduli space of $\LS$-framed $q$-quadratic differential
$q$-quadratic differentials.

Consider the spectral cover $\log \widehat{\rs}^{\circ}\to \log \rsp$.
This can be naturally extended to the double cover  $
(\log \widehat{\rs}^{\circ}_{\Tri})^{\xi}\to (\log \rsp_{\Tri})^{\xi}$
with metric completions at zeroes and oriented blow-ups at poles.
Correspondingly, we can also define the spectral cover of the
topological log surface $\log \widehat{\surf}_{\Tri}\to \LS$
with the covering involution $\tau^\Tri$
such that $h \colon \LS \to  \logrsx$
lifts to the homeomorphism between spectral covers
$\widehat{h} \colon \log \widehat{\surf}_{\Tri} \to (\log \widehat{\rs}^{\circ}_{\Tri})^{\xi}$.
Finally, the framing $h$ in \eqref{eq:h} induces an isomorphism of the
hat homology groups
\[
\widehat{h}_* \colon \HHo{}(\LS)\to
\HHo{}(\xi),
\]
where $\HHo{}(\LS)$ is the corresponding hat homology group (i.e.
defined by $\tau^\Tri_* \gamma=-\gamma$).

Recall that the period \eqref{eq:pp} of $\xi$ is an $R$-linear map.
Thus a $\LS$-framed $q_s$-quadratic differential $(\xi,h)$
gives an $R$-linear map
\[
Z_{\xi}\circ \widehat{h}_* \in \Hom_{R}(\HHo{}(\LS),\bC_s).
\]
\begin{definition}
The \emph{period map}
\[
\Pi_s \colon\QQuad_s(\LS) \to \Hom_{R}(\HHo{}(\LS),\bC_s)
\]
is defined by sending $
(\xi,h) \mapsto Z_{\xi}\circ  \widehat{h}_*$.
\end{definition}

\begin{conjecture}\label{conj:period}
The period map $\Pi_s$ is a local homeomorphism.
\end{conjecture}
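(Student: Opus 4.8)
The plan is to follow the strategy of Bridgeland--Smith \cite{BS} and Haiden--Katzarkov--Kontsevich \cite{HKK}, constructing \emph{period coordinates} from a horizontal strip decomposition and transporting the classical local-structure theory for strata of abelian differentials to our $q$-twisted setting via the spectral cover. Concretely, one passes to $\log\hat{\rs}^\circ$, where $\psi_s=\sqrt{\xi}$ is a single-valued holomorphic $1$-form (Remark~\ref{rem:s.v.}, equation~\eqref{eq:GG}), so that $\Pi_s$ becomes integration of $\psi_s$ against the $\tau$-anti-invariant cycles comprising $\HHo{}(\xi)$. After a small rotation one may assume the underlying differential is saddle-free; by the discussion in Section~\ref{sec:Quad_inf} the flat surface then decomposes into half-planes and horizontal strips only, and the saddle connections in $(\log\rs_\Delta^\circ)^\xi$, taken modulo the $\langle q\rangle$-action, furnish an explicit $R$-basis $\gamma_1,\dots,\gamma_n$ of $\HHo{}(\xi)$, where $n=2g-2+b+\sum_{i=1}^b k_i$ by Proposition~\ref{prop:rank1}. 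The periods $Z_\xi(\gamma_1),\dots,Z_\xi(\gamma_n)$ are the candidate local coordinates.

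The heart of the argument is to exhibit a local inverse, showing simultaneously that $\Pi_s$ is open and locally injective. Given a target vector $R$-close to $\Pi_s(\xi)$, one reassembles a flat surface by regluing the half-planes and horizontal strips according to the prescribed complex widths, exactly as in \cite[\S 4]{BS}. The $s$-simple zeros reappear as the $\infty$-angle points of Definition~\ref{def:conical} and the $s$-poles as the oriented real blow-ups of Section~\ref{sec:MC}, so the reconstructed datum is again a CY-$s$ type $q_s$-quadratic differential of the same $s$-polar type $(\uk,\ul)$; the framing $h$ is pinned down up to $\Homeo_0(\log\surfo)$ by matching strips, and $\langle q\rangle$-equivariance is preserved because the regluing is performed equivariantly on the sheets $\surfo^m$ of Construction~\ref{con:top log}. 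This shows both that nearby period vectors are realized (surjectivity) and that a differential is recovered uniquely from its periods (injectivity), yielding the local homeomorphism.

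The main obstacle is that, unlike in \cite{BS,HKK}, the ambient $\log\surfo$ is an infinite cyclic cover and the pole orders $k(s-2)+l$ are genuinely non-integral for irrational $s$; one therefore cannot invoke an integer-stratum period theorem verbatim but must run the strip-regluing $\langle q\rangle$-equivariantly and control the finitely generated $R$-module $\HHo{}(\xi)$ in place of a finite-dimensional lattice. The delicate points are the flat geometry at the higher-order $s$-poles after real blow-up and the verification that deforming the periods does not alter the $s$-polar type; both are pinned down by the winding-number computations of Proposition~\ref{pp:1} and Proposition~\ref{pp:2}, which show the boundary grading is locally rigid. Finally, for the range $\Re(s)\gg1$ the statement also follows a posteriori from the main theorem (Theorem~\ref{thm:q=x}) together with Theorem~\ref{thm:localiso2}: the isomorphism $\XQuad_s(\log\surfo)\cong\QStap_s\DX$ intertwines $\Pi_s$ with the central-charge map $\mathcal{Z}_s$, which is already a local homeomorphism, so the genuine content of the conjecture is to remove the hypothesis $\Re(s)\gg1$ by the direct geometric argument above.
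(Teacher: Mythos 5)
The statement you are addressing is left as a \emph{conjecture} in the paper: the authors do not prove it in general. The only argument the paper offers is the observation, recorded in a remark after Theorem~\ref{thm:main}, that on the locus $\XQuad^\circ_s(\log\surfo)$ covered by the main theorem the period map factors as $\Pi_s=q_s\circ(\Pi_\infty\otimes 1)$ through the isomorphism \eqref{eq:otimes}, so that local homeomorphy is inherited from the HKK period map $\Pi_\infty$ of \eqref{eq:i infinity}. Your closing paragraph reproduces exactly this a posteriori argument, and that part is correct; but it only covers the regime where Theorem~\ref{thm:q=x} applies, which is precisely why the general statement remains a conjecture.

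The direct geometric argument you sketch for the general case has genuine gaps. First, the claim that the saddle connections of a saddle-free representative, taken modulo the $\langle q\rangle$-action, furnish an $R$-basis of $\HHo{}(\xi)$ is asserted rather than proved: Proposition~\ref{prop:rank1} computes the rank via bouquets of circles, not via flat geometry, and identifying that algebraic basis with a geometric one is exactly the kind of statement the strip-decomposition analysis is supposed to deliver, not presuppose. Second, and more seriously, the regluing step is not actually carried out. On $\log\surfo$ there are infinitely many strips (one family per sheet), their widths are constrained by $\langle q\rangle$-equivariance to scale by $e^{\ii\pi s}$ from sheet to sheet, and the reglued pieces must close up consistently at each $\infty$-angle conical point across all sheets simultaneously. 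For $\Re(s)$ close to $2$ this closing-up can fail or change the combinatorics of the core --- this is precisely the phenomenon behind Assumption~\ref{ass:cuts} and the restrictions on $\Re(s)$ in the main theorem --- so the assertion that deforming the periods does not alter the $s$-polar type cannot be dispatched by the winding-number computations of Proposition~\ref{pp:1} and Proposition~\ref{pp:2}, which are purely local at the singularities and say nothing about the global strip structure. Until these two points are supplied, your argument establishes the conjecture only where the paper already establishes it.
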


In the next section, we interpret $q$-quadratic differentials as $q$-stability conditions.
Along the way, we will prove the conjecture above under certain assumptions.

\subsection{Quadratic differentials on log surface}
\begin{remark}\label{rem:s.v.}
Note that the monodromy representation $\rho_s$ defined in Section~\ref{sec:q_quad}, cf. \eqref{eq:rho},
factors through the representation $\rho$ above.
More precisely, define a group homomorphism
\[\begin{array}{rcl}
    \operatorname{ev}_s^{\otimes 2} \colon \langle q \rangle &\to&\bC^* \\
    q&\mapsto& q_s^2,
\end{array}\]
then we have $\rho_s=\operatorname{ev}_s^{\otimes 2} \circ \rho$.
This implies that
the CY-$s$ type $q_s$-quadratic differential $\xi$ is a single-valued
non-zero holomorphic quadratic differential on $\log \rsp$ satisfying
\begin{gather}\label{eq:GG0}
    q^* \xi=q_s^2 \xi.
\end{gather}
Therefore,
we have the following alternative definition of $\LS$-framed $q$-quadratic differentials.
\end{remark}

\begin{lemma}\label{lem:log}
A \emph{$\LS$-framed $q$-quadratic differential} $\Theta=(\rs,\xi,h;s)$ is equivalent to
the data $\Theta=(\log\rs,\xi,h;s)$ as follows:
\begin{itemize}
\item a genus $g$ Riemann surface $\log\rs$ with an automorphism $q$.
\item a meromorphic quadratic differential $\xi$ on $\log\rs$ such that
\begin{itemize}
\item there are $\aleph=|\Tri|$ zeroes of $\xi$, each of which is a conical singularity of infinity angle, as in Definition~\ref{def:conical}, with local coordinate of the form
    \[
        \xi=c e^{s \omega} \diff \omega^{\otimes 2}.
    \]
    Denote by $\Tri=\{Z_j\}$ the set of these zeroes.
\item there are $k_i\cdot b$ poles $\{p_i^j\mid 1\le i\le b, j\in\ZZ_{k_i}\}$ of $\xi$ such that
    the pole $p_i^j$ is a conical singularity of $k_i\pi$ angle, with local coordinate of the form \[
        \xi=c e^{-(k_i (s-2)+l_i-2)\omega}\diff \omega^{\otimes 2}.
    \]
\item The automorphisms $q$ and $\xi$ satisfy \eqref{eq:GG0}. Thus, $q$ preserves zeroes
    and $q(p_i^j)=p_i^{j+1}$ in particular.
\end{itemize}
\item a homeomorphism $h \colon \LS \to  \logrsx$
such that
\begin{itemize}
\item the numerical data of $\logrsx$ (similarly defined as in Remark~\ref{rem:num})
    matches with the one of $\LS$ under $h$.
\item the deck transformation $q$ on $\LS$ becomes the automorphism $q$ on $\logrsx$,
where $\logrsx$ is the real blow-up of $\log\rs$ at all the poles.
\end{itemize}
\end{itemize}
\end{lemma}

\begin{con}
Let $\Theta=(\logrs,\xi,h;s)$ be a $\LS$-framed $q$-quadratic differential.
Using $h$, we will pull back various structures (i.e., foliation, metric, etc.) of $\logrsx$ to $\LS$
and omit $(\logrs, h)$ in the following discussion when there is no confusion.
Namely, we will simply write $\Theta=\xi$ and call it a $q$-quadratic differential on $\LS$.
\end{con}

\section{$q$-Stability conditions via $q$-quadratic differentials}\label{sec:main}
Recall that we have the following topological setting:
\begin{itemize}
  \item $\gms=(\surf,\M,\Y,\grad)$ is a graded marked surface,
  \item $\TT$ is an (initial) full formal arc system of $\surf$,
  \item $\uc$ is the cut that is compatible with $\TT$,
  \item $(\surfo,\uc,\Lambda)$ is a graded DMS associated to $\gms$.
  \item $\LS=\bigcup_{m\in\ZZ}\surfo^{(m)}$ is the topological log surface of $\surfo$
   with grading $\log\grad$, with respect to $\uc$;
\end{itemize}
and categorical setting:
\begin{itemize}
  \item $\DT$ is the topological Fukaya category associated to $\surf$,
  \item $\DX$ is the Calabi-Yau-$\XX$ category of $\DT$,
  \item we identify $\DT$ with an $\XX$-baric heart of $\DX$ under \eqref{eq:Lagrangian}.
\end{itemize}
Note that the cut $\uc$ induces an isomorphism
\begin{gather}\label{eq:otimes}
    \HHo{1}(\LS)\cong\Ho{1}(\surf,\M;\ZZ_\Sp)\otimes R.
\end{gather}

\subsection{$q$-Quadratic differentials with cuts}
Denote by $\Core(\xi)$ the core of $\xi$ on $\LS$.
Let $\pi_\Tri\colon\LS\to\surfo$ be the projection.
Denote by
$$\core(\xi)\colon=\pi_\Tri(\Core(\xi))\subset\surfo$$
the projection of $\Core(\xi)$ on $\surfo$.
Note that $\core(\xi)$ is well-defined topologically
since \eqref{eq:GG0} implies that $\Core(\xi)$ and angles/direction of geodesics are
invariants under the deck transformation (but the length of geodesics may be scaled).
In order to give the metric information on $\core(\xi)$,
we need to choose a cut. We will discuss the existence of cuts in the next subsection.

\begin{definition}
Let $\xi$ be a $q$-quadratic differential on $\LS$.
A cut $\ut=\{t_i\}$ of $\surfo$ is compatible with $\xi$
if any of its arc $t_i$ does not intersect the interior of $\core(\xi)$.
Denote by $\QQuac_s(\LS)$ the $q$-quadratic differentials on $\LS$
that admit some compatible cut.
\end{definition}

Now take a $q$-quadratic differential $\xi$ with compatible cut $\ut$.
Then by Lemma~\ref{lem:cuts} there is a unique element $b_*\in\SBr(\surfo)$ such that $b(\ut)=\uc$.
Consider the induced action of $b$ on $\LS$, which is also denoted by $b$.
Denote by
\begin{gather}\label{eq:beta}
    \cz(\xi)\colon=\Core(b_*(\xi))\cap \surfo^{(0)}
\end{gather}
the induced core on $\surfo$,
which is the intersection of the core $\Core(b_*(\xi))$ with
the zero sheet $\surfo^{(0)}$ of $\LS$.
Note that $\xi$ also provides foliation/metric on $\cz(\xi)$.
By pulling the set of vertices $\Tri$ of $\cz(\xi)$ on $\surfo$ along the arcs in $\uc$
to the set $\Y$ of closed marked points of $\surfo$,
we obtain some convex hull $\cx(\xi)$ (see Figure~\ref{fig:cut}).
More precisely, this is the inverse operation of $\imc$ in \eqref{eq:imc}:
\[
    \cx(\xi) \colon= \imc^{-1} (\cz(\xi))\subset\gms.
\]

\begin{figure}[hb]\centering
\begin{tikzpicture}[scale=.4,arrow/.style={->,>=stealth,thick}]
\newcommand{\vtex}{{$\bullet$}}
\draw
    (175:5.5)coordinate (Z1)
    (5:5.5)coordinate (Z3)
    (-90:3)coordinate (Z2)

    (60:7)coordinate (U2)
    (120:7)coordinate (U1)
    (-135:7)coordinate (U3)
    (-45:7)coordinate (U4);

  \foreach \j in {.1,.2,...,.9}
    {
      \path (Z1)--(Z2) coordinate[pos=\j] (m0);
      \path (U1)--(m0) coordinate[pos=.9] (m1);
      \draw[Emerald, very thin] plot [smooth,tension=.5] coordinates {(U1)(m1)(U3)};
    }
  \foreach \j in {.1,.2,...,.9}
    {
      \path (Z3)--(Z2) coordinate[pos=\j] (m0);
      \path (U2)--(m0) coordinate[pos=.9] (m1);
      \draw[Emerald, very thin] plot [smooth,tension=.5] coordinates {(U2)(m1)(U4)};
    }

  \foreach \j in {-36,-25,-16,-9,0,9,16,25,36,49,64,81,100,121,144,169,196,225,256,289,324}
    {
      \path (U1)--(U2) coordinate[pos=.5] (m0);
      \path ($(m0)!\j*.0025!(Z2)$) coordinate (m1);
      \draw[Emerald, very thin] plot [smooth,tension=.5] coordinates {(U2)(m1)(U1)};
    }

\draw[\separated, thick](Z1)to(U1)to(Z2)to(U3)--cycle;
\draw[\separated, thick](Z3)to(U2)to(Z2)to(U4)--cycle;
\foreach \j in {10,20,30,40,50,-15,-25,-35}
{\draw[\separated,thick] (Z1)to(180-\j:7);}
\foreach \j in {10,20,30,40,50,-15,-25,-35}
{\draw[\separated,thick] (Z3)to(\j:7);}
\foreach \j in {15,-15,25,-25,35,-35}
{\draw[\separated,thick] (Z2)to(-90+\j:7);}
\draw[thick] (0,0) circle (7);

\draw(180:7)coordinate (z1)(0:7)coordinate (z3)(-90:7)coordinate (z2);
\foreach \k in {1,2,3}{
\foreach \j in {.2,.5,.8}{
\draw[->-=\j,>=stealth,very thick,green](Z\k)to(z\k);}}

\draw[red,thick](Z1)to[bend left=2](Z2)to[bend left=2](Z3);
\foreach \j in {1,2,3}{
    \draw(Z\j)node[white]{$\bullet$} node[red]{$\circ$};}

\end{tikzpicture}
\quad
\begin{tikzpicture}[scale=.4,arrow/.style={->,>=stealth,thick}]
\newcommand{\vtex}{{$\bullet$}}
\draw
    (180:7)coordinate (Z1)
    (0:7)coordinate (Z3)
    (-90:7)coordinate (Z2)

    (60:7)coordinate (U2)
    (120:7)coordinate (U1)
    (-135:7)coordinate (U3)
    (-45:7)coordinate (U4);

  \foreach \j in {.1,.2,...,.9}
    {
      \path (Z1)--(Z2) coordinate[pos=\j] (m0);
      \path (U1)--(m0) coordinate[pos=.9] (m1);
      \draw[Emerald, very thin] plot [smooth,tension=.5] coordinates {(U1)(m1)(U3)};
    }
  \foreach \j in {.1,.2,...,.9}
    {
      \path (Z3)--(Z2) coordinate[pos=\j] (m0);
      \path (U2)--(m0) coordinate[pos=.9] (m1);
      \draw[Emerald, very thin] plot [smooth,tension=.5] coordinates {(U2)(m1)(U4)};
    }

  \foreach \j in {-25,-16,-9,0,9,16,25,36,49,64,81,100,121,144,169,196,225,256,289,324}
    {
      \path (U1)--(U2) coordinate[pos=.5] (m0);
      \path ($(m0)!\j*.0025!(Z2)$) coordinate (m1);
      \draw[Emerald, very thin] plot [smooth,tension=.5] coordinates {(U2)(m1)(U1)};
    }

\draw[\separated, thick](Z1)to(U1)to(Z2)to(U3)--cycle;
\draw[\separated, thick](Z3)to(U2)to(Z2)to(U4)--cycle;
\foreach \j in {10,20,30,40,50,-25,-35}
{\draw[\separated,thick] (Z1)to(180-\j:7);}
\foreach \j in {10,20,30,40,50,-25,-35}
{\draw[\separated,thick] (Z3)to(\j:7);}
\foreach \j in {-25,25,35,-35}
{\draw[\separated,thick] (Z2)to(-90+\j:7);}
\draw[thick] (0,0) circle (7);

\draw[red,thick](Z1)to[bend left=10](Z2)to[bend left=10](Z3);
\foreach \j in {1,2,3}{
    \draw(Z\j)node[white]{$\bullet$} node[red]{$\circ$};}
\end{tikzpicture}
\caption{Horizonal strip decompositions on (a sheet of) $\LS$ and $\surf$}\label{fig:cut}
\end{figure}

The foliation of $\cx(\xi)$ gives a partial foliation of $\surf$,
which can be extended to the rest of the surface by gluing upper half planes
as the core determines a flat surface.
Thus we obtain a flat surface, or equivalently,
a quadratic differential $\phi$ on some graded marked surface $\surf_0^{\lambda_0}$
determined by
\begin{gather}\label{eq:cx xi}
    \Core(\phi)=\cx(\xi).
\end{gather}

\begin{lemma}
$\phi$ is in $\FQuad_\infty(\gms)$.
\end{lemma}
\begin{proof}
By definition, we only need to show that the numerical data of $\phi$
(or of $\surf_0^{\lambda_0}$)
coincide with $\num(\gms)$.
This is exactly what we have shown in Theorem~\ref{thm:winding}.
\end{proof}

By \eqref{eq:i infinity} in Theorem~\ref{thm:HKK},
$\phi$ corresponds to a stability condition
\begin{gather}\label{eq:ns xi}
    \ns(\xi)\colon=\Xi_\infty(\phi)\in\Stap\DT.
\end{gather}
Therefore, we obtain a triple $(\DI, \ns(\xi), s)$.
We proceed to show that such a triple induces a $q$-stability condition.
\begin{proposition}\label{pp:QS=QQ2}
Suppose that $\Re(s)\geq2$.
Let $\xi\in\QQuac_s(\LS)$.
Then the triple $(\DI, \ns(\xi),s)$, constructed as above, induces a closed $q$-stability condition
$$(\sigma,s)=(\DI, \ns(\xi),s)\otimes_*R$$
on $\DX$. Then we obtain a map
\begin{equation}\label{eq:xis}
    \Xi_s\colon\QQuac(\LS)/\SBr(\surfo)\to\CStab\DX/\ST,
\end{equation}
which is in fact an isomorphism.
\end{proposition}
\begin{proof}
Without loss of generality, assume $b$ in \eqref{eq:beta} is trivial,
i.e., $\xi$ and $\uc$ are compatible.

\subsubsection*{\textbf{Step 1}}
We only need to prove \eqref{eq:closed}, i.e., $\gldim\ns+1\le\Re(s)$.
Then Theorem~\ref{thm:IQ} implies that the construction above gives a $q$-stability condition
and we obtain a map $\Xi_s$.
Suppose not, then there is a pair of semistable objects $\widehat{M}_1$ and $\widehat{M}_2$ in $\DI$ such that
\begin{gather}\label{eq:>1}
\begin{cases}
    \Hom_{\D_\infty}(\widehat{M}_1,\widehat{M}_2)\neq0,\\
    \varphi_{\ns}(\widehat{M}_2)-\varphi_{\ns}(\widehat{M}_1)>\Re(s)-1(\geq1),
\end{cases}
\end{gather}
where $\varphi_{\ns}$ denotes the phase function with respect to $\ns$.

By Theorem~\ref{thm:IQZ}, $\widehat{M}_i$ correspond to graded closed arcs $\wg_i$ on $\surf$,
$i=1,2$,
and the non-zero morphism in \eqref{eq:>1} corresponds to an intersection
$$p \in \wg_1\cap\wg_2$$
with $\ind_p(\wg_1,\wg_2)=0$.
We claim that $p$ can not be in the interior of $\surf$ as shown below.
\[
\begin{tikzpicture}[xscale=.5, yscale=.3]
\draw[red, thick](3,3)node[below]{$\gamma_2$}to(-3,-3)
    (3,-3)to(-3,3)node[below]{$\gamma_1$}
    (0,0)node{\small{$\bullet$}}node[below]{$p$};
\end{tikzpicture}
\]
Otherwise, such an intersection in $\surf^\circ$ between $\wg_i$ also contributes as
$$p \in \wg_2\cap\wg_1$$
with $\ind_p(\wg_2,\wg_1)=1$ by \cite[Lemma~2.3]{IQZ}.
By Theorem~\ref{thm:IQZ} again, we also have a (non-zero) induced morphism
in $$\Hom_{\D_\infty}(\widehat{M}_2,\widehat{M}_1)[1].$$
As both $\widehat{M}_i$ (and their shifts) are (semi-)stable, we have
\[
    \varphi_{\ns}(\widehat{M}_1[1])\geq\varphi_{\ns}(\widehat{M}_2)
\]
which contradicts to the second inequality in \eqref{eq:>1}.

\begin{figure}[hb]\centering
\begin{tikzpicture}[xscale=.5, yscale=.3,
  egarrow/.style={>=stealth,thick,red}, 
  vline/.style={dashed,gray},
  c-vrtx/.style={blue}]
\newcommand{\vrtx}{\bullet}
\draw[fill=gray!7,dotted]
    (12, 5) to (6,-5) to (-12, -5) to (-6,5) -- cycle;

\draw
    (1,3)coordinate (A1)
    (4,1)coordinate (A2)
    (3,-2)coordinate (A3)
    (-1,-3)coordinate (A4) node[below,red]{\tiny{$Y$}}
    (-4,-1)coordinate (A5)
    (-3,2)coordinate (A6)
    (1,3+12)coordinate (B1)
    (4,1+12)coordinate (B2)
    (-3,2+12)coordinate (B6) ;
\draw[very thick] plot [smooth,tension=1] coordinates {(-2.2,-3)(A4)(0,-4)}
plot [smooth,tension=1] coordinates {(-4.5,.5)(A5)(-3.8,-3)}
plot [smooth,tension=1] coordinates {(2,-3)(A3)(4.2,-2)};

\draw [vline] (A1) edge (1,3+15);
\draw [vline] (A2) edge (4,1+15);
\draw [vline] (A6) edge (-3,2+15);

\draw [red,thick] (A1) edge (A6) edge (A2);
\draw [red,thick] (A4) edge[bend left] (A3) edge[bend right] (A5);
\draw [red,thick] (B1) edge (B6) edge (B2);

\foreach \x/\y in {1/4,6/5,2/3}{
\foreach \j in {.2,.5,.8}{
\draw[->-=\j,>=stealth,thick,green](A\x)to(A\y);}}

\draw[blue!50,dashed,thick] (A6) to(A2);
\draw[blue!50,dashed,thick] (B6) to node[below]{\tiny{$\;\we_0$}} (B2);

\foreach \j in {3,4,5}{
    \draw(A\j)node[white]{$\bullet$} node[red]{$\circ$};}
\foreach \j in {1,2,6}{
    \draw(A\j)node[white]{$\bullet$} node[red]{$\circ$}
        (B\j)node[white]{$\bullet$} node[red]{$\circ$};}

\draw[red!49](-3,2+7)coordinate (C6)node[white]{$\bullet$} node[red!23]{$\circ$}
    to(1,3+7)coordinate (C1)node[white]{$\bullet$} node[red!23]{$\circ$}
    to(4,1+7)coordinate (C2)node[white]{$\bullet$} node[red!23]{$\circ$};

\draw(8,12)node{$\LS$}(8,3)node{$\surfo$};
\draw[red]($(A4)!.5!(A5)$)node[]{\tiny{$\gamma_1$}}
    ($(A4)!.5!(A3)$)node[]{\tiny{$\gamma_2$}};
\draw[red]($(A1)!.5!(A6)$)node[above]{\tiny{$\eta_1$}}
    ($(A1)!.5!(A2)$)node[above]{\tiny{$\eta_2$}};
\draw[red]($(B1)!.5!(B6)$)node[above]{\tiny{$q(\we_1)$}}
    ($(B1)!.5!(B2)$)node[above]{\tiny{$\we_2$}};
\draw[red!49](-1,9.5)node[above]{\tiny{$\we_1$}}(2.5,9)node[above]{\tiny{$\we_2$}};
\draw[red,>=stealth,->-=.6] (-1.7,-2)to[bend left]node[above]{\tiny{$\alpha$}}(0,-2.25);

\draw[red]plot [smooth,tension=1.5] coordinates{
    ($(A6)!.8!(A1)$)  (1+.2,3+.8)  ($(A1)!.2!(A2)$) }
    (1+.2+.2,3+.8+.5)node[]{\tiny{$_{\beta}$}};
\draw[red,thin,>=stealth,->]   (1+.2,3+.8)to(1.0001+.2,3+.8) ;
\draw[red!49]plot [smooth,tension=1.5] coordinates{
    ($(C6)!.8!(C1)$)  (1+.2,3+.8+7)  ($(C1)!.2!(C2)$) }
    (1+.2+.2,3+.8+7+.5)node[]{\tiny{$_{\widetilde{\beta}}$}};
\draw[red!49,thin,>=stealth,->]   (1+.2,3+.8+7)to(1.0001+.2,3+.8+7) ;

\draw[red,>=stealth,-<-=.65]
    ($(B6)!.8!(B1)$)  to[bend right=50]($(B1)!.2!(B2)$)
    (1-.3,15-1)node[]{\tiny{$\theta$}}
    (A1)node[below]{\tiny{$\;^Z$}};
\end{tikzpicture}
\caption{The cut (green) and lifts of closed arcs to log surface}\label{fig:cut and lift}
\end{figure}
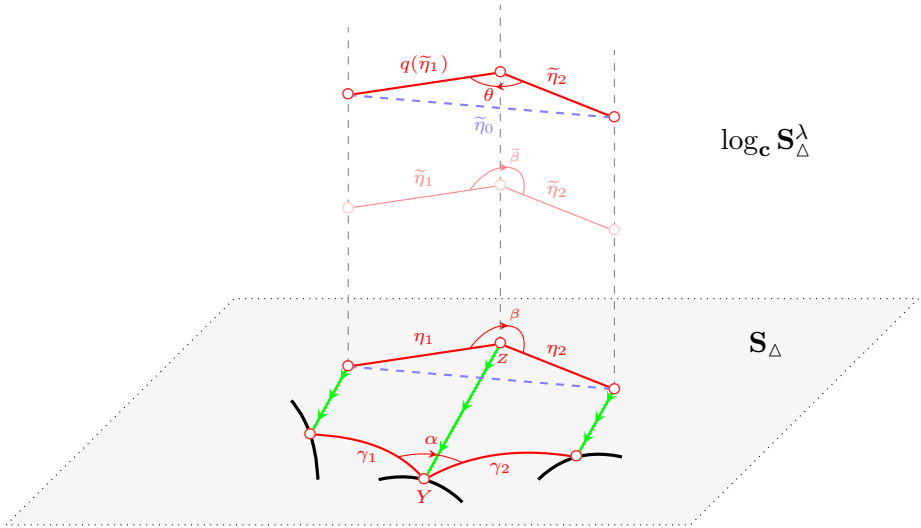

Now let $Y\in\wg_1\cap\wg_2$ be the intersection, which is in $\Y$,
corresponds to the non-zero morphism in \eqref{eq:>1}.
More precisely, such a morphism corresponds to the angle $\alpha$ from $\wg_1$ to $\wg_2$ at $Y$,
that equals to (with respect to the quadratic differential $\phi$)
\[
    \left(\varphi_{\ns}(\widehat{M}_2)-\varphi_{\ns}(\widehat{M}_1)\right)\cdot\pi.
\]
Let $\we_i=\imc(\wg_i)$, where $\imc$ is defined in \eqref{eq:imc}.
Since $\widehat{M}_i$ are semistable, $\wg_i$ are in $\Core(\phi)=\cx(\xi)$
and $\eta_i$ are in $\Core^0(\xi)$.
Denote by $Z$ the corresponding intersection between $\eta_i$, which is the decoration
connecting to $Y$ by $\uc$.
By construction,
the angle $\beta$ from $\eta_1$ to $\eta_2$ at $Z$ (with respect to the quadratic differential $\xi$)
equals the angle $\alpha$ from $\wg_1$ to $\wg_2$ at $Y$, cf. Figure~\ref{fig:cut and lift}.

Let $\we_i$ be some lifts of $\eta_i$ in $\LS$
so that $\beta$ is also lifted to an angle $\widetilde{\beta}$ from $\we_1$ to $\we_2$.
As the deck transformation $q$ acts as rotation by $e^{\ii \pi s}$ on $q$-quadratic differentials,
the angle $\theta$ from $\eta_2$ to $q(\eta_1)$ at $Z$ is
\[\begin{array}{rcl}
    \theta&=&\Re(s)\cdot\pi - \widetilde{\beta}\\
    &=&\Re(s)\cdot\pi - \beta\\
    &=&\Re(s)\cdot\pi - \alpha\\
    &=&\Re(s)\cdot\pi - (\varphi(\widehat{M}_2)-\varphi(\widehat{M}_1))\cdot\pi.
\end{array}\]
By \eqref{eq:>1}, $\theta<\pi$.
Since both $\eta_2$ and $q(\eta_1)$ are in the core $\Core(\xi)$,
which is a convex hull,
there is a geodesic $\we_0$ in $\Core(\xi)$
obtained from $q(\eta_1)\cup\eta_2$ by smoothing the intersection at $Z$
(on the side of the angle $\theta$, cf. the violet dashed arc in Figure~\ref{fig:cut and lift}).

Then the projection $\pi_\Tri(\we_0)$ of $\we_0$ on $\surfo$, which is in $\cz(\xi)$,
will intersect the segment $ZY$ in the cut $\cut$.
This contradicts to the compatibility between $\xi$ and $\uc$.
Thus \eqref{eq:closed} holds and $\sigma$ is a closed $q$-stability condition
and we obtain a map $\Xi_s$ in \eqref{eq:xis}.
The injectivity of $\Xi_s$ follows from the facts that:
$\Xi_\infty$ in \eqref{eq:i infinity} is injective and
the core determines quadratic differentials.

\subsubsection*{\textbf{Step 2}}
Reversing the process above, we have the following.

Given a point in $\CStab\DX/\ST$,
it can be represented by a $q$-stability condition $(\sigma,s)$ induced from
some triple $(\DT,\ns,s)$ for $\ns\in\Stap\DT$.
Let $\phi=\Xi_\infty^{-1}(\ns)$ be the exponential type quadratic differential on $\gms$
that corresponds to $\ns$.
Pushing the vertices of $\Y$ along the cut $\uc$ to points in $\Tri$,
one gets a convex hull $\Core^0=\imc(\Core(\phi))$ on $\surfo$ from the core $\Core(\phi)$.
The convexity of $\Core^0$ on $\LS$ follows from \eqref{eq:closed}:
    \begin{itemize}
      \item Take two saddle connections, say $\we_i$ in Figure~\ref{fig:cut and lift},
      that intersect at a decoration $Z$.
      Then the angle between them that does not intersect the cut, say from $\we_1$ to $\we_2$,
      is at most $\gldim(\ns)\cdot\pi$.
      This implies the other angle, e.g. $\theta$ from $\we_2$ to $q(\we_1)$
      in Figure~\ref{fig:cut and lift}, is at least
      $$\Re(s)\cdot\pi-\gldim(\ns)\cdot\pi\ge\pi.$$
    \end{itemize}

Now we shall use this convex hull to build a $q$-quadratic differential on $\LS$.
Consider the horizontal strip decomposition of $\surf$ with respect to $\phi$.
  Delete all the upper half planes that do not intersect the core $\Core(\phi)$.
  When pushing closed marked point in $\Y$ along the cut $\cut$ to decorations in $\Tri$,
  the foliation of the remaining of the horizontal strip decomposition provides
  a partial foliation/grading of the zero sheet $\surfo^{(0)}\subset\LS$.
  Note that this partial foliation contains $\Core^0$ mentioned above.

The partial foliation for the $m$-th sheet $\surfo^{(m)}$
    is given in the same way from the horizonal strip decomposition on $\surf$
    with respect to the rotated quadratic differential
    $e^{m \ii \pi s}\cdot\phi.$
    Then it contains the corresponding convex hull $\Core^m$
    obtained from $\Core(e^{m \ii \pi s}\cdot\phi)$.

The rest of the foliation on $\LS$ will be given/filled-in by part of the $\log$-surface
  $C_\infty^s=s\log z$ around each decoration $Z$ and by (non-essential) upper half planes.
  Here, $C_\infty^s=s\log z$ is topologically homeomorphic to $C_\infty$ in \eqref{eq:C_infty}.
  This can be done, i.e., the partial foliations do match because of
  the condition \eqref{eq:closed}:
  \begin{itemize}
  \item Each angle of $\Core^m$ between saddle connections at a decoration $Z$
    is at most $\gldim\ns\cdot\pi$.
    Therefore
    (the neighbourhood of) the cores $\Core^m$ around $Z$, do not overlap in $C_\infty^s$.
  \item These foliations are compatible, i.e.
    they can be regarded as part of $C_\infty^s$ simultaneously, because by construction
    the deck transformation $q$ becomes the rotation by $e^{m \ii \pi s}$.
  \item Thus we can fill in the gaps to get a foliation on $\LS$.
  \end{itemize}
Finally, it is straightforward to check that
the foliation we just constructed is indeed a $q$-quadratic differential on $\LS$ in Lemma~\ref{lem:log}.
This completes the proof that $\Xi_s$ is surjective.
\end{proof}

Now take a subspace $\QQuad^*_s(\LS)$ in $\QQuac_s(\LS)$
which consists of those $q$-quadratic differentials,
whose compatible cuts are in $\{b(\uc)\mid b\in\BT(\surfo)\}$.
This is the analogue of taking a connected component
$\FQuad_3^{\kong{T}}(\surfo)$ in $\FQuad(\surfo)$ in \cite[(4.13)]{KQ2},
where they fit in the commutative diagram
\begin{equation}\label{eq:quad0}
\begin{tikzpicture}[xscale=1.6,yscale=0.8,baseline=(bb.base)]
\path (0,1) node (bb) {}; 
\draw (0,2) node (s0) {$\FQuad_3^{\kong{T}}(\surfo)$}
 (0,0) node (s1) {$\FQuad_3(\surf)$}
 (2.5,1) node (s2) {$\FQuad_3(\surf)$};
\draw [->, font=\scriptsize]
 (s0) edge[right hook-stealth](s1)
 (s0) edge node [above] {$\BT(\surfo)$} (s2)
 (s1) edge node [below] {$\SBr(\surf)$} (s2);
\end{tikzpicture}
\end{equation}
in that case (Calabi-Yau-3).
As the subgroup $\ST\DX$ of $\SBr(\surfo)$ is isomorphic to $\BT(\surfo)$ by Theorem~\ref{thm:IQZ0},
we have the following result,
which is the Calabi-Yau-$s$ version of Theorem~\ref{thm:BSKQ},
or the $q$-deformed version of Theorem~\ref{thm:HKK}.

\begin{theorem}\label{thm:main}
There is the following identification
\begin{gather}\label{eq:chis}
    \chi_s\colon \HHo{}(\LS) \to \Grot(\DX).
\end{gather}
Moreover,
there is an isomorphism $\Xi_s$ of complex manifolds that fits into the commutative diagram
\begin{gather}\label{eq:QQ=QS2}
\xymatrix@C=6pc{
    \QQuac_s(\LS) \ar[r]^{\Xi_s} \ar[d]_{\Pi_s} & \CStab_s\DX
        \ar[d]^{\hh{Z}_s} \\
    \Hom_R(  \HHo{}(\LS),\bC_s) \ar[r]^{ \chi_* } &
    \Hom_R(  \Grot(\DX),\bC_s),
}\end{gather}
such that the period map $\Pi_s$ is given by
\[
    \Pi_s=q_s\circ (\Pi_\infty\otimes 1)\colon\xi\mapsto\displaystyle\int\sqrt{\xi}
\]
via \eqref{eq:otimes} and becomes the map $\hh{Z}_s= q_s\circ (Z_\infty\otimes 1)$ (i.e., central charge).
More precisely, \eqref{eq:formula} holds for $X=\Xxx$ in \eqref{eq:X lift} of Theorem~\ref{thm:IQZ0}.
Here $\Pi_\infty$ and $Z_\infty$ are the ones in \eqref{eq:i infinity}.

Furthermore, for $(\sigma,s)=\Xi_s(\xi)$, the map $\Xxx$ induces one-one correspondence
between saddle connections of $\xi$ on $\LS$ and $\sigma$-semistable objects in $\DX$.
\end{theorem}
\begin{proof}
Firstly, \eqref{eq:chis} follows from \eqref{eq:i infinity} by tensoring $R$,
noticing \eqref{eq:KKK} and \eqref{eq:otimes}.

Secondly, \eqref{eq:QQ=QS2} is the lifted version of \eqref{eq:xis} in Proposition~\ref{pp:QS=QQ2},
noticing that we have \eqref{eq:BT=ST}.

Finally, suppose that $(\sigma,s)$ is induced from the triple $(\DI, \ns, s)$ up to the action of $\ST\DX$
and $\phi=\Xi_\infty^{-1}(\ns)$ as in the proof of Proposition~\ref{pp:QS=QQ2}.
By Theorem~\ref{thm:imc},
we deduce that the correspondence between $\sigma$-semistable objects and saddle connections of $\xi$
also inherits from the correspondence between the $\ns$-semistable objects and saddle connections of $\phi$
in Theorem~\ref{thm:HKK}.
\end{proof}
In particular, Conjecture~\ref{conj:period} holds restricted to $\QQuad^*_s(\LS)$.

\subsection{Existence of cuts}
One technical issue is that if the compatible cuts always exist for a $q$-quadratic differential.
In other words, such a existence is equivalent to the assumption below.
\begin{assumption}\label{ass:cuts}
$\QQuac_s(\LS)=\QQuad_s(\LS)$.
\end{assumption}
We present some sufficient conditions for the existence of cuts in this subsection.

First we have the following fact.
\begin{lemma}\label{lem:core}
The projection $\pi_\Tri(Z)$ of any zero/decoration $Z$ of $\xi$ on $\surfo$ is in the boundary of $\core(\xi)$.
\end{lemma}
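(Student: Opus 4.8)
The plan is to prove the statement one level up, on the log surface $\log\surfo\cong(\log\rs_{\Delta}^{\circ})^{\xi}$, where by Remark~\ref{rem:s.v.} and Remark~\ref{rem:log} the differential $\xi$ is single-valued and every zero $Z$ of $\xi$ has a unique lift $\widetilde{Z}$ that is a conical singularity of infinite angle (Definition~\ref{def:conical}). Since $\Core(\xi)$ is the convex hull of the singularities for the flat metric $|\xi|$, and this metric is unchanged by $\xi\mapsto e^{\ii\theta}\xi$, I would first rotate the foliation so that $\xi$ becomes saddle-free: all but countably many $\theta$ work, the rotation is again $q$-equivariant, and it leaves both the zeros and the core (hence $\core(\xi)=\pi_0(\Core(\xi))$) unchanged. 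It therefore suffices to prove $\widetilde{Z}\in\partial\Core(\xi)$ for saddle-free $\xi$ and then descend along $\pi_0$.

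The descent is the easy half. The deck transformation acts on the distinguished coordinate near $\widetilde{Z}$ by $v\mapsto q_s v=e^{\ii\pi s}v$, i.e. as a homothety of $|\xi|$ that rotates the infinite cone at $\widetilde{Z}$ by $\pi\Re(s)$; being scale-invariant, $\Core(\xi)$ is $q$-invariant, so its angular extent at $\widetilde{Z}$ is periodic of period $\pi\Re(s)$. As $\pi_0$ is, near $\widetilde{Z}$, precisely the quotient of the infinite cone by $\langle q\rangle$, identifying one fundamental domain with the cone of angle $\pi\Re(s)$ at $Z$, any angular gap of $\Core(\xi)$ at $\widetilde{Z}$ descends to a genuine gap of $\core(\xi)$ at $Z$; hence $Z\in\partial\core(\xi)$.

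The heart of the matter is producing such a gap, i.e. showing $\widetilde{Z}\in\partial\Core(\xi)$. In the saddle-free case the half-planes and horizontal strips of the decomposition meet $\widetilde{Z}$ in sectors each of angle exactly $\pi$, each strip carrying a single saddle connection running from $\widetilde{Z}$ to another singularity while each half-plane carries none; by \cite[Lem.~3.1]{BS} there are moreover no closed or recurrent trajectories. Per fundamental domain only finitely many, say $d$, saddle connections issue from $\widetilde{Z}$ (one for each $\langle q\rangle$-orbit), and the strips containing them occupy disjoint $\pi$-sectors, so disjointness forces $d\pi\le\pi\Re(s)$, that is $d\le\Re(s)$. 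Hence the $d$ saddle-connection directions, whose consecutive angular gaps sum to $\pi\Re(s)$, must leave a gap of size at least $\pi$; a geodesic through $\widetilde{Z}$ lying in that gap then supports the convex hull, so $\widetilde{Z}\in\partial\Core(\xi)$ (if instead all gaps equal $\pi$, the hull is locally one-dimensional at $\widetilde{Z}$ and the conclusion is again immediate).

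The step I expect to be most delicate is the estimate $d\le\Re(s)$ together with the gap $\ge\pi$, because when $\Re(s)\notin\ZZ$ the rotation angle $\pi\Re(s)$ is incommensurable with the $\pi$-spacing of the horizontal prongs, so one cannot simply count sectors in a foliation-adapted fundamental domain; the argument must be run with the $q$-invariant metric core and the disjointness of the strip-sectors rather than with the horizontal foliation itself. When $\Im(s)\neq0$ there is a clean shortcut that sidesteps this entirely: then $q$ is a nontrivial homothety, so an interior $\widetilde{Z}$ would give $B(\widetilde{Z},\epsilon)\subseteq\Core(\xi)$ and hence, applying suitable powers of $q^{\pm1}$, balls of arbitrarily large radius, forcing $\Core(\xi)$ to be the whole surface and contradicting the presence of half-planes.
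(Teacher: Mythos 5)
Your proof is correct and takes essentially the same approach as the paper: reduce to the saddle-free case by rotation, observe that the saddle connections issuing from a lifted zero occupy disjoint $\pi$-sectors inside the infinite cone so that $d\le\Re(s)$, and conclude that some angular gap is at least $\pi$, giving a supporting direction (the paper phrases this as a contradiction with all gaps being $<\pi$ at an interior point). The delicacy you flag about $\Re(s)\notin\ZZ$ is handled in the paper by counting over $M$ consecutive sheets, getting $Ml\pi\le M\Re(s)\pi+\pi$ and letting $M\to\infty$; your extra descent step and the $\Im(s)\neq0$ homothety shortcut are fine but not needed for the paper's argument.
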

\begin{proof}
Up to rotation, we only need to consider the saddle-free case.
Suppose that a zero $Z$ is in the interior of $\core(\xi)$.

Let $K\gg1$ be a positive integer.
Consider all saddle connections starting from $Z$ that are in sheets $\bigcup_{i=1}^K\surfo^i$.
Let $l_i$ be the number of saddle connections in the sheet $\surfo^i$.
As mentioned above, $\xi$ satisfies \eqref{eq:GG0} and
hence the deck transformation $q$ preserves geodesics/saddle connections.
Thus $l_i$ are the same, denoted by $l$.

On the one hand, each of these saddle connections is a horizontal strip $H$,
such that $H$ is incident at $Z$ with angle $\pi$.
Thus the sum of angles at $Z$ of horizontal strips that contains these saddle connections satisfies
\[
    K\cdot l\cdot \pi <  K\cdot \Re(s)\cdot\pi + 2\pi,
\]
where $\Re(s)\cdot\pi$ is the angle at $Z$ for each sheet.
Dividing by $K$ of the inequality above and taking $\lim_{K\to+\infty}$
we see that $l\le\Re(s)$.

On the other hand, the angle between any two adjacent saddle connections at $Z$ is less than $\pi$,
since $\Core(\xi)$ is convex and $Z$ is in the interior of $\core(\xi)$.
So the sum of angles at $Z$ of horizontal strips that contains these saddle connections satisfies
\[
    2\pi+(K\cdot l-1)\cdot\pi>K \cdot \Re(s)\cdot\pi.
\]
Similarly, we deduce that $l\ge\Re(s)$, which forces $l=\Re(s)$.
However, this will further force that
the angle between any two adjacent saddle connection at $Z$ equals $\pi$, which contradicts
to the fact that $Z$ is in the interior of $\core(\xi)$.
\end{proof}

Now we show that Assumption~\ref{ass:cuts} always holds in the disk case.

\begin{corollary}\label{cor:zero}
If the genus $g$ of $\surf$ is zero, then Assumption~\ref{ass:cuts} holds.
\end{corollary}
\begin{proof}
By Lemma~\ref{lem:core}, any decoration $Z\in\Tri$ is in the boundary of $\core(\xi)$,
so that there is no obstruction to pair points in $\Tri$ and $\Y\in\partial\surfo$.
Hence the corollary holds.
\end{proof}

Also, when $\Re(s)\gg1$, then Assumption~\ref{ass:cuts} holds.
More precisely, we have the following.

\begin{proposition}
Recall that $(\uk,\ul):=\{(k_1,l_1),\dots,(k_b,l_b)\}$. If
\begin{gather}\label{eq:sgg1}
    \Re(s)\geq\max_{1\le i\le b} \{3-k_i-l_i\},
\end{gather}
then Assumption~\ref{ass:cuts} holds.
\end{proposition}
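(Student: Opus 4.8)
The plan is to establish the non-trivial inclusion $\QQuad_s(\log\surfo)\subseteq\XQuad^\circ_s(\log\surfo)$, the reverse holding by definition of $\XQuad^\circ_s$ as a subspace of $\QQuad_s$. So given $\Theta=(\log\surfo,\xi,h;s)$ admitting some compatible cut, I must exhibit a compatible cut in the orbit $\{b(\cut_\TT)\mid b\in\BT(\surfo)\}$. By Lemma~\ref{lem:cuts}, $\Cut(\surfo)$ is an $\SBr(\surfo)$-torsor, so every compatible cut $\cut$ equals $b(\cut_\TT)$ for a unique $b\in\SBr(\surfo)$, and the entire task is to arrange, under \eqref{eq:sgg1}, that $b$ falls into the subgroup $\BT(\surfo)$.

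First I would rotate by the $\bC$-action — which fixes the $\phi$-metric and hence $\core(\xi)$ and the notion of compatibility — to make $\xi$ saddle-free, so that each sheet $\surfo^m$ decomposes into half-planes and horizontal strips and, by Lemma~\ref{lem:core}, every zero projects onto $\partial\core(\xi)$. Near an $s$-pole $p_i$ of type $(k_i,l_i)$, blowing up gives the boundary $\partial_i$ with $k_i$ marked points and winding number $\wind{\lambda}(\partial_i)=2-l_i$ by Proposition~\ref{pp:1}. Rewriting \eqref{eq:sgg1} as
\[
    \wind{\lambda}(\partial_i)+k_i\le\Re(s)-1,
\]
I read it as an angular budget guaranteeing that the gap left by $\core(\xi)$ at each boundary zero — part of the infinite-angle cone of Definition~\ref{def:conical}, of angle at least $\pi$ in every inter-sheet interval — is wide enough to route the $k_i$ zeros attached to $p_i$ to $k_i$ distinct marked points of $\partial_i$ by disjoint arcs avoiding $\Int\core(\xi)$. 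This yields a canonical compatible cut $\cut_{\xi}$, built pole by pole.

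Any compatible cut then differs from $\cut_{\xi}$ by an element $\beta\in\SBr(\surfo)$ supported in $\surfo\setminus\Int\core(\xi)$, which deformation retracts onto a neighbourhood of $\partial\surfo\cup\Tri$; each elementary move of such a $\beta$ slides one decoration past a neighbour across a single strip, that is, is a half-twist $B_\eta$ along a closed arc $\eta\subset\core(\xi)$, so $\beta\in\BT(\surfo)$. It then remains to check $\cut_{\xi}\in\{b(\cut_\TT)\mid b\in\BT(\surfo)\}$, which I would obtain by deforming $\xi$ inside the locus \eqref{eq:sgg1} towards the standard differential underlying $\TT$: the canonical cut $\cut_{\xi}$ varies continuously, and crossing a wall where a saddle connection collapses alters it by exactly one braid twist, so its class modulo $\BT(\surfo)$ stays constant and equals that of $\cut_\TT$.

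The hard part will be precisely this monodromy control — proving that the discrepancy between two compatible cuts is realised only by braid twists and never by a genuine element of $\SBr(\surfo)\setminus\BT(\surfo)$. In the disk case it is vacuous, since there $\SBr(\surfo)=\BT(\surfo)$, but in positive genus or with several boundary components one must exclude the surface braids that drag a decoration around a handle or a boundary loop. This is exactly where \eqref{eq:sgg1} is indispensable: bounding the number of saddle connections at each zero (as in the proof of Lemma~\ref{lem:core}) and of separating trajectories at each pole confines every compatible arc to a single fundamental domain of $\log\surfo$ up to braid twists, thereby ruling out any extra surface-braid monodromy.
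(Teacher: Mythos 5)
Your argument has a genuine gap at its central step, the construction of the ``canonical compatible cut $\cut_\xi$, built pole by pole''. You speak of routing ``the $k_i$ zeros attached to $p_i$'' to the $k_i$ marked points of $\partial_i$, but which zeros are attached to which pole is precisely what must be established, and it is a global combinatorial question rather than a local angular one. The complement $\surfo\setminus\core(\xi)$ splits into $b$ annuli $\kong{A}_i$, one per boundary component, and a priori a given zero may lie on the inner boundary of several of them while some $\kong{A}_i$ may see fewer than $k_i$ zeros, so two poles can compete for the same zeros. The paper resolves this in two steps that your proposal omits entirely: first, a quantitative winding-number count around the inner boundary of each $\kong{A}_i$ --- the total winding $k_i(\Re(s)-2)+l_i-2$ decomposes into zero contributions from saddle connections plus a contribution bounded below by $1-\Re(s)$ at each zero, and hypothesis \eqref{eq:sgg1} is exactly what makes this estimate force at least $k_i$ zeros onto the inner boundary of $\kong{A}_i$; second, a Hall-type matching argument on the bipartite graph of zeros versus boundary components (with valence conditions supplied by that count and by Lemma~\ref{lem:core}), which produces a global assignment of exactly $k_i$ distinct zeros to each pole, and only then is the cut drawn inside the annuli. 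Your ``angular budget'' rereading of \eqref{eq:sgg1} never becomes either of these counts, so the existence of the cut is not proven.

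A secondary problem is that your last two paragraphs --- that any two compatible cuts differ by an element of $\BT(\surfo)$ rather than of $\SBr(\surfo)\setminus\BT(\surfo)$, and that $\cut_\xi$ deforms to $\cut_\TT$ with each wall-crossing contributing exactly one braid twist --- are asserted, not proven; these are exactly the monodromy statements that would require an argument, and you acknowledge as much by calling them ``the hard part''. Note also that the paper's own proof does not attempt this orbit refinement: Steps I--III only establish the \emph{existence} of a compatible cut, which is how the paper glosses Assumption~\ref{ass:cuts}. So even granting your reading of the statement as an orbit question, the portion of your text addressing it is a plan rather than a proof, and the portion addressing existence is missing its combinatorial core.
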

\begin{proof}
We use the winding number and a combinatorial fact to prove the proposition.

\subsubsection*{\textbf{Step 1}}
Since the boundaries of $\LS$ contract to the core $\Core(\xi)$,
the boundaries of $\surfo$ contract to $\core(\xi)$.
If we cut $\surfo$ along $\core(\xi)$,
we will get exactly $b$ annulus $\kong{A}_i$,
each of which contains a boundary component $\partial_i$ of $\surfo$.
Equivalently, we have
$$\surfo\setminus\core(\xi)\cong \bigcup_{i=1}^b\kong{A}_i, \quad \partial_i\subset\partial\kong{A}_i,$$
where $\kong{A}_i$ can intersect only at boundaries:
$$\kong{A}_i^\circ\cap \kong{A}_j^\circ=\emptyset, \quad i\ne j.$$
Suppose that $\partial_i$ is the real blow-up of a pole $p_i$ of type $(k_i,l_i)$
and $Z_1,\ldots, Z_k$ are the $s$-simple zeroes on the other boundary $C_i(\ne \partial_i)$ of $\kong{A}_i$.

Let $L$ be the unique simple closed curve in $\kong{A}_i$ (up to isotopy), clockwise around $C_i$.
The winding number of $L$, as an anticlockwise loop around $\partial_i$ is
\[
    \wind{}(L)=-\wind{\xi}(\partial_i)=k_i(\Re(s)-2)+l_i-2.
\]
On the other hand, such a winding number can be calculated as a clockwise loop around $C_i$:
\begin{itemize}
  \item boundary arcs (connecting zeroes) of $C_i$ are (images of) saddle connections
    (with zero winding) and;
  \item around each $s$-simple zero $Z_j$, the winding number is $\Re(s)$,
    which implies that $Z_j$ contributes the winding of $L$ is
    \[
        \wind{}(L;Z_j)=\alpha-1<\Re(s)-1,
    \]
    for $0<\alpha<\Re(s)$ is the angle at the corner $Z_j$ of $C_j$ shown in Figure~\ref{fig:wind-1}.
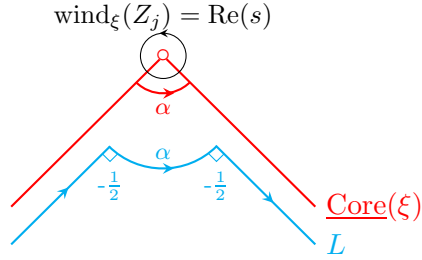
\begin{figure}[ht]\centering
\begin{tikzpicture}[]
\draw[red,thick](0+1,0+.5+1)to(3,3+.5)to(6-1,0+.5+1)node[right]{$\core(\xi)$};

\draw[\separated,thick,-<-=.5,>=stealth] (6-1,0+1)node[right]{$L$}to(3+.69,3-.69);
\draw[\separated,thick,-<-=.5,>=stealth]
    (3+.7,3-.7) arc(-45:-135:1);
\draw[\separated,thick,-<-=.5,>=stealth] (3-.7,3-.7)to(0+1,0+1);

\draw[\separated](3+.7+.1,3-.7-.1)to(3+.7,3-.7-.2)node[below]{\footnotesize{-$\frac{1}{2}$}}to(3+.7-.1,3-.7-.1);
\draw[\separated](3-.7+.1,3-.7-.1)to(3-.7,3-.7-.2)node[below]{\footnotesize{-$\frac{1}{2}$}}to(3-.7-.1,3-.7-.1);

\draw[red,thick,-<-=.5,>=stealth]
    (3+.35,3+.5-.35) arc(-45:-135:.5);
\draw[red](3,3)node[below]{\footnotesize{$\alpha$}};
\draw[\separated](3,2)node[above]{\footnotesize{$\alpha$}};

\draw(3,3+.7)node[above]{\small{$\wind{\xi}(Z_j)=\Re(s)$}}
(3,3+.5)node[white] {$\bullet$} node[red]{$\circ$} circle(.3);
\draw[->,>=stealth] (3,3+.8)to(3-.05,3+.8);
\end{tikzpicture}
\caption{Winding contribution $1+\alpha$ around a corner $Z_j$ of $C_j$}
\label{fig:wind-1}
\end{figure}

\end{itemize}
Thus, we have the estimation  that
$$\wind{}(L)=\sum_{j=1}^k \wind{}(L;Z_j)<k\cdot(\Re(s)-1)$$
or
\begin{gather}\label{eq:calculation wind}
  k>k_i+\frac{k_i+l_i-2}{\Re(s)-1}.
\end{gather}
By \eqref{eq:sgg1}, we deduce that $k>k_i-1$ or $k\ge k_i$.

\subsubsection*{\textbf{Step 2}}
Consider a bipartite (unoriented) graph $G$,
consisting of white vertices $W_Z$ labelled by $s$-simple zeroes $Z$ in $\Tri$
and black vertices $B_{\partial_i}$ labelled by boundary components $\partial_i$ of $\surfo$.
There is an edge in $G$ connecting the white vertex $W_Z$
and the black vertex $B_{\partial_i}$ if and only if
$Z\in\Tri$ are in the boundary of $\kong{A}_i$.
We associate a number $k_i$ to $B_{\partial_i}$,
where $(k_i, l_i)$ is the type of the corresponding $s$-pole of $B_{\partial_i}$.
We have the following conditions on $G$:
\begin{itemize}
\item \textbf{Step I} says that the valence of $B_{\partial_i}$ is at least $k_i$;
\item Lemma~\ref{lem:core} says that the valence of any $W_Z$ is at least 1.
\end{itemize}
We claim that for any bipartite graph $G$ with the conditions above,
there is a matching $E$ of $G$, consisting of $\aleph=\sum_{i=1}^bk_i$ edges of $G$,
such that each white vertex $W_Z$ is incident in exactly one edge of $E$
and each black vertex $B_{\partial_i}$ is incident in exactly $k_i$ edges of $E$.

Use induction on the number of edges in $G$, where the start step is trivial.
For the inductive step, there are two cases:
\begin{itemize}
\item If there is a white vertex $Z$ with valence $1$,
let $B_{\partial_i}$ be the only black vertex connected directly to $Z$ (by an edge $e$).
Then delete $Z$ and reduce the associated number $k_i$ of $B_{\partial_i}$ to $k_i-1$
to get a graph $G'$ which also satisfies the conditions above.
By inductive assumption, there is a matching $E'$ for $G'$
and then $E'\cup\{e\}$ is a matching for $G$.
\item Otherwise, the valence of each white vertex $Z$ is at least 2.
The sum of valences of all white vertices is at least
$$2\aleph=2\sum_{i=1}^b k_i,$$
which is also the sum of valences of all black vertices.
Then we deduce that the valence $v_j$ of some black vertex $B_{\partial_i}$ is strictly bigger than
the associated number $k_i$.
Delete any edge incidents at $B_j$ from $G$ and we can apply the inductive assumption  again.
\end{itemize}

\subsubsection*{\textbf{Step 3}}
Combining the two facts above, the matching tells
how to pair $k_i$ $s$-simple zeroes to the boundary $\partial_i$,
which then induces a cut (by connecting them in each of the annulus $\kong{A}_i$).
In fact, the cut can be then chosen to be images of geodesics (`separating connections').
\end{proof}

\subsection{Conclusion}
Up to the technical issue, i.e., Assumption~\ref{ass:cuts} above,
the spaces of induced open and closed $q$-stability conditions coincide,
which provide the `principal' components of the space $\QStab_s\DX$ of $q$-stability conditions.

\begin{theorem}\label{thm:q=x}
Suppose that Assumption~\ref{ass:cuts} holds (e.g. when $\Re(s)\gg1$) and $\Re(s)>2$.
Then we have the following
\begin{enumerate}
\item $\OStab_s\DX=\CStab_s\DX$. Thus,
\begin{gather}\label{eq:main}
    \QQuad_s(\LS)\cong\OStab_s\DX.
\end{gather}
\item The image of the inclusion $$\OStab_s\DX\hookrightarrow\QStab_s\DX$$
is open and closed and hence consists of connected components.
\end{enumerate}
We will write $\QStap_s\DX$ for $\OStab_s\DX$ in this case.
\end{theorem}
\begin{proof}
Take a closed $q$-stability condition $(\sigma,s)$ in $\CStab_s\DX$,
which is induced from some triple, i.e.
$$(\sigma,s)=(\DI, \ns,s)\otimes_*R$$
with $\gldim\ns=\Re(s)-1$ (here we take the canonical $\XX$-baric heart without loss of generality).
We will keep all the notations in Proposition~\ref{pp:QS=QQ2},
i.e.
\begin{itemize}
\item $\xi=\Xi_s^{-1}(\sigma,s)$ with a compatible cut $\cut$ and the corresponding
(exponential type) quadratic differential
$$\phi=\Xi_\infty^{-1}(\ns)\in\FQuad_\infty(\gms),$$
cf. \eqref{eq:cx xi} and \eqref{eq:ns xi}.
\end{itemize}

For the first statement, we need to show that there exists another compatible cut $\cut_0$
such that the corresponding quadratic differential $\phi_0\in\FQuad_\infty(\gms)$
and stability condition $\ns_0=\Xi_\infty(\phi_0)$ satisfying
\[
    (\sigma,s)=(\hh{L}_0(\DI), \ns_0,s)\otimes_{\oplus}R \quad\text{with}\quad
    \gldim\ns_0<\Re(s)-1.
\]
We divide the proof of this statement into three steps.
\begin{description}
\item[Step 1]
Consider a pair of semistable objects $\widehat{M}_1$ and $\widehat{M}_2$ in $\DI$ such that
\begin{gather}\label{eq:=gldim}
\begin{cases}
    \Hom_{\D_\infty}(\widehat{M}_1,\widehat{M}_2)\neq0,\\
    \varphi_{\ns}(\widehat{M}_2)-\varphi_{\ns}(\widehat{M}_1)=\gldim\ns.
\end{cases}
\end{gather}
Let $\wg_i$ be the closed arcs on $\surf$ that correspond to $\widehat{M}_i$ in $\DI$.
As in the proof of Proposition~\ref{pp:QS=QQ2},
they intersect at some $Y\in\Y$, such that
the angle $\alpha=\gldim\ns\cdot\pi$ at $Y$ provides the non-zero morphism in \eqref{eq:=gldim},
cf. Figure~\ref{fig:cut and lift}.
By \cite[Prop.5.7]{Q7}, there are only finitely many such angles $\alpha$.

\item[Step 2]
Pulling along the cut $\cut$, the arcs $\wg_i$ become $\we_i=\imc(\wg_i)$ on $\surfo$ as before.
Denote by $Z\in\Tri$ the decoration corresponding to $Y$ and
$\beta$ the angle corresponding to $\alpha$.
So we have
\[
    \beta=\alpha=\gldim\ns\cdot\pi=(\Re(s)-1)\cdot\pi,
\]
which implies the other angle $\theta$ at $Z$ between $\we_2$ and $q(\we_1)$ is exactly $\pi$.
This also means that $\we_i$ are boundary arcs of
(the zero sheet of) the core $\Core^0(\xi)$.

\item[Step 3]
Now we can perturb the quadratic differential $\xi$ a little bit to get a new one $\xi_0$,
such that, the only changes happen at all those angles $(\beta,\theta)$ as in \textbf{Step 1},
namely, we require for each of such angles,
\[
    \beta_0=\beta-\epsilon,\quad \theta_0=\theta+\epsilon=\pi+\epsilon
\]
for some small positive real number $\epsilon$.
By Assumption~\ref{ass:cuts}, $\xi_0$ admits a compatible cut $\cut_0$,
which is also compatible with $\xi$ then.

Use the cut $\cut_0$ to produce another quadratic differential $\phi_0\in\FQuad_\infty(\gms)$
that corresponds to another stability condition $\ns_0$ in $\Stab\DI$.
Note that $\cut_0$ will cut the angle $\beta$ in \textbf{Step 2}.
By \cite[Prop.5.7]{Q7},
$$\gldim\ns_0<\gldim\ns=\Re(s)-1.$$

With the Lagrangian immersion $\hh{L}_0$ that corresponds to $\cut_0$,
we obtain another triple to induce $(\sigma,s)$:
\[
    (\sigma,s)=(\hh{L}_0(\DI), \ns_0,s)\otimes_{\oplus}R,
\]
as required.
\end{description}

For the second statement, we note that $\OStab_s\DX$ is open $\QStab_s\DX$
since condition \eqref{eq:open} is an open one (cf. \cite[Thm.~5.11]{IQ1}).
For the closedness in $\QStab_s\DX$,
one can use the analogue map $K$ in \cite[\S~11.5]{BS}, as we have shown that
these stability conditions are indeed coming from quadratic differentials.
\end{proof}

\part{Applications}\label{part:app}
\section{Bridgeland-Smith theory on Calabi-Yau-$N$ categories}\label{sec:BS-N}
We keep all the settings at the beginning of Section~\ref{sec:main}.
\subsection{$N$-reductions for categories}
Suppose that $N\geq\gldim\ha_\TT+1$
and $\Gamma_\TT^N$ is the Calabi-Yau-$N$ Ginzburg algebra
obtained from $\GAX$ by collapsing the double degree $a+b\XX$ into a single degree $a+bN$.
Then there is a projection between dg algebras
$$\pi_N\colon\GAX\to\GAN,$$
which induces a fully faithful exact functor
\[
    \pi_N\colon\DX/[\XX-N]\to\DN\,(\colon=\D_{fd}(\Gamma_\TT^N)),
\]
that makes $\DN$ the $N$-reduction of $\D_{\XX} \sslash [\XX-N]$,
in the sense that
\begin{itemize}
\item $\DN$ can be identified with a triangulated hull of $\DX/[\XX-N]$,
\item $K(\D_N) \cong \ZZ^{\oplus n}$ and the induced $R$-linear map
\[
    [\pi_N] \colon K(\DX) \to K(\DN)
\]
is a surjection given by sending $q \mapsto (-1)^N$.
\end{itemize}
Thus, we have the following by \cite[Thm.~4.5]{IQ1}.
\begin{corollary}\label{cor:embed}
There is an embedding
$$\QStab_N \DX \longrightarrow \Stab\DN$$ between complex manifolds.
Denote by $\Stap\DN$ the image of $\CStab_N\DX$.
\end{corollary}

We also give the following conjecture,
which is the surface version of \cite[Cor.~6.8]{IQ1}.
\begin{conjecture}\label{conj:ses}
The cluster-$\XX$ category $\CXT$ in \eqref{eq:CXT}
admits the Serre functor $\tau\circ[1]$ (which should correspond to $[\XX-1]$),
for $\tau$ the Auslander-Reiten functor.
Moreover, the Calabi-Yau-$(N-1)$ cluster category $\hh{C}_{N-1}(\TT)$ of $\GAN$
is the (unique) triangulated hull of $\CXT\sslash\tau[2-N]$
that fits into the commutative diagram:
\begin{equation}\label{eq:cses}
    \xymatrix{
    0 \ar[r] &
        \D_{fd}(\GAX) \ar[r]\ar[d]^{_{\sslash[\XX-N]}} &
        \per\GAX \ar[r]\ar[d]^{_{\sslash[\XX-N]}} &
        \CXT \ar[r]\ar[d]^{_{\sslash\tau[2-N]}} &0 \\
     0 \ar[r] & \D_{fd}(\GAN) \ar[r]& \per\GAN \ar[r] & \hh{C}(\GAN) \ar[r] &0.
}\end{equation}
\end{conjecture}

\begin{remark}
One interesting question is what is the condition for $\DX\sslash[\XX-N]$ to be $N$-reductive.
The condition $N\geq\gldim\ha_\TT+1$ here is actually for $\GAN$ being non-positive,
so that it is a silting object in its perfect derived category and hence
ensures $\Stab\DN$ is nonempty.
\end{remark}

\subsection{$N$-reductions for $q$-quadratic differentials}
When $s=N\ge3$ is a positive integer, then the $q_s$-quadratic differential $\xi$,
locally of the form \eqref{eq:xi},
becomes a classical type quadratic differential on some Riemann surface in Definition~\ref{def:cqd}.
We have the definition of generalized GMN differentials as follows.
\begin{definition}\label{def:CYN GMN}
A \emph{CY-$N$ type differential} on a Riemann surface $\rs$ is a meromorphic differential
with classical type of singularities satisfying:
\begin{itemize}
\item all the zeroes have order $N-2$;
\item the order of any pole is at least 3.
\end{itemize}
The (decorated) marked surface associated to $(\rs,\phi)$ is
defined as in Definition~\ref{def:GMN}.
\end{definition}
Next, from a graded marked surface $\gms$ with associated graded DMS $\surfo$,
we construct the marked surface $\surfo^N$ for $q_N$-quadratic differentials.

\begin{definition}
The \emph{(CY-$N$) marked surface} $\surf^N$ is a marked surface modified from $\gms$ by
\begin{itemize}
  \item forgetting the grading and closed marked points and
  \item changing the number of open marked points on the boundary $\partial_i$
  to be
  \begin{equation}\label{eq:number}
    k_i\cdot (N-2)+l_i-2,
  \end{equation}
  where $(k_i,l_i)$ is the associated numerical data.
\end{itemize}
The \emph{(CY-$N$) DMS} $\surfo^N$ is modified from $\surfo$ same as above.
\end{definition}
Note that the condition \eqref{eq:higher} in Definition~\ref{def:BS_quad} that
$s$-poles are higher $s$-poles ensures the number in \eqref{eq:number} is a positive integer.

Similar to Definition~\ref{def:SFquad},
we have moduli spaces $\FQuad_N(\surfo^N)$ of $\surfo^N$-framed quadratic differentials.
So we have the following straightforward observation, as the counterpart to Corollary~\ref{cor:embed}.
\begin{lemma}\label{lem:embed}
There is an embedding
\[
    \QQuad_N(\LS) \longrightarrow \FQuad(\surfo^N).
\]
Denote its image by $\FQuad_N^\circ(\surfo^N)$ and the corresponding quotient space in $\FQuad_N(\surf^N)$
by $\FQuad_N^\circ(\surf^N)$.
\end{lemma}

Combining Corollary~\ref{cor:embed} and Lemma~\ref{lem:embed}
we have the following version of Theorem~\ref{thm:q=x}
as an $N$-analogue of Bridgeland-Smith's result (or King-Qiu's upgraded version).
The condition $N\gg 1$ ensures that Assumption~\ref{ass:cuts} holds.
But one expects that this is not necessary: condition \eqref{eq:N} is more reasonable.

\begin{theorem}\label{thm:BS N}
If $N\gg1$, there is an isomorphism between complex manifolds
\begin{gather}\label{eq:BS N}
    \FQuad_N^\circ(\surfo^N)\cong\Stap\D_{fd}(\Gamma_\TT^N).
\end{gather}
\end{theorem}

When $\surf$ is the disk (i.e., type A case),
the theorem above is the main theorem in \cite{I}.
However, notice that the method in \cite{I} is a directly generalization of \cite{BS}
while our approach is different.

\begin{remark}
Let us explain that in the unpunctured case of \cite{BS}, i.e., $N=3$ and
when the marked surface has no punctures (equivalent to the higher order pole condition),
how the result above should reproduce \eqref{eq:BS}.
To see this, let $\mathbf{K}=(K_1,\cdots, K_b)$ be the numerical data of numbers of marked points
on $\partial\surf$ in the setting of \cite{BS}.
Recall that $g$ and $b$ are the genus and number of boundaries components of $\surf^3$.
One chooses a graded marked surface $\gms$ of genus $g$,
with integers $\ul=(l_1\cdots,l_b)$ that sums to $4-4g$
and a set of integer $\uk=(k_1\cdots,k_b)$ for $k_i=K_i-l_i+2$ as in \eqref{eq:number}.
Here we need $k_i\ge1$, which can be done since
\begin{itemize}
\item if $g\ge1$ or $b\ge2$, then
$$\sum_{i=1}^b k_i=\sum_{i=1}^b K_i+(4g-4)+2b \ge b;$$
\item if $g=0$ and $b=1$, then $K_1\ge3$ (so that $\Delta\ge1$ in \eqref{eq:Delta})
and the estimation above also holds.
\end{itemize}
Then \eqref{eq:BS N} in Theorem~\ref{thm:BS N} indeed becomes \eqref{eq:BS} as required.
\end{remark}

We hope to cover the punctured case of \cite[Theorem~1.2]{BS} in future works.

\begin{example}
Let $\surf=\dd$ be a disk with 4 marked points on its boundary.
So the numerical data is $(g=0,\;b=1;\;\uk=(4),\;\ul=(4);\, \LP_0=\emptyset)$.

Choose the full formal open arc system $\TT$ with dual closed arc system $\TT^*$ of $\gms$
as in the first picture in Figure~\ref{fig:ex1}.
\begin{figure}[ht]\centering
\begin{tikzpicture}[scale=.36,rotate=0]
\draw[very thick](0,0)circle(6);
\foreach \j in {0,2,4}{\draw[very thick,cyan](45*\j+45:6)node{$\bullet$}
    to[bend left=-25](45*\j-45:6)node{$\bullet$};}
\draw[ultra thick,red](180:6)node[white]{$\bullet$}node[red]{$\circ$}to[bend left]
    node[above]{$1$}(-45-45:6);
\draw[ultra thick,red](0:6)node[white]{$\bullet$}node[red]{$\circ$}to[bend right]
    node[above]{$3$}(-45-45:6);
\draw[ultra thick,red](45+45:6)node[white]{$\bullet$}node[red]{$\circ$}to
    node[above left]{$2$}
    (-45-45:6)node[white]{$\bullet$}node[red]{$\circ$};
\begin{scope}[shift={(14,0)}]
\draw[very thick](0,0)circle(6);
\foreach \j in {0,2,4}{\draw[very thick,cyan](45*\j+45:6)node{$\bullet$}
    to[bend left=-25](45*\j-45:6)node{$\bullet$};}
\draw[ultra thick,red](180:5)node[white]{$\bullet$}node[red]{$\circ$}to[bend left](-45-45:3);
\draw[ultra thick,red](0:5)node[white]{$\bullet$}node[red]{$\circ$}to[bend right](-45-45:3);
\draw[ultra thick,red](45+45:5)node[white]{$\bullet$}node[red]{$\circ$}to
    (-45-45:3)node[white]{$\bullet$}node[red]{$\circ$};
\end{scope}
\end{tikzpicture}\qquad
\begin{tikzpicture}[scale=.36,rotate=60]
\foreach \j in {1,...,6}{\draw[cyan,very thick](120*\j-30:6)to(120*\j+90:6);}
\foreach \j in {1,...,3}{  \draw[ultra thick,red](30+120*\j:4)to(0,0);}
\foreach \j in {1,...,3}{  \draw(30-120*\j:4)node[white]{$\bullet$}node[red]{$\circ$};}
\draw(0,0)node[white]{$\bullet$}node[red]{$\circ$};
\foreach \j in {1,...,6}{\draw[very thick](60*\j+30:6)to(60*\j-30:6)node{$\bullet$};}
\end{tikzpicture}
\caption{The $A_3$ case: $\gms$, $\surfo$ and $\surfo^3$ with arc systems}\label{fig:ex1}
\end{figure}
Note that the open arcs are drawn in blue while the closed ones are in red.
Suppose that the intersection indexes between open arcs in $\TT$ are all zero.
Then the corresponding dual closed arc systems $\TT^*_\Tri$ in $\surfo$ and $\surfo^3$
are shown in the second and third pictures in Figure~\ref{fig:ex1}.
Then we could have the corresponding graded quivers as:
\begin{gather}\label{eq:ex1}
\qquad\xymatrix@R=3.2pc@C=2pc{ &{\color{red}2}
    \ar@{<-}@<0ex>[dl]^{0} \\ {\color{red}1}&& {\color{red}3}\ar@{<-}@<0ex>[ul]^{0}
          \ar@{<-}@<0ex>[ll]^{-1}
}\quad
\xymatrix@R=3.2pc@C=2pc{ &{\color{red}2}  \ar@{<-}@(ur,ul)[]_{1-\XX}
    \ar@{<-}@<.8ex>[dr]^{2-\XX}\ar@{<-}@<.2ex>[dl]^0
      \\ {\color{red}1}\ar@{<-}@(l,d)[]_{1-\XX}\ar@{<-}@<.2ex>[rr]^{3-\XX}\ar@{<-}@<.8ex>[ur]^{2-\XX}&&
        {\color{red}3}\ar@{<-}@(d,r)[]_{1-\XX}\ar@{<-}@<.2ex>[ul]^0
          \ar@{<-}@<.8ex>[ll]^{-1}
}\quad
\xymatrix@R=3.2pc@C=2pc{ &{\color{red}2}  \ar@{<-}@(ur,ul)[]_{-2}
    \ar@{<-}@<.8ex>[dr]^{-1}\ar@{<-}@<.2ex>[dl]^0
      \\ {\color{red}1}\ar@{<-}@(l,d)[]_{-2}\ar@{<-}@<.2ex>[rr]^{0}\ar@{<-}@<.8ex>[ur]^{-1}&&
        {\color{red}3}\ar@{<-}@(d,r)[]_{-2}\ar@{<-}@<.2ex>[ul]^0
          \ar@{<-}@<.8ex>[ll]^{-1}
}
\end{gather}
where the potentials associated to the second and third quivers are just the inner three cycles
of degree $3-\XX$ and degree $0$ respectively.
\end{example}

\begin{example}
Keep the surface $\surf=\dd$ as above.
Choose another full formal open arc system $\TT$ with dual closed arc system $\TT^*$ of $\gms$
as in the left picture in Figure~\ref{fig:ex2}.
\begin{figure}[h]\centering
\begin{tikzpicture}[scale=.36,rotate=135]
\draw[very thick](0,0)circle(6);
\foreach \j in {2,4}{\draw[very thick,cyan](45*\j+45:6)node{$\bullet$}
    to[bend left=-25](45*\j-45:6)node{$\bullet$};}
\draw[very thick,cyan](45*3:6)node{$\bullet$}
    to[bend left=-0](-45:6)node{$\bullet$};
\draw[ultra thick,red](180:6)node[white]{$\bullet$}node[red]{$\circ$}to[bend left]
    node[left]{$3$}(-45-45:6);
\draw[ultra thick,red](0:6)node[white]{$\bullet$}node[red]{$\circ$}
    to[bend right]node[below]{$\quad2$}(-45-45:6)node[white]{$\bullet$}node[red]{$\circ$};
\draw[ultra thick,red](90:6)node[white]{$\bullet$}node[red]{$\circ$}to[bend left=-30]
    node[right]{$1$}(0:6)node[white]{$\bullet$}node[red]{$\circ$};
\end{tikzpicture}\quad
\begin{tikzpicture}[scale=.36,rotate=135]
\draw[very thick](0,0)circle(6);
\foreach \j in {2,4}{\draw[very thick,cyan](45*\j+45:6)node{$\bullet$}
    to[bend left=-25](45*\j-45:6)node{$\bullet$};}
\draw[very thick,cyan](45*3:6)node{$\bullet$}
    to[bend left=-0](-45:6)node{$\bullet$};
\draw[ultra thick,red](180:5)node[white]{$\bullet$}node[red]{$\circ$}to[bend left](-45-45:5);
\draw[ultra thick,red](0:5)node[white]{$\bullet$}node[red]{$\circ$}
    to[bend right](-45-45:5)node[white]{$\bullet$}node[red]{$\circ$};
\draw[ultra thick,red](90:5)node[white]{$\bullet$}node[red]{$\circ$}to[bend left=-30]
    (0:5)node[white]{$\bullet$}node[red]{$\circ$};
\end{tikzpicture}\;
\caption{The $A_3$ case: alternative arc systems $\gms$ and $\surfo$}\label{fig:ex2}
\end{figure}
Then the corresponding dual full formal closed arc systems $\TT^*_\Tri$ in $\surfo$ is
shown in the right pictures in Figure~\ref{fig:ex2}.
The associated quivers $(Q_\TT,\widetilde{Q}_\TT)$ are
just the standard $A_3$ quiver
$1\xleftarrow{a}2\xleftarrow{b}3$ and its Calai-Yau-$\XX$ double respectively,
whose gradings 
are controlled by grading of curves.
We list three cases of the gradings of $Q_\TT$ with the corresponding 4-reduction pictures as follows:
\begin{itemize}
\item If $\deg a=\deg b=0$,
then the corresponding dual full formal closed arc system $\TT^*_\Tri$ in $\surfo^4$ is shown
in the left picture of Figure~\ref{fig:ex2'}.
\item If $\deg a=\deg b=1$,
then the corresponding dual full formal closed arc system $\TT^*_\Tri$ in $\surfo^4$ is shown
in the middle picture of Figure~\ref{fig:ex2'}.
\item If $\deg a=1$ and $\deg b=2$,
then the corresponding dual full formal closed arc system $\TT^*_\Tri$ in $\surfo^4$ is shown
in the right picture of Figure~\ref{fig:ex2'}.
\end{itemize}
\begin{figure}[h]\centering
\begin{tikzpicture}[scale=.36,rotate=135]
\path (-45-36:4) coordinate (v2) (-45+36:4) coordinate (v3)
    (-45-126:4.5) coordinate (v1) (-45+126:4.5) coordinate (v4);
\draw[ultra thick,red](v1)to(v2)to(v3)to(v4);
\foreach \j in {1,...,4}{\draw[red](v\j)node[white]{$\bullet$}node[red]{$\circ$};}
\draw[very thick,cyan](-45:6)to(135:6) (135+108:6)to(135:6) (135-108:6)to(135:6);
\foreach \j in {1,...,10}{\draw[very thick](-45+36*\j:6)to(36*\j-9:6)node{$\bullet$};}
\end{tikzpicture}
\begin{tikzpicture}[scale=.36,rotate=135]
\path (-45-36:4) coordinate (v2) (-45+36:4) coordinate (v3)
    (-45-90:4.5) coordinate (v1) (-45+90:4.5) coordinate (v4);
\draw[ultra thick,red](v1)to(v2)to(v3)to(v4);
\foreach \j in {1,...,4}{\draw[red](v\j)node[white]{$\bullet$}node[red]{$\circ$};}
\draw[very thick,cyan](-45:6)to(135:6) (135+108+36:6)to(135+36:6) (135-108-36:6)to(135-36:6);
\foreach \j in {1,...,10}{\draw[very thick](-45+36*\j:6)to(36*\j-9:6)node{$\bullet$};}
\end{tikzpicture}
\begin{tikzpicture}[scale=.36,rotate=135]
\path (-45-36-90:4) coordinate (v2) (-45+36:4) coordinate (v3)
    (-45-90+36:4.5) coordinate (v1) (-45+90:4.5) coordinate (v4);
\draw[ultra thick,red](v1)to(v2)to(v3)to(v4);
\foreach \j in {1,...,4}{\draw[red](v\j)node[white]{$\bullet$}node[red]{$\circ$};}
\draw[very thick,cyan](-45:6)to(135:6) (135+108+72:6)to(135+72:6) (135-108-36:6)to(135-36:6);
\foreach \j in {1,...,10}{\draw[very thick](-45+36*\j:6)to(36*\j-9:6)node{$\bullet$};}
\end{tikzpicture}
\caption{The $A_3$ case: three dual full formal arc systems in $\surfo^4$}\label{fig:ex2'}
\end{figure}
Note that the grading of quivers in the Calabi-Yau-$N$ case is
controlled by the numbers of marked points on the boundaries of $\surfo^N$.
In the final subsection of this section, we explain the cluster theory in the Calabi-Yau-$N$ case.
\end{example}
\subsection{Cluster combinatorics}
The skeletons (cf. \cite[Remark~2.17, Lemma~4.8 and Lemma~4.9]{KQ2}, see also \cite{Q2,BS})
for manifolds in \eqref{eq:BS N} should be
\begin{gather}\label{eq:CEG}
  \EG(\surf^N) \cong \operatorname{CEG}(\hh{C}_{N-1}(\TT)),
\end{gather}
where on the left hand side, it is the \emph{exchange graph of $N$-angulation} of $\surf_N$
and on the right hand side, it is the \emph{cluster exchange graph} of $\hh{C}_{N-1}(\TT)$.
The vertices of $\EG^{N}(\surf^N)$ are $N$-angulation of $\surf_N$ and
the edges are forward flips.
The vertices of $\operatorname{CEG}_{N-1}(\hh{C}(\TT))$ are
\emph{$(N-1)$-cluster tilting sets} (cf. e.g. \cite[Def.~4.1]{KQ1})
in the cluster categories $\hh{C}_{N-1}(\TT)$ mentioned in Conjecture~\ref{conj:ses}
with edges correspond to forward mutations.

For instance, Figure~\ref{fig:4-flips} shows a sequence of forward flips
for $N=4$-angulations of an 12-gon,
which corresponds to meromorphic quadratic differentials of singularity type
$$(2^5,-14)=(2,2,2,2,2,-14)$$ on
$\Pone=\mathbb{P}^1$.
More precisely, these 4-angulations consist of isotopy classes of separated saddle connections
of the corresponding meromorphic quadratic differentials.
Moreover, an example of a decorated forward flip for $4$-angulation is
from the middle picture of Figure~\ref{fig:ex2'} to the right picture there.
For more details, cf. \cite[\S~3]{KQ2}.
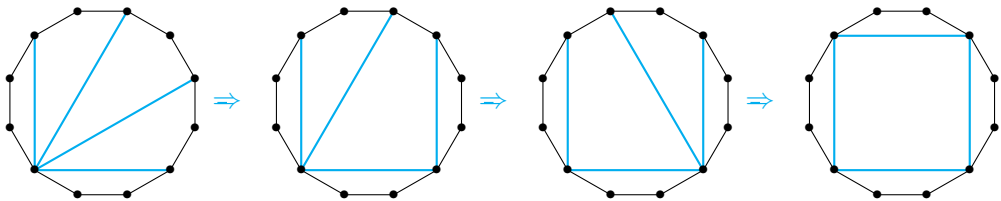
\begin{figure}[ht]\centering
  \begin{tikzpicture}[scale=.63]
    \draw[thick, cyan](360/12*7+15:2)to(360/12*4+15:2);
    \draw[thick, cyan](360/12*7+15:2)to(360/12*2+15:2);
    \draw[thick, cyan](360/12*7+15:2)to(360/12*0+15:2);
    \draw[thick, cyan](360/12*7+15:2)to(360/12*10+15:2);
    \draw[thick, cyan](0:2.4)to(0:2.6)node{$\Rightarrow$};
    \foreach \j in {1,...,12}  { \draw (360/12*\j+15:2)to(360/12*\j+360/12+15:2) node{\tiny{$\bullet$}};}
    \end{tikzpicture}
    \begin{tikzpicture}[scale=.63]
    \draw[thick, cyan](360/12*7+15:2)to(360/12*4+15:2);
    \draw[thick, cyan](360/12*7+15:2)to(360/12*2+15:2);
    \draw[thick, cyan](360/12*10+15:2)to(360/12*1+15:2);
    \draw[thick, cyan](360/12*7+15:2)to(360/12*10+15:2);
    \draw[thick, cyan](0:2.4)to(0:2.6)node{$\Rightarrow$};
    \foreach \j in {1,...,12}  { \draw (360/12*\j+15:2)to(360/12*\j+360/12+15:2) node{\tiny{$\bullet$}};}
    \end{tikzpicture}
    \begin{tikzpicture}[scale=.63]
    \draw[thick, cyan](360/12*7+15:2)to(360/12*4+15:2);
    \draw[thick, cyan](360/12*10+15:2)to(360/12*3+15:2);
    \draw[thick, cyan](360/12*10+15:2)to(360/12*1+15:2);
    \draw[thick, cyan](360/12*7+15:2)to(360/12*10+15:2);
    \draw[thick, cyan](0:2.4)to(0:2.6)node{$\Rightarrow$};
    \foreach \j in {1,...,12}  { \draw (360/12*\j+15:2)to(360/12*\j+360/12+15:2) node{\tiny{$\bullet$}};}
    \end{tikzpicture}
    \begin{tikzpicture}[scale=.63]
    \draw[thick, cyan](360/12*7+15:2)to(360/12*4+15:2);
    \draw[thick, cyan](360/12*4+15:2)to(360/12*1+15:2);
    \draw[thick, cyan](360/12*10+15:2)to(360/12*1+15:2);
    \draw[thick, cyan](360/12*7+15:2)to(360/12*10+15:2);
    \foreach \j in {1,...,12}  { \draw (360/12*\j+15:2)to(360/12*\j+360/12+15:2) node{\tiny{$\bullet$}};}
    \end{tikzpicture}
\caption{A sequence of flips of $4$-angulations}
\label{fig:4-flips}
\end{figure}

One can also associate Calabi-Yau-$N$ quivers with superpotential to $N$-angulations directly,
parallel to Definition~\ref{con:Qs}, where one replaces degree $\XX$ with $N$.
For instance, in figure~\ref{fig:W_T} which is the case when $N=5$,
the superpotential should contains all those 3-cycle of degree $-2$
(two of them are shown in the figure).
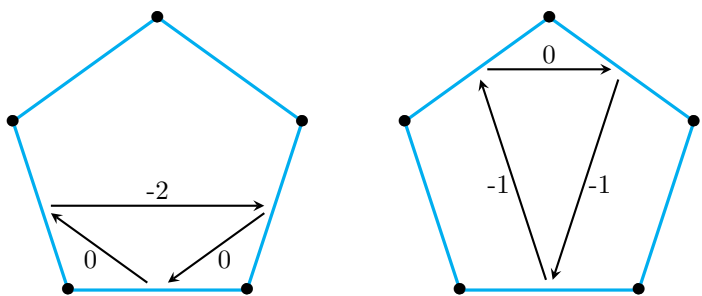
\begin{figure}[ht]\centering
  \begin{tikzpicture}[xscale=1]
    \foreach \j in {1,...,5}  {
        \draw[cyan, very thick]
            (72*\j+18:2) coordinate (v\j)node[black]{$\bullet$} --(72*\j+90:2)node[black]{$\bullet$};}
    \path (v1)--(v2) coordinate[pos=0.5] (x5)
              --(v3) coordinate[pos=0.5] (x1)
              --(v4) coordinate[pos=0.5] (x2)
              --(v5) coordinate[pos=0.5] (x3)
              --(v1) coordinate[pos=0.5] (x4);
    \foreach \j in {1,...,5}{\draw (x\j) node[red] (x\j){};}
    \draw[->,>=stealth,thick] (x3) to (x2);
    \draw[->,>=stealth,thick] (x2) to (x1);
    \draw[->,>=stealth,thick] (x1) to (x3);
    \foreach \j in {1,...,5}  { \draw (72*\j+18:2) coordinate (v\j);}
    \draw[](90+144:1.5)node{\small{0}}(90-144:1.5)node{\small{0}}(90:-.3)node{\small{-2}};
  \end{tikzpicture}
\qquad
  \begin{tikzpicture}[xscale=1]
    \foreach \j in {1,...,5}  {
        \draw[cyan, very thick]
            (72*\j+18:2) coordinate (v\j)node[black]{$\bullet$} --(72*\j+90:2)node[black]{$\bullet$};}
    \path (v1)--(v2) coordinate[pos=0.5] (x5)
              --(v3) coordinate[pos=0.5] (x1)
              --(v4) coordinate[pos=0.5] (x2)
              --(v5) coordinate[pos=0.5] (x3)
              --(v1) coordinate[pos=0.5] (x4);
    \foreach \j in {1,...,5}{\draw (x\j) node[] (x\j){};}
    \draw[->,>=stealth,thick] (x2) to (x5);
    \draw[->,>=stealth,thick] (x4) to (x2);
    \draw[->,>=stealth,thick] (x5) to (x4);
    \foreach \j in {1,...,5}  { \draw (72*\j+18:2) coordinate (v\j);}
    \draw[](90+144-36:.7)node{\small{-1}}(90-144+36:.7)node{\small{-1}}(90:1.5)node{\small{0}};  \end{tikzpicture}
\caption{Some 3-cycles in the CY-5 case.}
\label{fig:W_T}
\end{figure}

\begin{remark}\label{rem:final}
Finally, we remark that
one may want to get \eqref{eq:BS N} by generalizing the approach in \cite{BS},
as \cite{I} did for the type A case.
However, this is not a short cut by all means (nor easy) due to lack of the following results:
\begin{itemize}
  \item the topological cluster theory for surfaces, e.g.\eqref{eq:CEG}, has not been established.
  \item the categorical cluster theory for surfaces, e.g. \cite[Thm.~8.6]{KQ1} for $\GAX$, has not been established.
\end{itemize}
\end{remark}

\section{Hurwitz spaces via quadratic differentials in genus zero}
\label{sec:Hurwitz}
The aim of this section is to identify the moduli space of CY-$s$ type quadratic differentials
with the Hurwitz space in the case $g=0$. 
\subsection{Hurwitz spaces}\label{sec:HW}
\begin{definition}
Let $\uk=(k_1,\dots,k_b) $ be a multi-set of positive integers $k_i \in \ZZ_{\ge 1}$.
A \emph{Hurwitz cover of genus $g$ and polar type $\uk$}
is a pair of
a compact Riemann surface $\rs$ of genus $g$ and
a meromorphic function $f \colon \rs \to\PP^1$ such that poles of $f$ consists of $b$ points,
i.e., $f^{-1}(\infty)=\{p_i\}_{i=1}^b$ with orders $\ord_f(p_i)=-k_i$.

Two Hurwitz covers $(\rs,f)$ and $(\rs^{\prime},f^{\prime})$ are equivalent
if there is a biholomorphism $h \colon \rs \to \rs^{\prime}$
such that $f=h^* f^{\prime} $.
The \emph{Hurwitz space $\HS(g,\uk)$} is the moduli space of Hurwitz covers
of genus $g$ and polar type $\uk$.
\end{definition}

For a Hurwitz cover $(\rs,f)$, denote by $\Zer(f), \Pol(f) \subset S$ the set
of zeroes and poles of $f$ respectively. Write $\Crit(f)=\Zer(f) \cup \Pol(f)$.
It is known that the dimension of the Hurwitz space is given by
\[
\dim_{\bC}\HS(g,\uk)=2g-2+b+\sum_{i=1}^b k_i,
\]
which will be denoted by $n$.

The \emph{regular locus} $\HS(g,\uk)_{\reg} \subset \HS(g,\uk) $ is defined to be
the subset consisting of Hurwitz covers with simple zeroes,
which will be called regular Hurwitz covers.
For $(\rs,f) \in \HS(g,\uk)_{\reg}$, denote $\Tri=\Zer(f)$.
Then since
\[
0=\sum_{p \in \Crit(f)}\operatorname{ord}_f(p)=|\Tri|-\sum_{i=1}^b k_i,
\]
we have $|\Tri|=\sum_{i=1}^b k_i$ in this case.

\subsection{Marked surfaces and log surfaces from Hurwitz covers}
\label{sec:log_HW}
In this subsection, we construct
(ungraded) marked surfaces and (ungraded) log surfaces from Hurwitz covers.

\begin{construction}
\label{con:MS}
For a Hurwitz cover $(\rs,f)$, we can associate a marked surface $\rsf=(\rsf,\M,\Y)$ as follows:
\begin{itemize}
\item A surface $\rsf$ is defined as the real blow up of
the underlying smooth surface of $\rs$ at the poles
$f^{-1}(\infty)=\{z_1,\dots,z_b\}$.
\item Define marked arcs $\overline{\M} \subset \partial \rsf$
as open intervals corresponding to asymptotic directions,
where $e^{f}$ decays exponentially as $z \to z_i$.
\item By shrinking each marked arc of $\overline{\M}$ to a single point in $\rsf$,
we obtain a set of closed marked points $\M$.
\item The set of open marked points in $\Y\subset\partial\rsf$ are the dual to $\M$,
i.e., they are alternative in $\partial\rsf$.
\end{itemize}
\end{construction}
Denote by $\partial_i$ the $S^1$-boundary component
obtained from the real blow up at $z_i$ for $i=1,\dots,b$.
Then by definition, there are $k_i=-\operatorname{ord}_f(z_i)$ points
on the boundary component $\partial_i$.
\begin{remark}
Though we can construct the marked surface $\rsf=(\rsf,\M,\Y)$ from the Hurwitz cover $(\rs,f)$,
this information is not enough to construct a grading $\grad$ on $\rsf$.
In Section~\ref{sec:framed_HS}, we introduce a primary quadratic differential in the case
when the genus of $\rsf$ is zero, which allows us construct a natural grading.
\end{remark}

Next we consider the construction of log surfaces from
Hurwitz covers.
Let $(\rs,f)$ be a regular Hurwitz cover.
Consider a multi-valued holomorphic
function $\log f$ on $\rsp:=\rs \setminus \Crit(f)$.
Take a base point $* \in \rsp$ and choose a value
$\log f (*)$.
First we note that the analytic continuation of $\log f(*)$
along with a cycle
$\gamma \in \pi_1(\rsp,*)$ is
given as the form $ \log f(*)+2\pi m$ for some $m \in \ZZ$.
Then we obtain a representation
\[
\rho \colon \pi_1(\rsp,*) \to \langle q \rangle,
\quad \gamma \mapsto \rho(\gamma):= q^m.
\]

\begin{definition}
\label{def:log}
The {\it logarithmic ($\log$) surface $\log \rsp$} associated to a
Hurwitz cover $(\rs,f)$
is defined to be the covering $\pi \colon \log \rsp\to \rsp$
corresponding to the representation
$\rho \colon \pi_1(\rsp,*) \to \langle q\rangle$.
The infinite cyclic group $\langle q\rangle$ acts on $\log S^{\circ}$
as the group of deck transformations.
\end{definition}
By definition, the multi-valued function $\log f$ can be
lifted on the log surface $\log \rsp$ as
a single-valued function through the analytic continuation. In other words,
the surface  $\log \rsp$ is the Riemann surface of the function $\log f$.
We also note that for any complex number $s \in \bC$,
the function
\[
f^{s-2}=e^{  (s-2)\log f     }
\] is also a
single-valued function on $\log \rsp$.

Finally we note that as in Section~\ref{sec:MC},
for a log surface $\log \rsp$
we can construct the topological log surface $\log \rs_{\Tri}^f$
by adding zeroes $\Tri$ of $f$ and
doing the real blow-ups at poles of $f$.

\subsection{Primary quadratic differentials}
In the following, we assume $g=0$, namely $\rs \cong \PP^1$, and fix the
coordinate $z \in \bC \cup \{\infty\}=\PP^1$.

\begin{definition}
Let $\ul=(l_1,\dots,l_b) \in \ZZ^b$ be an ordered multi-set of $b$ integers satisfying
the condition $$\sum_{i=1}^b l_i=4.$$
For a Hurwitz cover $(\Pone,f)$ with poles
$f^{-1}(\infty)=\{z_1,\dots,z_b\}$, 
we define a meromorphic quadratic differential $\Omega_{\ul}$ by
\begin{gather}\label{eq:Omega l}
    \Omega_{\ul}(z):=\prod_{i=1}^b \frac{\diff z^2}{(z-z_i)^{l_i}}.
\end{gather}
We call $\Omega_{\ul}$ the \emph{primary quadratic differential
compatible with $(\Pone,f)$}.
\end{definition}

For a Hurwitz cover $f \colon \Pone \to \Pone$ with poles of
order $k_1,\dots,k_b$ at $z_1,\dots,z_b$ respectively,
consider the primary quadratic differential $\Omega_{\ul}$. Then the quadratic differential
\begin{gather}\label{eq:phi=e^f}
    \phi=\phi(f,\ul)\colon=e^f \Omega_{\ul}
\end{gather}
on $\Pone$ gives a quadratic differential with exponential type singularities of indexes
$$(\uk,\ul)=\{(k_1,l_1),\dots,(k_b,l_b)\}$$ at $z_1,\dots,z_b$ respectively.
The following statement gives the other direction.

\begin{proposition}
\label{prop:exp_HC}
Let $\phi$ be a quadratic differential on $\Pone$
with exponential type singularities of indexes $(\uk,\ul)$ at $z_1,\dots,z_b$ respectively.
Then there is a unique Hurwitz cover $f \colon \Pone \to \Pone$
with poles of order $k_1,\dots,k_b$ at $z_1,\dots,z_b$,
up to $f \mapsto f+2 \pi \ii m$ for $m \in \ZZ$,
such that $\phi$ is of the form in \eqref{eq:phi=e^f}.
\end{proposition}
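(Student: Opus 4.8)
The plan is to recover $f$ directly as a branch of the logarithm of the ratio of $\phi$ and $\Omega_{\ul}$. First I would form the quotient
\[
    h\colon=\frac{\phi}{\Omega_{\ul}},
\]
which, being the ratio of two meromorphic sections of $\omega_{\PP}^{\otimes2}$, is a genuine single-valued meromorphic function on $\PP$. Since $\phi$ is of exponential type with exactly the prescribed singularities, hence holomorphic and non-vanishing away from $z_1,\dots,z_b$ as in the construction \eqref{eq:phi=e^f}, and since $\Omega_{\ul}$ is holomorphic and non-vanishing away from $z_1,\dots,z_b$ (and also at $\infty$, where $\Omega_{\ul}\sim\diff u^{\otimes2}$ for $u=1/z$ because $\sum_i l_i=4$), the function $h$ is holomorphic and non-vanishing on $\PP\setminus\{z_1,\dots,z_b\}$. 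Producing $f$ then amounts to choosing a single-valued branch of $\log h$ on this domain, and the stated ambiguity $f\mapsto f+2\pi\ii m$ is precisely this choice of branch.

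The key local computation is near each $z_i$. Writing $w=z-z_i$, the exponential-type hypothesis gives $\phi=e^{w^{-k_i}}w^{-l_i}g_i(w)\,\diff w^{\otimes2}$ with $g_i$ holomorphic and non-vanishing, while $\Omega_{\ul}=w^{-l_i}c_i(w)\,\diff w^{\otimes2}$ with $c_i$ holomorphic and non-vanishing (the remaining factors $(z-z_j)^{-l_j}$, $j\neq i$, being holomorphic units at $z_i$). The matching powers $w^{-l_i}$ cancel, so
\[
    h=e^{w^{-k_i}}\,\widetilde g_i(w),\qquad \widetilde g_i\colon=g_i/c_i \text{ holomorphic and non-vanishing.}
\]
Hence $\diff h/h=-k_i w^{-k_i-1}\,\diff w+(\widetilde g_i'/\widetilde g_i)\,\diff w$; the first term has a pole of order $k_i+1\ge2$ and therefore no residue, and the second is holomorphic because $\widetilde g_i$ is a unit. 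Thus $\oint_{z_i}\diff h/h=0$ for every $i$. I expect this residue/monodromy vanishing to be the main point of the argument: it is exactly the cancellation of the $w^{-l_i}$ factors — that is, the defining choice of $\ul$ in $\Omega_{\ul}$ — that removes the only potential $w^{-1}$ contribution, while the essential part $e^{w^{-k_i}}$ contributes no residue by itself.

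With all periods of $\diff h/h$ around the punctures vanishing, $\log h$ admits a single-valued holomorphic branch $f$ on $\PP\setminus\{z_1,\dots,z_b\}$, unique up to an additive constant in $2\pi\ii\ZZ$. It remains to verify that $f$ is meromorphic on all of $\PP$ with the asserted poles. Near $z_i$ the branch reads $f=w^{-k_i}+\log\widetilde g_i(w)$, whose second summand is holomorphic, so $f$ has a pole of order exactly $k_i$ there; at every other point of $\PP$, including $\infty$, $h$ is a holomorphic unit and hence $f$ is holomorphic. Therefore $f\colon\PP\to\PP$ is a meromorphic function with $f^{-1}(\infty)=\{z_1,\dots,z_b\}$ and $\ord_f(z_i)=-k_i$, i.e. a Hurwitz cover of genus $0$ and polar type $\uk$, and by construction $e^{f}\,\Omega_{\ul}=h\,\Omega_{\ul}=\phi$.

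Finally, for uniqueness, if $f_1$ and $f_2$ both satisfy $\phi=e^{f_i}\Omega_{\ul}$, then $e^{f_1-f_2}\equiv1$ on the connected domain $\PP\setminus\{z_1,\dots,z_b\}$, so $f_1-f_2$ is a constant lying in $2\pi\ii\ZZ$; this is exactly the claimed ambiguity. In fact both $f_i$ extend over each $z_i$ with the same principal part $w^{-k_i}$ coming from $\phi$, so their difference is entire and constant.
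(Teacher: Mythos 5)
Your proposal is correct and follows essentially the same route as the paper: form the quotient $\phi/\Omega_{\ul}$, observe that the local factors $v^{-l_i}$ cancel so that the quotient is $e^{v^{-k_i}}$ times a holomorphic unit near each $z_i$, and conclude that its logarithm is single-valued with a pole of order exactly $k_i$ there. Your explicit residue computation for $\diff h/h$ and the check at $\infty$ merely spell out details the paper leaves implicit.
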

\begin{proof}
First we consider the holomorphic function $g(z):= \phi(z) \slash \Omega_{\ul}(z)$
on $\Pone \setminus\{z_1,\dots,z_b\}$.
Then by definition, $g(z)$ is non-zero on $\Pone \setminus\{z_1,\dots,z_b\}$ and points $z_1,\dots,z_b$
are exponential type singularities. Since $g(z)$ is non-zero away from $z_i$,
it is sufficient to show that the function $\log g(z)$ has a pole of
order $k_i$ at $z_i$ and is single-valued around $z_i$. By definition of
exponential type singularity,
there is a coordinate change $v(z)$ satisfying $v(z_i)=0$ and $v^{\prime}(z_i) \neq 0$
near $z_i$ such that
\[
    \phi=e^{v^{-k_i}}v^{-l_i}\cdot h_1(v)\diff v^2,
\]
where $h_1(v)$ is some non-zero holomorphic function near $v=0$. We also note
that under the above coordinate change, the primary quadratic differential $\Omega_{\ul}$
can be written as
\[
    \Omega_{\ul}=v^{-l_i}\cdot h_2(v)\diff v^2,
\]
where $h_2(v)$ is also some non-zero holomorphic function near $v=0$.
Thus $g(z(v))=\exp(v^{-k_i} )\cdot h_1(v)\slash h_2(v)$ near $v=0$.
Thus $\log g$ has a pole of order $k_i$ at $z_i$ as required.
\end{proof}

Note that the primary quadratic differential $\phi=\phi(f,\ul)$ in \eqref{eq:phi=e^f}
provides a canonical grading $\grad=\grad(f,\ul)$ on the associated marked surface $\rsf$.
Define the numerical data for the Hurwitz cover $(\Pone, f)$ by
\begin{gather}\label{eq:Hnum}
    \num(\Pone,f)=\num\left((\rsf)^\grad\right).
\end{gather}
\subsection{$q$-Quadratic differentials}
Similar to Proposition~\ref{prop:exp_HC},
we consider the analogue statement for CY-$s$ type quadratic differentials.
Fix a complex number $s$ with $\Re(s)>2$.
For a Hurwitz cover $f \colon \Pone \to \Pone$ with simple zeroes and poles of
order $k_1,\dots,k_b$ at $z_1,\dots,z_b$ respectively, define a multi-valued
quadratic differential on $\Pone$ ramified at $\Crit(f)$ by
\begin{gather}\label{eq:xxi}
    \xi=\xi(f,s,\ul)\colon=f^{s-2} \,\Omega_{\ul}=e^{(s-2)\log f}\Omega_{\ul}
\end{gather}
Then $\xi$ is a CY-$s$ type $q_s$-quadratic differential
with $s$-poles of type $(\uk,\ul)$ at $z_1,\dots,z_b$.
The following statement gives the other direction in this case.

\begin{proposition}
\label{prop:CY_HC}
Let $\xi$ be a CY-$s$ type $q_s$-quadratic differential on $\Pone$
with $s$-poles of type
$(k_1,l_1),\dots,(k_b,l_b)$ at $z_1,\dots,z_b$
together with a fixed value $\xi(*)$ on some point $* \in \log (\Pone )^{\circ}$.
Then there is a unique regular Hurwitz cover
$f \colon \Pone \to \Pone$ with poles of order
$k_1,\dots,k_b$ at $z_1,\dots,z_b$ and a single-valued function
$\log f$ on $\log \Pone^{\circ}$,
up to multiplying by $\omega_{s-2}^m$, for $\omega_{s-2}=e^{2\pi \ii  \slash (s-2)}$ and $m \in \ZZ$:
\begin{gather}\label{eq:r^m}
    f \mapsto \omega_{s-2} ^m f\quad\text{or}\quad\log f \mapsto \log f +\frac{2 \pi \ii m}{s-2},
\end{gather}
such that
\[
\xi=f^{s-2} \,\Omega_{\ul},
\]
where $\ul=(l_1,\dots,l_b)$ and the value $\log f(*)$ is determined by $\xi(*)=e^{(s-2)\log f(*)}\Omega_{\ul}(*)$.
\end{proposition}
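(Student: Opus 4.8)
The plan is to mirror the proof of Proposition~\ref{prop:exp_HC}, replacing $e^{f}$ by $f^{s-2}$. Since $\xi$ is of CY-$s$ type of $s$-polar type $(\uk,\ul)$ on $\Pone$, we have $\sum_{i=1}^b l_i=4$ by \eqref{eq:4-4g} (with $g=0$), so the primary quadratic differential $\Omega_{\ul}$ of \eqref{eq:Omega l} is well-defined; recall that it is single-valued and meromorphic on $\Pone$, of local exponent $-l_i$ at each $z_i$ and regular non-vanishing elsewhere, including at $\infty$. I would set
\[
    g := \xi\,/\,\Omega_{\ul},
\]
a multi-valued holomorphic function on $\Pone^{\circ}=\Pone\setminus\Crit(\xi)$, and show that $f:=g^{1/(s-2)}$ is the desired regular Hurwitz cover.

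First I would carry out the local analysis of $g$. Near an $s$-pole $z_i$ of type $(k_i,l_i)$, in a local coordinate $v$ one has $\xi=v^{-k_i(s-2)-l_i}u_1(v)\,\diff v^{\otimes2}$ and $\Omega_{\ul}=v^{-l_i}u_2(v)\,\diff v^{\otimes2}$ with $u_1,u_2$ units, whence $g=v^{-k_i(s-2)}u(v)$ with $u$ a unit; near an $s$-simple zero $Z_j\in\Delta$ (where $\Omega_{\ul}$ is regular non-vanishing, as $Z_j\neq z_i$) one has $g=v^{\,s-2}u(v)$; and $g$ is holomorphic non-vanishing at every other point of $\Pone$. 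Hence any local branch of $g^{1/(s-2)}$ has a pole of order $k_i$ at $z_i$, a simple zero at each $Z_j$, and is holomorphic non-vanishing elsewhere, so that $f$---once known to be single-valued---is automatically a rational function on $\Pone$ with poles of orders $\uk$ and only simple zeros, i.e.\ a regular Hurwitz cover of polar type $\uk$.

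The crux, and what I expect to be the main obstacle, is the single-valuedness of $f=\exp\!\big(\tfrac{1}{s-2}\log g\big)$; the key point is that every local order of $g$ is an integer multiple of $(s-2)$. From the local forms above, the period $\oint\diff\log g$ around a small loop $\gamma_j$ about $Z_j$ equals $2\pi\ii(s-2)$, and around a small loop $\delta_i$ about $z_i$ equals $-2\pi\ii k_i(s-2)$. As $[\gamma_j],[\delta_i]$ generate $\pi_1(\Pone^{\circ},*)$ and monodromy is additive, the period of $\tfrac{1}{s-2}\diff\log g$ along any loop lies in $2\pi\ii\,\ZZ$---namely $2\pi\ii$ around $\gamma_j$ and $-2\pi\ii k_i$ around $\delta_i$, with vanishing total by $\aleph=\sum_i k_i$ (Lemma~\ref{lem:zeros}), consistent with $\prod_j\gamma_j\prod_i\delta_i=1$. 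Therefore $f$ has trivial monodromy and is a single-valued meromorphic map $f\colon\Pone\to\Pone$, while $\log f=\tfrac{1}{s-2}\log g$ becomes single-valued on $\log\Pone^{\circ}$ (which coincides with the $\log$ surface of Definition~\ref{def:log}, since the two monodromy representations $\rho$ agree). By construction $f^{s-2}=g$, i.e.\ $\xi=f^{s-2}\,\Omega_{\ul}$, the remaining verifications being formal.

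Finally I would settle uniqueness. If $f_1,f_2$ both solve $f^{s-2}=g$ and are single-valued meromorphic, then $f_1/f_2$ takes values in the discrete set $\{\zeta:\zeta^{s-2}=1\}=\{\omega_{s-2}^m:m\in\ZZ\}$, hence is a constant $\omega_{s-2}^m$ by connectedness of $\Pone^{\circ}$; this is precisely the stated ambiguity $f\mapsto\omega_{s-2}^m f$, equivalently $\log f\mapsto\log f+2\pi\ii m/(s-2)$. Prescribing $\xi(*)=e^{(s-2)\log f(*)}\Omega_{\ul}(*)$ fixes $\log f(*)$ up to this ambiguity and so pins down $f$ up to multiplication by $\omega_{s-2}^m$, as claimed.
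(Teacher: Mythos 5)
Your proposal is correct and follows essentially the same route as the paper: form $g=\xi/\Omega_{\ul}$, set $f=g^{1/(s-2)}$, and verify the local exponents at the $s$-poles via the coordinate change $v$. You additionally spell out the global single-valuedness via the monodromy of $\tfrac{1}{s-2}\diff\log g$ around all punctures and the uniqueness up to $\omega_{s-2}^m$, details the paper's proof leaves implicit, but the underlying argument is the same.
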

\begin{proof}
First we note that the ambiguity $f \mapsto \omega_{s-2}^m f$ comes from
$f^{s-2}=(\omega_{s-2}^mf)^{s-2}$ since $\omega_{s-2}^{s-2}=1$.
Consider a multi-valued function $g(z):=\xi(z) \slash \Omega_{\ul}(z)$ and let
\[
    f(z)\colon=g(z)^{1 \slash (s-2)}=e^{\frac{1}{s-2} \log g(z)}
\]
up to multiplying $\omega_{s-2}^m$.
We only need to show that $f(z)$ has a pole of order $k_i$ at $z_i$ and is single-valued near $z_i$.

Similar to the proof of Proposition~\ref{prop:exp_HC},
there is a coordinate change $v(z)$ satisfying $v(z_i)=0$ and $v^{\prime}(z_i) \neq 0$
near $z_i$ such that
\[
    \xi=v^{-k_i(s-2)-l_i   }\diff v^2
\]
and
\[
    \Omega_{\ul}=v^{-l_i}\cdot h(v)\diff v^2,
\]
where $h(v)$ is also some non-zero holomorphic function near $v=0$.
Then $g(z(v))=v^{-k_i(s-2)} \cdot h(v)^{-1}$ near $v=0$. Thus
$f(z(v))=g(z(v))^{1 \slash (s-2)}$ has a pole of order $k_i$ at $v=0$ and
is single-valued near $v=0$ since $h(v)^{-1}$ is non-zero.
\end{proof}

\subsection{Framed regular Hurwitz spaces}
\label{sec:framed_HS}
Let $\gzero^\grad=(\gzero,\M,\Y,\grad)$ be a graded marked surface of genus zero.
Then the numerical data of $\gzero$ is just
\[
    \num(\gzero^\grad)=(b;\uk,\ul)
\]
since $g=0$ and $\LP_0$ is empty.
We also choose an arc system $\TT$ and a cut $\cut$
and thus have the log surface $\log_{\cut}\gzero_\Tri^\grad$,
which will be simply written as $\log \gzero_\Tri$.

Let $(\Pone,f)$ be a regular Hurwitz cover of polar type $\uk$ and
$\log \Pone_{\Tri}^f$ be the associated topological log surface constructed in Section~\ref{sec:log_HW}
with numerical data $\num(\Pone,f)$ in \eqref{eq:Hnum}.
Then Theorem~\ref{thm:winding} implies that there is a homeomorphism
\[
    h \colon  \log \gzero_\Tri \to \log \Pone_{\Tri}^f
\]
which commutes with the action of $\langle q \rangle$.
\begin{definition}
A \emph{$\log \gzero_\Tri$-framed Hurwitz cover $(\Pone,f,\log f,h)$}
consists of a regular Hurwitz cover $(\Pone,f)$,
a fixed choice of the (single-valued) function $\log f$ on $\log \Pone^{\circ}$ and
a homeomorphism
\[
    h \colon  \log \gzero_\Tri \to (\log \Pone_{\Tri}^{\circ})^f,
\]
which commutes with the action of $\langle q \rangle$ and
matches the numerical data $\num(\Pone,f)$ with $\num(\log\gzero_\Tri)$.
\end{definition}
Two $\log \gzero_\Tri$-framed Hurwitz cover $(\Pone,f_i,\log f_i,h_i)$ are $\log\gzero_\Tri$-equivalent if
\begin{itemize}
\item there is a biholomorphism $F \colon \Pone \to \Pone$ satisfying $F(f_1^{-1}(\infty))=f_2^{-1}(\infty)$,
together with the choice of a lift
$\widetilde{F} \colon \log \Pone^{ \circ} \to \log \Pone^{ \circ}$
such that $\widetilde{F}^* (\log f_{2})=\log f_{1}$ and
\item
$h_2^{-1} \circ \widetilde{F} \circ h_1\in\Homeo_0(\log\gzero_\Tri)$,
where
\[
    \widetilde{F} \colon \log \Pone_{\Tri}^{f_1}\to
    \log \Pone_{\Tri}^{f_2}
\]
is the induced homeomorphism.
\end{itemize}

Denote by $\HS(\log \gzero_\Tri)$ the moduli space of $\log \gzero_\Tri$-framed Hurwitz covers.

Recall that $\MCG^\circ(\log \gzero_\Tri)$ is generated by the deck transformation $q$
and those homeomorphism induced from $\MCG(\gzero_\Tri)$.
The deck transformation $q \in \MCG^\circ(\log \gzero_\Tri)$ acts on $\HS(\log \gzero_\Tri)$ as
\[
 (\Pone,f,\log f,h)\cdot q=(\Pone,f,\log f+2\pi \ii,h).
\]
The right action of $g \in \MCG^\circ(\log \gzero_\Tri)$ on $\HS(\log \gzero_\Tri)$ is given by
\[
 (\Pone,f,\log f,h)\cdot g=(\Pone,f,\log f,h\circ g).
\]

Choose a boundary component $\partial_1 \subset \gzero_\Tri$ with $k_1$ marked points and
set  $\widetilde{\partial}_1:=\pi^{-1}(\partial_1)$
for the covering map $\pi_{\Tri} \colon \log \gzero_\Tri \to \gzero_\Tri$.
For a $\log \gzero_\Tri$-framed Hurwitz cover $(\Pone,f,\log f,h)$,
assume that a pole $z_1$ of $f$ corresponds to $\widetilde{\partial}_1$
through the framing $h$. Then by the automorphism of $\Pone$, we can always take
the representative $(\Pone,f,\log f,h)$ with $z_1=\infty$ and the
coefficient of the pole of $f$ at $z_1=\infty$ is one, namely
\begin{gather}\label{eq:f=}
    f=z^{k_1}+O(z^{k_1+1}).
\end{gather}
Recall from Definition~\ref{def:x.f.quad} that $\QQuad_s(\log \gzero_\Tri)$ is
the moduli space of $q$-quadratic differentials
with the numerical data $\num(\log \gzero_\Tri)=(b,\uk,\ul)$.
\begin{theorem}
\label{thm:HW_Quad_iso}
There is an isomorphism of complex manifolds
\begin{gather}\label{eq:Psis}
    \begin{array}{rcl}
    \Psi_s \colon \HS(\log \gzero_\Tri)\;\;\; &\longrightarrow& \QQuad_s(\log \gzero_\Tri)\\
    (\Pone,f,\log f, h) &\longmapsto& (\Pone,\xi,h,s)
    \end{array}
\end{gather}
for $\xi=\xi(f,x,\ul)$ as in \eqref{eq:xxi}.
Here we use the representative $f$ with the form \eqref{eq:f=}.
\end{theorem}
\begin{proof}
The existence and injectivity of the map $\Psi_s$ are explicit.
We only need to show the surjectivity of $\Psi_s$.
For a $q$-quadratic differential, by the automorphism of $\Pone$,
we can take its representative as $(\Pone,\xi,h,s)$ with $z_1=\infty$,
where $z_1$ is an $s$-pole corresponding to $\widetilde{\partial}_1$.
In addition, we can normalize as
\[
\xi=c ( z^{k_1(s-2)}+\cdots)\Omega_{\ul},
\]
where $c=q_s^{2m}$ for some $m \in \ZZ$.
Take some point $* $ from the interior of
$\log \gzero_\Tri$. Then by Proposition~\ref{prop:CY_HC}, there is a regular Hurwitz cover
$f \colon \Pone \to \Pone$ and a single-valued function
$\log f$ on $\log \Pone^{\circ}$, up to multiplying by $\omega_{s-2}^m$ as in \eqref{eq:r^m},
such that
\[
    \xi=f^{s-2} \,\Omega_{\ul} \quad \text{and} \quad \xi(h(*))=
    e^{(s-2)\log f(h(*))}\Omega_{\ul}(h(*)).
\]
By the construction of $f$ in the proof of Proposition~\ref{prop:CY_HC},
we can take $f$ with the form $f=z^{k_1}+\cdots$. This implies the surjectivity of $\Psi_s$.
\end{proof}

\begin{remark}

As a complex manifold, the moduli space $\QQuad_s(\log \gzero_\Tri)$ is
independent of the date $\ul$ of $\num(\log \gzero_\Tri)$. However, the period map
$$\Pi_s=\int \sqrt{\xi}$$ strongly depends on $\ul$ since $\xi=f^{s-2}\Omega_{\ul}$.
The choice $\ul$ corresponds to the choice of the primitive form \cite{Sa1,SaTa}
(the primary differential \cite[Section 5]{Dub1}) and controls the
Frobenius structure on $\HS(\log \gzero_\Tri)$.
\end{remark}

\begin{corollary}\label{cor:n+1}
Consider the union
\[
    \QQuad_{>2}(\log \gzero_\Tri)\colon=
        \bigcup_{s \in \bC^{>2}} \QQuad_s(\log \gzero_\Tri),
\]
where $\bC^{>2}=\{s \in \bC \,\vert\, \Re(s)>2\}$.
Then the space $\QQuad(\log \gzero_\Tri)$ has the structure of
a complex manifold of dimension $(n+1)$
induced from the bijection
\[
    \QQuad_{>2}(\log \gzero_\Tri) \cong \HS(\log \gzero_\Tri) \times \bC^{>2}
\]
given by Theorem~\ref{thm:HW_Quad_iso}.
\end{corollary}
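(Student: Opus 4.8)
The plan is to bundle the fiberwise isomorphisms of Theorem~\ref{thm:HW_Quad_iso} into a single global map and to read off the product structure from the explicit formula defining them. The first observation is that the parameter $s$ is part of the data of a $q$-quadratic differential (Definition~\ref{def:x.f.quad}), and the equivalence relation there never identifies differentials with distinct parameters; hence the union defining $\XQuad(\log\P_\Delta)$ is in fact a disjoint union
\[
    \XQuad(\log\P_\Delta)=\bigsqcup_{s\in\bC_{>2}}\XQuad_s(\log\P_\Delta),
\]
equipped with a tautological projection $p\colon\XQuad(\log\P_\Delta)\to\bC_{>2}$ recording $s$. I would then define
\[
    \Phi\colon \HS(\ulog\P_\Delta)\times\bC_{>2}\to\XQuad(\log\P_\Delta),\qquad
    ([(\Pone,f,\log f,h)],s)\mapsto \phi_s([(\Pone,f,\log f,h)]).
\]
The crucial point is that the source $\HS(\ulog\P_\Delta)$ does not depend on $s$: the framed Hurwitz data $(\Pone,f,\log f,h)$ is determined purely by the polar type $\uk$ and the topology, so the same element $\eta\in\HS(\ulog\P_\Delta)$ may be fed into $\phi_s$ for every $s\in\bC_{>2}$, which is exactly what makes the product, rather than a twisted family, appear.

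That $\Phi$ is a bijection is then immediate. Since $\Phi$ commutes with the projections to $\bC_{>2}$ (indeed $p\circ\Phi=\mathrm{pr}_2$, because $\Phi(\eta,s)$ has parameter $s$), it suffices to check that $\Phi$ restricts to a bijection on each fiber; but the restriction of $\Phi$ to $\HS(\ulog\P_\Delta)\times\{s\}$ is exactly $\phi_s$, a bijection onto $\XQuad_s(\log\P_\Delta)$ by Theorem~\ref{thm:HW_Quad_iso}. A dimension count gives the asserted value: $\HS(\ulog\P_\Delta)$ is a complex manifold of dimension $n=2g-2+b+\sum_{i=1}^b k_i=b-2+\sum_{i=1}^b k_i$ (Section~\ref{sec:HW}, with $g=0$), while $\bC_{>2}$ is an open subset of $\bC$ of complex dimension $1$, so the product, and hence $\XQuad(\log\P_\Delta)$, carries dimension $n+1$.

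Finally I would transport the complex structure along $\Phi$, declaring $\Phi$ a biholomorphism; transport from a complex manifold automatically supplies the Hausdorff, locally Euclidean, holomorphically-glued structure. The one point requiring genuine argument, and the crux I expect to be the main obstacle, is that this transported structure is the \emph{natural} one, i.e.\ that it restricts on each slice $\XQuad_s(\log\P_\Delta)$ to the complex structure already carried by that slice (via $\phi_s$, equivalently via the period map $\Pi_s$), \emph{and} is compatible as $s$ varies. Slicewise compatibility is built in, since $\phi_s$ is a biholomorphism for each fixed $s$; the transverse, holomorphic-in-$s$ compatibility is where the work lies, as one must check that $(\eta,s)\mapsto\Phi(\eta,s)$ is holomorphic jointly, not merely separately. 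This reduces to the joint holomorphicity of the family $\xi=f^{s-2}\,\Omega_{\ul}=e^{(s-2)\log f}\,\Omega_{\ul}$. Here I would exploit that, for a fixed Hurwitz datum, the underlying log surface, its spectral cover, and the hat-homology cycles are all $s$-independent, since the branching of $\sqrt{\xi}=e^{(s-2)\log f/2}\sqrt{\Omega_{\ul}}$ comes only from the parities of the $l_i$ through $\sqrt{\Omega_{\ul}}$; thus the cycle of integration never moves with $s$. The central charges $\int_{\widetilde\eta}\sqrt{\xi}$ therefore depend holomorphically on $(\eta,s)$ by differentiation under the integral sign, the $s$-derivative producing the convergent integral $\tfrac12\int_{\widetilde\eta}(\log f)\,e^{(s-2)\log f/2}\sqrt{\Omega_{\ul}}$. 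This joint holomorphicity promotes the fiberwise isomorphisms to a single complex-analytic trivialization $\XQuad(\log\P_\Delta)\cong\HS(\ulog\P_\Delta)\times\bC_{>2}$, completing the proof.
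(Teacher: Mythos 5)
Your proposal is correct and takes essentially the same route as the paper, which states this corollary without a separate proof, treating it as immediate from Theorem~\ref{thm:HW_Quad_iso}: the fiberwise isomorphisms $\phi_s$ assemble into the product bijection and the complex structure is transported along it. Your extra verification that the structure is jointly holomorphic in $(\eta,s)$ — via the explicit family $\xi=e^{(s-2)\log f}\,\Omega_{\ul}$ and differentiation of the periods under the integral sign — is a sensible elaboration of a point the paper leaves implicit.
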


Passing to $q$-stability conditions, we have the following result
on gluing $q$-stability conditions along $s$-direction for
\begin{gather}
  \QStap_{>2}\DX=\bigcup_{s \in \bC^{>2}} \QStap_s\DX,
\end{gather}
which provides an answer for \cite[Conj.~3.16]{IQ1}.

\begin{corollary}\label{cor:n+2}
For any $s$ with $\Re(s)>2$, we have
\begin{gather}\label{eq:XiPsi}
    \Xi_s\circ\Psi_s \colon \HS(\log \gzero_\Tri)\cong\QStap_{s}\DX,
\end{gather}
where $\Xi_s$ and $\Psi_s$ are as in \eqref{eq:QQ=QS2} and \eqref{eq:Psis}.
Hence
$\QStap_{>2}\D_\XX$ admits the structure of a complex manifold of dimension $n+1$
and the projection map
\[
    \pi \colon \QStap\D_\XX \to \CC,\quad (\sigma,s) \mapsto s
\]
is holomorphic.
\end{corollary}
\begin{proof}
In our case, $\surf=\Pone$ has genus zero and thus Corollary~\ref{cor:zero} applies.
By Theorem~\ref{thm:q=x}, we have \eqref{eq:main}, i.e., $\QStap_{s}\DX\cong\QQuad_s(\log \gzero_\Tri)$
in this case.
Thus the claim follows from Theorem~\ref{thm:main} and Corollary~\ref{cor:n+1} respectively.
\end{proof}


\subsection{Disk case: type A example}\label{sec:A}
Now consider the type A case, where $\rs=\Pone$, $b=1$, $\uk=(n+1)$
and $\ul=(4)$. In this case, the marked surface $\gzero$ is
a disk with $n+1$ points on the boundary, which will be denoted by $\dd$.
Consider the space of polynomials of degree $n+1$
\begin{gather}
    \Poly_{n}:=\left\{f(z)\,\vert\, f(z)=z^{n+1}+a_1 \,
        z^{n-1}+a_2\, z^{n-2}+\cdots+a_n,\,a_i \in \bC \right\} \cong \bC^n
\end{gather}
and its regular subset
\[
    \Poly_{n,\reg}
        :=\{f(z) \in \Poly_n \,\vert\,  \text{all zeroes of $f(z)$ are simple}\}.
\]
Then we can regard $f(z) \in \Poly_{n,\reg}$
as a genus $0$ regular Hurwitz cover
\[
f \colon \Pone \to \Pone
\]
with a unique pole of order $n+1$ at $z =\infty$.

Let $\mathfrak{h}$ be the Cartan subalgebra of type $A_n$ and
$\mathfrak{h}_{\reg} \subset \mathfrak{h}$ be the
regular subset.
Then there is an identification
$$\Poly_{n,\reg}\cong\mathfrak{h}_{\reg} \slash W,$$
where $W$ is the Weyl group of type $A_n$, namely the symmetric group of degree $n+1$.
More details on $\mathfrak{h}$ in the Dynkin case will be discussed in Section~\ref{sec:ADE}.
Denote by $\widetilde{\Poly}_{n,\reg}$ the universal cover of $\Poly_{n,\reg}$.

Let $\omega_{n+1}=e^{2\pi \ii \slash (n+1) }$ be the $(n+1)$-th root of unity and
denote by $C_{n+1}:=\langle \omega_{n+1} \rangle$ the cyclic group of order $n+1$ generated by $\omega_{n+1}$.
We define the action of $C_{n+1}$ on $\Poly_{n,\reg}$ by
\[
    (\omega_{n+1} \cdot f) (z):=f(\omega_{n+1}^{-1} z).
\]
Alternatively, we can write $\omega_{n+1} \cdot (a_i)_i=(\omega_{n+1}^{i+1}a_i)_i$
for coefficients $(a_i)_i$ of $f(z)$.
\begin{lemma}
\label{lem:HS_poly}
There is an isomorphism
\begin{gather}\label{eq:PolyHS}
    \Poly_{n,\reg} \slash C_{n+1} \cong \HS(0,(n+1))_{\reg}
\end{gather}
by regarding a regular polynomial as a regular Hurwitz cover on $\Pone$.
\end{lemma}
\begin{proof}
It is straightforward to check that the only
automorphisms of $\Pone$ which preserve $\Poly_{n,\reg}$ are $z \mapsto \omega z$ for $\omega \in C_{n+1}$.
Hence the claim follows.
\end{proof}
We note that the cyclic group $C_{n+1}$ can be identified with
the \emph{marked mapping class group} $\MCG_\bullet(\dd)$, whose generator is
the rotation $\tau$ that sends each marked point on the boundary to the next (clockwise) point.
Recall that $\MCG_\bullet$ here is only required to preserve $\M$ setwise.
Let $\ddo$ be the decorated version of $\dd$ (with $|\Tri|=n+1$).
It is well-known that
\[
    \MCG_\bullet(\ddo)=\left( \Br_{n+1}\times\<\tau\> \right)/ \<\tau^{n+1}\>,
\]
which follows from the short exact sequence
\[
    0\to \SBr(\ddo) \to \MCG_\bullet(\ddo)\to \MCG_\bullet(\dd)\to 0
\]
for $$\SBr(\ddo)=\MCG(\ddo,\partial\ddo)=\Br_{n+1}.$$
Here, $\MCG(\ddo,\partial\ddo)$ is the classical mapping class group that preserves the boundary pointwise.
More precisely, the $\tau^{n+1}$ is the Dehn twist along $\partial\ddo$,
which is also the generator of the center of $\Br_{n+1}$.

Let $\log \ddo$ be the topological (ungraded) log surface of type A. Then
first we note that the lift of the action $\tau \in \MCG_\bullet(\dd)$ on
$\log \ddo$ can be identified with the deck
transformation $q \in \MCG^\circ(\log \ddo)$.
Then we have the following description of the mapping group $\MCG^\circ(\log \ddo)$.
\begin{lemma}\label{lem:MCG_braid}
As
$$
    \MCG^\circ(\log \ddo) = \left( \Br_{n+1} \times \langle q \rangle\right) \slash \langle q^{n+1} \rangle,
$$
where $q^{n+1}$ acts on $\Br_{n+1}$ as the lift of $D_{\partial\ddo}$,
there is an isomorphism of the groups
\[
    \MCG^\circ(\log \ddo) \cong \Br_{n+1} \rtimes C_{n+1}.
\]
\end{lemma}

\begin{proposition}\label{prop:uc=HS}
There is an isomorphism of complex manifolds
\begin{gather}\label{eq:UC poly}
    \widetilde{\Phi}_{n+1} \colon \widetilde{\Poly}_{n,\reg}\to \HS(\log \ddo),
\end{gather}
as universal covering of
\begin{gather}\label{eq:poly}
    \begin{array}{ccc}
    \Phi_{n+1} \colon \HS(0,(n+1))_{\reg} &\longrightarrow& \HS(\log \ddo) \slash \MCG^\circ(\log \ddo) \\
    (\Pone,f) &\longmapsto& (\Pone,f,\log f,h).
    \end{array}
\end{gather}
\end{proposition}
\begin{proof}
By Definition, we have \eqref{eq:poly}.

Let
\[
    \conf_m(X)\colon=\{ \{x_1,\ldots,x_n\}\in X^n\mid x_i\neq x_j,\;\forall i\neq j\}
\]
be the $m^{\mathrm{th}}$ (unordered) configuration space of $X$.
Then there is a natural identification
\begin{equation*}\begin{array}{rcl}    \vspace{1ex}
    \Poly_{n,\reg}&=&\{f(z)=\displaystyle\prod_{k=1}^{n+1}(z-z_k) \mid z_k\in\bC, z_i\ne z_j,\;\forall i\neq j\}\\ &=&\conf_{n+1}(\bC).
\end{array}\end{equation*}
It is well-known that $\pi_1(\Poly_{n,\reg})=\Br_{n+1}$.
By Lemma~\ref{lem:HS_poly}, we have
\[
    \pi_1(\HS(0,(n+1))_{\reg})=\pi_1(\Poly_{n,\reg}/C_{n+1})=\Br_{n+1}\rtimes C_{n+1}.
\]
Combining Lemma~\ref{lem:MCG_braid}, we obtain the required covering version \eqref{eq:UC poly}.
\end{proof}

\section{Almost Frobenius structures for stability conditions: ADE case}\label{sec:SF}
The aim of this section is to give a conjectural description
of the spaces of $q$-stability conditions on the Calabi-Yau-$\XX$ completions
of type ADE quivers.

\subsection{Almost Frobenius structures}\label{sec:almost}
We recall the definition of an almost Frobenius structure
from \cite[\S~3]{Dub1}.
\begin{definition}
\label{def:Frob_alg}
A {\it Frobenius algebra} $(A,*,(\,,\,))$ is a pair of a commutative
associative $\CC$-algebra $(A,*)$ with the unit $E \in A$ and
a non-degenerate symmetric bilinear form
\[
    (\,,\,)\colon A \otimes A \to \CC
\]
satisfying the condition
\[
    (X * Y,Z)=(X,Y*Z)
\]
for $X,Y,Z \in A$.
\end{definition}

For a complex manifold $M$, denote by $\TM$ the
holomorphic tangent sheaf of $M$.
\begin{definition}
An \emph{almost Frobenius manifold}
$(M,*,(\,,\,),E,e)$ of the charge
$d \in \CC\,(d \neq 1)$
consists of the following data:
\begin{itemize}
\item a complex manifold $M$ of dimension $n$,
\item a commutative associative $\O_M$-linear product
\[
    * \colon \TM \otimes \TM \to \TM
\]
\item  a non-degenerate symmetric $\O_M$-bilinear form
\[
    (\,,\, ) \colon \TM \otimes \TM \to \O_M
\]
satisfying the condition
\[
    (X*Y,Z)=(X,Y*Z)
\]
for (local) holomorphic vector fields $X,Y,Z$,
\item global holomorphic vector fields $E$ (called
the {\it Euler vector field}) and $e$ (called
the {\it primitive vector field}).
\end{itemize}
They satisfy the following axioms:
\begin{itemize}
\item[(1)]
The metric $(\,,\,)$ is flat.
\item[(2)]
Let $p^1,\dots,p^n$ be flat coordinates of the flat metric $(\,,\,)$
on some open subset $U \subset M$ and set $\rd_i:=\frac{\rd}{\rd p^i}$.
Then there is some function
$F(p) \in \O_M(U)$ such that
the multiplication $*$
can be written as
\[
    \rd_i *\rd_j=\sum_{k=1}^n C_{ij}^k(p) \rd_k,
\]
where
\[
    C_{ij}^k(p)=\sum_{l=1}^n
    G^{kl}\rd_i \rd_j \rd_l F(p)
\]
and $(G^{kl})$ is the inverse matrix of the metric $(G_{kl}=(\rd_k,\rd_l))$.
\item[(3)]
The function $F(p)$ satisfies the homogeneity equation
\[
    \sum_{i=1}^n p^i \rd_i F(p)=2 F(p)+\frac{1}{1-d}\sum_{i,j=1}^n G_{ij}p^i p^j.
\]

\item[(4)]The Euler vector field takes the form
\[
    E=\frac{1-d}{2}\sum_{i=1}^n p^i \rd_i
\]
and is the unit of the multiplication $*$.
\item[(5)]The primitive vector field $e$ is invertible with respect to the multiplication $*$
and the derivation $P_{\nu}(p) \mapsto e P_{\nu}(p)$
on the space of solutions
of the following integrable equation with the parameter $\nu \in \CC$
\begin{equation}
\label{eq:period}
    \rd_i \rd_j P_{\nu}(p) =\nu \,\sum_{k=1}^n C_{ij}^k(p)
    \rd_kP_{\nu}(p)
\end{equation}
acts as the shift of the parameter $\nu \mapsto \nu-1$,
i.e., if $P_{\nu}(p)$ is the solution
of the equation (\ref{eq:period}) with the parameter $\nu$, then $e P_{\nu}(p)$ is
the solution with the parameter $\nu-1$.
\end{itemize}
\end{definition}
We note that
the restriction of the multiplication and the metric on the
tangent space $T_pM$ for each point $p \in M$ gives
a family of Frobenius algebras $(T_p M,*_p,(\,,\,)_p)$.

The flat coordinates $p^1,\dots,p^n$ of the flat metric $(\, ,\,)$
are called {\it periods} and independent solutions $P_{\nu}^1,\dots,P_{\nu}^n$
of the equation in Axiom $(5)$ are called  {\it twisted periods}.
The twisted periods satisfy the following quasi-homogeneity condition.
\begin{proposition}
The derivative of the twisted period $P_{\nu}$
along with the Euler field $E$ satisfies
\[
    E  P_{\nu}=\left(\frac{1-d}{2}+\nu \right)P_{\nu}
    +\mathrm{const.}
\]
\end{proposition}
\begin{proof}
We show
\[
    \rd_j E P_{\nu}=\left(\frac{1-d}{2}+\nu \right)\rd_j P_{\nu}.
\]
Since $[\rd_j,E]=\frac{1-d}{2} \rd_j$, the above equality is
equivalent to $E \rd_j P_{\nu}=\nu \rd_j P_{\nu}$.
Recall that the Euler field is the unit of the Frobenius algebra, i.e.
$E*\rd_j=\rd_j$. This is equivalent to
\[
    \frac{1-d}{2}\sum_{i,k=1}^n p^i C_{ij}^k \rd_k=\rd_j.
\]
Together with the equation (\ref{eq:period}) for the twisted period
$\rd_i \rd_j P_{\nu}(p)=\nu\sum_k C_{ij}^k \rd_k P_{\nu}$, we have
\[
    E \rd_j P_{\nu}=
    \frac{1-d}{2}\sum_{i=1}^n p^i\rd_i \rd_j P_{\nu}
    =\nu\frac{1-d}{2}\sum_{i,k=1}^n p^i  C_{ij}^k \rd_k P_{\nu}=\nu \rd_j P_{\nu}.
\]
\end{proof}

In the following, we always normalize the twisted period $P_{\nu}$
to satisfy the quasi-homogeneity condition
\begin{equation}
\label{eq:quasi}
    E P_{\nu}=\left(\frac{1-d}{2}+\nu \right)P_{\nu}
\end{equation}
by shifting some constant.

\subsection{Flat torsion-free connections and twisted periods}\label{sec:flat}
In this section, we summarize the relation between flat torsion-free connections
and affine structures on manifolds. In the following
we consider in the category of complex manifolds and
all vector fields, differential forms and connections are holomorphic.
Let $M$ be a complex manifold of dimension $n$ and
$\nabla$ be a connection of the holomorphic
tangent bundle $TM$. We recall that
$\nabla$ is called {\it torsion-free} if
\[
    \nabla_{X}Y-\nabla_Y X -[X,Y]=0
\]
for vector fields $X,Y$ on $M$.
Recall that
a connection $\nabla$ on the tangent bundle $TM$ induces
the connection $\nabla$ on the cotangent bundle $T^*M$
by the formula
\[
    \diff \langle X, \omega \rangle=
    \langle \nabla X, \omega \rangle+\langle X, \nabla \omega \rangle
\]
for a vector field $X $ and a $1$-form $\omega $ on $M$.
The following statement is well-known.
\begin{proposition}
Let $M$ be a simply connected complex manifold of dimension $n$
and $\nabla$ be a flat torsion-free connection on $TM$. Then
there are $n$ holomorphic functions $P^1,\dots,P^n \colon M \to \CC$
such that
\[
    \diff P^1 \wedge \cdots \wedge \diff P^n \neq 0
\]
everywhere on $M$ and
\[
    \nabla \diff P^i=0.
\]
These functions are unique up to affine transformation
\[
    P^i \mapsto \sum_{j=1}^n A^i_j P^j +B^i,
\]
where $(A^i_j) \in \GL(n,\CC)$ and $(B^i) \in \CC^n$.
\end{proposition}
Thus functions $P^1,\dots,P^n$ give local coordinates
on $M$. This coordinate system is called a {\it flat coordinate system}.
Consider the vector space
\[
    V_{M}:=\bigoplus_{i=1}^n \CC \diff P^i.
\]
Then since $\diff P^1,\dots,\diff P^n$ are independent global sections of $T^*M$,
we have a trivialization of the cotangent bundle
\[
    T^*M \cong V_{M} \times M
\]
and the flat torsion-free connection $\nabla$ is just
equal to the exterior differential $\diff$ on the trivial bundle.

Consider an almost Frobenius manifold $(M,*,(\,,\,),E,e)$. Let $\nabla$ be
the Levi-Civita connection with respect to the flat metric $(\,,\,) $ on $M$.
Fix a complex number $\nu \in \CC$. Then we can define a
flat torsion-free connection $\check{\nabla}$ on $TM$
\[
    \check{\nabla}_X Y:=\nabla_X Y+\nu X *Y
\]
for vector fields $X,Y$.
Then the equation (\ref{eq:period}) for twisted periods $P_{\nu}$
is equivalent to the equation
\[
    \check{\nabla} \diff P_{\nu}=0.
\]
\begin{remark}
\label{rem:primitive}
The connection $\check{\nabla}$ is called the {\it Dubrovin connection}
or the {\it second structure connection} of a Frobenius manifold.
For details, we refer to \cite[\S~3]{Dub1} and \cite[\S~2.1]{M}.
\end{remark}

\def\MQ{\widetilde{\hh{M}}_Q}
\subsection{ADE type}\label{sec:ADE}
We recall the almost Frobenius structure on the
universal covering of the regular locus of the Cartan subalgebra
in the case of type ADE following \cite[\S~5.2]{Dub1}.

Let $Q$ be a quiver of type ADE and
$(L_Q,(\,,\,))$ be the root lattice associated with
$Q$. Let $\Tri_+ \subset L_Q$ be the set of positive roots.
The Cartan subalgebra is
$$\hs:=\Hom_{\ZZ}(L_Q,\CC)$$ and let $e_1,\dots,e_n \in \hs$
be the dual basis of $\alp_1,\dots,\alp_n$.
Then
$$\hs \cong \oplus_{i=1}^n \CC e_i \quad\text{with metric}\quad
    (e_i,e_j):=(A^{-1})_{ij},$$
where $A^{-1}$ is the inverse of the Cartan matrix.
We regard
a root $\alp \in \Tri_+$ as a function on
$\hs$ via the pairing between $\hs$
and $L_Q$.
Consider the regular subset
\begin{gather}\label{eq:hreg}
  \hs_{\reg}:=\hs \setminus \bigcup_{\alp \in \Tri_+} \ker \alp
\end{gather}
and its universal covering space
\[
    \MQ:=\widetilde{\hs_{\reg}}.
\]
When $Q$ is an $A_n$ quiver, these definitions/notations match the ones in Section~\ref{sec:A}.
In the following, we construct the almost Frobenius structure on $\MQ$.

Let $\alp_1,\dots,\alp_n$ be simple roots and introduce the
coordinate on $\hs$ given by
$p^i:=\alp_i(p)$ for $p \in \hs$, namely $p=\sum_{i=1}^n p^i e_i$.
Then the coordinates
$p^1,\dots,p^n$ also form the local coordinates on $\MQ$
through the projection $\MQ \to \hs_{\reg}$.
Since $T\MQ \cong \hs \times \MQ$, the metric $(\,,\,)$ on $\hs$
induces the flat metric on $\MQ$ and the coordinates $p^1,\dots,p^n$
form the flat coordinates.

Define the potential function $F \colon \MQ \to \CC$ by
\[
F(p):=\frac{1}{2}\sum_{\alp \in \Tri_+}\alp(p)^2 \log \alp(p).
\]
Since $\MQ$ is the universal covering of $\hs_{\reg}$,
$F$ is a single-valued holomorphic function on $\MQ$.
On an open subset $U \subset \MQ$ with the flat coordinate $p^1,\dots,p^n$,
the third derivative of $F$ is given by
\begin{equation}
\label{eq:tensor}
C_{ijk}(p):=\rd_i \rd_j \rd_j F(p)=\sum_{\alp \in \Tri_+}\frac{\alp(e_i) \alp(e_j) \alp(e_k)}{\alp(p)}.
\end{equation}
This defines the multiplication $*$ together with  the
inverse of the metric, which is just the Cartan matrix $(dp^i,dp^j)=A_{ij}$.
The charge $d$ is given by
\[
    d=1-\frac{2}{h},
\]
where $h$ is the Coxeter number.
In particular, the Euler field takes the form
\[
E=\frac{1}{h}\sum_{i=1}^n p^i \rd_i.
\]

For the construction of the primitive vector field $e$, we refer to \cite[\S~5.2]{Dub1}.

\subsection{Coxeter-KZ connections}
Following \cite{C} (and the survey \cite[\S~4]{TL}), we consider
a certain integrable connection on $\hs_{\reg}$, called the Coxeter-KZ connection.

As in the previous section, let $(L_Q,(\,,\,))$ be the root lattice associated with
a quiver $Q$ of type ADE.
For a root $\alp \in \Tri$,
consider reflections
\[
    r_{\alp} \colon L_Q \to L_Q,\quad \lam \mapsto \lam
        -(\lam,\alp)\alp,
\]
for $\alp \in L_Q$. These reflections form
the {\it Weyl group}
\[
    W_Q:=\langle r_{\alp} \,|\,\alp \in \Tri \rangle,
\]
which naturally acts on the Cartan subalgebra $\hs$.

Consider a representation
of $W_Q$ on a finite dimensional vector space $V$:
\[
    \rho \colon W_Q \to \GL(V).
\]
On the trivial bundle $V \times \hs_{\reg} \to \hs_{\reg}$,
we define the {\it Coxeter-KZ (CKZ)} connection $\nabla^{\CKZ}_{\nu}$ by
\[
    \nabla^{\CKZ}_{\nu}:=\diff-\nu \sum_{\alp \in \Tri_+}
    (\rho(r_{\alp})-\id_V)\frac{d \alp}{\alp},
\]
where $\nu \in \CC$ is some complex number.
\begin{theorem}
\label{thm:CKZ}
The connection $\nabla^{\CKZ}_{\nu}$ is a $W$-invariant flat connection
on $V \times \hs_{\reg} $. The monodromy representation
$$\rho_{\nu \colon }\pi_1(\hs_{\reg}\slash W) \cong \Br_Q \to \GL(V)$$
factors through the
Hecke algebra $H_Q$:
\begin{equation*}
    \xymatrix{
        R[\Br_Q] \ar@{|->}[rr] \ar[dr]&&
        R[\GL(V)]  & \bb_i \ar@{|->}[rr] \ar@{|->}[dr]&&
        \rho_{\nu}(\psi_i) \\
        &H_Q \ar[ur] &&& -T_i \ar@{|->}[ur]&.
    }
\end{equation*}
where we specialize $q=e^{2\mathbf{i} \pi \nu}$ for the Hecke algebra $H_Q$.
\end{theorem}
\begin{proof}
See \cite{C} and also the survey \cite[\S~4]{TL}.
\end{proof}

\begin{remark}
In \cite{TL}, the CKZ connection is given by
\[
    \nabla^{\prime \CKZ}_{\nu}=\diff -\nu \sum_{\alp \in \Tri_+}
    \rho(r_{\alp})\frac{d \alp}{\alp}.
\]
Two connections  $\nabla^{\CKZ}_{\nu}$ and $\nabla^{\prime \CKZ}_{\nu}$
are related as the following gauge transformation.
Consider the function
\[
    f\colon=\prod_{\alp \in \Tri_+}\alp
\]
on $\hs$.
Then the equality
\[
    \nabla^{\prime \CKZ}_{\nu}=f^{\nu}\circ \nabla^{\CKZ}_{\nu} \circ f^{-\nu}
\]
holds since
\[
    \diff f^{-\nu}=-\nu \sum_{\alp \in {\Tri_+}}\id_V\frac{d \alp}{\alp}.
\]
\end{remark}

The connection $\nabla^{ \CKZ}_{\nu}$ can be naturally extended  on
$\MQ=\widetilde{\hs_{\reg}}$.
As the special case, we consider the CKZ connection on the tangent bundle
of $\MQ$. We note that  the tangent bundle of $M$ is given by
\[
    T\MQ\cong \hs \times \MQ.
\]
Since the Weyl group $W$ acts on $\hs$ as the reflection group, we have the
CKZ connection on $T\MQ$.

\begin{proposition}
The CKZ connection $\nabla^{ \CKZ}_{\nu}$ on
$T\MQ$ coincides with the Dubrovin connection
$\check{\nabla}$. In particular, $\nabla^{ \CKZ}_{\nu}$ on $T\MQ$ is torsion-free
and the twisted period $P_{\nu}$ satisfies
\[
    \nabla^{ \CKZ}_{\nu} \diff P_{\nu}=0.
\]
\end{proposition}
\begin{proof}
First we note that under the identification $T\MQ\cong \hs \times \MQ$,
the vector field $\rd_i=\frac{\rd}{\rd p^i}$ is identified with $e_i$.
By using equation (\ref{eq:tensor}) we have
\[
    \check{\nabla}_{\rd_i}\rd_j=\nu \rd_i*\rd_j=\nu
    \sum_{\alp \in \Tri_+}\sum_{k,l}\frac{\alp(e_i) \alp(e_j) \alp(e_k)}{\alp(p)}G^{kl}\rd_l.
\]
On the other hand,
\begin{align*}
    \nabla^{ \CKZ}_{\nu, e_i}e_j
    =-\nu \sum_{\alp \in {\Tri_+}}(r_{\alp}(e_j)-e_j)\frac{\langle d\alp,e_i \rangle}{\alp(p)}
    =\nu \sum_{\alp \in {\Tri_+}}(e_j-r_{\alp}(e_j))\frac{\alp(e_i)}{\alp(p)}.
\end{align*}
Since
\[
    e_j-r_{\alp}(e_j)=e_j-(e_j-\alp(e_j)\alp^{\vee} )=\alp(e_j)\sum_{k,l}\alp(e_k)G^{kl}e_l,
\]
the result follows.
\end{proof}
Thus the twisted periods for Frobenius manifolds of type ADE can be characterized
as the flat coordinates of the CKZ connections.
Together with Theorem~\ref{thm:CKZ}, we have the following.
\begin{corollary}
The monodromy representation of $\Br_Q$ for the twisted period map
$P_{\nu}$ factors through the Hecke algebra $H_Q$.
\end{corollary}

\begin{remark}
\label{rem:primitive2}
In \cite{Sa1}, Saito defined  the period map associated with the primitive form
on the unfolding of an isolated singularity. He also defined the holonomic D-module
(with the complex parameter $s \in \CC$) whose solutions characterize his period map.
The above twisted period maps
 $P_{\nu}$ on almost Frobenius manifolds of type ADE
 correspond to
 Saito's period maps in the case that isolated singularities are of type ADE.
 For the details, we refer to \cite[\S~5]{Sa1}, \cite[\S~4]{Sa2} and
 \cite[\S~5.1]{Dub1}.
\end{remark}

\subsection{Cotangent bundles of the spaces of stability conditions}
Let $\D_{\XX}$ be a triangulated category satisfying Assumption~\ref{ass:R} and
take a basis $[E_1],\dots,[E_n]$ of $K(\D_{\XX})$. Then
\[
    K(\D_{\XX}) \cong \bigoplus_{i=1}^n R[E_i].
\]
Consider holomorphic functions
\[
    Z^i \colon \QStab_s\D_{\XX} \to \CC \quad (i=1,\dots,n)
\]
given as the components of the cental charge defined
by $Z^i(\sigma):=Z(E_i)$ for $\sigma=(Z,\sli) \in \QStab_s\D_{\XX} $.
Consider holomorphic $1$-forms $\diff Z^1,\dots,\diff Z^n$ on $\QStab_s\D_{\XX}$
and define a vector space
\[
    V_\mathrm{S}=\bigoplus_{i=1}^n \CC \diff Z^i.
\]
The following is a direct consequence of Theorem~\ref{thm:localiso2}.
\begin{proposition}
The cotangent bundle of the space $\QStab_s\D_{\XX}$ is
the trivial bundle given by
\[
T^*\QStab_s\D_{\XX} \cong V_\mathrm{S} \times \QStab_s\D_{\XX}.
\]
\end{proposition}
\begin{proof} By Theorem~\ref{thm:localiso2}, for any small open subset
$U \subset \QStab_s\D_{\XX}$, functions $Z^1,\dots,Z^n$
form local coordinates on $U$. This implies the result.
\end{proof}
If we consider the trivial connection $\diff$  on
$V_\mathrm{S} \times \QStab_s\D_{\XX}$, the central charges
$Z^i$ satisfies the trivial equation
\[
    \diff \diff Z^i=0.
\]
Together with the content of Section~\ref{sec:flat}, this suggests the following fundamental problem
on the spaces of stability conditions.
\begin{problem}
To find a simply connected complex manifold $M$ and a torsion-free flat connection $\nabla$
such that there is an isomorphism of complex manifolds
\[
\phi \colon M \to \QStab_s\D_{\XX}
\]
satisfying $\phi^* \diff=\nabla$.
\end{problem}
In the next subsection, we shall give a conjectural answer for $\DXQ$ with type ADE-quiver $Q$.

If we find an isomorphism $\phi$ in the above problem,
then the flat coordinates $P^i$ of $\nabla$ on $M$ can be
identified with the central charges $Z^i$ up to affine transformation.
In Section~\ref{sec:almost}, we fix the ambiguity of
constant shift of twisted periods by the equation (\ref{eq:quasi})
via the Euler field. Correspondingly,
we introduce
the Euler field in the side of stability conditions as follows.
Let $\frac{\rd}{\rd Z^i}$ be the dual basis of $dZ^i$.
Then the tangent bundle of $\QStab_s\D_{\XX}$ is given by
\[
T\QStab_s\D_{\XX}\cong \left(
\bigoplus_{i=1}^n \CC \frac{\rd}{\rd Z^i}  \right) \times \QStab_s\D_{\XX}.
\]
The Euler vector field $\mathcal{E}$ of charge $d$ on  $\QStab_s\D_{\XX}$
is defined by
\[
\mathcal{E}:=\left( \frac{1-d}{2}+
\frac{s-2}{2} \right) \sum_{i=1}^n Z^i \frac{\rd}{\rd Z^i}.
\]
Then central charges $Z^i$ clearly satisfy the  quasi-homogeneity condition
\[
\mathcal{E}Z^i=\left( \frac{1-d}{2}+
\frac{s-2}{2} \right) Z^i.
\]

\subsection{Conjectural descriptions}
Similar to Definition~\ref{def:GXQW}, we have the Calabi-Yau-$\XX$ version of a path algebra.

\begin{definition}\cite[\S~2.4]{IQ1}
Let $Q$ be a quiver of type ADE.
Denote by $\Gamma_{\XX}Q$ the associated Calabi-Yau-$\XX$ Ginzburg $\ZZ^2$-graded differential algebra,
$\DXQ$ its finite dimensional derived category and
$\QStab_s\DXQ$ the space of $q$-stability conditions on $\DXQ$.
\end{definition}

\begin{conjecture}
\label{conj:main}
Assume that $\Re (s) \ge 2$ and set $\nu=(s-2) \slash 2$.
There is a $\Br_Q$-equivariant
isomorphism $\lozenge_s$ of complex manifolds
such that the diagram
\begin{equation}\label{eq:Frob}
\xymatrix{
\MQ \ar[rr]^{\lozenge_s\qquad} \ar[dr]_{P_{\nu}} &&
\QStab_s^{\circ}\DXQ  \ar[dl]^{\mathcal{Z}_s}  \\
&\CC^n&
}
\end{equation}
commutes.
In particular, $\lozenge_s$ satisfies that
\[\begin{cases}
    \lozenge_s^* \, \mathcal{E}=E,\\
    \lozenge_s^* \, \mathrm{d}=\nabla^{\CKZ}_{\nu}
\end{cases}\]
for Euler fields and torsion-free flat connections on each side.
\end{conjecture}

\subsection{Type A case}
When $Q$ is of type A, i.e., $\surf=\dd$ is a (graded) disk with $n+1$ marked points on its boundary
as in Section~\ref{sec:A}.
Take a full formal arc system $\TT$ of $\dd$ and we identify $\DXQ=\DX$.

Then putting the results of Corollary~\ref{cor:n+2} and Proposition~\ref{prop:uc=HS} together,
we have the identification
\begin{equation}\label{eq:identification}
    \QStap_s\D_\XX(A_n)\cong\widetilde{\Poly}_{A_n}=\widetilde{\mathfrak{h}_\reg/W}
\end{equation}
via moduli spaces of (framed) $q$-quadratic differentials and (framed) Hurwitz spaces.
Thus, this implies the following.

\begin{theorem}\label{thm:pline}
Assume that $\Re (s) > 2$.
Let $Q$ be the $A_n$-quiver.
Then Conjecture~\ref{conj:main} holds.
\end{theorem}
\begin{remark}
For integer valued $s$, there are following previous works:
\begin{itemize}
\item $s=2$ is essentially shown in \cite{B3};
\item $s=3$ is essentially shown in \cite{BS};
\item $s\in\ZZ_{\ge2}$ is shown in \cite{I} based on the theory of \cite{BS}.
\end{itemize}
\end{remark}

\addcontentsline{toc}{part}{Reference}
\let\oldaddcontentsline\addcontentsline
\renewcommand{\addcontentsline}[3]{}

\end{document}